\theoremstyle{plain}
\newtheorem{theorem}{Theorem}
\newtheorem{prop}[theorem]{Proposition}
\newtheorem{proposition}[theorem]{Proposition}
\newtheorem{lemma}[theorem]{Lemma}
\newtheorem{corollary}[theorem]{Corollary}
\newtheorem{definition}[theorem]{Definition}
\theoremstyle{definition}
\newtheorem{remark}[theorem]{Remark}
\newtheorem{example}[theorem]{Example}
\numberwithin{equation}{section}
\numberwithin{theorem}{section}
\renewcommand{\P}{{\mathbb P}}
\newcommand{\eps}{\varepsilon}
\newcommand{\wt}{\widetilde}
\newcommand{\calL}{{\mathcal L}}
\newcommand{\one}{{{\bf 1}}}
\newcommand{\lb}{\langle}
\newcommand{\rb}{\rangle}
\newcommand{\limn}{\lim_{n\to\infty}}
\newcommand{\vv}{a}
\newcommand{\ud}[0]{\,\mathrm{d}}
\begin{document}

\title[Cylindrical continuous martingales]
{Cylindrical continuous martingales and stochastic integration in infinite dimensions}

\author{Mark Veraar}
\address{Delft Institute of Applied Mathematics\\
Delft University of Technology \\ P.O. Box 5031\\ 2600 GA Delft\\The
Netherlands}
\email{M.C.Veraar@tudelft.nl}

\author{Ivan Yaroslavtsev}
\email{I.S.Yaroslavtsev@tudelft.nl}

\begin{abstract}
In this paper we define a new type of quadratic variation for cylindrical continuous local martingales on an infinite dimensional spaces.
It is shown that a large class of cylindrical continuous local martingales has such a quadratic variation.
For this new class of cylindrical continuous local martingales we develop a stochastic integration theory for operator valued processes under the condition that the range space is a UMD Banach space. We obtain two-sided estimates for the stochastic integral in terms of the $\gamma$-norm. In the scalar or Hilbert case this reduces to the Burkholder-Davis-Gundy inequalities. An application to a class of stochastic evolution equations is given at the end of the paper.
\end{abstract}

\thanks{The first named author is supported by the Vidi subsidy 639.032.427 of the Netherlands Organisation for Scientific Research (NWO)}

\keywords{Cylindrical martingale, quadratic variation, continuous local martingale, stochastic integration in Banach spaces, UMD Banach spaces, Burkholder-Davis-Gundy, random time change, $\gamma$-radonifying operators,
inequalities, It\^o formula, stochastic evolution equation, stochastic convolution, functional calculus}

\subjclass[2010]{60H05 Secondary: 60B11, 60G44, 47D06}

\maketitle

\tableofcontents

\section{Introduction}

Cylindrical local martingales play an important role in the theory of stochastic PDEs. For example the classical cylindrical Brownian motion $W_H$ on a Hilbert space $H = L^2(D)$ can be used to give a functional analytic framework to model a space-time white noise on $\mathbb R_+\times D$.
A cylindrical (local) martingale $M$ on a Banach space $X$ is such that for every $x^*\in X^*$ (the dual space of $X$) one has that $M x^*$ is a (local) martingale and the mapping $x^*\to M x^*$ is linear and satisfies appropriate continuity conditions (see Section \ref{sec:defcyllocalmart}).

Cylindrical (local) martingales have extensively studied in the literature (see \cite{JKFR,Ond,Ond1,MP,MR1,Sch96,SuchWe}).
In this paper we introduce a new type of  \emph{quadratic variation} $[[M]]$ for a cylindrical continuous local martingale $M$ on a Banach space $X$ (see Definition \ref{def:cylmart}). Moreover, we develop a stochastic calculus for those $M$ which admit such a quadratic variation. The process $[[M]]$ is continuous and increasing and it is given by
\begin{equation}\label{eq:intro[[M]]}
[[M]]_t := \lim_{{\rm mesh}\to 0}\sum_{n=1}^N \sup_{x^*\in X^*, \|x^*\|= 1}([M(t_n)x^*]-[M(t_{n-1})x^*]),
\end{equation}
where the a.s.\ limit is taken over partitions $0= t_0 < \ldots < t_N = t$. The definition \eqref{eq:intro[[M]]} can be given for any Banach space $X$, but for technical reasons we will assume that $X^*$ is separable. The definition \eqref{eq:intro[[M]]} is new even in the Hilbert space setting. Our motivation for introducing this class comes from stochastic integration theory and in this case $M$ is a cylindrical continuous local martingale on a Hilbert space. A more detailed discussion on stochastic integration theory will be given in the second half of the introduction.

In many cases $[[M]]$ is simple to calculate. For instance for a cylindrical Brownian motion one has $[[W_H]]_t = t$. More generally, if $M = \int_0^\cdot \Phi \ud W_H$ where $\Phi$ is an $\calL(H,X)$-valued adapted process, then one has
\[[[M]]_t = \int_0^t \|\Phi(s) \Phi(s)^*\|_{\calL(X^*,X)} \ud s.\]
These examples illustrate that Definition \eqref{eq:intro[[M]]} is a natural object. However, one has to be careful, as there are  \emph{cylindrical} continuous martingales (even on Hilbert spaces) which do not have a quadratic variation $[[M]]$. From now on let us write $M_{\rm var}^{\rm loc}(X)$ for the class of cylindrical continuous local martingales which admit a quadratic variation.

If $M$ is a continuous local martingale with values in a Hilbert space, then it is well known that it has a classical quadratic variation $[M]$ in the sense that there exists an a.s.\ unique increasing continuous process $[M]$ starting at zero such that $\|M\|^{{2}} - [M]$ is a continuous local martingale again. It is simple to check that in this case $[[M]]$ exists and a.s.\ for all $t\geq 0$, $[[M]]_t \leq [M]_t$. Clearly, $[M]$ does not exist in the cylindrical case, but as we will see, $[[M]]$ gives a good alternative for it.

Previous attempts to define quadratic variation are usually given in the case $M$ is actually a martingale (instead of a cylindrical martingale) and in the case $X$ is a Hilbert space (see \cite{DGFR, Ond, MP, MR1}).
We will give a detailed comparison with the earlier attempts to define the quadratic variation in Section \ref{sec:cyl}.

\medskip

To study SPDEs with a space-time noise one often models the noise as a cylindrical local martingale on an infinite dimensional space. We refer the reader to \cite{DPZ} for the case of cylindrical Brownian motion.
In order to  study SPDEs one uses a theory of stochastic integration for operator-valued processes $\Phi:\mathbb R_+\times\Omega\to \calL(H,X)$.
Our aim is to develop a stochastic integration theory where the integrator $M$ is from $M_{\rm var}^{\rm loc}(H)$ and the integrand takes values in $\calL(H,X)$, where $X$ is a Banach space which has the UMD property.

The history of stochastic integration in Banach spaces has an interesting history which goes back 40 years. Important contributions have been given in
several papers and monographs (see \cite{BD, Brz1, BN, HP76, NW1, Nh, Ondr04, RS} and references therein). We refer to \cite{NVW15} for a detailed discussion on the history of the subject.
Influenced by results from Garling \cite{Ga1} and McConell \cite{MC}, a stochastic integration theory for $\Phi:[0,T]\times\Omega\to \calL(H,X)$ with integrator $W_H$ was developed in \cite{NVW} by van Neerven and Weis and the first author. The theory is naturally restricted to the class of Banach spaces $X$ with the UMD property (e.g.\ $L^q$ with $q\in (1, \infty)$). The main result is that $\Phi$ is stochastically integrable with respect to an $H$-cylindrical Brownian motion if and only if $\Phi\in \gamma(0,T;H,X)$ a.s. Here $\gamma(0,T;H,X)$ is a generalized space of square functions as introduced in the influential paper \cite{KW} (see Subsection \ref{subsec:gamma} for the definition). Furthermore, it was shown that for any $p\in (0,\infty)$ the following two-sided estimate holds
\[c \|\Phi\|_{L^p(\Omega;\gamma(0,T;H,X))} \leq \Big\|\sup_{t\in [0,T]}\Big\|\int_0^t \Phi \ud W_H\Big\|_X \Big\|_{L^p(\Omega)} \leq C \|\Phi\|_{L^p(\Omega;\gamma(0,T;H,X))},\]
which can be seen as an analogue of the classical Burkholder-Davis-Gundy inequalities. This estimate is strong enough to obtain sharp regularity results for stochastic PDEs (see \cite{NVWSMR}) which can be used for instance to extend some of the sharp estimates of Krylov \cite{Kry} to an $L^p(L^q)$-setting.

The aim of our paper is to build a calculus for the newly introduced class of cylindrical continuous local martingales which admit a quadratic variation. Moreover, if $M$ is a cylindrical continuous local martingale on a Hilbert space $H$, we show that there is a natural analogue of the stochastic integration theory of \cite{NVW} where the integrator $W_H$ is replaced by $M$. At first sight it is quite surprising that the $\gamma$-norms again play a fundamental role in this theory although the cylindrical martingales do not have a Gaussian distribution. Our theory is even new in the Hilbert space setting. The proof of the main result Theorem \ref{main} is based on a sophisticated combination of time change arguments and Brownian representation results for martingales with absolutely continuous quadratic variations from \cite[Theorem 2]{Ond}. Theorem \ref{main} gives that $\Phi$ is stochastically integrable with respect to $M$ if and only if $\Phi Q_M^{1/2}\in \gamma(0,T,[[M]];H,X)$ a.s.\ Here $Q_M$ is a predictable operator with norm $\|Q_M\| = 1$. Moreover, two-sided Burkholder--Davis--Gundy inequalities hold again. We will derive several consequence of the integration theory among which a version It\^o's formula.

\medskip

We finish this introduction with some related contributions to the theory of stochastic integration in an infinite dimensional setting. In \cite{MP} M{\'e}tivier and Pellaumail developed an integration theory for cylindrical martingales which are not necessarily continuous and two-sided estimates are derived in a Hilbert space setting. A theory for SDEs and SPDEs with semimartingales in Hilbert spaces is developed by Gy{\"o}ngy and Krylov in  \cite{GK1,GK2,G3}. The integration theory with respect to cylindrical L\'evy processes in Hilbert cases and its application to SPDEs is developed in the monograph by Peszat and Zabczyk \cite{PZ}. Some extensions in the Banach space setting have been considered and we refer to \cite{App07,ApRi,ManRud,Ros,RieGaa} and references therein. In \cite{Dirk14} Dirksen has found an analogue of the two-sided Burkholder--Davis--Gundy inequalities in the case the integrator is a Poisson process and $X = L^q$ (also see \cite{DMN,Marinelli13,MarRo}). By the results of our paper and the previously mentioned results, it is a natural question what structure of a cylindrical  \emph{noncontinuous} local martingales is required to build a theory which allows to have two-sided estimates for stochastic integrals.

\medskip

Outline:
\begin{itemize}
\item In Section \ref{sec:prel} some preliminaries are discussed.
\item In Section \ref{sec:cyl} the quadratic variation of a cylindrical continuous local martingale is introduced.
\item In Section \ref{sec:char} the stochastic integrable $\Phi$ are characterized.
\item In Section \ref{sec:SE} the results are applied to study a class of stochastic evolution equations.
\end{itemize}

 \emph{Acknowledgment} – The authors would like to thank Jan van Neerven for helpful comments. We are indebted to the anonymous referee for his/her valuable comments to improve the paper.

\section{Preliminaries\label{sec:prel}}

Let $F:\mathbb R_+ \to \mathbb R$ be a right-continuous function of bounded variation (e.g. nondecreasing c\'adl\'ag). Then we define $\mu_F$ on subintervals of  $\mathbb R_+$ as follows:
\begin{align*}
 \mu_F({(a,b]}) &= F(b)-F(a),\;\;\; 0\leq a< b<\infty,\\
 \mu_F(\{0\}) &=0.
\end{align*}
By the Carath\'eodory extension theorem, $\mu_F$ extends to a measure, which we will call  \emph{the Lebesgue-Stieltjes measure} associated to $F$. Conversely, if $\mu$ is a measure such that $\mu({(a,b]}) = F(b)-F(a)$ for a given function $F$, then $F$ has to be right-continuous.

Let $(S,\Sigma)$ be separable measurable space and let $(\Omega, \mathbb F, \mathbb P)$ be a probability space.
A mapping $\nu:\Sigma\times \Omega\to [0,\infty]$ will be called a  \emph{random measure} if for all $A\in \Sigma$, $\omega\mapsto \nu(A,\omega)$ is measurable and for almost all $\omega\in \Omega$, $\nu(\cdot, \omega)$ is a measure on $(S,\Sigma)$ and $(S, \Sigma, \nu(\cdot, \omega))$ is separable (i.e.\ such that the corresponding $L^2$-space is separable).

\begin{example}\label{ex:randommeasL}
Let $F:\mathbb R_+\times \Omega \to \mathbb R$ be a c\'adl\'ag process which is of bounded variation a.s.\ Then one can define a random measure $\mu_F:\mathcal B(\mathbb R_+)\times \Omega \to [0,\infty]$ such that $\mu_F(A,\omega) = \mu_{F(\omega)}(A)$.
\end{example}

Random measures arise naturally when working with continuous local martingales $M$. Indeed, for almost all $\omega \in \Omega$, the quadratic variation process $[M](\cdot, \omega)$ is continuous and increasing (see \cite{Kal,MP,Prot}), so as in Example \ref{ex:randommeasL} we can associate a Lebesgue-Stieltjes measure with it. Often we will denote this measure again by $[M](\cdot, \omega)$ for convenience.

Let $(S,\Sigma,\mu)$ be a measure space. Let $X$ and $Y$ be Banach spaces. An operator valued function $f: S \to \mathcal L(X,Y)$ is called  \emph{$X$-strongly measurable} if for all $x\in X$, the function $s\mapsto f(s)x$ is strongly measurable. It is called  \emph{scalarly measurable} if for all $y^*\in Y^*$, $f(s)^*y^*$ is strongly measurable. If $Y$ is separable both measurability notions coincide.

Often we will use the notation $A \lesssim_Q B$ to indicate that there exists a constant $C$ which depends on the parameter(s) $Q$ such that $A\leq C B$.

\subsection{Positive operators and self-adjoint operators on Banach spaces}
Let $X$, $X$ be Banach spaces. We will denote the space of all bilinear operators from
$X\times Y$ to $\mathbb R$ as $\mathbf B(X,Y)$.
Notice, that for each continuous $b \in \mathbf B(X,Y)$ there exists an operator
$B \in \mathcal L(X,Y^*)$ such that
\begin{equation}\label{bil}
 b(x,y) = \langle Bx,y\rangle,\;\;\;x\in X, y\in Y.
\end{equation}

We will call an operator $B\colon X \to X^*$  \textit{self-adjoint},
if for each $x, y \in X$
\[
\langle Bx,y\rangle = \langle Bx,y\rangle.
\]
A self-adjoint operator $B$ is called  \textit{positive}, if $\langle Bx,x\rangle\geq 0$
for all $x \in X$.

 \begin{remark}\label{posoper}
Notice, that if $B:X\to X^*$ is a positive self-adjoint operator, then the Cauchy-Schwartz inequality holds for the bilinear form $(x, y) :=\lb Bx, y\rb$ (see \cite[4.2]{Rud}). From the latter one deduces that
\begin{equation}\label{eq:normB}
\|B\| = \sup_{x\in X, \|x\|=1}|\langle Bx,x\rangle|
\end{equation}
Moreover, if $X$ is a Hilbert space, then \eqref{eq:normB} holds for any self-adjoint operator.
\end{remark}

Further we will need the following lemma proved in \cite[Proposition~32]{Ond}:
\begin{lemma}\label{funcselfadop}
 Let $(S,\Sigma)$ be a measurable space, $H$ be a separable Hilbert space, $f: S \to \mathcal L(H)$
 be a scalarly measurable self-adjoint operator-valued function. Let $F : \mathbb R \to \mathbb R$
 be locally bounded measurable. Then $F(f): S \to \mathcal L(H)$ is a scalarly measurable self-adjoint operator-valued function.
\end{lemma}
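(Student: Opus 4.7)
The plan is to combine the bounded Borel functional calculus for self-adjoint operators with a monotone class argument, after first reducing to the case where $\|f(s)\|$ is uniformly bounded. Since $H$ is separable, Pettis' theorem makes scalar measurability of $f$ equivalent to strong measurability of $s\mapsto f(s)x$ for every $x\in H$. Picking a countable dense subset $D$ of the unit ball of $H$, the identity
\[
\|f(s)\| = \sup_{x,y\in D}|\langle f(s)x, y\rangle|
\]
shows $s\mapsto \|f(s)\|$ is measurable, so $S_n := \{s : \|f(s)\|\leq n\}$ is a measurable exhaustion of $S$. Working on each $S_n$ the spectrum of $f(s)$ sits inside $[-n,n]$, hence only the values of $F$ on this compact interval matter, and we may assume $F$ is a bounded Borel function on $[-n,n]$.

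I would first prove the statement for polynomials by induction. The composition $s\mapsto f(s)^k x$ stays strongly measurable because $f$ sends strongly measurable $H$-valued sections to strongly measurable $H$-valued sections (using separability again); then
\[
\langle f(s)^{k+1}x,y\rangle = \langle f(s)^k x, f(s)y\rangle
\]
is the inner product of two strongly measurable $H$-valued functions, hence measurable. Linearity extends this to all polynomials $p$, and $p(f(s))$ is clearly self-adjoint. Next, for continuous $F$ on $[-n,n]$, Weierstrass approximation and the estimate $\|p(f(s))-F(f(s))\|_{\mathcal L(H)}\leq \|p-F\|_{\infty,[-n,n]}$ provided by the continuous functional calculus give pointwise convergence of $\langle p_k(f(s))x,y\rangle$ to $\langle F(f(s))x,y\rangle$, preserving measurability.

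Finally, I would apply the monotone class theorem to the class
\[
\mathcal H := \bigl\{F\in \mathcal B_b([-n,n]) : s\mapsto \langle F(f(s))x,y\rangle \text{ is measurable for all } x,y\in H\bigr\}.
\]
This class is a vector space containing the continuous functions, and via the scalar spectral measures $\mu^{x,y}_s$ associated with $f(s)$ one has $\langle F(f(s))x,y\rangle = \int F\,d\mu^{x,y}_s$, so dominated convergence shows $\mathcal H$ is closed under bounded pointwise limits; hence $\mathcal H = \mathcal B_b([-n,n])$. Self-adjointness of $F(f(s))$ is preserved throughout because the spectral functional calculus sends real-valued bounded Borel functions to self-adjoint operators. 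Piecing the $S_n$ together gives the result on all of $S$.

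The main obstacle I expect is the passage from continuous to bounded Borel $F$: it depends genuinely on the spectral theorem, both to represent $\langle F(f(s))x,y\rangle$ as an integral against a finite scalar measure (so that dominated convergence applies pointwise in $s$) and to ensure that self-adjointness survives the passage to merely Borel symbols. The polynomial and continuous cases are essentially bookkeeping once scalar measurability has been upgraded to strong measurability of $s\mapsto f(s)x$.
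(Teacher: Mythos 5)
Your proof is correct, but note that there is no in-paper argument to compare it with: the paper states this lemma as a quotation of \cite[Proposition~32]{Ond} and gives no proof, so any self-contained argument is by construction a different route. Your route is the standard functional-calculus one, and every step goes through: the reduction to the uniformly bounded case via the measurable exhaustion $S_n=\{s:\|f(s)\|\le n\}$ (using $\|f(s)\|=\sup_{x,y\in D}|\langle f(s)x,y\rangle|$ over a countable dense $D$), the polynomial case via the fact that $f$ maps strongly measurable $H$-valued sections to strongly measurable sections, the continuous case via Weierstrass and the sup-norm bound of the continuous functional calculus on $\sigma(f(s))\subseteq[-n,n]$, and the passage to bounded Borel $F$ via the monotone class theorem together with dominated convergence for the scalar spectral measures $\mu^{x,y}_s$, whose total variation is at most $\|x\|\,\|y\|$. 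What the citation buys the paper is brevity; what your argument buys is self-containedness and an explicit picture of why local boundedness of $F$ is exactly what is needed (it makes $F$ bounded Borel on each spectral window $[-n,n]$).

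Two small points are worth making explicit if you write this up. First, the Hilbert spaces in this paper are real, so $F(f(s))$ should be defined by complexifying $H$ and $f(s)$ and observing that, $F$ being real-valued, the resulting operator leaves the real subspace invariant and is self-adjoint there; this is routine but should be said. Second, your monotone class argument delivers measurability of $s\mapsto\langle F(f(s))x,y\rangle$ for all $x,y\in H$; to match the paper's definition of scalar measurability you should invoke Pettis' measurability theorem (separability of $H$) once more to upgrade this to strong measurability of $s\mapsto F(f(s))^*y=F(f(s))y$. Neither point is a gap; the argument is sound as proposed.
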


The next lemma allows us to define a square root of a positive operator
in case of a~reflexive separable Banach space:
\begin{lemma}\label{sqroot}
Let $X$ be a reflexive separable Banach space,
$B\colon X \to X^*$ be a positive operator. Then
there exists a separable Hilbert space $H$ and an operator
$B^{1/2} \colon X \to H$ such that $B = B^{1/2*} B^{1/2}$.
\end{lemma}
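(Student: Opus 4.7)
The plan is to construct $H$ as the Hilbert space completion of the quotient of $X$ by the null space of the quadratic form $x \mapsto \langle Bx, x\rangle$. The factorization $B = (B^{1/2})^* B^{1/2}$ then drops out of the definition of the adjoint.

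First, I would put the symmetric bilinear form $(x,y)_B := \langle Bx, y\rangle$ on $X \times X$. Since $B$ is positive and self-adjoint, this is a positive semidefinite symmetric form, and by Remark \ref{posoper} the Cauchy--Schwarz inequality $|(x,y)_B|^2 \leq (x,x)_B (y,y)_B$ holds. Define
\[
N := \{x \in X : (x,x)_B = 0\}.
\]
By Cauchy--Schwarz, $N$ coincides with $\{x \in X : (x,y)_B = 0 \text{ for all } y \in X\}$, so $N$ is a (closed) linear subspace. Consequently the form $\langle [x],[y]\rangle := (x,y)_B$ is well-defined on $X/N$ (if $x'-x, y'-y \in N$, Cauchy--Schwarz forces $(x',y')_B = (x,y)_B$), and by construction it is a genuine inner product on $X/N$.

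Let $H$ denote the Hilbert space completion of $(X/N, \langle\cdot,\cdot\rangle)$. Since $X$ is separable, so are $X/N$ and its completion $H$. Define $B^{1/2}\colon X \to H$ as the composition of the quotient map $\pi\colon X \to X/N$ with the canonical embedding into $H$; this is bounded because
\[
\|B^{1/2}x\|_H^2 = (x,x)_B = \langle Bx,x\rangle \leq \|B\|\,\|x\|^2.
\]
Taking the Banach-space adjoint gives $(B^{1/2})^*\colon H^* \to X^*$, which we identify with an operator $H \to X^*$ via the Riesz isomorphism $H^* \cong H$. For arbitrary $x,y \in X$,
\[
\langle (B^{1/2})^* B^{1/2} x, y\rangle_{X^*,X} = \langle B^{1/2}x, B^{1/2}y\rangle_H = (x,y)_B = \langle Bx, y\rangle,
\]
so $(B^{1/2})^* B^{1/2} = B$ as operators $X \to X^*$.

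The only place one has to be careful is checking that the inner product on $X/N$ is well defined and that the Hilbert-space adjoint $(B^{1/2})^*$ genuinely maps into the original dual $X^*$; both points are immediate once Cauchy--Schwarz for the semi-inner product $(\cdot,\cdot)_B$ (Remark \ref{posoper}) is in hand, so I do not expect any real obstacle. Reflexivity of $X$ is not used in this construction; it presumably enters only later when $B^{1/2}$ is applied to $X^*$-valued objects via $(B^{1/2})^{**}$.
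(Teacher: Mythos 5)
Your proof is correct, but it takes a genuinely different route from the paper's. You use the standard completion (GNS-type) construction: quotient $X$ by the null space of the semi-inner product $(x,y)\mapsto\langle Bx,y\rangle$ and complete; the bound $\|B^{1/2}x\|_H^2\le\|B\|\,\|x\|^2$ and the factorization $B=B^{1/2*}B^{1/2}$ then fall out of the definitions, and, as you note, reflexivity is never used (separability of $X$ alone gives separability of $H$). The paper instead fixes once and for all a separable Hilbert space $H$ with a continuous dense embedding $j\colon X^*\hookrightarrow H$ (Kuo's construction), uses reflexivity to regard $j^*\colon H\hookrightarrow X^{**}=X$ as a dense embedding with trivial kernel, and sets $B^{1/2}=\sqrt{jBj^*}\,j^{*-1}$ via the operator square root on $H$. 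What that heavier construction buys is that $H$ and $j$ do not depend on $B$: if $B=f(s)$ varies scalarly measurably in a parameter, then $\sqrt{jfj^*}$ is scalarly measurable by Lemma~\ref{funcselfadop}, so $f^{1/2}$ is a scalarly measurable function with values in the \emph{fixed} space $\mathcal L(X,H)$ (Remark~\ref{sqrootmes}); this parameter-uniform, measurable square root is exactly what is used later for $Q_M^{1/2}$ in Proposition~\ref{Th5}. In your construction the Hilbert space depends on $B$, so while it proves the lemma as stated (even under weaker hypotheses), it would not directly provide the measurable selection of square roots that the paper needs downstream.
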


\begin{proof}
 Since $X$ is reflexive separable, $X^*$ is also separable.
 We will use the space $H$, constructed in \cite[p.154]{Kuo} (see also \cite[p.15]{BN} and \cite[Part~3.3]{Ond1}).
 Briefly speaking,
 one can find such a separable Hilbert space $H$ that there exists
 a continuous dense embedding $j \colon X^* \hookrightarrow H$. Because
 of the reflexivity, $j^*: H \hookrightarrow X^{**} = X$ is a continuous
 dense embedding and as an embedding it has a trivial kernel.

 Consider the operator $jBj^*:H \to H$. Obviously this operator is positive,
 so one can define a positive square root $\sqrt{jBj^*}:H \to H$  (see \cite[Chapter 6.6]{Fri}).
 Now define
 \[
  B^{1/2} = \sqrt{jBj^*} j^{*-1} \colon \text{ran }j^* \to H.
 \]
 This operator is bounded, because for each $x \in  \text{ran }j^*$
 \begin{align*}
   \|\sqrt{jBj^*} j^{*-1} x\|_H^2 &= \langle \sqrt{jBj^*} j^{*-1} x,\sqrt{jBj^*} j^{*-1} x\rangle
 =\langle jBj^* j^{*-1} x, j^{*-1} x\rangle\\
 &= \langle B x, j^*j^{*-1} x\rangle = \langle B x,  x\rangle\leq \|B\|\|x\|^2,
 \end{align*}
 therefore it can be extended to the whole $X$. Moreover, for all $x,y \in \text{ran }j^*$
 \[
 \langle B^{1/2*} B^{1/2} x,y\rangle = \langle   B^{1/2} x,B^{1/2}y\rangle
 =\langle   \sqrt{jBj^*} j^{*-1} x,\sqrt{jBj^*} j^{*-1}y\rangle = \langle B  x,y\rangle,
 \]
Thus $B^{1/2*} B^{1/2} = B$ on $\text{ran }j^*$, and hence on $X$ by density and continuity.
\end{proof}

\begin{remark}\label{sqrootmes}
 The square root obtained in such a way is not determined uniquely, since the operator
 $j$ can be defined in different ways. The following measurability property holds:
 if there exists a measurable space $(S,\Sigma)$ and a scalarly measurable
 function $f : S \to \mathcal L(X,X^*)$ with values in positive operators, then defined
 in such a polysemantic way $f^{1/2}$ will be also scalarly measurable. Indeed,
 since $f$ is scalarly measurable, then $jfj^*$ and, consequently by Lemma \ref{funcselfadop}
 and the fact that $jfj^*$ is positive operator-valued, the square root $\sqrt{jfj^*}$
 is scalarly measurable. So, $f^{1/2} = \sqrt{jfj^*}j^{*-1}x$ is measurable
 for all $x \in \text{ran }j^*$, and because of the boundedness of an operator $\sqrt{jfj^*}j^{*-1}$
 and the density of $\text{ran }j^*$ in $X$ one has that $f^{1/2}x$ is measurable for all $x\in X$.
 \end{remark}

\subsection{Supremum of measures}

In the main text we will often need explicit descriptions of the supremum of measures. The results are elementary, but we could not find suitable references in the literature. All positive measures are assumed to take values in $[0,\infty]$ (see \cite[Definition~1.6.1]{Bog}). In other words, a positive measure of a set could be infinite.

\begin{lemma}\label{maxmes}
Let $(\mu_{\alpha})_{\alpha\in \Lambda}$
be positive measures on a measurable space $(S,\Sigma)$.
Then there exists the smallest measure $\check{\mu}$ s.t.\ $\check{\mu} \geq \mu_{\alpha}$ $\forall \alpha\in \Lambda$. Moreover,
\begin{equation}\label{eq:supofmeas}
 \check{\mu}(A)= \sup\sum_{n=1}^N \sup_{\alpha} \mu_{\alpha} (A_n),\;\;\; A \in \Sigma,
\end{equation}
where the first supremum is taken over all the partitions $A=\bigcup_{n=1}^N A_n$ of $A$.
\end{lemma}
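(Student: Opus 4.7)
The plan is to take the right-hand side of \eqref{eq:supofmeas} as the \emph{definition} of $\check{\mu}$ and then verify, in turn: (i) $\check{\mu}$ is a measure on $(S,\Sigma)$; (ii) $\check{\mu} \geq \mu_{\alpha}$ for every $\alpha\in\Lambda$; and (iii) $\check{\mu}$ is dominated by any measure $\nu$ satisfying $\nu\geq \mu_{\alpha}$ for all $\alpha$. Together (ii) and (iii) give both existence and minimality in one stroke.

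Items (ii) and (iii) are essentially formal. For (ii), the trivial partition $\{A\}$ already gives $\check{\mu}(A) \geq \sup_{\alpha} \mu_{\alpha}(A) \geq \mu_{\alpha}(A)$. For (iii), fix a competitor $\nu\geq \mu_{\alpha}$ and any finite measurable partition $A=\bigsqcup_{n=1}^N A_n$; additivity of $\nu$ yields
\[
\nu(A) = \sum_{n=1}^N \nu(A_n) \geq \sum_{n=1}^N \sup_{\alpha} \mu_{\alpha}(A_n),
\]
and taking the supremum over partitions produces $\nu(A)\geq \check{\mu}(A)$.

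The real work is (i). Monotonicity and $\check{\mu}(\emptyset)=0$ are immediate from the formula, so only $\sigma$-additivity matters. Write $A=\bigsqcup_{k\geq 1} B_k$. For the ``$\leq$'' direction I refine any finite partition $(A_n)_{n=1}^N$ of $A$ into the doubly-indexed partition $(A_n\cap B_k)_{n,k}$; since each $\mu_{\alpha}$ is itself countably additive, $\sup_{\alpha}\mu_{\alpha}(A_n)\leq \sum_k \sup_{\alpha}\mu_{\alpha}(A_n\cap B_k)$, and swapping the order of summation bounds the original expression by $\sum_k \check{\mu}(B_k)$. For ``$\geq$'', fix $\varepsilon>0$, choose for each $k$ with $\check{\mu}(B_k)<\infty$ a finite partition of $B_k$ whose associated sum exceeds $\check{\mu}(B_k)-\varepsilon/2^k$, and concatenate the first $K$ such partitions with the remainder $A\setminus \bigsqcup_{k\leq K}B_k$ to form a finite partition of $A$; sending $K\to\infty$ and then $\varepsilon\to 0$ gives $\check{\mu}(A)\geq \sum_k \check{\mu}(B_k)$.

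The only point requiring care is bookkeeping around $\infty$: if some $\check{\mu}(B_k)=\infty$, I would replace the $\varepsilon/2^k$-almost-optimal partition for that single $k$ by one whose sum exceeds an arbitrary $R$; the same glueing argument then forces $\check{\mu}(A)\geq R$ for every $R$, so both sides equal $\infty$ and the identity persists in $[0,\infty]$. Beyond this, everything reduces to elementary manipulation of suprema and countable sums, so no genuine obstacle is expected.
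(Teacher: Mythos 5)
Your proposal is correct and follows essentially the same route as the paper: take the right-hand side of \eqref{eq:supofmeas} as the definition, verify it is a measure via the refinement argument for subadditivity and the glueing of $\varepsilon$-almost-optimal partitions for the reverse inequality, and then obtain minimality by splitting any dominating measure along a partition. The only cosmetic difference is that you prove countable additivity directly (handling the infinite case explicitly), whereas the paper combines finite additivity with $\sigma$-subadditivity.
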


From now on we will write $\sup_{\alpha\in \Lambda} \mu_{\alpha} = \check{\mu}$, where $\check{\mu}$ is as in the above lemma. A similarly formula as \eqref{eq:supofmeas} can be found in \cite[Exercise~213Y(d)]{Frem2} for finitely many measures.

\begin{proof}
The existence of the measure $\check{\mu}$ is well-known (see e.g. \cite[Exercise~2.2]{Kal}  \cite[Exercise~213Y(e)]{Frem2}), but it also follows from the proof below. To prove \eqref{eq:supofmeas} let $\nu$ denote the right-hand side of \eqref{eq:supofmeas}.

We first show that $\nu$ is a measure. It suffices to show that $\nu$ is additive and $\sigma$-subadditive.
To prove the $\sigma$-subadditivity, let $(B_k)_{k\geq 1}$ be sets in $\Sigma$ and let $B = \bigcup_{k\geq 1} B_k$. Let
$A_1, \ldots, A_N\in \Sigma$ be disjoint and such that $B = \bigcup_{n=1}^N A_n$. Let $B_{nk} = A_n\cap B_k$. Then by the $\sigma$-subadditive of the $\mu_{\alpha}$, we find
\begin{align*}
\sum_{n=1}^N \sup_{\alpha} \mu_{\alpha} (A_n) & =\sum_{n=1}^N \sup_{\alpha} \sum_{k\geq 1} \mu_{\alpha} (B_{nk})
\leq \sum_{k\geq 1} \sum_{n=1}^N  \sup_{\alpha}  \mu_{\alpha} (B_{nk}) \leq \sum_{k\geq 1}  \nu(B_k).
\end{align*}
Taking the supremum over all $A_n$, we find $\nu(B) \leq \sum_{k\geq1} \nu(B_k)$.

To prove the additivity let $B,C\in \Sigma$ be disjoint. By the previous step it remains to show that  $\nu(B) + \nu(C)\leq \nu(B\cup C)$. Fix $\varepsilon>0$ and choose $A_1, \ldots, A_N\in \Sigma$ disjoint, $\alpha_1, \ldots, \alpha_N\in \Lambda$ and $1\leq M<N$ such that $\bigcup_{n=1}^M A_n = B$, $\bigcup_{n=M+1}^N A_n = C$ and
\[\nu(B)\leq \sum_{n=1}^M \mu_{\alpha_n}(A_n) +\varepsilon \ \ \ \text{and} \ \ \nu(C)\leq \sum_{n=M+1}^N \mu_{\alpha_n}(A_n)+\varepsilon .\]
Then we find that
\[\nu(B) + \nu(C) \leq \sum_{n=1}^N \mu_{\alpha_n}(A_n) +2\varepsilon\leq \nu(B\cup C) + 2\varepsilon,\]
and the additivity follows.

Finally, we check that $\nu = \check{\mu}$. In order to check this let $\tilde{\nu}$ be a measure such that $\mu_{\alpha}\leq\tilde{\nu}$ for all $\alpha$. Then for each $A\in \Sigma$ we find
\[\nu(A)  = \sup\sum_{n=1}^N \sup_{\alpha} \mu_{\alpha} (A_n) \leq \sup\sum_{n=1}^N \tilde{\nu} (A_n) = \tilde{\nu}(A)\]
and hence $\nu\leq \tilde{\nu}$. Thus we may conclude that $\nu = \check{\mu}$.
\end{proof}

\begin{remark}\label{rem:mu}
If the conditions of Lemma \ref{maxmes} are satisfied and there exists a measure $\mu$ such that $\mu_{\alpha}\leq \mu$, then $\check{\mu}\leq \mu$. In particular if $\mu$ is finite, then $\check{\mu}$ is finite as well.
\end{remark}

\begin{lemma}\label{lem:maxmeasuref}
Let $(S, \Sigma,\nu)$ be a measure space. Let $F$ be a set of measurable functions from $S$ into $[0,\infty]$. Let $\{f_j\}_{j=1}^\infty$ be a sequence in $F$. Let $\overline{f} = \sup_{j\geq 1} f_j$ and assume $\sup_{f\in F}f = \overline{f}$.
For each $f\in F$ let $\mu_f$ be the measure given by
\[\mu_f(A) = \int_A f \ud \nu.\]
Let $\check{\mu} = \sup_{f\in F} \mu_f$. Then $\check{\mu} = \sup_{j\geq 1} \mu_{f_j}$ and
\begin{equation}\label{eq:identityBorel}
\check{\mu}(A) = \int_A \overline{f} \ud \nu, \ \ A\in \Sigma.
\end{equation}
\end{lemma}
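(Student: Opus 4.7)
The plan is to introduce the auxiliary measure $\tilde{\mu}(A) := \int_A \overline{f}\, \ud\nu$, show that it dominates every $\mu_f$ with $f\in F$ as well as every $\mu_{f_j}$, and then use the explicit formula \eqref{eq:supofmeas} for the supremum of measures (from Lemma \ref{maxmes}) together with monotone convergence to obtain the reverse inequality. The chain of inequalities
\[
\sup_{j\geq 1}\mu_{f_j} \;\leq\; \check{\mu} \;\leq\; \tilde{\mu} \;\leq\; \sup_{j\geq 1}\mu_{f_j}
\]
will then force all four quantities to coincide, giving both assertions simultaneously.

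For the easy direction, I would first note that $\overline{f}$ is measurable as a countable supremum, so $\tilde{\mu}$ is a well-defined measure on $\Sigma$. Since the hypothesis $\sup_{f\in F} f = \overline{f}$ holds pointwise, we have $f \leq \overline{f}$ for every $f\in F$, so $\mu_f \leq \tilde{\mu}$; by Remark \ref{rem:mu} this gives $\check{\mu}\leq \tilde{\mu}$, and in particular $\sup_{j\geq 1}\mu_{f_j}\leq \tilde{\mu}$.

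For the crucial lower bound, I would set $g_k := \max(f_1,\ldots,f_k)$, which increases to $\overline{f}$ pointwise. Given $A\in \Sigma$ and $k\geq 1$, I disjointify by defining
\[
A_1^k := A\cap\{g_k = f_1\}, \qquad A_j^k := A\cap\{g_k = f_j\}\setminus\bigcup_{i<j} A_i^k, \quad 2\leq j\leq k,
\]
so that $A = \bigsqcup_{j=1}^k A_j^k$ and $g_k = f_j$ on $A_j^k$. Then
\[
\sum_{j=1}^k \mu_{f_j}(A_j^k) \;=\; \sum_{j=1}^k \int_{A_j^k} f_j\, \ud\nu \;=\; \int_A g_k\, \ud\nu,
\]
and applying formula \eqref{eq:supofmeas} to the countable family $\{\mu_{f_j}\}_{j\geq 1}$ yields $\bigl(\sup_{j\geq 1}\mu_{f_j}\bigr)(A)\geq \int_A g_k\,\ud\nu$. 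Monotone convergence sends the right-hand side to $\int_A \overline{f}\,\ud\nu = \tilde{\mu}(A)$, closing the loop.

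The main obstacle is the disjointification step: one has to exhibit a concrete partition on which the sum of integrals equals $\int_A g_k\,\ud\nu$ in order to feed formula \eqref{eq:supofmeas}. Once that partition is in hand, the rest is bookkeeping — monotone convergence handles the limit $k\to\infty$, and no separability or $\sigma$-finiteness of $\nu$ is needed because the values $[0,\infty]$ are admissible throughout.
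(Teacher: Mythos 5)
Your proof is correct, and while its skeleton (an easy domination argument for one direction, and a partition argument showing the countable subfamily already exhausts $\int_A \overline{f}\,\ud\nu$ for the other) parallels the paper, the execution of the hard inequality is genuinely different. The paper fixes $\varepsilon>0$ and $n\in\mathbb N$, partitions $A$ into the countably many disjointified sets where $f_j>(1-\varepsilon)(\overline{f}\wedge n)$, uses countable additivity of $\check{\mu}$ together with the arbitrariness of $\varepsilon$ and $n$, and only afterwards deduces $\check{\mu}=\sup_{j\geq1}\mu_{f_j}$ by re-applying the integral identity with $F$ replaced by $\{f_j:j\geq1\}$. You instead work with the finite maxima $g_k=\max(f_1,\ldots,f_k)$, partition $A$ into the finitely many sets on which $g_k$ is attained by a given $f_j$, feed this finite partition into \eqref{eq:supofmeas} (finite additivity of $\sup_{j\geq1}\mu_{f_j}$ together with $\sup_{j}\mu_{f_j}\geq \mu_{f_j}$ would also suffice), and let monotone convergence perform the limit in $k$; the chain $\sup_{j\geq1}\mu_{f_j}\leq\check{\mu}\leq\tilde{\mu}\leq\sup_{j\geq1}\mu_{f_j}$ then yields both assertions simultaneously. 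Your route avoids the $\varepsilon$- and truncation bookkeeping and needs only finite partitions and the measure property of $\sup_{j\geq1}\mu_{f_j}$, at the small cost of checking measurability of the level sets $\{g_k=f_j\}$ (immediate, since $g_k\geq f_j$ gives $\{g_k=f_j\}=\{g_k\leq f_j\}$); the paper's route avoids introducing $g_k$ but must invoke countable additivity of $\check{\mu}$ and a two-parameter limiting argument. Both are complete proofs of the lemma as stated.
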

\begin{proof}
Since $\overline{f}$ is the supremum of countably many measurable functions, it is measurable.
Since $A\mapsto \int_A \overline{f} \ud \nu$ defines a measure which dominates all measures $\mu_{f}$, the estimate $``\leq"$ in \eqref{eq:identityBorel} follows.

To prove the converse estimate, let $A\in \Sigma$, $\varepsilon>0$ and $n\in \mathbb N$.
Let $A_1 = \{s\in A: f_1(s)>(1-\varepsilon){(\overline{f}(s)\wedge n)}\}$ and let
\[A_{j+1} = \{s\in A: f_{j+1}(s)>(1-\varepsilon){(\overline{f}(s)\wedge n)}\}\setminus \bigcup_{k=1}^j A_k, \ \ j\geq 1.\]
Then the $(A_j)_{j\geq 1}$ are pairwise disjoint and $\bigcup_{j\geq 1} A_j = A$, and therefore,
\begin{align*}
\check{\mu}(A) & = \sum_{j\geq 1} \check{\mu}(A_j) \geq \sum_{j\geq 1} \mu_{f_j}(A_j) = \sum_{j\geq 1} \int_{A_j} f_j \ud \nu \\ & \geq (1-\varepsilon) \sum_{j\geq 1} \int_{A_j} {\overline{f}(s)\wedge n} \ud \nu = (1-\varepsilon) \int_{A} {\overline{f}(s)\wedge n} \ud \nu.
\end{align*}
Since $\varepsilon>0$ and $n\in \mathbb N$ were arbitrary the required estimate follows. The identity $\check{\mu} = \sup_{j\geq 1} \mu_{f_j}$ follows if we replace $F$ by $\{f_j: j\geq 1\}$ and apply \eqref{eq:identityBorel} in this situation.
\end{proof}

\begin{lemma}\label{lemma:limmaxmes}
Let $(\mu_{n})_{n\geq 1}$ be a sequence of measures on a measurable space $(S,\Sigma)$.
Let $\check{\mu} = \sup_{n\geq 1} \mu_n$. Then for each $A\in \Sigma$,
\[
 (\sup_{1\leq n\leq N}\mu_n)(A) \to \check{\mu}(A),\;\;\;\text{as} \ \ N\to \infty.
\]
\end{lemma}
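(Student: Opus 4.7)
Write $\check{\mu}_N := \sup_{1\leq n\leq N}\mu_n$, understood in the sense of Lemma \ref{maxmes}. The plan is to exploit the explicit formula \eqref{eq:supofmeas} from Lemma \ref{maxmes} to approximate $\check{\mu}(A)$ from below by $\check{\mu}_N(A)$ for $N$ sufficiently large. First I would record two monotonicity facts that will be used throughout: $(\check{\mu}_N)_{N\geq 1}$ is an increasing sequence of measures (taking suprema over larger index sets can only enlarge each of the quantities $\sup_\alpha \mu_\alpha(A_n)$ in \eqref{eq:supofmeas}), and $\check{\mu}_N\leq \check{\mu}$ for every $N$ by Remark \ref{rem:mu}. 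In particular the limit $L(A):=\lim_{N\to\infty}\check{\mu}_N(A)$ exists in $[0,\infty]$ and satisfies $L(A)\leq \check{\mu}(A)$, so only the reverse inequality remains.

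Fix $A\in\Sigma$ and treat the finite case first, assuming $\check{\mu}(A)<\infty$. Given $\varepsilon>0$, Lemma \ref{maxmes} provides a finite measurable partition $A=\bigcup_{k=1}^K A_k$ with
\[
\sum_{k=1}^K \sup_{n\geq 1}\mu_n(A_k) > \check{\mu}(A)-\varepsilon.
\]
Since $\sup_n \mu_n(A_k)\leq \check{\mu}(A_k)\leq \check{\mu}(A)<\infty$, for each $k$ I can choose an index $n_k$ with $\mu_{n_k}(A_k) > \sup_n \mu_n(A_k) - \varepsilon/K$. Set $N_0 := \max_{k} n_k$. Then for every $N\geq N_0$, using additivity of $\check{\mu}_N$ on the partition and the trivial bound $\check{\mu}_N(A_k)\geq \mu_{n_k}(A_k)$,
\[
\check{\mu}_N(A) = \sum_{k=1}^K \check{\mu}_N(A_k) \geq \sum_{k=1}^K \mu_{n_k}(A_k) > \sum_{k=1}^K \sup_{n\geq 1}\mu_n(A_k) - \varepsilon > \check{\mu}(A) - 2\varepsilon.
\]
Letting $N\to\infty$ and then $\varepsilon\downarrow 0$ yields $L(A)\geq \check{\mu}(A)$, which together with the reverse inequality proves the claim in this case.

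For the remaining case $\check{\mu}(A)=\infty$, the argument is nearly the same but one aims for an arbitrary threshold $M>0$ in place of $\check{\mu}(A)-\varepsilon$. Using \eqref{eq:supofmeas} pick a partition $A=\bigcup_{k=1}^K A_k$ with $\sum_{k=1}^K \sup_n\mu_n(A_k) > 2M$. If some $\sup_n \mu_n(A_{k_0})=\infty$, there is an $n_0$ with $\mu_{n_0}(A_{k_0})>M$, and then $\check{\mu}_N(A)\geq \mu_{n_0}(A_{k_0})>M$ for all $N\geq n_0$. Otherwise every $\sup_n \mu_n(A_k)$ is finite and the preceding argument with $\varepsilon = M$ gives $\check{\mu}_N(A) > M$ for all large $N$. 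In either situation $L(A)\geq M$, and since $M$ was arbitrary we conclude $L(A)=\infty=\check{\mu}(A)$.

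No step is a serious obstacle; the only mild subtlety is separating the finite and infinite cases so that the quantities $\sup_n \mu_n(A_k)-\varepsilon/K$ are well-defined when one selects the approximating indices $n_k$.
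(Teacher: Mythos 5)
Your proof is correct and follows essentially the same route as the paper's: both extract, via the formula \eqref{eq:supofmeas}, a finite partition of $A$ together with indices $n_k$ so that $\sum_k \mu_{n_k}(A_k)$ nearly attains $\check{\mu}(A)$, and then bound $(\sup_{1\leq n\leq N}\mu_n)(A)$ from below for $N$ beyond the largest index. You are merely more explicit than the paper about choosing the $n_k$ (the $\varepsilon/K$ step) and about the case $\check{\mu}(A)=\infty$, which the paper dispatches with a ``without loss of generality.''
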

\begin{proof}
Let $A\in \Sigma$. Without loss of generality suppose that $\check{\mu}(A)<\infty$. Fix $\eps>0$. According to \eqref{eq:supofmeas} there exists $K>0$, a partition $A=\bigcup_{k=1}^K A_k$ of $A$ into pairwise disjoint sets and an increasing sequence $(n_k)_{1\leq k \leq K}\subseteq \mathbb N$ such that $\sum_{k=1}^K \mu_{n_k} (A_k)>\check{\mu}(A)-\eps$. Hence $(\sup_{1\leq n\leq n_K}\mu_n)(A)\geq \check{\mu}(A)-\eps$, which finishes the proof.
\end{proof}

\begin{remark}\label{maxbormes}
 Assume that in the situation above $S = \mathbb R$, $\Sigma$ is a Borel $\sigma$-algebra.
 Define $\bar{\mu}$ on segments as follows:
 \begin{equation}\label{maxbormesform}
   \bar{\mu}(a,b] = \sup\sum_{n=1}^N \sup_{\alpha} \mu_{\alpha} (A_n),
 \end{equation}
 where the first supremum is taken over all the partitions $(a,b]=\bigcup_{n=1}^N A_n$ of the segments $(a,b]$
 into pairwise disjoint segments. Then by Carath\'eodory's extension theorem $\bar{\mu}$ extends to a measure on the Borel $\sigma$-algebra. Obviously $\bar{\mu}\geq \mu_{\alpha}$ for each $\alpha\in \Lambda$ (because
 $(\bar{\mu}-\mu_{\alpha})((a,b])\geq 0$ for every segment $(a,b]$, and so, by \cite[Corollaries~1.5.8 and 1.5.9]{Bog}  for every Borel set)
 and $\bar{\mu}\leq \check{\mu}$. Consequently, $\bar{\mu} = \check{\mu}$.
 Notice that the segments in the partition $(a,b]=\bigcup_{n=1}^N A_n$ can be chosen with rational endpoints (of course except $a$ and $b$). Then the supremum obtained in \eqref{maxbormesform} will be the same.
\end{remark}

\section{Cylindrical continuous martingales and quadratic variation\label{sec:cyl}}

In this section we assume that $X$ is a Banach space with a separable dual space $X^*$.
 Let $(\Omega, \mathbb F, \mathbb P)$ be a complete probability space with filtration
$\mathbb F := (\mathcal F_t)_{t \in \mathbb R_+}$ that satisfies the usual conditions,
and let $\mathcal F := \sigma(\bigcup_{t\geq 0} \mathcal F_t)$.
We denote the predictable $\sigma$-algebra by $\mathcal P$.

In this section we introduce a class of cylindrical continuous local martingales on a Banach space $X$ which have a certain quadratic variation. We will show that it extends several previous definitions from the literature even in the Hilbert space setting.

\subsection{Definitions\label{sec:defcyllocalmart}}

A scalar-valued process $M$ is called a continuous local martingale if there exists a sequence of stopping times $(\tau_n)_{n\geq 1}$ such that $\tau_n\uparrow \infty$ almost surely as $n\to \infty$ and $\one_{\tau_n>0} M^{\tau_n}$ is a continuous martingale.

Let $\mathcal M$ (resp.\ $\mathcal M^{\rm loc}$) be the class of continuous (local) martingales.
On $\mathcal M^{\rm loc}$
define the translation invariant metric given by
\begin{equation}\label{eq:metric}
 \|M\|_{\mathcal M^{\rm loc}} = \sum_{n=1}^{\infty} 2^{-n}\mathbb E[1 \wedge \sup_{t\in [0,n]} |M_t|].
\end{equation}
Here and in the sequel we identify indistinguishable processes.
One may check that this coincides with the ucp topology (uniform convergence compact sets in probability). The following characterization will be used frequently.
\begin{remark}\label{rem:Kal176}
For a sequence of continuous local martingales one has
$M^n\to 0$ in $\mathcal M^{\rm loc}$ if and only if for every $T\geq 0$, $[M^n]_T\to 0$ in probability and $M^n_0\to 0$ in probability (see \cite[Proposition 17.6]{Kal}).
\end{remark}

The space $\mathcal M^{\rm loc}$ is a complete metric space. This is folklore, but we include a proof for convenience of the reader. Let $(M^n)_{n\geq 1}$ be a Cauchy sequence in $\mathcal M^{\rm loc}$ with respect to the ucp topology. By completeness of the ucp topology we obtain an adapted limit $M$ with continuous paths. It remains to shows that $M$ is a continuous local martingale. By taking an appropriate subsequence without loss of generality we can suppose that $M^n \to M$ a.s.\ uniformly on compacts sets. Define a sequence of stopping times $(\tau_k)_{k=1}^{\infty}$ as follows:
\begin{equation*}
\tau_k = \begin{cases}
\inf\{t\geq 0:\sup_{n}\|M^n(t)\|>k\}, &\text{if}\; \sup_{t\in \mathbb R_+}\sup_{n}\|M^n(t)\|>k;
\\
\infty,&\text{otherwise.}
\end{cases}
\end{equation*}
Since each $(M^n)^{\tau_k}$ is a bounded local martingale with continuous paths, $(M^n)^{\tau_k}$ is a martingales as well by the dominated convergence theorem. Letting $n\to \infty$, it follows again by dominated convergence theorem  that $M^{\tau_k}$ is a martingale. Therefore, $M$ is a continuous local martingale with a localizing sequence $(\tau_k)_{k=1}^{\infty}$.

\smallskip

Let $X$ be a Banach space. A continuous linear mapping $M:X^* \to \mathcal M^{\rm loc}$ is called a  \textit{cylindrical continuous local martingale}.
(Details on cylindrical martingales can be found in \cite{BU,JKFR}).
For a cylindrical continuous local martingale $M$ and a stopping time $\tau$ we can define $M^{\tau}:X^* \to \mathcal M^{\rm loc}$ by $M^{\tau}x^*(t) = Mx^*(t\wedge\tau)$. In this way $M^{\tau}$ is a cylindrical continuous (local) martingale again. Two cylindrical continuous local martingales $M$ and $N$ are called  \textit{indistinguishable} if $\forall x^*\in X^*$ the local martingales $Mx^*$ and $Nx^*$ are indistinguishable.

\begin{remark}
On $\mathcal M^{\rm loc}$ it is also natural to consider the Emery topology, see \cite{Em} and also \cite{Kar,BU,JKFR}. Because of the continuity of the local martingales we consider, this turns out to be equivalent. We find it therefore more convenient to use the ucp topology instead.
\end{remark}

\begin{remark}\label{rem:bili}
Since $X^*$ is separable, we can find linearly independent vectors $(e^*_n)_{n \geq 1}\subseteq X^*$ with linear span $F$ which is dense in $X^*$. For fixed $t \geq 0$ and almost all $\omega \in \Omega$
one can define $B_t : \Omega \to \mathbf B(F,F)$ such that
$B_t(x^*,y^*) = [Mx^*,My^*]_t$ for all $x^*,y^* \in F$.
Unfortunately, one can not guarantee, that $t\mapsto B_t$ is continuous a.s.\
Moreover, as we will see in Example \ref{countex} for $X$ a Hilbert space, it may already happen that
for a.a. $\omega\in \Omega$, for some $t>0$, $B_t\notin\mathbf B(X^*, X^*)$.
\end{remark}

In the following definition we introduce a new set of cylindrical martingales for which the above phenomenon does not occur.

Let $(\Omega, \mathcal F, \mathbb P)$ be a probability space, $(S, \Sigma)$ be a measure space and let $\mathcal M_+(S,\Sigma)$ be a set of all positive measures on $(S, \Sigma)$. For $f,g:\Omega\to \mathcal M_+(S,\Sigma)$ we say that $f\geq g$ if $f(\omega)\geq g(\omega)$ for $\mathbb P$-a.a. $\omega\in \Omega$.

\begin{definition}\label{def:cylmart}
 Let $M:X^* \to \mathcal M^{\rm loc}$ be a linear mapping. Then $M$ is said to have a  \emph{quadratic variation} if
 \begin{enumerate}
 \item[(1)] There exists a smallest $f:\Omega \to \mathcal M_+(\mathbb R_+, \mathcal B(\mathbb R_+))$ such that $f\geq \mu_{[Mx^*]}$ for each $x^*\in X^*$, $\|x^*\| = 1$,
   \item[(2)] $f(\omega)[0,t]$ is finite for a.e. $\omega\in \Omega$ for all $t\geq 0$.
 \end{enumerate}
Let $[[M]]:\mathbb R_+\times \Omega \to \mathbb R_+$ be such that \[[[M]]_t(\omega) = \one_{f(\omega)[0,t]<\infty} f(\omega)[0,t].\]
Then $[[M]]$ is called the  \emph{quadratic variation} of $M$ and we write $M \in \mathcal M_{\rm var}^{\rm loc}(X)$.

If additionally, for each $x^*\in X^*$, $M x^*$ is a martingale, we write $M\in \mathcal M_{\rm var}(X)$.
\end{definition}

Notice that in the definition above $f = \mu_{[[M]]}$ a.s.\ In the next proposition we collect some basic properties of $[[M]]$.
\begin{proposition}
Assume $M \in \mathcal M_{\rm var}^{\rm loc}(X)$. Then
$M$ is a cylindrical continuous local martingale and the following properties hold:
\begin{enumerate}
\item $[[M]]$ has a continuous version.
\item $[[M]]$ is predictable.
\item $[[M]]_0 = 0$ a.s.
\item $[[M]]$ is increasing.
\item For all $x^*\in X^*$ a.s.\ for all $s\leq t$,
\[[M x^*]_t - [Mx^*]_s \leq ([[M]]_t - [[M]]_s) \|x^*\|^2.\]
\end{enumerate}
\end{proposition}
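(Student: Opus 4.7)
The plan is to identify $[[M]]_t$ concretely as $\check{\mu}[0,t]$, where $\check{\mu}(\omega):=\sup_{\|x^*\|=1}\mu_{[M(\omega)x^*]}$ is the smallest majorant measure supplied by Lemma \ref{maxmes}. By minimality, $\check{\mu}$ is precisely the random measure $f$ appearing in Definition \ref{def:cylmart}, and the finiteness assumption (2) of that definition gives $[[M]]_t=\check{\mu}[0,t]$ on a full-probability event. Once this identification is in place, properties (3) and (4) fall out of the partition formula \eqref{eq:supofmeas}: the only partition of the singleton $\{0\}$ into non-empty parts is the trivial one, so
\[
\check{\mu}(\{0\}) \;=\; \sup_{\|x^*\|=1}\mu_{[Mx^*]}(\{0\}) \;=\; 0
\]
by the convention $\mu_F(\{0\})=0$ from the preliminaries, and monotonicity of $t\mapsto\check{\mu}[0,t]$ is built into $\check{\mu}$ being a positive measure.

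For (5), scaling gives $\mu_{[Mx^*]}=\|x^*\|^2\mu_{[M(x^*/\|x^*\|)]}\leq\|x^*\|^2\check{\mu}$ as an a.s.\ inequality of measures, and evaluating on $(s,t]$ produces the claimed pointwise inequality uniformly in $s\leq t$ on a single good event. Feeding (5) back into the definition then provides the continuity of $M\colon X^*\to\mathcal M^{\rm loc}$, so that $M$ is genuinely a cylindrical continuous local martingale: for $x^*_n\to x^*$ in $X^*$, linearity and (5) force $[M(x^*_n-x^*)]_T\leq\|x^*_n-x^*\|^2[[M]]_T\to 0$ in probability for every $T$, and Remark \ref{rem:Kal176} then upgrades this to ucp convergence.

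For (1), right-continuity of $t\mapsto\check{\mu}[0,t]$ is automatic from $\sigma$-additivity together with the finiteness in Definition \ref{def:cylmart}(2). Left-continuity reduces to atomlessness of $\check{\mu}$, and the same trivial-partition argument used at $0$ applies at every $t_0>0$: it gives $\check{\mu}(\{t_0\})=\sup_{\|x^*\|=1}\mu_{[Mx^*]}(\{t_0\})=0$, since each $[Mx^*]$ has continuous paths and so its Lebesgue--Stieltjes measure carries no point masses. For (2), Remark \ref{maxbormes} combined with separability of $X^*$ yields
\[
[[M]]_t \;=\; \check{\mu}(0,t] \;=\; \sup_{\pi}\sum_{i=1}^N\sup_{x^*\in D}\bigl([Mx^*]_{t_i}-[Mx^*]_{t_{i-1}}\bigr),
\]
where $\pi$ ranges over the countable family of partitions $0=t_0<\cdots<t_N=t$ with rational interior endpoints and $D$ is a fixed countable dense subset of $\{x^*:\|x^*\|=1\}$. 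Each summand is $\mathcal F_t$-measurable, so $[[M]]_t$ is too, and adaptedness together with continuity yields predictability. The main technical step of the whole argument is the atomlessness used in (1): it is exactly the fact that $\check{\mu}$ is a supremum \emph{of measures}, rather than a pointwise supremum of distribution functions (which could manufacture new jumps out of continuous components), that keeps left-continuity intact.
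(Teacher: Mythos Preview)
Your proposal is correct and follows essentially the same route as the paper. The paper's proof is deliberately terse: it declares (3), (4), (5) ``immediate from the definitions'', derives the ucp-continuity of $x^*\mapsto Mx^*$ from (5) together with Remark~\ref{rem:Kal176}, and then defers (1) and (2) to Proposition~\ref{prop:[[M]]andF}. Your direct arguments for (1) and (2) are precisely the content of that later proposition: atomlessness via the partition formula \eqref{eq:supofmeas} evaluated on singletons, and adaptedness via the explicit countable supremum over rational partitions and a dense set in the unit sphere, with predictability following from adaptedness plus path continuity.

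One point of imprecision worth tightening: you define $\check{\mu}(\omega):=\sup_{\|x^*\|=1}\mu_{[M(\omega)x^*]}$ as a pointwise supremum over the uncountable unit sphere and then identify it with the $f$ of Definition~\ref{def:cylmart}. But $[Mx^*]$ is only defined a.s.\ for each fixed $x^*$, so this uncountable pointwise supremum is not a priori meaningful, and the ordering in Definition~\ref{def:cylmart} is the a.s.\ order, for which minimality over an uncountable family is subtler than Lemma~\ref{maxmes}. The paper sidesteps this by first forming the supremum over a countable dense $D\subset\{\|x^*\|=1\}$ (which is unambiguous $\omega$-by-$\omega$), and only \emph{after} establishing the ucp-continuity of $M$ from (5) does it argue that this countable supremum already dominates every $\mu_{[Mx^*]}$, hence equals $f$. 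In your write-up this step is hidden inside the phrase ``Remark~\ref{maxbormes} combined with separability of $X^*$'': replacing the inner $\sup_{\|x^*\|=1}$ by $\sup_{x^*\in D}$ is legitimate precisely because you have already proved $M$ is a cylindrical continuous local martingale. Reordering so that the countable supremum is the primary object (and the uncountable one appears only after continuity is in hand) would make the logic airtight, and matches exactly what Proposition~\ref{prop:[[M]]andF} does.
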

In Example \ref{countex} we will see that not every cylindrical continuous local martingale is in $M_{\rm var}^{\rm loc}(X)$.

\begin{proof}
Properties (3), (4) and (5) are immediate from the definitions. Properties (1) and (2) will be proved in Proposition \ref{prop:[[M]]andF} below.

To prove that $M$ is a cylindrical continuous local martingale, fix $t\geq 0$ and a sequence $(x_n^*)_{n\geq 1}$ such that $x_n^*\to 0$. Then by (5), $[Mx_n^*]_t \to 0$ a.s., so by Remark \ref{rem:Kal176} $M$ is a continuous linear mapping.
\end{proof}

\begin{remark}
 Let $M \in \mathcal M_{\rm var}^{\rm loc}(X)$. Then $M$ is a cylindrical continuous local martingale.

\end{remark}

\begin{proposition}\label{prop:[[M]]andF}
Let $M:X^*\to \mathcal M^{\rm loc}$ be a cylindrical continuous local martingale. Then the following assertions are equivalent:
\begin{enumerate}
\item $M \in \mathcal M_{\rm var}^{\rm loc}(X)$;
\item For any dense subset $(x_n^*)_{n\geq 1}$ of the unit ball in $X^*$ there exists a nondecreasing right-continuous process $F:\mathbb R_+ \times \Omega \to \mathbb R_+$ such that for a.a.\ $\omega \in \Omega$ we have that $\mu_{F(\omega)}=\sup_{n} \mu_{[Mx_n^*](\omega)}$;
\item For any dense subset $(x_n^*)_{n\geq 1}$ of the unit ball in $X^*$ there exists a nondecreasing right-continuous process  $G:\mathbb R_+ \times \Omega \to \mathbb R_+$ such that for a.a. $\omega \in \Omega$ we have that $\mu_{[Mx_n^*](\omega)}\leq \mu_{G(\omega)}$.
\end{enumerate}
Moreover, in this case $F$ is a.s.\ continuous, predictable and $F = [[M]]$ a.s.
\end{proposition}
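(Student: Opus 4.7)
The strategy is to handle $(2)\Leftrightarrow(3)$ directly from Lemma \ref{maxmes} and Remark \ref{rem:mu}, and then to identify the countable supremum measure $\check\mu(\omega):=\sup_n\mu_{[Mx_n^*](\omega)}$ with the object $f$ of Definition \ref{def:cylmart} via a density argument; the moreover-statements then fall out. For $(2)\Rightarrow(3)$ take $G=F$. For $(3)\Rightarrow(2)$, fix $\omega$ outside a null set and apply Lemma \ref{maxmes} to the countable family $\{\mu_{[Mx_n^*](\omega)}\}$; since each is dominated by $\mu_{G(\omega)}$, which is finite on bounded intervals, Remark \ref{rem:mu} ensures the same for $\check\mu(\omega)$. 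Set $F(\omega,t):=\check\mu(\omega)[0,t]$; this is nondecreasing and right-continuous and induces $\check\mu(\omega)$. Measurability of $F$ in $\omega$ comes from Lemma \ref{lemma:limmaxmes}, which writes $F_t$ as the monotone limit of the $\mathcal F_t$-measurable random variables $(\sup_{1\leq n\leq N}\mu_{[Mx_n^*]})[0,t]$, so $F$ is adapted.

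The core step is $(2)\Rightarrow(1)$: one must show $\check\mu$ dominates $\mu_{[Mx^*]}$ a.s.\ for every unit $x^*\in X^*$. Fix such $x^*$ and pick $x^*_{n_k}\to x^*$ from the dense sequence; by continuity of $M:X^*\to\mathcal M^{\rm loc}$ and Remark \ref{rem:Kal176}, $[Mx^*_{n_k}]_t\to[Mx^*]_t$ in probability for every $t\geq 0$. The inequality $\mu_{[Mx^*_{n_k}]}((s,t])=[Mx^*_{n_k}]_t-[Mx^*_{n_k}]_s\leq\check\mu((s,t])$ holds a.s.\ for each $k$; extracting an a.s.-convergent subsequence and letting $k\to\infty$ yields $\mu_{[Mx^*]}((s,t])\leq\check\mu((s,t])$ a.s.\ for each fixed $0\leq s<t$. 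Restricting to rational $s<t$ and invoking right-continuity (cf.\ Remark \ref{maxbormes}) upgrades this to $\mu_{[Mx^*]}\leq\check\mu$ on a single full-measure event. Hence $\check\mu$ is the smallest measure dominating $\{\mu_{[Mx^*]}:\|x^*\|=1\}$, so $f=\check\mu$ in Definition \ref{def:cylmart} and $[[M]]=F$ a.s. The reverse $(1)\Rightarrow(2)$ then uses both bounds: $f\geq\mu_{[Mx_n^*]}$ for each $n$ forces $f\geq\check\mu$, and the density argument gives $f\leq\check\mu$.

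For continuity of $F$: each $[Mx_n^*]$ is continuous, so each $\mu_{[Mx_n^*]}$ is atomless, and formula \eqref{eq:supofmeas} applied to the singleton $A=\{t\}$ (whose only partition into pairwise disjoint subsets is the trivial one) gives $\check\mu(\{t\})=\sup_n\mu_{[Mx_n^*]}(\{t\})=0$; hence $F$ is continuous. Continuity plus the adaptedness already established yields predictability. The main technical obstacle is the null-set bookkeeping in the density argument: the subsequence extractions and passage to a.s.\ limits must be arranged so that a single $\mathbb P$-null set handles all rational pairs $s<t$ simultaneously for a chosen countable dense family $(x^*_n)$, so that the conclusion $\mu_{[Mx^*]}\leq\check\mu$ is a genuine comparison of random measures rather than a collection of pointwise-in-$(s,t)$ almost-sure inequalities.
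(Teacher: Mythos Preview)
Your proposal is correct and follows the same overall architecture as the paper's proof: the cycle $(2)\Rightarrow(3)\Rightarrow(2)$ via Lemma \ref{maxmes}/Remark \ref{rem:mu}, the identification of $\check\mu$ with $[[M]]$ via density, and the continuity/predictability arguments all match. The only notable difference is in the density step $(2)\Rightarrow(1)$: the paper short-circuits your rational bookkeeping by invoking \cite[Exercise 17.8]{Kal}, which produces a subsequence along which $[Mx_{n_k}^*]\to[Mx^*]$ a.s.\ \emph{uniformly on compact sets}, so that $[Mx^*]_t-[Mx^*]_s\leq F(t)-F(s)$ for all $s\leq t$ follows on a single full-measure set in one stroke; you reach the same conclusion via convergence in probability at fixed $t$, subsequence extraction, and patching over rational endpoints. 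Both routes are valid; the paper's is slightly cleaner, while yours avoids reliance on that specific exercise.
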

\begin{proof}
(1) $\Rightarrow$ (2): Since $\mu_{[[M]]}\geq \mu_{[Mx_n^*]}$ a.s.\ for each $n\geq 1$, it follows that a.s.\ there exists $\check{\mu}:=\sup_n \mu_{[Mx^*_n]}\leq \mu_{[[M]]}$ by the definition of a supremum of measures given in Lemma \ref{maxmes}. By Remark \ref{maxbormes} one can write $\check{\mu} = \mu_F$ where the process $F$ is given by
\begin{equation}\label{eq:defFdiff}
F(t) = \sup \sum_{j=1}^J \sup_{n\geq 1} \Big([Mx_n^*]_{t_j}-[Mx_n^*]_{t_{j-1}}\Big),
\end{equation}
where the outer supremum is taken over all $0=t_0<t_1<\ldots <t_J<t$ with $t_j\in \mathbb Q$ for $j\in \{0,\ldots, J\}$. The fact that $F$ is increasing is clear from \eqref{eq:defFdiff}. The right-continuity of $F$ follows from the fact that $\check{\mu}$ is a measure.

(2) $\Rightarrow$ (3): This is trivial.

(3) $\Rightarrow$ (2): Since each of the measures $\mu_{[Mx^*_n]}$ is nonatomic a.s., by \eqref{eq:supofmeas} $\mu_F$ is nonatomic a.s.\ and finite by Remark \ref{rem:mu} and hence $F$ is finite and a.s.\ continuous.

(2) $\Rightarrow$ (1):
We claim that for each $x^*\in X^*$ with $\|x^*\|=1$ a.s.\ $\mu_F \geq \mu_{[Mx^*]}$. Fix $x^*\in X^*$ of norm 1. Since $M$ is a cylindrical continuous local martingale we can find $(n_k)_{k\geq 1}$ such that $x_{n_k}^*\to x^*$ and a.s.\ $[Mx_{n_k}^*] \to [Mx^*]$ uniformly on compact sets as $k\to \infty$ (see \cite[Exercise 17.8]{Kal}). Then a.s.\ for all $0\leq s<t<\infty$ one has that $[Mx_{n_k}^*]_t-[Mx_{n_k}^*]_s\leq F(t)-F(s)$ for each $k\geq 1$, so a.s.\
 \[
 [Mx^*]_t-[Mx^*]_s =\lim_{k\to \infty}[Mx_{n_k}^*]_t-[Mx_{n_k}^*]_s\leq \lim_{k\to \infty}F(t)-F(s) = F(t)-F(s),
 \]
 and therefore $\mu_F\geq \mu_{[Mx^*]}$ a.s. We claim that $F$ is a.s.\ the least function with this property. Let $\phi:\Omega\to \mathcal{M}_+(\mathbb R_+,\mathcal{B}(\mathbb R_+))$ be such that for all $x^*\in X^*$ with $\|x^*\|=1$, $\phi\geq \mu_{[Mx^*]}$ a.s. Then $\phi \geq \sup_n \mu_{[Mx^*_n]} = \mu_F$ a.s.\ and hence $\mu_F$ is the smallest measure with the required property. By the definition of a quadratic variation we find that $F=[[M]]$ a.s.

Finally, note that by \eqref{eq:defFdiff}, $F$ is adapted and therefore $F$ is predictable by the a.s.\ pathwise continuity of $F$.
\end{proof}

\begin{remark}
 Notice that by the above proposition the quadratic variation of $M \in \mathcal M_{\rm var}^{\rm loc}(X)$ has the following form a.s.
 \begin{equation}\label{eq:[[M]]assup}
   [[M]]_t = \sup\sum_{n=1}^N \sup_{m} ([Mx_m^*]_{t_{i+1}}-[Mx_m^*]_{t_{i}}),\;\;\; t\geq 0,
 \end{equation}
 where the limit is taken over all rational partitions $0= t_0 < \ldots < t_N = t$ and $(x_m^*)_{m\geq 1}$ is a dense subset of the unit ball in $X^*$.
\end{remark}

Next we give another characterization of $M$ being in $\mathcal M_{\rm var}^{\rm loc}(X)$.

\begin{theorem}\label{thm:aMdef}
Let $M:X^*\to \mathcal M^{\rm loc}$. Then $M \in \mathcal M_{\rm var}^{\rm loc}(X)$ if and only if there exists a mapping $\vv_M:\mathbb R_+\times\Omega\to \mathbf B(X^*,X^*)$ such that for every $x^*, y^*\in X^*$, a.s.\ for all $t\geq 0$, $\vv_{M}(t)(x^*, y^*) = [Mx^*, My^*]_t$ and a.s.\ for all $t\geq 0$, $(x^*, y^*)\mapsto \vv_{M}(t)(x^*, y^*)$ is bilinear and continuous, and for all $t\geq 0$ the following limit exists
    \begin{equation}\label{eq:funcG}
         G(t) := \lim_{{\rm mesh}\to 0}\sum_{n=1}^N \sup_{\|x^*\|= 1}(\vv_{M}(t_n)(x^*, x^*)-\vv_{M}(t_{n-1})(x^*, x^*)).
    \end{equation}
    Here the limit is taken over partitions $0= t_0 < \ldots < t_N = t$.

    If this is the case then $G(t) = [[M]]_t$ a.s. for all $t\geq 0$.
\end{theorem}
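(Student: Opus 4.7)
The plan is to prove the equivalence by appealing to Proposition \ref{prop:[[M]]andF}: in the forward direction I construct $\vv_M$ via polarization and a Kunita--Watanabe style bound, then identify the limit $G(t)$ with $[[M]]_t$ by a refinement argument; in the reverse direction I recognize $G(t)$ as $\sup_\pi S(\pi)$ and apply condition (3) of Proposition \ref{prop:[[M]]andF}.

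For the forward direction, assuming $M \in \mathcal M_{\rm var}^{\rm loc}(X)$, I would combine Kunita--Watanabe with property (5) of $[[M]]$ to obtain
\[|[Mx^*, My^*]_t - [Mx^*, My^*]_s| \leq \|x^*\|\,\|y^*\|\,([[M]]_t - [[M]]_s), \quad s \leq t.\]
Pick a countable linearly independent system $(e_n^*)$ with dense span $F \subseteq X^*$ and define $\vv_M(t)$ on $F \times F$ as the bilinear extension of $(e_n^*, e_m^*) \mapsto [Me_n^*, Me_m^*]_t$; this is well defined jointly for all $t \geq 0$ on a single full-measure event since there are countably many pairs. The uniform estimate makes $\vv_M(t)(\cdot,\cdot)$ continuous on $F \times F$, so it extends uniquely to a bounded bilinear form on $X^* \times X^*$ a.s.\ for all $t \geq 0$. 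For an arbitrary pair $(x^*, y^*)$ the extension agrees with $[Mx^*, My^*]_t$ a.s.\ by approximation and joint continuity of $[\cdot,\cdot]_t$ in its arguments (via polarization).

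To identify $G(t)$ with $[[M]]_t$, continuity of $\vv_M(t)(\cdot,\cdot)$ and density give
\[S(\pi) := \sum_{n=1}^N \sup_{\|x^*\|=1}\bigl(\vv_M(t_n)(x^*,x^*) - \vv_M(t_{n-1})(x^*,x^*)\bigr) = \sum_n \sup_m\bigl([Mx_m^*]_{t_n} - [Mx_m^*]_{t_{n-1}}\bigr),\]
where $(x_m^*)$ is a dense sequence in the unit ball of $X^*$. By \eqref{eq:[[M]]assup} one has $[[M]]_t = \sup_\pi S(\pi)$ over rational partitions, so $S(\pi) \leq [[M]]_t$. For the matching lower bound, fix $\varepsilon > 0$ and a rational partition $\pi_0$ with $S(\pi_0) > [[M]]_t - \varepsilon$. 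For any partition $\pi$, the common refinement $\tilde\pi = \pi \vee \pi_0$ satisfies $S(\tilde\pi) \geq S(\pi_0)$ by the refinement monotonicity $\sup_m \mu_m(A) \leq \sum_j \sup_m \mu_m(A^j)$. Moreover, $S(\tilde\pi) - S(\pi)$ is supported on the at most $|\pi_0|$ intervals of $\pi$ containing an endpoint of $\pi_0$ and is bounded by $|\pi_0| \cdot \max_n ([[M]]_{t_n} - [[M]]_{t_{n-1}})$, which vanishes as $\mathrm{mesh}(\pi) \to 0$ by a.s.\ continuity of $[[M]]$. Letting mesh $\to 0$ and then $\varepsilon \to 0$ yields $G(t) = [[M]]_t$ a.s.

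For the reverse direction, since $G(t)$ exists as a limit and $S(\pi)$ is monotone under refinement, taking a refining sequence of rational partitions with mesh tending to zero shows $G(t) = \sup_\pi S(\pi)$. By Remark \ref{maxbormes} this equals $\check\mu((0,t])$ with $\check\mu := \sup_m \mu_{[Mx_m^*]}$ for a dense $(x_m^*)$ in the unit ball. Thus $G$ is non-decreasing, right-continuous and finite, which is precisely condition (3) of Proposition \ref{prop:[[M]]andF}, yielding $M \in \mathcal M_{\rm var}^{\rm loc}(X)$ and $[[M]] = G$ a.s. The main obstacle is the refinement argument in the forward direction: upgrading the supremum characterization \eqref{eq:[[M]]assup} of $[[M]]_t$ (over rational partitions) into a genuine limit as mesh $\to 0$ uses crucially the a.s.\ continuity of $[[M]]$ to guarantee that inserting finitely many extra partition points has asymptotically vanishing effect.
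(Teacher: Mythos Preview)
Your argument is correct and follows essentially the same route as the paper: construct $\vv_M$ on a countable spanning set using a Kunita--Watanabe bound against $[[M]]$, extend by continuity, then reduce the supremum over the unit sphere to a countable dense set and invoke Proposition~\ref{prop:[[M]]andF}. Your refinement argument showing that $\lim_{\mathrm{mesh}\to 0} S(\pi)=\sup_\pi S(\pi)=[[M]]_t$ is in fact more detailed than the paper's treatment, which simply asserts that \eqref{eq:funcG} ``coincides with \eqref{eq:[[M]]assup}''.

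There is one omission in the reverse direction: Proposition~\ref{prop:[[M]]andF} has as hypothesis that $M$ is a \emph{cylindrical continuous local martingale}, i.e.\ that $x^*\mapsto Mx^*$ is continuous into $\mathcal M^{\rm loc}$, whereas the theorem only assumes $M:X^*\to\mathcal M^{\rm loc}$. The paper fills this by observing that the existence of $G$ gives $\mu_{[Mx^*]}\leq \mu_G\|x^*\|^2$ a.s., so $x_n^*\to 0$ forces $[Mx_n^*]_T\to 0$ a.s.\ and hence $Mx_n^*\to 0$ in ucp by Remark~\ref{rem:Kal176}. You should insert this one-line check before invoking Proposition~\ref{prop:[[M]]andF}.
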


\begin{proof}
 Let $M \in \mathcal M_{\rm var}^{\rm loc}(X)$. Fix a set $(x_m^*)_{m\geq 1}\subset X^*$ of linearly independent vectors such that $\text{span}(x_m^*)_{m\geq 1}$ is dense in $X^*$. Let $F=(y_n^*)\subset X^*$ be the $\mathbb Q$-span of $(x_m^*)_{m\geq 1}$. Then there exists $\vv_M:\mathbb R_+ \times\Omega \to\mathbf B(F,F)$ such that for each $n,k\geq 1$ $\vv_M(y_n^*,y_k^*)$ is a version of $[My_n^*, My_k^*]$ such that $\mu_{\vv_M(y_n^*,y_k^*)}\ll \mu_{[[M]]}\|y_n^*\|\|y_k^*\|$. Since by the last inequality $\vv_M$ is bounded on $F\times F$, it can be extended to $X^*\times X^*$, and by the continuity of $M$, $\vv_M(x^*,y^*)$ is a version of $[Mx^*, My^*]$. To prove \eqref{eq:funcG} notice that because of the boundedness of $\vv_M$ and a density argument one replace the supremum over the unit sphere by the supremum over $x^*\in \{y_n^*:\|y_n^*\|\leq 1\}$. Then this formula coincides with \eqref{eq:[[M]]assup}, therefore a.s.\ $G(t) = [[M]]_t$ for all $t\geq 0$.

To prove the converse first note that for all $x^*\in X^*$, $\mu_{[Mx^*]}\leq \mu_{G} \|x^*\|^2$ a.s. and hence $M$ is a cylindrical continuous local martingale by Remark \ref{rem:Kal176}. Since $\vv_M$ is continuous one can replace the supremum by the supremum over a countable dense subset of the unit ball again. Now one can apply Proposition \ref{prop:[[M]]andF} and use \eqref{maxbormesform}.
\end{proof}

\begin{definition}\label{def:cylDoleansmeasure}
Given $M\in \mathcal M_{\rm var}^{\rm loc}(X)$ we define its  \textit{cylindrical Dol\'eans measure} $\mu_M$ on the predictable $\sigma$-algebra $\mathcal P$ as follows:
\[
\mu_M(C) = \mathbb E \int_0^{\infty}\mathbf 1_C\ud[[M]],\;\;\; C \in \mathcal P.
\]
\end{definition}

\begin{lemma}\label{lemma:stoppingtimequadvar}
Let $M\in \mathcal M_{\rm var}^{\rm loc}(X)$ and let $\tau$ be a stopping time and define a sequence of stopping times by
\[\tau_n = \inf\{t\geq 0: [[M]]_t\geq n\}, \ \  \text{for $n\geq 1$.}\]
Then the following assertions hold:
\begin{enumerate}
\item $M^{\tau}\in M_{\rm var}^{\rm loc}(X)$, $[[M^{\tau}]] = [[M]]^{\tau}$.
\item For each $n\geq 1$, $M^{\tau_n}\in \mathcal M_{\rm var}(X)$.
\end{enumerate}
\end{lemma}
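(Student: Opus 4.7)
The plan is to reduce both statements to the equivalence (1)$\Leftrightarrow$(2) of Proposition \ref{prop:[[M]]andF}, exploiting the pathwise observation that stopping a nondecreasing right-continuous process at a stopping time $\sigma$ corresponds to restricting its associated Lebesgue-Stieltjes measure to $[0,\sigma(\omega)]$.

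For part (1), I fix a countable dense sequence $(x_n^*)_{n\geq 1}$ in the closed unit ball of $X^*$. Since $[M^\tau x_n^*] = [Mx_n^*]^\tau$ pathwise, its Lebesgue-Stieltjes measure is $\mu_{[Mx_n^*]}|_{[0,\tau]}$, and analogously $\mu_{[[M]]^\tau} = \mu_{[[M]]}|_{[0,\tau]}$. The pathwise identity to establish is
\[
\sup_{n\geq 1}\bigl(\mu_{[Mx_n^*]}|_{[0,\tau]}\bigr) = \Bigl(\sup_{n\geq 1}\mu_{[Mx_n^*]}\Bigr)\Big|_{[0,\tau]},
\]
which I will deduce from the explicit formula \eqref{eq:supofmeas}: every partition of a Borel set $B$ restricts to a partition of $B\cap[0,\tau]$, giving ``$\leq$'', while any partition of $B\cap[0,\tau]$ extends to a partition of $B$ by adjoining $B\setminus[0,\tau]$ (on which the restricted measures vanish), giving ``$\geq$''. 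Combined with Proposition \ref{prop:[[M]]andF} this yields $\sup_{n\geq 1}\mu_{[M^\tau x_n^*]} = \mu_{[[M]]^\tau}$ almost surely. Since $[[M]]^\tau$ is continuous and nondecreasing, condition (2) of Proposition \ref{prop:[[M]]andF} applied to $M^\tau$ is then fulfilled, giving $M^\tau\in\mathcal M_{\rm var}^{\rm loc}(X)$ with $[[M^\tau]]=[[M]]^\tau$.

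For part (2), the continuity and adaptedness of $[[M]]$ make $\tau_n$ a stopping time, and the continuity of $[[M]]$ forces $[[M]]_{\tau_n}\leq n$ almost surely. Part (1) then gives $[[M^{\tau_n}]]_\infty\leq n$ a.s., so by the basic pointwise bound $[M^{\tau_n}x^*]_t\leq[[M^{\tau_n}]]_t\|x^*\|^2$ established in the properties of $[[\cdot]]$ above, $[M^{\tau_n}x^*]_\infty\leq n\|x^*\|^2$ a.s.\ for every $x^*\in X^*$. The Burkholder-Davis-Gundy inequality then shows that $M^{\tau_n}x^* - Mx^*(0)$ is an $L^2$-bounded continuous martingale, so $M^{\tau_n}x^*$ itself is a uniformly integrable martingale, yielding $M^{\tau_n}\in\mathcal M_{\rm var}(X)$.

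The principal technical point I anticipate is the commutation of the supremum of measures with pathwise restriction to the random interval $[0,\tau(\omega)]$; while elementary once \eqref{eq:supofmeas} is in hand, it must be argued pathwise because $\tau$ depends on $\omega$. Once this is in place, everything else reduces to routine applications of Proposition \ref{prop:[[M]]andF} and the Burkholder-Davis-Gundy inequality.
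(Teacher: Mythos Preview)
Your proposal is correct and follows essentially the same approach as the paper's proof. For part (1), the paper works directly from Definition \ref{def:cylmart} (noting $\ud\mu_{[M^\tau x^*]} = \one_{[0,\tau]}\ud\mu_{[Mx^*]}\leq \one_{[0,\tau]}\ud\mu_{[[M]]}$ and asserting that the restricted measure is again the least majorant), whereas you route through the countable-dense characterization of Proposition \ref{prop:[[M]]andF} and make the commutation ``restriction to $[0,\tau]$ commutes with the supremum of measures'' explicit via \eqref{eq:supofmeas}; this is the same idea, and your version is arguably more careful about justifying the minimality. For part (2) both arguments invoke the Burkholder--Davis--Gundy inequality after bounding $[[M^{\tau_n}]]_\infty\leq n$; the paper bounds $\mathbb E\sup_{s\leq t}|M^{\tau_n}_s x^*|$ in $L^1$ and invokes dominated convergence, while you obtain $L^2$-boundedness of $M^{\tau_n}x^* - Mx^*(0)$---a cosmetic difference.
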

\begin{proof}

(1): It is obvious from the scalar theory that for every $x^*\in X^*$ with $\|x^*\|\leq 1$, $M^{\tau}x^*$ is a continuous local martingale.
Moreover,
\[\ud\mu_{[M^{\tau} x^*]} = \one_{[0,\tau]} \ud\mu_{[M x^*]} \leq \one_{[0,\tau]} \ud\mu_{[[M]]}.\]
Since $\mu_{[[M]]}$ is the least measure which majorizes  $\mu_{[M x^*]}$ for $\|x^*\|=1$, it follows that $\one_{[0,\tau]} \ud\mu_{[[M]]}$ is the least measure which majorizes $\mu_{[M^{\tau} x^*]}$ for $\|x^*\|=1$.

(2): To check that $M^{\tau_n}\in \mathcal M_{\rm var}(X)$ it remains to show that $\one_{\tau_n>0}M^{\tau_n} x^*$ is a martingale. By the Burkholder-Davis-Gundy inequality \cite[Theorem 26.12]{Kal} and the continuity of $[[M]]$ we have for all $x^*\in X^*$
\[\mathbb E \sup_{s\leq t} |\one_{\tau_n>0} M^{\tau_n}_t x^*|\leq C \mathbb E [M^{\tau_n} x^*]^{1/2}_t = C\mathbb E [[M]]^{1/2}_{t\wedge \tau_n} \|x^*\|\leq C n^{1/2}\|x^*\|.\]
Therefore, the martingale property follows from the dominated convergence theorem and the fact that $\one_{\tau_n>0} M^{\tau_n} x^*$ is a local martingale.
\end{proof}

We end this subsection with a simple but important example.
\begin{example}[Cylindrical Brownian motion]\label{wiencyl}
Let $X$ be a Banach space and $Q\in \mathcal L(X^*,X)$ be a positive self-adjoint operator.
Let $W^Q:\mathbb R_+ \times X^* \to L^2(\Omega)$ be a cylindrical $Q$-Brownian motion  (see \cite[Chapter 4.1]{DPZ}), i.e.
\begin{itemize}
 \item $W^Q(\cdot)x^*$ is a Brownian motion for all $x^* \in X$,
 \item $\mathbb E W^Q(t)x^*\,W^Q(s)y^* = \langle Qx^*,y^*\rangle \min\{t,s\}$
 $\forall x^*,y^* \in X^*$, $t,s \geq 0$.
\end{itemize}
The operator $Q$ is called the  \textit{covariance operator} of $W^Q$.
Then $W^Q\in \mathcal M_{\rm var}(X)$. Indeed, since $\vv_{W^Q}(t)(x^*, x^*) = t \lb Q x^*, x^*\rb$ we have $[[M]]_t = t \|Q\|$.

In the case $Q = I$ is the identity operator on  a Hilbert space $H$, we will call $W_H = W^I$ an  \textit{$H$-cylindrical Brownian motion}. In this case $[[M]]_t = t$.
\end{example}

\subsection{Quadratic variation operator\label{subsec:quadratvar}}
Let $M\in \mathcal M_{\rm var}^{\rm loc}(X)$.
From Example \ref{wiencyl} one sees that essential information about the cylindrical martingale is lost when one only considers $[[M]]$. For this reason we introduce the quadratic variation operator $A_M$ and its $[[M]]$-derivative $Q_M$.

Let $\Omega_0\subset \Omega$ be a set of a full measure such that $G(t)$ from \eqref{eq:funcG} is finite for all $t\geq 0$ in $\Omega_0$. Note that pointwise in $\Omega_0$ for all $t\geq 0$,
\[|\vv_{M}(t)(x^*, y^*)| \leq [[M]]_t \|x^*\| \|y^*\| \ \ \ \forall x^*,y^*\in F.\]
It follows that for all $\omega\in \Omega_0$ for all $t\geq 0$ and all $x^*\in X^*$, the bilinear map $(x^*,y^*)\mapsto \vv_{M}(t,\omega)(x^*, y^*)$ is bounded by $[[M]]_t(\omega)$ in norm, and therefore it defines a mapping $A_M(t,\omega)\in \calL(X^*, X^{**})$. For $\omega\notin \Omega_0$ we set $A_M = 0$. Note that for each $x^*, y^*\in X^*$, for almost all $\omega\in \Omega$, and for all $t\geq 0$, $\lb A_M(t) x^*, y^*\rb$ is a version of $[Mx^*, My^*]_t$.
The function $A_M$ is called  \textit{the quadratic variation operator of $M$}. By construction, for every $x^*, y^*\in X^*$, $(t,\omega)\mapsto \lb A_M (t,\omega)x^*, y^*\rb$ is predictable.
Moreover, one can check that for each $t\geq 0$ and $\omega\in \Omega$, and $x^*,y^*\in X^*$,
\[\lb A_M(t,\omega)x^*, x^*\rb\geq 0, \ \ \ \text{and} \ \ \ \lb A_M(t,\omega) x^*, y^*\rb = \lb A_M(t,\omega) y^*, x^*\rb.\]

\begin{proposition}[Polar decomposition]\label{Q_M}
For each $M\in \mathcal M_{\rm var}^{\rm loc}(X)$ there exists a process $Q_M: \mathbb R_+ \times \Omega \to \mathcal L(X^*,X^{**})$
such that almost surely for all $t>0$
\begin{equation}\label{eq:derivativeAM}
\langle A_M(t) x^*,y^*\rangle = \int_0^t \langle Q_M(s)x^*,y^*\rangle \ud [[M]]_s
,\;\;\; x^*,y^* \in X^*.
\end{equation}
Moreover, the following properties hold:
\begin{enumerate}
\item For all $x^*,y^*\in X^*$, $(t,\omega)\mapsto \lb Q_M(t,\omega)x^*, y^*\rb$ is predictable.
\item $Q$ is self-adjoint and positive $\mu_M$-a.e.
\item $\|Q_M(t)\| =1$ for $\mu_{[[M]]}$-a.e. $t$ on $\mathbb R_+$. In particular, $\|Q_M(t,\omega)\|=1$, $\mu_M$-a.s.\ on $\mathbb R_+\times \Omega$.
\end{enumerate}
\end{proposition}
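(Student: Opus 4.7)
The plan is to construct $Q_M$ via Radon--Nikodym derivatives applied to scalar processes and then to assemble these scalar densities into an operator-valued process. First, for every pair $x^*, y^* \in X^*$ I would apply the polarization identity $\lb A_M(t)x^*,y^*\rb = \tfrac14([M(x^*+y^*)]_t - [M(x^*-y^*)]_t)$ together with the estimate $\mu_{[Mx^*]} \leq \|x^*\|^2 \mu_{[[M]]}$ (from property (5) of $[[M]]$) to show that the Lebesgue--Stieltjes measure associated with $t\mapsto\lb A_M(t)x^*,y^*\rb$ is absolutely continuous with respect to $\mu_{[[M]]}$, with total variation density bounded by $\|x^*\|\|y^*\|$. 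The Radon--Nikodym theorem, applied pathwise with standard measurable selection (or directly in the predictable setting via the conditional expectation construction), then yields a predictable $q_{x^*,y^*}:\R_+\times\Omega\to\R$ with $|q_{x^*,y^*}|\le \|x^*\|\|y^*\|$ $\mu_M$-a.e.\ and $\lb A_M(t)x^*,y^*\rb = \int_0^t q_{x^*,y^*}(s)\,\ud[[M]]_s$ a.s.

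Next I would fix a countable $\Q$-linear dense subspace $F \subset X^*$ and use the linearity and symmetry of $A_M$ together with the uniqueness of Radon--Nikodym densities to conclude that, off a single $\mu_M$-null set $N$, the map $(x^*,y^*)\mapsto q_{x^*,y^*}(s,\omega)$ is $\Q$-bilinear and symmetric on $F\times F$, satisfies $q_{x^*,x^*} \ge 0$, and obeys $|q_{x^*,y^*}|\le \|x^*\|\|y^*\|$. Outside $N$ this extends uniquely by continuity to a bounded positive symmetric bilinear form on $X^*\times X^*$, which by \eqref{bil} corresponds to an operator $Q_M(s,\omega):X^*\to X^{**}$ with $\|Q_M(s,\omega)\|\le 1$; on $N$ I set $Q_M\equiv 0$. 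Predictability of $(t,\omega)\mapsto \lb Q_M(t,\omega)x^*,y^*\rb$ for every $x^*,y^*\in X^*$ follows from predictability on $F\times F$ plus the uniform bound and density, giving (1), while (2) is immediate from the pointwise bilinear-form properties established above.

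The remaining and most substantive step is (3), namely $\|Q_M(t)\|=1$ $\mu_{[[M]]}$-a.e. Since $Q_M$ is positive self-adjoint, Remark \ref{posoper} gives $\|Q_M(s)\| = \sup_{\|x^*\|=1}\lb Q_M(s)x^*,x^*\rb$, and by the Lipschitz bound $|q_{x^*,x^*}-q_{y^*,y^*}| \leq (\|x^*\|+\|y^*\|)\|x^*-y^*\|$ the supremum may be taken over a countable dense sequence $(x_n^*)_{n\ge 1}$ in the unit sphere of $X^*$. Applying Lemma \ref{lem:maxmeasuref} to the family of densities $q_{x_n^*,x_n^*}$ with respect to $\nu := \mu_{[[M]]}$ gives
\[
 \Bigl(\sup_{n\ge 1}\mu_{[Mx_n^*]}\Bigr)(A) = \int_A \sup_{n\ge 1} q_{x_n^*,x_n^*}\,\ud\mu_{[[M]]}, \qquad A \in \mathcal B(\R_+).
\]
By Proposition \ref{prop:[[M]]andF} the left-hand side equals $\mu_{[[M]]}(A)$ almost surely, so $\sup_n q_{x_n^*,x_n^*}=1$ $\mu_{[[M]]}$-a.e., which yields $\|Q_M\|=1$ $\mu_{[[M]]}$-a.e.\ and hence $\mu_M$-a.e.

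I expect the construction of $Q_M$ as a genuine operator-valued process (rather than a family of scalar densities indexed by pairs $x^*,y^*$) to be the main technical hassle, because only countably many Radon--Nikodym null sets may be discarded; this is handled by working first on the countable dense $F\times F$, using uniqueness to transfer linearity/symmetry/positivity from $A_M$ to the densities off a single null set, and only then extending by the continuity bound. The sharpness $\|Q_M\|=1$ is then a direct consequence of the minimality of $[[M]]$ encoded in Proposition \ref{prop:[[M]]andF} combined with the density formula of Lemma \ref{lem:maxmeasuref}.
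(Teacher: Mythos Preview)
Your proof is correct and follows essentially the same approach as the paper: build $Q_M$ by taking Radon--Nikodym derivatives of $\langle A_M(\cdot)x^*,y^*\rangle$ with respect to $[[M]]$ on a countable dense set of pairs, extend by continuity using the uniform bound $|q_{x^*,y^*}|\le\|x^*\|\|y^*\|$, and deduce $\|Q_M\|=1$ from the minimality of $[[M]]$. The only minor differences are that the paper obtains predictability concretely via Lemma~\ref{lem:Lebesguediff} (writing the density as a limit of predictable difference quotients) where you gesture at an unspecified ``standard'' construction, and that the paper proves $\|Q_M\|=1$ by a short contradiction argument rather than your direct appeal to Lemma~\ref{lem:maxmeasuref}; both are cosmetic variations of the same proof.
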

In \eqref{eq:derivativeAM} the Lebesgue-Stieltjes integral is considered.
In the proof we use the following fact which  is closely related to \cite[Theorem 5.8.8]{Bog} and \cite[Theorem 3.21]{Fol}. In the statement and its proof we use the convention that $\frac{0}{0}=0$.
\begin{lemma}\label{lem:Lebesguediff}
Let $\mu$ be a positive non-atomic $\sigma$-finite measure on $\mathbb R_+$ and let $f\in L^1_{\rm loc}(\mathbb R_+,\mu)$. Define the measure $\nu$ by
$\ud\nu = f\ud\mu$. Then for $\mu$-almost all $t>0$,
\begin{equation*}
 \lim_{\varepsilon\downarrow 0}\frac{\nu((t-\varepsilon, t])}{\mu((t-\varepsilon, t])} = f(t).
\end{equation*}
\end{lemma}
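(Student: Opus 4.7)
The strategy is to reduce the claim to the classical one-sided Lebesgue differentiation theorem on $\mathbb R$ via a change of variables using the distribution function of $\mu$. By a standard localization I may assume $\mu$ is a finite non-atomic measure and $f \in L^1(\mathbb R_+, \mu)$. Set $F(t) := \mu((0, t])$; non-atomicity of $\mu$ makes $F$ continuous and non-decreasing, and finiteness bounds $M := F(\infty) < \infty$.

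Next I would introduce the left-continuous pseudo-inverse $\tau \colon [0, M] \to \mathbb R_+$ defined by $\tau(s) := \inf\{t \ge 0 : F(t) \ge s\}$. Continuity of $F$ gives $F(\tau(s)) = s$, and the elementary equivalence $\tau(s) \le t \Leftrightarrow s \le F(t)$ yields the pushforward identity $\tau_*(\mathrm{Leb}|_{[0,M]}) = \mu$. In particular, for any non-negative Borel function $g$,
\[\int_{\mathbb R_+} g\, d\mu = \int_0^M g(\tau(s))\, ds.\]

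Apply the classical one-sided Lebesgue differentiation theorem to $f \circ \tau \in L^1([0, M])$: there exists a Lebesgue-full-measure set $A \subseteq (0, M)$ such that for every $s \in A$,
\[\lim_{h \downarrow 0} \frac{1}{h}\int_{s - h}^{s} f(\tau(u))\, du = f(\tau(s)).\]
By the pushforward relation, $B := F^{-1}(A)$ has full $\mu$-measure. Let $E$ be the set of $t > 0$ on whose immediate left $F$ is constant; since the maximal flat intervals of $F$ are countable, $E$ is a countable union of half-open intervals of $\mu$-measure zero and hence $\mu$-null. For $t \in B \setminus E$, set $s := F(t)$, so that $\tau(s) = t$, and $h_\varepsilon := F(t) - F(t - \varepsilon)$, which is strictly positive and tends to $0$ as $\varepsilon \downarrow 0$ by continuity of $F$ and the condition $t \notin E$. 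The change-of-variables formula then identifies
\[\frac{\nu((t - \varepsilon, t])}{\mu((t - \varepsilon, t])} = \frac{1}{h_\varepsilon} \int_{s - h_\varepsilon}^{s} f(\tau(u))\, du \;\longrightarrow\; f(\tau(s)) = f(t),\]
which is the desired limit for $\mu$-a.e.\ $t$.

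The main obstacle is the careful treatment of the ``flat'' set $E$, on which the quotient $\nu((t - \varepsilon, t]) / \mu((t - \varepsilon, t])$ has the indeterminate form $0/0$; the argument handles this by folding $E$ into the exceptional $\mu$-null set, which is legitimate because each maximal interval of constancy of $F$ contributes zero $\mu$-mass. Once the pseudo-inverse $\tau$ and the pushforward relation are in place, transferring the almost-everywhere Lebesgue statement back to $\mu$ is essentially mechanical.
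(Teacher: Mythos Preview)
Your argument is correct and shares the paper's core idea: push $\mu$ forward to Lebesgue measure via the distribution function $F$ and its generalized inverse $\tau$, then invoke the classical one-sided Lebesgue differentiation theorem for $f\circ\tau$. The difference lies in how the two proofs deal with the possibility that $F$ is not strictly increasing. The paper avoids the issue altogether by first reducing to the case $\mu\geq\lambda$: it proves the lemma for $\mu+\lambda$ in place of $\mu$ (where $F$ is automatically strictly increasing, so $\tau\circ F=\mathrm{id}$ everywhere and no flat set arises), and then recovers the general statement by writing $\ud\mu=g\,\ud(\mu+\lambda)$, $\ud\nu=fg\,\ud(\mu+\lambda)$ and taking the quotient of the two limits. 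You instead work directly with the given $\mu$ and explicitly excise the $\mu$-null ``flat'' set $E$ on which $\tau\circ F$ fails to be the identity. Your route is slightly more self-contained (no auxiliary measure), while the paper's trick of adding $\lambda$ makes the time-change bijective and keeps the bookkeeping shorter; both are perfectly valid.
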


\begin{proof}
It is enough to show this lemma given $\mu \geq \lambda$. If it is shown for $\mu \geq \lambda$, then in general situation one can use $\mu+\lambda$: due to the fact that $\mu\ll \mu + \lambda$ one has that there exists $g:\mathbb R_+ \to \mathbb R_+$ such that $\ud\mu = g\ud(\mu + \lambda)$ and $\ud\nu = fg\ud(\mu + \lambda)$, so for $\mu$-a.a. $t\geq 0$
 \begin{align*}
  \lim_{\varepsilon\downarrow 0}\frac{\nu((t-\varepsilon, t])}{\mu((t-\varepsilon, t])}
  &=\lim_{\varepsilon\downarrow 0}\frac{\nu((t-\varepsilon, t])}{(\mu+\lambda)((t-\varepsilon, t])}\Big{\slash}\lim_{\varepsilon\downarrow 0}\frac{\mu((t-\varepsilon, t])}{(\mu+\lambda)((t-\varepsilon, t])}\\
  &= \frac{(fg)(t)}{g(t)} = f(t).
 \end{align*}

Now let $\mu \geq \lambda$, and define $\tau:\mathbb R_+\to \mathbb R_+$ by $\tau(t) = \inf\{s:\mu([0,s)) > t\}$. Then $\mu\circ\tau = \lambda$ is the Lebesgue measure on $\mathbb R_+$, $\ud(\nu\circ\tau) = f\circ\tau\ud\lambda$. By the Lebesgue differentiation theorem (see \cite[Theorem 3.21]{Fol}) one has
\begin{equation}\label{eq:Lebesguediff1}
  \lim_{\varepsilon\downarrow 0}\frac{\nu\circ\tau((t-\varepsilon, t])}{\mu\circ\tau((t-\varepsilon, t])} = f\circ \tau(t),
\end{equation}
 for $\lambda$-almost all $t$. Define $F:\mathbb R_+ \to \mathbb R_+$ by $F(s) =\mu([0,s))$. Then $F$ is strictly increasing and continuous since $\mu$ is nonatomic. Therefore $\tau\circ F(s) = s$ for all $s\in \mathbb R_+$, and it follows from \eqref{eq:Lebesguediff1} that for $\mu$-a.a. $t\in \mathbb R_+$
 \begin{align*}
  \lim_{\varepsilon\downarrow 0}\frac{\nu((t-\varepsilon, t])}{\mu((t-\varepsilon, t])} = \lim_{\varepsilon\downarrow 0}\frac{\nu\circ\tau\circ F((t-\varepsilon, t])}{\mu\circ\tau\circ F((t-\varepsilon, t])}&=\lim_{\varepsilon\downarrow 0}\frac{\nu\circ\tau((F(t-\varepsilon), F(t)])}{\mu\circ\tau((F(t-\varepsilon), F(t)])} \\
  &= f\circ \tau (F(t)) = f(t).
 \end{align*}
 \end{proof}

\begin{proof}[Proof of Proposition \ref{Q_M}]
Let $\Omega_0\subset \Omega$ be a set of a full measure such that $G(t)$ from \eqref{eq:funcG} is finite for all $t\geq 0$ in $\Omega_0$.
Then pointwise on $\Omega_0$, for all $x^*,y^* \in X^*$, $\langle A_Mx^*,y^*\rangle$
is absolutely continuous with respect to $[[M]]$. Let $(e^*_n)_{n \geq 1}\subseteq X^*$ be
a set of linearly independent vectors, such that its linear span
$F$ is dense in $X^*$.
Then there exists a process $Q_M\colon  \Omega\times \mathbb R_+ \to \mathcal L(F,X^{**})$
such that $\langle Q_Me^*_n,e^*_m\rangle$ is predictable for each $n,m \geq 1$ and
$\int_0^t\langle Q_M(s) e^*_n,e^*_m\rangle \ud [[M]]_s = \langle A_M(t) e^*_n,e^*_m\rangle$.
To check the predictability, note that by Lemma \ref{lem:Lebesguediff} a.s.\ for $\mu_{[[M]]}$-a.a.\ $t\geq 0$,
\[\frac{\lb A_M(t)e_n^*, e_m^*\rb-\lb A_M(t-1/k)e_n^*, e_m^*\rb}{[[M]]_{t} - [[M]]_{t-\frac 1k}}\to \lb Q_M(t)e_n^*, e_m^*\rb \]
as $k\to \infty$. Since the left-hand side is predictable, the right-hand side has a predictable version.

Let $(f_n^*)_{n \geq 1}$ in $F$ of length one be dense in $\{x^*\in X^*: \|x^*\|=1\}$.
Then by the definition of $[[M]]$, on $\Omega_0$ it holds that $|\mu_{\vv_{M}(\cdot)(x^*, x^*)} |\leq \mu_{[[M]]}$ for all $m,n\geq 1$. Therefore on $\Omega_0$, we find that for all $m,n\geq 1$,
$|\langle Q_M(s) f^*_n,f^*_m\rangle|\leq 1$ for $\mu_{[[M]]}$-a.a.\ $t\geq 0$. Let $S\subseteq\mathbb R_+\times\Omega_0$ be the set where $|\langle Q_M(s) f^*_n,f^*_m\rangle|\leq 1$ for all $m,n\geq 1$. Then $S$ is predictable and for each $\omega\in \Omega_0$, $\mu_{[[M]]}(\mathbb R\setminus S_{\omega})$ = 0, where $S_{\omega}$ denotes its section. Taking the supremum over all $n, m\geq 1$, it follows that  $\|Q_M\|\leq 1$ on $S$. On the complement of $S$ we let $Q_M = 0$.
Since $F$ is dense in $X^*$, $Q_M$ has a unique continuous extension to a mapping in $\calL(X^*,X^{**})$.

Fix $t > 0$. Let $x^*, y^* \in X^*$, $(x_n^*)_{n\geq 1}, (y_n^*)_{n \geq 1} \subseteq F$
be such that $x^* =\lim_{n \to \infty} x_n^*$, $y^* = \lim_{n \to \infty} y_n^*$.
Since on $\Omega_0$ for all $t\geq 0$,
\[\langle A_M(t) x_n^*,y_m^*\rangle = \int_0^t \langle Q_M(s)x_n^*,y_m^*\rangle \ud [[M]]_s,\]
letting as $m,n\to \infty$, \eqref{eq:derivativeAM}  follows by the dominated convergence theorem.

We claim that for all $\omega\in \Omega_0$,  $\|Q_M\|=1$, $\mu_{[[M]]}$-a.e.\ on $\mathbb R_+$.
Since $\mu_{[[M]]}$ is a maximum for the measures $\mu_{\vv_{M}(\cdot)(f^*_n, f^*_n)}$ (where the $f_n^*$ are as before)
it follows that $\|Q_M\| = \sup_n \langle Q_Mf^*_n,f^*_n\rangle = 1$, $\mu_{[[M]]}$-a.e.\ on $\mathbb R_+$. Indeed, otherwise there exists an $\alpha\in (0,1)$ such that $C = \{t\in \mathbb R_+: \|Q(t)\|<\alpha\}$ satisfies $\mu_{[[M]]}(C)>0$. Then it follows that for the maximal measure and all measurable $B\subseteq C$
\[ \mu_{a_M(\cdot)(f_n^*, f_n^*)}(B) = \int_{B} \langle Q_M(s)f_n^*,f_n^*\rangle \ud [[M]]_s\leq \alpha \mu_{[[M]]}(B).\]
This contradicts the fact that the supremum measure on the left equals $\mu_{[[M]]}$ as well. Thus $\mu_{[[M]]}(C) = 0$ and hence the claim follows.

It follows from the construction that $Q_M$ is self-adjoint and positive $\mu_{[[M]]}$-a.s.
\end{proof}

\begin{remark}
Assume that $X^{**}$ is also separable (e.g.\ $X$ is reflexive).
In this case it follows from the Pettis measurability theorem that the functions $A_Mx^*$ and $Q_M x^*$
are strongly progressively measurable for each $x^* \in X^*$ (see e.g. \cite{HNVW1}).
Moreover, if $\phi:\mathbb R_+ \times \Omega \to X^*$ is strongly progressively measurable,
then $A_M\phi$ and $Q_M\phi$ are strongly progressively measurable as well.
\end{remark}

\begin{remark}
Let $H$ be a separable Hilbert space and $X$ be a separable Banach space.
In \cite{MR1, Ond, Ond1} cylindrical continuous martingales are considered for which
the quadratic variation operator has the form
\[\lb A_M(t)x^*, y^*\rb = \int_0^t (g^*x^*, g^*y^*)_H\ud s,\]
where $g: \mathbb R_+\times \Omega \to \mathcal L(H,X)$ is such that for all $x^*\in X^*$, $g^*x^*\in L^2_{\rm loc}(\mathbb R_+;H)$. In this case $[[M]]_t = \int_0^t \|gg^*\|\ud s$. Indeed,
\[\vv_{M}(b)(x^*, x^*) - \vv_{M}(a)(x^*, x^*)= \int_{(a,b]}\|g(s)^*x^*\|^2_H \ud s
\]
and hence the identity follows from Lemma \ref{lem:maxmeasuref}, Remark \ref{maxbormes}, Theorem \ref{thm:aMdef} and the separability of $X^*$.
\end{remark}

\subsection{Quadratic variation for local martingales}

In this section we will study the case where the cylindrical local martingale actually comes from a local martingale on $X$. We discuss several examples and compare our definition quadratic variation from Definition \ref{def:cylmart} to other definitions. In order to distinguish between martingales and cylindrical martingales we use the notation $\wt M$ for an $X$-valued martingale.

For a continuous local martingale $\widetilde{M}\colon \mathbb R_+ \times \Omega \to X$
we define the  \textit{associated}  cylindrical continuous martingale
$M:\mathbb R_+ \times X^* \to L^0(\Omega)$ by
\[
Mx^* = \langle\widetilde M,x^*\rangle,\,\,\,x^* \in X^*.
\]
It is a cylindrical continuous local martingale since if $(x^*_n)_{n\geq 1}\subseteq X^*$ vanishes as $n \to \infty$, then for all $t\geq 0$ almost all $\omega$
\[
\sup_{0\leq s\leq t}|\langle\widetilde M_s(\omega),x^*_n\rangle|\leq\|x^*_n\|\sup_{0\leq s\leq t}\|\widetilde M_s(\omega)\|\to 0\;\;\; n\to\infty,
\]
so $\langle\widetilde M,x^*_n\rangle\to 0$ in the ucp topology.

Below we explain several situations where one can check that the associated cylindrical continuous local martingale $M$ is an element of $\mathcal M_{\rm var}^{\rm loc}(X)$. In general this is not true (see Example \ref{ex:countexbanval}).

First we recall some standard notation in the case $H$ is a separable Hilbert space. Let $\wt{M}:\mathbb R_+\times\Omega\to H$ be a continuous local martingale. Then the quadratic variation is defined by
\begin{equation}\label{eq:Mtclassic}
[\wt M]_t = \mathbb P-\lim_{{\rm mesh}\to 0}  \sum_{n=1}^N \|M_{t_n}- M_{t_{n-1}}\|^2.
\end{equation}
where $0=t_0<t_1<\ldots<t_N=t$.
It is well known that this limit exists in the ucp sense (see \cite[2.6 and 3.2]{MP}) and the limit coincides with the unique increasing and continuous process starting at zero such that $\|\wt M\|^2 - [\wt M]$ is a continuous local martingale. Moreover, one can always choose a sequence of partitions with ${\rm mesh}\to 0$ for which a.s.\ uniform convergence on compact intervals holds.

Observe that for an orthonormal basis $(h_n)_{n\geq1}$, letting $M^n_t = (\wt M_t, h_n)_H$ we find that almost surely for all $t\geq 0$
\[\wt M_t = \sum_{n\geq 1} M_t^n h_n\]
with convergence in $H$. Moreover, the following identity for the quadratic variation  holds (see \cite[Chapter 14.3]{MP}):
a.s.
\begin{align}\label{eq:quadtrace}
[\wt M]_t = \sum_{n\geq 1} [M^n]_t, \ \ \text{for all $t\geq 0$}.
\end{align}

Next we first consider two finite dimensional examples before returning to the infinite dimensional setting.
\begin{example}
Let $M \in \mathcal M_{\rm var}^{\rm loc}(\mathbb R)$. Then $\wt M = M 1$ is a continuous
real-valued local martingale, $[[M]] = [\wt M]$ and $Q_M=1$ (where $Q_M$ is as in Proposition \ref{Q_M}).
\end{example}

\begin{example}\label{ex:findim}
Let $d \geq 1$ and $H = \mathbb R^d$.
Again let $M\in \mathcal M_{\rm var}^{\rm loc}(H)$.
Let $h_1,\ldots,h_d$ be an orthonormal basis in $H$. Then $\wt{M} = \sum_{n=1}^d M h_n\otimes h_n$ defines a continuous $H$-valued local martingale.
Moreover, its quadratic variation satisfies
\[[\wt M]_t = \sum_{n=1}^d [Mh_n]_t, \ \ t\geq 0.\]
and in particular the right-hand side does not depend on the choice of the orthonormal basis.
It follows that
\[
\frac{[\wt M]_t -[\wt M]_s}{d} \leq \sup_{\|h\|=1} ([Mh]_t - [Mh]_s) \leq  [\wt M]_t -[\wt M]_s,\,\,\, t>s\geq0,
\]
and hence from the definition of $[[M]]$ we see that
\[
\frac{[\wt M]_t -[\wt M]_s}{d} \leq [[M]]_t - [[M]]_s \leq  [\wt M]_t -[\wt M]_s,\,\,\, t>s\geq0,
\]
which means that the Lebesgue-Stieltjes measures $\mu_{[\wt M]}$ and $\mu_{[[M]]}$ are equivalent a.s.
\end{example}

\begin{example}\label{ex:martingaleassociated}
Let $H$ be a separable Hilbert space again and let $\widetilde M$ be an $H$-valued continuous local martingale.
The  \emph{quadratic variation operator} (see \cite[Chapter 14.3]{MP}) $\langle\widetilde M \rangle:\mathbb R_+ \times \Omega \to \mathcal L(H)$ is defined by
\[
\langle\langle\widetilde M \rangle_t h,g\rangle = [\langle\widetilde M,h\rangle,\langle\widetilde M , g\rangle]_t,
\;\;\;\; \omega\in \Omega, t\geq 0.
\]
To see that this is well-defined and bounded of norm at most $[M]_t$, choose partitions with decreasing mesh sizes such that the convergence in \eqref{eq:Mtclassic} holds on a set of full measure $\Omega_0$. Then a polarization argument shows that pointwise on $\Omega_0$,
\begin{align*}
|[\langle\widetilde M,h\rangle,\langle\widetilde M , g\rangle]_t| & = \lim \Big|\sum_{n=1}^N \langle\widetilde \Delta M_{t_n}, h\rangle  \langle\widetilde \Delta M_{t_n} , g\rangle\Big|
\\ & \leq \lim  \sum_{n=1}^N \|\Delta M_{t_n}\|^2 \|g\| \, \|h\| = [M]_t \|g\| \, \|h\|.
\end{align*}
The operator $\langle\widetilde M \rangle_t$ is positive and it follows from \eqref{eq:quadtrace} that for any orthonormal basis $(h_n)_{n\geq 1}$  of $H$, pointwise on $\Omega_0$ for all $t\geq 0$,
\[\sum_{n\geq 1}\lb \langle\widetilde M \rangle_t h_n, h_n\rb = \sum_{n\geq 1}[(\wt M, h_n)]_t = [M]_t.\]
Hence a.s.\ for all $t\geq 0$, $\langle\widetilde M \rangle_t$ a trace class operator and  $\text{Tr}\langle\widetilde M \rangle_t = [M]_t$.

As in Proposition \ref{Q_M} one sees that there is a $q_{\widetilde M}:\mathbb R_+ \times \Omega
\to \mathcal L(H)$ such that for all $g,h\in H$,  $\lb q_{\wt M} g, h\rb $ is predictable and a.s.\
\[
\langle \widetilde M\rangle_t = \int_0^t q_{\widetilde M}(s)\ud[\widetilde M]_s, \;\;\; t > 0.
\]
Moreover, a.s.\ $q_{\widetilde M}$ is a trace class operator $\mu_{[\widetilde M]}$-a.a.,
and $\text{Tr} (q_{\widetilde M}(t)) = 1$~a.s. for all $t\geq 0$.

Define $M:\mathbb R_+ \times H\to L^0(\Omega)$ by the formula
\[
Mh := \langle\widetilde M,h\rangle,\,\,\,h \in H.
\]
We claim that $M\in M_{\rm var}^{\rm loc}(H)$. As before a.s.\ for $\forall t>s>0$,
$\sup_{\|h\| = 1}[Mh]_t - [Mh]_s \leq [\widetilde M]_t - [\widetilde M]_s$, so
$[[M]]_t - [[M]]_s \leq [\widetilde M]_t - [\widetilde M]_s$, which means that a.s.\ $[[M]]_t$ is continuous in $t$. Such $M$ is called the  \textit{associated} local $H$-cylindrical martingale.

Now we find that almost surely, for all $h,g \in H$ and $t \geq 0$
\[
\int_0^t \langle Q_M(s) g,h\rangle \ud [[M]]_s = [Mh,Mg]_t =
[\langle \widetilde M,h\rangle,\langle \widetilde M,g\rangle]_t = \int_0^t \langle q_{\widetilde M}(s)
g,h\rangle\ud [\widetilde M]_s.
\]

Moreover, an approximation argument yields that for all elementary progressive processes $\phi,\psi:\mathbb R_+\times\Omega\to H$
\begin{equation}\label{cylhil}
\int_0^{\infty}\langle Q_{M}(s) \phi(s),\psi(s)\rangle \ud [[M]]_s =
\int_0^{\infty}\langle q_{\widetilde M}(s) \phi(s),\psi(s)\rangle\ud [\widetilde M]_s.
\end{equation}
\end{example}

\begin{remark}
Example \ref{ex:martingaleassociated} illustrates some of the advantages using $[[M]]$ instead of $[M]$.
Indeed, $[M]$ is rather large and in order to compensate for this $q_M$ has to be small (of trace class). On the other hand $[[M]]$  is so small that only the boundedness of $Q_M$ is needed. The above becomes even more clear in the cylindrical case, where $[M]$ and $q_M$ are not defined at all.
\end{remark}

Let $X$ be a Banach space, $\widetilde{M}\colon \mathbb R_+ \times \Omega \to X$
be a continuous local martingale. Then we say that $\widetilde{M}$ has
a  \textit{scalar quadratic variation} (see \cite[Definition 4.1]{DGFR}),
if for any $t >0$
\begin{equation}\label{banachvalued}
 [\widetilde{M}]^{\varepsilon}_t :=
\int_0^t \frac{\|\widetilde{M}_{s+\varepsilon}-\widetilde{M}_s\|^2}{\varepsilon}\ud s
\end{equation}
has a ucp limit as $\varepsilon \to 0$. In this case the limit will be denoted by $[\widetilde{M}]_t:= \mathbb P-\lim_{\varepsilon\to 0}[\widetilde{M}]_t^{\varepsilon}$.
Since in the Hilbert space case the above limit coincides with the previously defined quadratic variation, there is no risk of confusion here (see \cite[Remark~4.3.3-4.3.4]{DGFR}).

\smallskip

Outside the Hilbert space setting it is not so simple to determine whether the scalar quadratic variation exists. Also note that the definition can not be extended to cylindrical (local) martingales. In the next example we show that the existence of $[M]$ implies the existence of $[[M]]$.

\begin{example}
Let $\widetilde M$ be an $X$-valued continuous local martingale
with a scalar quadratic variation. Then the associated cylindrical continuous local martingale
$Mx^* := \langle\widetilde M,x^*\rangle$ for $x^* \in X^*$ is in $\mathcal M_{\rm var}^{\rm loc}(X)$.
Indeed, choose a sequence $\varepsilon_n\to 0$ such that the limit in \eqref{banachvalued} converges uniformly on compact intervals on a set of full measure $\Omega_0$.
Then for every $\omega\in \Omega_0$, $t>s\geq 0$, $x^*\in X^*$,
\begin{equation*}
 [Mx^*]_t - [Mx^*]_s  = \lim_{n\to \infty}\int_s^t\frac{|(\widetilde Mx^*)_{ r+\varepsilon_n}-(\widetilde Mx^*)_{r}|^2}{\varepsilon_n}\ud { r}
\leq ([\widetilde M]_t - [\widetilde M]_s) \|x^*\|.
\end{equation*}
Therefore, $[[M]]$ exists and for all $\omega_0\in \Omega$, $t> s\geq 0$, $[[M]]_t - [[M]]_s \leq [\widetilde M]_t - [\widetilde M]_s$.  With a similar argument one sees that the existence of the tensor quadratic variations of \cite{DGFR} implies the existence of $[[M]]$.

It follows from Example \ref{ex:countexbanval} that there are martingales which do not admit a scalar (or tensor) quadratic variation. We do not know if the existence of $[[M]]$ implies that $[M]$ (or its tensor quadratic variation) exists in general.
\end{example}

\subsection{Cylindrical martingales and stochastic integrals}
Let $X, Y$ be two Banach spaces, $x^* \in X^*, y \in Y$. We denote by $x^* \otimes y\in\mathcal L(X,Y)$
the following linear operator: $x^* \otimes y: x \mapsto \langle x^*,x\rangle y$.

Let $X$ be a Banach space. The process $\Phi: \mathbb R_+ \times \Omega \to \mathcal L(H,X)$ is called  \textit{elementary progressive}
with respect to the filtration $\mathbb F = (\mathcal F_t)_{t \in \mathbb R_+}$ if it is of the form
\[
\Phi(t,\omega) = \sum_{n=1}^N\sum_{m=1}^M \mathbf 1_{(t_{n-1},t_n]\times B_{mn}}(t,\omega)
\sum_{k=1}^K h_k \otimes x_{kmn},
\]
where $0 \leq t_0 < \ldots < t_n <\infty$, for each $n = 1,\ldots, N$ the sets
$B_{1n},\ldots,B_{Mn}\in \mathcal F_{t_{n-1}}$ and vectors $h_1,\ldots,h_K$ are orthogonal.
For each elementary progressive
$\Phi$ we define the stochastic integral with respect to $M\in \mathcal M_{\rm var}^{\rm loc}(H)$
as an element of $L^0(\Omega; C_b(\mathbb R_+;X))$ as
\begin{equation}\label{intnorm}
 \int_0^t \Phi(s) \ud M(s) = \sum_{n=1}^N\sum_{m=1}^M \mathbf 1_{B_{mn}}
\sum_{k=1}^K (M(t_n\wedge t)h_k - M(t_{n-1}\wedge t)h_k)x_{kmn}.
\end{equation}
Often we will write $\Phi \cdot M$ for the process $\int_0^\cdot \Phi(s) \ud M(s)$.

\begin{remark}\label{realvalued}
For all progressively measurable processes $\phi:\mathbb R_+\times\Omega\to \calL(H, \mathbb R)$ with $\phi Q_M^{1/2}\in L^2(\mathbb R_+,[[M]];\calL(H,\mathbb R))$ one has
\begin{equation}\label{eq:quadvarsimpleint}
 \Big[\int_0^{\cdot} \phi \ud M\Big]_t = \int_0^t \phi(s) Q_M(s) \phi^*(s) \ud [[M]]_s.
\end{equation}
This can be proved analogously to \cite[(14.7.4)]{MP}.

One can also prove that in the situation above for each stopping time $\tau:\Omega \to \mathbb R_+$ a.s.\ for all $t\geq 0$
\begin{equation}\label{eq:optstoppingcylcase}
 \Bigl(\int_0^{\cdot} \phi \ud M\Bigr)^{t\wedge\tau} = \int_0^{t} \phi(s) \mathbf 1_{s\leq\tau } \ud M_s = \int_0^{t} \phi \ud M^{\tau}.
\end{equation}
If the domain of $\phi$ is in a fixed finite dimensional subspace $H_0\subseteq H$, then \eqref{eq:optstoppingcylcase} is an obvious multidimensional corollary of \cite[Proposition~17.15]{Kal}. For general $\phi$ it follows from an approximation argument.
Indeed, let $\phi_n:\mathbb R_+\times\Omega\to \calL(H_n, \mathbb R)$, where $H_n\subseteq H$ is fixed finite dimensional for each $n\geq 0$, be such that $\phi_nQ_M^{1/2} \to\phi Q_M^{1/2}$ in $L^2(\mathbb R_+,[[M]];\calL(H,\mathbb R))$ a.s.\ Then thanks to Lemma \ref{lemma:stoppingtimequadvar} $\phi_n Q_M^{1/2}\to\phi Q_M^{1/2}$ in $L^2(\mathbb R_+,[[M^{\tau}]];\calL(H,\mathbb R))$ a.s.\ and $\phi_n\mathbf 1_{\cdot\leq\tau } Q_M^{1/2}\to\phi 1_{\cdot\leq\tau } Q_M^{1/2}$ in $L^2(\mathbb R_+,[[M]];\calL(H,\mathbb R))$ a.s.\ So, using \eqref{eq:optstoppingcylcase} for $\phi_n$, \eqref{eq:quadvarsimpleint} and Remark \ref{rem:Kal176} one obtains \eqref{eq:optstoppingcylcase} for general $\phi$.

\end{remark}

\begin{remark}\label{rem3}
It follows from Remark \ref{rem:Kal176}
that for each finite dimensional subspaces $X_0\subseteq X$ the definition of the stochastic integral can be extended
to all strongly progressively measurable processes
$\Phi\colon \mathbb R_+\times \Omega \to \mathcal L(H,X)$ that take values in $\mathcal L(H,X_0)$,
and satisfy $\Phi\,Q_M^{1/2}\in L^2(\mathbb R_+,[[M]]; \mathcal L(H,X))$ a.s.\ (or equivalently $\Phi\,Q_M^{1/2}$
is scalarly in $L^2(\mathbb R_+,[[M]]; H)$ a.s.).  In order to deduce this result from the one-dimensional case
one can approximate $\Phi$ by a process which is supported on a finite dimensional subspace of $H$ and use Remark \ref{rem:Kal176} together with \eqref{eq:quadvarsimpleint} and the fact that $X_0$ is isomorphic to $\mathbb R^d$ for some $d\geq 1$ since it is finite dimensional.
The space of stochastic integrable $\Phi$ will be characterized in Theorem \ref{main}.
\end{remark}

\begin{proposition}\label{prop:cylmartBM}
Let $H$ be a Hilbert space. Let $N\in \mathcal M_{\rm var}^{\rm loc}(H)$.
Let $\Phi:\mathbb R_+ \times\Omega \to \mathcal L(H,X)$ be such that for each $x^*\in X^*$, $\Phi^* x^*$ is progressively measurable and assume that for all $x^*\in X^*$, $\omega \in \Omega_0$, $\lb \Phi(\omega) Q_N(\omega) \Phi^*(\omega) x^*, x^*\rb \in L^1_{\rm loc}(\mathbb R_+,[[N]](\omega))$.
Define a cylindrical continuous local martingale $M:=\int \Phi \ud N$ by
\begin{align}\label{intcyl}
M x^*(t) := \int_0^t \Phi^*x^*\, \ud N,\;\;\; x^*\in X^*.
\end{align}
Then $M\in \mathcal M_{\rm var}^{\rm loc}(X)$ if and only if $\|\Phi Q_N \Phi^*\|\in L^1_{\rm loc}(\mathbb R_+, [[N]])$ a.s.\
In this case,
\begin{equation}\label{eq:quadvarint}
 [[M]]_t = \int_0^t \|\Phi(s) Q_N\Phi^*(s)\| \ud [[N]],  \ \ t\geq 0,
\end{equation}
\begin{align*}
 \langle A_M(t) x^*,y^*\rangle &  = \int_0^t \langle \Phi(s) Q_N \Phi^*(s)x^*,y^*\rangle \ud [[N]]_s, \ \ t\geq 0, x^*, y^*\in X^*,
\\ Q_M(s) &= \frac{\Phi(s) Q_N(s) \Phi^*(s)}{\|\Phi(s)Q_N(s)\Phi^*(s)\|}, \ \ \text{for $\mu_{[[N]]}$-almost all $s\in \mathbb R_+$.}
\end{align*}
\end{proposition}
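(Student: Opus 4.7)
The plan is to reduce the proposition to the scalar stochastic-integral formula \eqref{eq:quadvarsimpleint} and then identify $[[M]]$ via the sup-of-measures machinery in Lemma \ref{lem:maxmeasuref}. For each $x^*\in X^*$ the scalar process $Mx^* = \int \Phi^*x^* \, dN$ fits into the framework of Remark \ref{realvalued}, because the scalar integrand $\phi = \Phi^*x^*$ satisfies $\phi Q_N \phi^* = \langle \Phi Q_N \Phi^*x^*,x^*\rangle$, which lies in $L^1_{\rm loc}(\mathbb R_+,[[N]])$ a.s.\ by hypothesis. Applying \eqref{eq:quadvarsimpleint} together with polarization yields, almost surely,
\[
[Mx^*, My^*]_t = \int_0^t \langle \Phi(s) Q_N(s) \Phi^*(s) x^*, y^*\rangle \, d[[N]]_s.
\]
In particular $\mu_{[Mx^*]}$ is absolutely continuous with respect to $\mu_{[[N]]}$ with density $\langle \Phi Q_N \Phi^* x^*, x^*\rangle$, and this identity will serve as the formula for $\langle A_M(t)x^*,y^*\rangle$ once $M\in\mathcal M_{\rm var}^{\rm loc}(X)$ has been established.

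Next, by Proposition \ref{Q_M} the operator $Q_N$ is positive self-adjoint $\mu_N$-a.s., so the composition $\Phi Q_N \Phi^*\in\calL(X^*,X^{**})$ is also positive self-adjoint, and \eqref{eq:normB} gives
\[
\|\Phi(s)Q_N(s)\Phi^*(s)\| = \sup_{\|x^*\|\leq 1}\langle \Phi(s)Q_N(s)\Phi^*(s)x^*, x^*\rangle.
\]
Since $X^*$ is separable and $x^*\mapsto\langle \Phi Q_N \Phi^*x^*,x^*\rangle$ is continuous, a single countable dense sequence $(x_n^*)_{n\geq 1}$ in the unit ball of $X^*$ realizes this supremum pointwise $\mu_N$-a.s. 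Applying Lemma \ref{lem:maxmeasuref} pathwise with base measure $\mu_{[[N]](\omega)}$ and densities $f_n(s) = \langle \Phi Q_N \Phi^* x_n^*, x_n^*\rangle$ yields
\[
\check\mu := \sup_{n\geq 1}\mu_{[Mx_n^*]} = \sup_{\|x^*\|\leq 1}\mu_{[Mx^*]}, \qquad \check\mu(A) = \int_A \|\Phi Q_N \Phi^*\| \, d[[N]]_s.
\]

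By Proposition \ref{prop:[[M]]andF}, (3)$\Leftrightarrow$(1), $M\in \mathcal M_{\rm var}^{\rm loc}(X)$ exactly when $\check\mu$ is locally finite, i.e.\ when $\|\Phi Q_N \Phi^*\|\in L^1_{\rm loc}(\mathbb R_+,[[N]])$ a.s., in which case $[[M]]_t = \check\mu([0,t])$ and \eqref{eq:quadvarint} is proved. The formula for $Q_M$ then follows by combining \eqref{eq:quadvarint} and the expression for $A_M$ from the first step with the defining identity \eqref{eq:derivativeAM}: on $\{\|\Phi Q_N \Phi^*\|>0\}$ one may rewrite $d[[N]] = \|\Phi Q_N \Phi^*\|^{-1} d[[M]]$, and uniqueness of the Radon--Nikodym derivative in \eqref{eq:derivativeAM} forces $Q_M(s) = \Phi(s) Q_N(s) \Phi^*(s)/\|\Phi(s)Q_N(s)\Phi^*(s)\|$ $\mu_M$-a.e.; on the complementary set both sides are zero.

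The main technical subtlety is the measurability/countable-sup step: to apply Lemma \ref{lem:maxmeasuref} pathwise one needs a \emph{single} countable sequence $(x_n^*)$ simultaneously witnessing $\|\Phi(s)Q_N(s)\Phi^*(s)\|$ at $\mu_N$-almost every $(s,\omega)$. This rests on positivity and self-adjointness of $\Phi Q_N \Phi^*$, which give \eqref{eq:normB}, together with the separability of $X^*$. With this in place the remainder is a straightforward assembly of Remark \ref{realvalued}, Proposition \ref{prop:[[M]]andF} and Proposition \ref{Q_M}.
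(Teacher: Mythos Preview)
Your argument is essentially the paper's own: compute $[Mx^*]$ via \eqref{eq:quadvarsimpleint}, identify the supremum measure through Lemma \ref{lem:maxmeasuref} and the separability of $X^*$, and read off $[[M]]$, $A_M$, $Q_M$. The paper phrases the $[[M]]$ computation via the partition formula \eqref{eq:funcG} and Remark \ref{maxbormes}, you via Proposition \ref{prop:[[M]]andF}; these are equivalent routes.

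One point you skip that the paper treats explicitly: the statement asserts that $M$ \emph{is} a cylindrical continuous local martingale, i.e.\ that $x^*\mapsto Mx^*$ is continuous into $\mathcal M^{\rm loc}$, and Proposition \ref{prop:[[M]]andF} presupposes this. Under your hypotheses you only know $\langle\Phi Q_N\Phi^*x^*,x^*\rangle\in L^1_{\rm loc}(\mathbb R_+,[[N]])$ for each individual $x^*$, with no a~priori uniform control, so ucp-continuity of $x^*\mapsto Mx^*$ is not automatic. The paper closes this by applying the closed graph theorem pathwise to the map $X^*\ni x^*\mapsto\langle\Phi Q_N\Phi^*x^*,\cdot\rangle\in L^1(0,T,[[N]])$, obtaining a (random) constant $C_T(\omega)$ with $\|\langle\Phi Q_N\Phi^*x^*,y^*\rangle\|_{L^1(0,T,[[N]])}\leq C_T(\omega)\|x^*\|\,\|y^*\|$; then $[Mx_n^*]_T\to[Mx^*]_T$ a.s.\ whenever $x_n^*\to x^*$, and Remark \ref{rem:Kal176} gives the ucp continuity. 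You should add this step (or note that when $\|\Phi Q_N\Phi^*\|\in L^1_{\rm loc}$ the domination is immediate, and in the converse direction $M\in\mathcal M_{\rm var}^{\rm loc}(X)$ already forces the cylindrical-martingale property).
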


In this section there are two definitions of a stochastic integral (see \eqref{intnorm} and \eqref{intcyl}).
One can check that both integrals coincide in the sense that \eqref{intcyl} would be the cylindrical continuous martingale associated to the one given in \eqref{intnorm}.

\begin{proof}
We first show that $M$ is a cylindrical continuous local martingale. Clearly, each $M x^*$ is a continuous local martingale. It remains to prove the continuity of $x^*\mapsto M x^*$ in the ucp topology. Fix $T>0$. Let $\Omega_0$ be a set of full measure such that  for $\omega\in \Omega_0$, $t\mapsto \lb \Phi(t,\omega) Q_N(t,\omega)^*\Phi(t,\omega)^* x^*, x^*\rb\in L^1(0,T)$.
By the closed graph theorem for each $\omega \in \Omega_0$ there is a constant $C_T(\omega)$ such that
\[\|\lb \Phi(\cdot,\omega) Q_N(\cdot, \omega)^*\Phi(\cdot,\omega)^* x^*, y^*\rb\|_{L^1(0,T,[[N]](\omega))} \leq C_T(\omega) \|x^*\|\, \|y^*\|.\]
Also note that $[Mx^*]_t = \int_0^t\langle\Phi(s) { Q_N} \Phi^*(s)x^*,x^*\rangle \ud [[N]]$ for all $x^*\in X^*$.
Now if $x_n^*\to x^*$ as $n\to \infty$, it follows from the above estimate and identity that $[M x^*_n]_T\to [M x^*]_T$ on $\Omega_0$, and hence by the remarks in Subsection \ref{sec:defcyllocalmart} also $Mx^*_n\to Mx^*$ uniformly on $[0,T]$ in probability. Since $T>0$ was arbitrary, we find that $M$ is a cylindrical continuous local martingale.

To prove the equivalence it suffices to observe that
\begin{equation}\label{qvint}
\begin{split}
[[M]]_t &= \lim_{{\rm mesh}\to 0}\sum_{j=1}^J \sup_{x^*\in X^*, \|x^*\|=1}( [Nx^*]_{t_j}-[Nx^*]_{t_{j-1}})\\
&= \lim_{{\rm mesh}\to 0}\sum_{j=1}^J \sup_{x^*\in X^*, \|x^*\|=1}
\int_{t_{j-1}}^{t_j}\langle\Phi(s) Q_N(s) \Phi^*(s)x^*,x^*\rangle\ud [[N]]_s
\\ &  = \int_0^t \|\Phi(s) Q_N({s}) \Phi^*(s)\| \ud [[N]]_s,
\end{split}
\end{equation}
where the last equality holds true thanks to Lemma \ref{lem:maxmeasuref}, Remark \ref{maxbormes} and the separability of $X^*$. At the same time this proves the required formula for $[[M]]_t$. In order to find $A_M$ it suffices to note that for all $x^*,y^* \in X^*$:
\begin{align*}
\langle A_M(t) x^*,y^*\rangle_H & = [Mx^*,My^*]_t  = \int_0^t \langle  Q_N(s) \Phi^*(s)x^*,\Phi^*(s)y^*\rangle \ud [[N]]_s
\\ & =\int_0^t \langle \Phi(s) Q_N(s) \Phi^*(s)x^*,y^*\rangle \ud [[N]]_s.
\end{align*}
Since $\ud [[M]]_s = \|\Phi(s) Q_N(s) \Phi^*(s)\|\ud [[N]]_s$ the required identity for $Q_M$ follows from Proposition \ref{Q_M}.
\end{proof}

Next we present an example of a situation where $\widetilde M$ is a continuous martingale which associated cylindrical continuous local martingale $M$ is not in $M_{\rm var}^{\rm loc}(X)$.
\begin{example}\label{ex:countexbanval}
Let $X = \ell^p$ with $p\in (2, \infty)$ and let $W$ be a one-dimensional Brownian motion. It follows from \cite[Example 3.4]{RS} that
there exists a continuous martingale $\wt M:\mathbb R_+\times\Omega\to X$ such that
\[\lb \wt M_t, x^*\rb = \int_0^t \lb \phi(s), x^*\rb \, \ud W(s),\]
where $\phi:\mathbb R_+\to X$ is such that $\lb \phi, x^*\rb\in L^2(\mathbb R_+)$ for all $x^*\in X^*$,
but on the other hand $\|\phi\|_{L^2(0,1;X)}= \infty$.
Therefore, by Proposition \ref{prop:cylmartBM} the associated cylindrical martingale satisfies $[[M]]_1 = \infty$ a.s.,
and hence $M\notin \mathcal M_{\rm var}^{\rm loc}(X)$.

The same construction can be done for any Banach space $X$ which does not have cotype $2$ (see \cite[Proposition 6.2]{RS} and \cite[Theorem 11.6]{Ngamma}).
\end{example}

In the next example we construct a cylindrical continuous martingale in a Hilbert space which is not in $\mathcal M_{\rm var}^{\rm loc}(H)$.
\begin{example}\label{countex}
Let $H$ be a separable Hilbert space with an orthonormal basis $(h_n)_{n \geq 1}$ and $W$ be an one-dimensional Brownian motion.
Let $[0,1] = \cup_{n=1}^{\infty} A_n$ be a partition of $[0,1]$ into pairwise disjoint sets. Let $\psi:\mathbb R_+\times \Omega \to H$ be a deterministic function such that $\psi(t) = \sum_{n=1}^{\infty}|A_n|^{-1/2}\mathbf 1_{A_n}(t)h_n$. For each $h\in H$ one has that
\begin{equation}\label{eq:countex}
 \int_{\mathbb R_+} \langle\psi(s), h\rangle^2\ud s =
\int_{\mathbb R_+} \sum_{n=1}^{\infty}|A_n|^{-1}\mathbf 1_{A_n}(t)\langle h_n, h\rangle^2\ud s= \sum_{n=1}^{\infty} \langle h_n,h\rangle^2 = \|h\|^2,
\end{equation}
therefore $\langle\psi, h\rangle$ is stochastically integrable with respect to $W$ and one can define $M:H\to \mathcal M^{\rm loc}$ by $Mh = \langle\psi, h\rangle\cdot W$. Obviously $M$ is linear. Moreover, $Mh$ is an $L^2$-martingale for each $h\in H$ and thanks to \eqref{eq:countex} and the It\^o isometry, $\|(Mh)_{\infty}\|_{L^2(\Omega)} = \Big\|\int_0^1 \lb \psi, h\rb \ud W\Big\|_{L^2(\Omega)} = \|h\|$. So $Mh \to 0$ as $h \to 0$ in the ucp topology by Remark \ref{rem:Kal176}, hence $M$ is a cylindrical continuous local martingale. On the other hand due to \eqref{eq:quadvarint} one concludes that
\[
[[M]]_1 = \int_{0}^1 \|\psi(s)\|^2\ud s
= \int_{0}^1\sum_{n=1}^{\infty}|A_n|^{-1}\mathbf 1_{A_n}(s) \|h_n\|^2\ud s
=\sum_{n=1}^{\infty}\|h_n\|^2 = \infty.
\]
Consequently, $M\notin \mathcal M_{\rm var}^{\rm loc}(H)$.
\end{example}

\subsection{Quadratic Dol\'eans measure}

Recall from Definition \ref{def:cylDoleansmeasure} that $\mu_M$ is the cylindrical Dol\'eans measure associated with $M$. Since it only depends on $[[M]]$ sometimes the information get lost. In the next definition we define a bilinear-valued measure associated to $M$ (see \cite[Section 15.3]{MP}).

\begin{definition}\label{def:quaddoleansmeasure}
Let $M$ be a cylindrical continuous martingale such that $M(t)x^*\in L^2(\Omega)$ for all $t\geq 0$. Define the  \textit{quadratic Dol\'eans measure} $\bar{\mu}_M:\mathcal P \to \mathbf B( X^{*}, X^{*}) $ by
\[
\langle\bar{\mu}_M(F \times (s,t]),x^* \otimes y^*\rangle = \mathbb E[\mathbf 1_{F} ([Mx^*, My^*]_t - [Mx^*,My^*]_s)]
\]
for every predictable rectangle $F \times (s,t]$ and for every $x^*,y^* \in X^*$.
\end{definition}
A disadvantage of the quadratic Dol\'eans measure is that it can only be considered if $\lb M, x^*\rb_t\in L^2(\Omega)$. Such a problem does not occur for $\mu_{[[M]]}$, $A_M$ and $Q_M$ as in Proposition \ref{Q_M}.

Note that $\bar{\mu}_M$ defines a vector measure with variation (see \cite{DU,VTC}) given by
\begin{equation}\label{eq:varvectormeasure}
 |\bar{\mu}_M|(A) = \sup\sum_{n=1}^N\|\bar{\mu}_M(A_n)\|,
\end{equation}
where the supremum is taken over all the partitions $A=\bigcup_{n=1}^N A_n$. If $|\bar{\mu}_M|([0,\infty)\times\Omega)<\infty$, then it is a standard fact that the variation $|\bar{\mu}_M|$ defines a measure again and $|\bar{\mu}_M| \ll \mu_M$ (see \cite{DU}). Under the assumption that $\bar{\mu}_M$ has bounded variation a stochastic integration theory was developed in \cite[Chapter 16]{MP}.
The next result connects the measure $\mu_M$ from Definition \ref{def:cylDoleansmeasure} the operator $Q_M$ from Proposition \ref{Q_M} and the above vector measure $\bar{\mu}_{M}$. It provides a bridge between the theory in \cite[Chapter 16]{MP} and our setting.
\begin{proposition}\label{prop:dolquadvar}
Assume $M$ is a cylindrical continuous martingale such that $\lb M, x^*\rb_t\in L^2(\Omega)$ for all $t\geq 0$. Then
the following assertions are equivalent
\begin{enumerate}
\item $M\in \mathcal M_{\rm var}^{\rm loc}(X)$ and $\mu_M([0,\infty)\times\Omega)<\infty$
\item $\bar{\mu}_M$ has bounded variation.
\end{enumerate}
In that case  $d\bar{\mu}_M = Q_M d \mu_M$ in a weak sense, namely
\begin{equation}\label{dolquadvar}
\langle \bar{\mu}_M(A), x^*\otimes y^*\rangle = \int_{A}\langle Q_Mx^*,y^* \rangle \ud\mu_M,\;\;\;
x^*,y^* \in X^*, A \in \mathcal P.
\end{equation}
Moreover, $|\bar{\mu}_M|=\mu_M$.
\end{proposition}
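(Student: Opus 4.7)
The plan is to prove the chain (1) $\Rightarrow$ \eqref{dolquadvar} $\Rightarrow |\bar\mu_M|=\mu_M\Rightarrow$ (2) first, and then close the loop with (2) $\Rightarrow$ (1). The heart of the whole argument is \eqref{dolquadvar}: once it is in place, the equivalence and the identity $|\bar\mu_M|=\mu_M$ will both drop out by combining the sup-of-measures machinery of Section \ref{sec:prel} with Proposition \ref{Q_M}(3) ($\|Q_M\|=1$ $\mu_M$-a.e.).

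Assume (1). Proposition \ref{Q_M} gives $\langle A_M(t) x^*, y^*\rangle=\int_0^t\langle Q_M(s)x^*,y^*\rangle\,\ud[[M]]_s$, and inserting this into Definition \ref{def:quaddoleansmeasure} on a predictable rectangle $F\times(s,t]$ with $F\in\mathcal F_s$ immediately yields \eqref{dolquadvar} on rectangles. Both sides are countably additive in $A$, so a monotone class argument extends \eqref{dolquadvar} to all $A\in\mathcal P$. By Proposition \ref{Q_M}(2) the form $\bar\mu_M(A)$ is positive self-adjoint for every $A$, hence Remark \ref{posoper} gives
\[
\|\bar\mu_M(A)\|=\sup_{\|x^*\|=1}\langle\bar\mu_M(A),x^*\otimes x^*\rangle=\sup_{\|x^*\|=1}\int_A\langle Q_M x^*,x^*\rangle\,\ud\mu_M.
\]
Separability of $X^*$ lets me fix a countable dense $(x_n^*)_{n\geq 1}$ in the unit ball and replace the supremum by one over that sequence. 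Writing $\nu_{x^*}(A):=\int_A\langle Q_M x^*,x^*\rangle\,\ud\mu_M$, formula \eqref{eq:supofmeas} of Lemma \ref{maxmes} combined with the definition \eqref{eq:varvectormeasure} identifies $|\bar\mu_M|$ with the supremum measure $\sup_n\nu_{x_n^*}$, while Lemma \ref{lem:maxmeasuref} together with $\sup_n\langle Q_M x_n^*,x_n^*\rangle=\|Q_M\|=1$ $\mu_M$-a.e.\ identifies that supremum measure as $\mu_M$ itself. Hence $|\bar\mu_M|=\mu_M$; in particular (2) holds and the ``moreover'' clause is proved.

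It remains to show (2) $\Rightarrow$ (1). The hypothesis means $|\bar\mu_M|$ is a finite positive measure on $\mathcal P$, and directly from Definition \ref{def:quaddoleansmeasure} it does not charge sets of the form $\mathbb R_+\times F$ with $\mathbb P(F)=0$. Dol\'eans' representation theorem therefore produces a predictable right-continuous increasing process $V$ with $V_0=0$ and $|\bar\mu_M|(C)=\mathbb E\int_0^\infty\one_C\,\ud V$ for every $C\in\mathcal P$. For every $\|x^*\|=1$ this gives
\[
\mathbb E\one_F\bigl([Mx^*]_t-[Mx^*]_s\bigr)=\langle\bar\mu_M(F\times(s,t]),x^*\otimes x^*\rangle\leq\mathbb E\one_F(V_t-V_s),\qquad F\in\mathcal F_s.
\]
The main (and, I expect, the only delicate) step is to upgrade this expectation bound to the pathwise comparison $[Mx^*]_t-[Mx^*]_s\leq V_t-V_s$ a.s.\ for all $s\leq t$: $Y:=V-[Mx^*]$ is a predictable finite-variation process whose negative-variation process $Y^-$ is again predictable, so testing the displayed inequality against the indicator of the predictable support of $\ud Y^-$ forces $\mathbb E Y^-_\infty\leq 0$, hence $Y^-\equiv 0$ a.s. Applying this to the countable dense family $(x_n^*)_{n\geq 1}$ in the unit ball of $X^*$ and using right-continuity of $V$ together with continuity of each $[Mx_n^*]$, I obtain a single null set outside of which $\mu_{[Mx_n^*](\omega)}\leq\mu_{V(\omega)}$ for every $n$. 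Proposition \ref{prop:[[M]]andF}(3) then gives $M\in\mathcal M_{\rm var}^{\rm loc}(X)$ with $[[M]]\leq V$, whence $\mu_M([0,\infty)\times\Omega)\leq\mathbb E V_\infty=|\bar\mu_M|([0,\infty)\times\Omega)<\infty$, and (1) holds.
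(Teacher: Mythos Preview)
Your proof is correct. The forward direction (1)$\Rightarrow$\eqref{dolquadvar}$\Rightarrow|\bar\mu_M|=\mu_M\Rightarrow$(2) matches the paper's argument almost exactly: both derive \eqref{dolquadvar} on predictable rectangles from Proposition~\ref{Q_M} and then identify $|\bar\mu_M|$ via the sup-of-measures lemmas together with $\|Q_M\|=1$ $\mu_M$-a.e.

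For (2)$\Rightarrow$(1) you take a genuinely different route. The paper invokes its tailor-made Lemma~\ref{lemma:technicalquaddol}, which shows directly that the pathwise sup-of-increments process built from the $[My_n^*]$ is predictable, continuous, and has Dol\'eans measure $\sup_n\mu_{My_n^*}=|\bar\mu_M|$; this immediately fits into Theorem~\ref{thm:aMdef}. You instead appeal to the classical Dol\'eans representation theorem to produce \emph{some} predictable increasing $V$ with $|\bar\mu_M|(C)=\mathbb E\int\one_C\,\ud V$, and then run a short pathwise comparison (the negative-variation argument for $Y=V-[Mx^*]$) to feed $V$ into Proposition~\ref{prop:[[M]]andF}(3). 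Your approach is arguably more transparent and uses only a standard external tool; the paper's approach is self-contained and has the bonus of identifying $V$ with $[[M]]$ from the outset rather than merely producing an upper bound. Either way the conclusion is the same, and your negative-variation step is sound because the predictable support of $\ud Y^-$ is a legitimate predictable test set.
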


The identity \eqref{dolquadvar} coincides with \cite[(16.1.1)]{MP}. To prove the above result we will need a technical lemma.
Let $f:\mathbb R_+\times\Omega\to [0,\infty]$ be an a.s.\ continuous increasing predictable process.
With slight abuse of terminology we say that the Dol\'eans measure of $f$ exists if
$C \mapsto \mathbb E \int_0^{\infty} \mathbf 1_C \ud f$ defines a finite measure on $\mathcal{P}$.

\begin{lemma}\label{lemma:technicalquaddol}
 Let $(f^n)_{n\geq 1}$ be a sequence of continuous predictable increasing processes on $\mathbb R_+$. Suppose that for all $n\geq 1$ the corresponding Dol\'eans measure $\mu_{n}$ of $f^n$ exists. Assume also that $\mu = \sup_{n\geq 1}\mu_n$ is of bounded variation. Then $F:\mathbb R_+\times\Omega \to \mathbb R_+$ defined by
 \begin{equation}\label{eq:maxquaddolmes3}
 F(t) = \lim_{\text{mesh}\to 0}\sum_{k=1}^{K} \sup_{n}(f^n(t_{k})-f^n(t_{k-1})),
 \end{equation}
 where the limit is taken over all partitions $0=t_0<\ldots<t_K = t$,
 is a predictable continuous increasing process and its Dol\'eans measure exists and equals $\mu$.
\end{lemma}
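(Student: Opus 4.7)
The plan is to define $F$ pathwise as the Lebesgue-Stieltjes function of the pathwise supremum measure, verify adaptedness, continuity, and monotonicity directly, and then identify the Dol\'eans measure of $F$ with $\mu$ through a Radon-Nikodym argument that approximates the countable supremum by finite ones.

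First I would set $\check{\mu}_\omega := \sup_{n\geq 1}\mu_{f^n(\omega)}$ pathwise in the sense of Lemma \ref{maxmes}, and define $F(t)(\omega) := \check{\mu}_\omega([0,t])$. Remark \ref{maxbormes} allows the partition in \eqref{maxbormesform} to range over rational endpoints, so $F$ coincides with \eqref{eq:maxquaddolmes3} and is adapted, since each inner sum $\sum_k \sup_{n\leq N}(f^n(t_k)-f^n(t_{k-1}))$ with $t_k\leq t$ is $\mathcal F_t$-measurable. Because $f^n(\cdot,\omega)$ is continuous, each $\mu_{f^n(\omega)}$ is non-atomic, and \eqref{eq:supofmeas} applied to a singleton (whose only partitions are trivial) yields $\check{\mu}_\omega(\{t\}) = 0$; hence $F$ is continuous at every $t$ where it is finite. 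Predictability then follows from adaptedness together with continuity.

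The heart of the argument is identifying the Dol\'eans measure $\nu(A) := \mathbb E\,\check{\mu}_\omega(A_\omega)$ with $\mu$. The inequality $\nu\geq\mu$ is immediate: the pathwise bound $\check{\mu}_\omega\geq \mu_{f^n(\omega)}$ integrates to $\nu\geq\mu_n$, and hence $\nu\geq\sup_n\mu_n = \mu$ by minimality. For the converse I would truncate: let $\check{\mu}_\omega^{(N)} := \sup_{1\leq n\leq N}\mu_{f^n(\omega)}$, let $F^N$ be the associated pathwise process, and set $\nu_N(A) := \mathbb E\,\check{\mu}_\omega^{(N)}(A_\omega)$. By Lemma \ref{lemma:limmaxmes} and monotone convergence $\nu_N\uparrow \nu$, while analogously $\sup_{n\leq N}\mu_n\uparrow \mu$, so it suffices to prove $\nu_N = \sup_{n\leq N}\mu_n$ for each fixed $N$.

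Fix $N$ and let $h^N := \sum_{i=1}^N f^i$, whose Dol\'eans measure is $\sigma^N := \sum_{i=1}^N \mu_i$. Each $\mu_{f^n(\omega)}$ is pathwise absolutely continuous with respect to $\mu_{h^N(\omega)}$, and Lemma \ref{lem:Lebesguediff} provides the predictable representative
\[
g_n(t,\omega) = \lim_{\varepsilon\downarrow 0}\frac{\mu_{f^n(\omega)}((t-\varepsilon,t])}{\mu_{h^N(\omega)}((t-\varepsilon,t])}.
\]
The same $g_n$ serves as $d\mu_n/d\sigma^N$ on $\mathcal P$, since for any bounded predictable $H$ the Dol\'eans identity yields $\int H\,d\mu_n = \mathbb E\int H g_n\,dh^N = \int H g_n\,d\sigma^N$. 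Applying Lemma \ref{lem:maxmeasuref} to the finite family $\{g_n\}_{n\leq N}$ simultaneously pathwise (against $\mu_{h^N(\omega)}$) and on $\mathcal P$ (against $\sigma^N$) shows that both suprema have density $\max_{n\leq N}g_n$, so integrating the pathwise identity gives
\[
\nu_N(A) \;=\; \int_A \max_{n\leq N} g_n\,d\sigma^N \;=\; \bigl(\sup_{n\leq N}\mu_n\bigr)(A),
\]
as required. The finiteness of $\mu$ then forces $F(t)$ to be integrable, hence a.s.\ finite, validating the earlier continuity claim globally. The main technical obstacle is the predictable identification of the pathwise Radon-Nikodym derivative, which is exactly what Lemma \ref{lem:Lebesguediff} is tailored to supply; the rest reduces to measure-theoretic bookkeeping on the tools built up in the preceding subsections.
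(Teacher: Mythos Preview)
Your argument is correct and follows essentially the same strategy as the paper: truncate to a finite supremum $F^N$, identify its Dol\'eans measure $\nu_N$ with $\sup_{n\leq N}\mu_n$ via a Radon--Nikodym argument that uses Lemma~\ref{lem:Lebesguediff} to obtain predictable densities, and then pass to the limit using Lemma~\ref{lemma:limmaxmes} and monotone convergence.

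The one noteworthy difference lies in the finite-$N$ step. The paper takes $\mu_{F^N(\omega)}$ itself as the reference measure, so that pathwise $\max_{n\leq N} d\mu_{f^n}/d\mu_{F^N}=1$ $\mu_{F^N}$-a.e.; it then builds a disjoint predictable partition $\{A_n\}$ of the set where the $n$-th derivative attains the value $1$ and uses $\nu_N(B\cap A_n)=\mu_n(B\cap A_n)$ to conclude $\nu_N\leq \sup_{n\leq N}\mu_n$. You instead take $h^N=\sum_{i\leq N} f^i$ as the common dominating process and invoke Lemma~\ref{lem:maxmeasuref} twice---once pathwise against $\mu_{h^N(\omega)}$ and once on $\mathcal P$ against $\sigma^N$---to obtain the single density $\max_{n\leq N} g_n$ on both levels, which immediately gives $\nu_N=\sup_{n\leq N}\mu_n$ without constructing the sets $A_n$. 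This is a slightly cleaner packaging of the same idea; the paper's version makes the partition explicit, while yours hides it inside the proof of Lemma~\ref{lem:maxmeasuref}. One presentational point: your deduction of predictability of $F$ from ``adaptedness together with continuity'' implicitly uses the a.s.\ finiteness that you only establish at the end. This is harmless---you can either postpone the predictability claim or, as the paper does, observe that $F$ is the pointwise limit of the predictable processes $F^N$.
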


\begin{proof}
 For each $N \geq 1$ define $F^N:\mathbb R_+\times\Omega \to \mathbb R_+$ as
  \[
 F^N(t) = \lim_{\text{mesh}\to 0}\sum_{k=1}^{K} \sup_{1\leq n\leq N}(f^n(t_{k})-f^n(t_{k-1})),\;\;\; t\geq 0,
 \]
 where the limit is taken over all partitions $0=t_0<\ldots<t_K = t$. Then $F^N$ is a predictable process by Remark \ref{maxbormes}. Moreover, it is continuous since the corresponding Lebesgue-Stieltjes measure is nonatomic by \eqref{eq:supofmeas}. Let us consider the corresponding Dol\'eans measure $\nu_N$ of $F^N$. We claim that
 \begin{equation}\label{eq:claimtechnicalsup}
 \nu_N =\sup_{1\leq n \leq N}\mu_n.
 \end{equation}
 Since $\nu_N\geq \mu_n$ for each given $n\leq N$, we have $\nu_N \geq \sup_{1\leq n \leq N}\mu_n$.
Also notice that $\nu_N \leq \sum_{1\leq n\leq N}\mu_n$.

It remains to show $``\leq"$ in \eqref{eq:claimtechnicalsup}. First of all by Remark \ref{maxbormes} a.s.\ $\mu_{F^N}(\omega) = \sup_{1\leq n\leq N} \mu_{f^n}(\omega)$. By Lemma \ref{lem:maxmeasuref} a.s.\ the maximum of the Radon-Nikodym derivatives satisfies $\max_{1\leq n\leq N} \frac {\ud\mu_{f^n}}{\ud\mu_{F^N}}(t) = 1$ for $\mu_{F^N}$-a.a.\ $t \in \mathbb R_+$. So by Lemma \ref{lem:Lebesguediff} a.s.\ for $\mu_{F^N}$-a.a. $t> 0$
 \begin{equation}\label{eq:maxquaddolmes}
   1=\max_{1\leq n\leq N} \frac {\ud\mu_{f^n}}{\ud\mu_{F^N}}(t)
 = \max_{1\leq n\leq N} \lim_{\eps \to 0} \frac{f^n(t) - f^n(t-\eps\wedge t)}{F^N(t)-F^N(t-\eps\wedge t)}.
 \end{equation}
 Notice, that for each $n\leq N$ the processes $t\mapsto f^n(t) - f^n(t-\eps\wedge t)$ and $t\mapsto F^N(t)-F^N(t-\eps\wedge t)$ are predictable and continuous. Therefore, the sets
 \[
 A_n := \{(t,\omega)\in \mathbb R_+\times \Omega: \lim_{\eps \to 0}\frac {f^n(t) - f^n(t-\eps\wedge t)}{F^N(t)-F^N(t-\eps\wedge t)} = 1\},\;\;\; 1\leq n\leq N,
 \]
 are in the predictable $\sigma$-algebra $\mathcal{P}$. Redefine these sets to make them disjoint: $A_n := A_n \setminus (\cup_{1\leq k < n} A_k)$. Then by \eqref{eq:maxquaddolmes} for each predictable rectangle $B\in \mathcal P$ we have that $\nu_N(A_n\cap B) = \mu_n(A_n\cap B)$. Clearly this extends to all $B\in \mathcal{P}$.
Now it follows that for all $B \in\mathcal P$
 \[
 {\nu}_N (B) = \sum_{1\leq n \leq N}\nu_{N}(B\cap A_n)=\sum_{1\leq n \leq N}\mu_{n}(B\cap A_n)
 \leq (\sup_{1\leq n \leq N}\mu_{n}) (B),
 \]
and hence \eqref{eq:claimtechnicalsup} holds. Letting $N\to\infty$ in \eqref{eq:claimtechnicalsup} by Lemma \ref{lemma:limmaxmes} we obtain
 \begin{equation}\label{eq:maxquaddolmes2}
\lim_{N\to \infty} \nu_N(A) = \lim_{N\to \infty}(\sup_{1\leq n\leq N}\mu_{n})(A) = \mu(A), \ \ \ A \in \mathcal P.
\end{equation}

By Lemma \ref{lemma:limmaxmes}, pointwise on $\mathbb R_+\times\Omega$, $F^N\to F$, where $F$ is as in \eqref{eq:maxquaddolmes3}. Notice that $\mathbb E F^N(t) = \nu^N(\Omega\times [0,t])\nearrow\mu(\Omega\times [0,t])< \infty$, and since $F^N(t) \nearrow F(t)$ we have that $\mu(\Omega\times [0,t]) = \mathbb E F(t)$, so $F(t)$ finite a.s. Moreover, $F$ is predictable as it is the pointwise limit of the predictable processes $F^N$.
By the monotone convergence theorem and~\eqref{eq:maxquaddolmes2} we find that for all $0\leq s<t$ and $A\in \mathcal F_s$,
\[\mathbb E \one_A (F(t) - F(s)) = \lim_{N\to \infty}\mathbb E \one_A(F^N(t) - F^{N}(s)) = \lim_{N\to \infty}\nu_N((s,t]\times A) = \mu((s,t]\times A),\]
which completes the proof.
\end{proof}

\begin{proof}[Proof of Proposition \ref{prop:dolquadvar}]
(1)$\Rightarrow$(2): Assume (1). Let $x^*,y^* \in X^*$. Then for $A = (a,b]\times F$ with $b>a\geq 0$ and $F \in \mathcal F_a$, it follows from Proposition \ref{Q_M} that
\begin{align*}
\langle \bar{\mu}_M(A), x^*\otimes y^*\rangle &= \mathbb E 1_{F} ([Mx^*,My^*]_b - [Mx^*,My^*]_a)\\
&=\int_{F}\int_a^b d\langle A_M(s)x^*,y^* \rangle\ud\mathbb P
\\ &= \int_{F}\int_a^b \langle Q_Mx^*,y^* \rangle \ud [[M]]_s\ud\mathbb P =
\int_{A}\langle Q_Mx^*,y^* \rangle\ud\mu_M.
\end{align*}
As in \cite[Chapter 16.1]{MP} this extends to each $A \in \mathcal P$. This proves \eqref{dolquadvar} and since $\|Q_M\|=1$ $\mu_M$-a.e.\ it follows that
\[|\bar{\mu}_M|([0,\infty)\times \Omega) \leq \int_{[0,\infty)\times\Omega}\ud\mu_M = \mu_M([0,\infty)\times\Omega)<\infty.\]

(2)$\Rightarrow$(1): Assume (2). Let $(x_n^*)_{n\geq 1}$ be such that its $\mathbb Q$-linear span $E$ is dense in $X^*$ and $(x_1^*, \ldots, x_n^*)$ are linear independent for any $n\geq 1$. By a standard argument one can construct a $\mathbb Q$-bilinear mapping $\vv_M:\Omega{\times} [0,\infty)\to \mathbf B_{\mathbb Q}(E,E)$ such for all $x^*, y^*\in E$ and all $t\geq 0$, a.s.\ $\vv_M(t,\omega)(x^*, y^*) = [\lb M, x^*\rb, \lb M, y^*\rb]_t$.

Let $(y_n^*)_{n\geq 1}\subseteq X^*$ be equal to the intersection of $E$ and the unit ball in $X^*$. Then by Definition \ref{def:quaddoleansmeasure} and \eqref{eq:varvectormeasure} $|{\bar{\mu}_M}| =\sup_{n}\mu_{My_n^*}$, where $\mu_{Mx^*}$ is the Dol\'eans measure of $Mx^*$ for a given $x^* \in X^*$.
Now by Lemma \ref{lemma:technicalquaddol} one derives that there exists a predictable continuous increasing process ${F}:\mathbb R_+\times \Omega \to \mathbb R$ such that a.s.\
\begin{equation*}
 F(t) = \lim_{\text{mesh}\to 0}\sum_{k=1}^{K} \sup_{n}(a_M(t_k)(y_n^*,y_n^*)-a_M(t_{k-1})(y_n^*,y_n^*)),
 \end{equation*}
 where the limit is taken over all partitions $0=t_0<\ldots<t_K = t$. In particular, $a_M(t) (y_n^*, y_n^*)\leq F(t)$ a.s.\ and hence as in the first part of the proof of Theorem \ref{thm:aMdef} one sees that $a_M(t)$ extends to a bounded bilinear form on $X^*\times X^*$ a.s.\ and thanks to Remark \ref{rem:Kal176} and the fact that $M$ is a cylindrical continuous local martingale one obtains that for each $x^*,y^*\in X^*$, $\vv_M(x^*,y^*)$ and $[Mx^*,My^*]$ are indistinguishable. Then
 \begin{equation*}
 F(t) = \lim_{\text{mesh}\to 0}\sum_{k=1}^{K} \sup_{x^*\in X^*,\|x^*\|=1}(a_M(t_k)(x^*,x^*)-a_M(t_{k-1})(x^*,x^*)),
 \end{equation*}
and thanks to Theorem \ref{thm:aMdef} we conclude that the quadratic variation of $M$ exists.

The final identity $|\bar{\mu}_M|=\mu_M$ follows from Lemma \ref{lem:maxmeasuref}, \eqref{dolquadvar} and the fact that
\[\sup_{\|x^*\|=\|y^*\|=1} \lb Q_M x^*, y^*\rb = \|Q_M\| = 1.\]
which was proved in Proposition \ref{Q_M}.
\end{proof}

\subsection{Covariation operators}

In this subsection we assume that both $X$ and $Y$ have a separable dual space. In this section we introduce a covariation operator for $M_1\in \mathcal M_{\rm var}^{\rm loc}(X),M_2\in \mathcal M_{\rm var}^{\rm loc}(Y)$ and develop some calculus results for them.

\begin{prop}
Let $M_1\in \mathcal M_{\rm var}^{\rm loc}(X),M_2\in \mathcal M_{\rm var}^{\rm loc}(Y)$ be defined on the
same probability space. Then there exists a  \textit{covariation operator}
$A_{M_1,M_2}:\mathbb R_+ \times \Omega \to \mathcal L(X^*,Y^{**})$ such that for each $x^*\in X^*$, $y^*\in Y^*$ a.s.
\begin{equation*}
 \langle A_{M_1,M_2}(t)x^*,y^* \rangle = [M_1x^*,M_2y^*]_t,\;\;\;t\geq 0.
\end{equation*}
\end{prop}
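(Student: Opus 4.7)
The plan is to mimic the construction of the single-martingale quadratic variation operator built after Theorem \ref{thm:aMdef}, but working on a product of two countable dense sets and using the Kunita--Watanabe inequality to get the right pointwise bilinear bound.

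First I would pick countable $\mathbb{Q}$-linearly independent sequences $(x_n^*)_{n\geq 1}\subseteq X^*$ and $(y_m^*)_{m\geq 1}\subseteq Y^*$ whose $\mathbb{Q}$-linear spans $F$ and $G$ are dense in $X^*$ and $Y^*$ respectively (this is possible since $X^*,Y^*$ are separable). For each fixed pair $(x_n^*,y_m^*)$, the scalar continuous local martingales $M_1 x_n^*$ and $M_2 y_m^*$ have a well-defined covariation $[M_1 x_n^*, M_2 y_m^*]$; pick a version and call it $c_{n,m}(t,\omega)$. On the countable collection of such pairs, bilinearity in $(x^*,y^*)$ over $\mathbb{Q}$ can be enforced pointwise on a single set $\Omega_0$ of full measure using the bilinearity of the scalar covariation (finitely many countable identities, each holding a.s.).

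The key pointwise bound on $\Omega_0$ is the Kunita--Watanabe inequality: for $x^*\in F, y^*\in G$ with $\|x^*\|,\|y^*\|\leq 1$ and $s\le t$,
\[
\bigl|[M_1 x^*, M_2 y^*]_t-[M_1 x^*,M_2 y^*]_s\bigr|\le \bigl([M_1 x^*]_t-[M_1 x^*]_s\bigr)^{1/2}\bigl([M_2 y^*]_t-[M_2 y^*]_s\bigr)^{1/2},
\]
which by the defining property of $[[M_i]]$ is bounded by $([[M_1]]_t-[[M_1]]_s)^{1/2}([[M_2]]_t-[[M_2]]_s)^{1/2}\|x^*\|\|y^*\|$. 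In particular, on $\Omega_0$ the $\mathbb{Q}$-bilinear form $(x^*,y^*)\mapsto c(t,\omega)(x^*,y^*)$ on $F\times G$ is bounded in norm by $[[M_1]]_t(\omega)^{1/2}[[M_2]]_t(\omega)^{1/2}$, so it has a unique continuous $\mathbb{R}$-bilinear extension to $X^*\times Y^*$. Via \eqref{bil}, this bilinear form corresponds to an operator $A_{M_1,M_2}(t,\omega)\in\mathcal{L}(X^*,Y^{**})$; off $\Omega_0$ we set it to zero.

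It remains to check the identifying property for arbitrary $x^*\in X^*$ and $y^*\in Y^*$. Pick sequences $x_{n_k}^*\to x^*$ in $F$ and $y_{m_k}^*\to y^*$ in $G$. Since $M_1, M_2$ are cylindrical continuous local martingales, $M_1 x_{n_k}^*\to M_1 x^*$ and $M_2 y_{m_k}^*\to M_2 y^*$ in $\mathcal{M}^{\rm loc}$, and then standard stability of the scalar covariation under ucp convergence (see Remark \ref{rem:Kal176} and the polarization identity applied to $[M_1 x_{n_k}^*\pm M_2 y_{m_k}^*]$) yields $[M_1 x_{n_k}^*, M_2 y_{m_k}^*]\to [M_1 x^*, M_2 y^*]$ in ucp. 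Combined with pointwise continuity of $(x^*,y^*)\mapsto \langle A_{M_1,M_2}(t)x^*,y^*\rangle$ on $\Omega_0$, this gives the required identity for every $(x^*,y^*)$, a.s.\ for all $t\geq 0$. The only mild technical point is coordinating the a.s.\ statements across uncountably many pairs $(x^*,y^*)$; the standard fix is to first establish bilinearity and the Kunita--Watanabe bound on the countable set $F\times G$ (whence on a single full-measure $\Omega_0$), and only then extend by continuity so that the resulting $A_{M_1,M_2}$ is defined globally while the identity is verified a.s.\ for each individual pair.
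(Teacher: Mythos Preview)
Your proposal is correct and follows essentially the same route as the paper's own proof: the paper also constructs a bilinear form $a_{M_1,M_2}$ as a version of $(x^*,y^*)\mapsto [M_1x^*,M_2y^*]$ satisfying the Kunita--Watanabe bound $|a_{M_1,M_2}(t)(x^*,y^*)|\le \sqrt{[[M_1]]_t[[M_2]]_t}\,\|x^*\|\|y^*\|$, and for the actual construction it simply refers back to the first part of the proof of Theorem~\ref{thm:aMdef}, which is exactly the countable-dense-set-plus-extension-by-continuity argument you wrote out in detail. Your write-up is in fact more explicit than the paper's (which is a one-paragraph sketch); the only minor point is that the stability of the covariation under ucp convergence is not literally the content of Remark~\ref{rem:Kal176} but rather a consequence of it via polarization and \cite[Exercise~17.8]{Kal}, which you already indicate.
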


\begin{proof}
 Let $a_{M_1,M_2}:\mathbb R_+ \times \Omega \to \mathbf B(X^*,Y^*)$ be defined as a version of $(t,\omega)(x^*,y^*) \mapsto [M_1(\omega)x^*,M_2(\omega)y^*]_t$ such that a.s.\ for each $t\in \mathbb R_+$
 \begin{equation}\label{proplinaerity}
  \begin{split}
 |a_{M_1,M_2}(t)(x^*,y^*)|&\leq \sqrt{a_{M_1}(t)(x^*,x^*)a_{M_2}(t)(y^*,y^*)}\\
 &\leq
 \sqrt{[[M_1]]_t[[M_2]]_t}\|x^*\|\|y^*\|\;\;\;\forall x^*\in X^*, y^*\in Y^*,
  \end{split}
 \end{equation}
To construct such a version we can argue as in the first part of the proof of Theorem \ref{thm:aMdef}.
\end{proof}

\begin{proposition}\label{prop:linearity}
The space $\mathcal M_{\rm var}^{\rm loc}(X)$ is a vector space and equipped with the (metric) topology of ucp convergence of the quadratic variation $[[\cdot]]$ it becomes a complete metric space
with the translation invariant metric given by
\[
\|M\|_{\mathcal M_{\rm var}^{\rm loc}(X)} := \sum_{n=1}^{\infty} 2^{-n}\mathbb E[1 \wedge  [[M]]^{1/2}_n] + \sup_{\|x^*\|\leq 1}\mathbb E[1 \wedge |(Mx^*)_0|].
\]
Moreover, for $M_1,M_2\in\mathcal M_{\rm var}^{\rm loc}(X)$ a.s.\ for all $t\geq 0$ the triangle inequality holds:
\begin{equation}\label{eq:triangeineqquadvar}
 [[M_1+M_2]]_t^{\frac 12}\leq [[M_1]]_t^{\frac 12}+[[M_2]]_t^{\frac 12}.
\end{equation}
\end{proposition}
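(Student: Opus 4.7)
The plan is to prove first the triangle inequality \eqref{eq:triangeineqquadvar} (from which the vector-space structure follows at once), then to check the metric axioms, and finally to prove completeness, which I anticipate will be the main obstacle.

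For \eqref{eq:triangeineqquadvar}, I would start from the scalar Kunita--Watanabe consequence
\[
([Mx^*+Nx^*]_t-[Mx^*+Nx^*]_s)^{1/2}\leq([Mx^*]_t-[Mx^*]_s)^{1/2}+([Nx^*]_t-[Nx^*]_s)^{1/2},
\]
obtained by expanding $[Mx^*+Nx^*]=[Mx^*]+2[Mx^*,Nx^*]+[Nx^*]$ and controlling the cross term. Inserting the pointwise bound $[M_ix^*]_t-[M_ix^*]_s\leq[[M_i]]_t-[[M_i]]_s$ (valid for $\|x^*\|=1$) and setting $\g_n=([[M_1]]_{t_n}-[[M_1]]_{t_{n-1}})^{1/2}$, $\d_n=([[M_2]]_{t_n}-[[M_2]]_{t_{n-1}})^{1/2}$, I obtain for any rational partition $0=t_0<\cdots<t_N=t$ the $x^*$-uniform bound $\sup_{\|x^*\|=1}([(M_1+M_2)x^*]_{t_n}-[(M_1+M_2)x^*]_{t_{n-1}})\leq(\g_n+\d_n)^2$. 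Summing in $n$, applying the discrete Minkowski inequality in $\ell^2$, telescoping $\sum_n\g_n^2=[[M_1]]_t$ and $\sum_n\d_n^2=[[M_2]]_t$, and finally invoking the representation \eqref{eq:[[M]]assup} yields $[[M_1+M_2]]_t\leq([[M_1]]_t^{1/2}+[[M_2]]_t^{1/2})^2$. To be allowed to use \eqref{eq:[[M]]assup} I first need $M_1+M_2\in\mathcal M_{\rm var}^{\rm loc}(X)$, which follows from the cruder consequence $\mu_{[(M_1+M_2)x^*]}\leq 2(\mu_{[M_1x^*]}+\mu_{[M_2x^*]})$ (AM--GM on the cross term) together with criterion~(3) of Proposition~\ref{prop:[[M]]andF} applied to the continuous increasing process $G=2([[M_1]]+[[M_2]])$. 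Combined with the obvious scaling $[[\a M]]=\a^2[[M]]$, this gives the vector-space structure.

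The metric axioms are then routine: translation invariance is built in, the triangle inequality follows by applying $1\wedge(\cdot)$ and $\mathbb E$ term-by-term in \eqref{eq:triangeineqquadvar} together with $1\wedge(a+b)\leq(1\wedge a)+(1\wedge b)$ (and the analogous bound for the standard translation-invariant $L^0$-pseudometric in the initial-value summand), and positivity follows since $[[M]]\equiv 0$ forces each $Mx^*$ to be constant a.s., while vanishing of the initial-value summand then pins that constant to zero by continuity of $x^*\mapsto(Mx^*)_0$ on a dense set.

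The main obstacle is completeness. For a Cauchy sequence $(M^k)$, the bound $[M^kx^*-M^lx^*]_n\leq\|x^*\|^2[[M^k-M^l]]_n$ combined with Remark~\ref{rem:Kal176} and completeness of $\mathcal M^{\rm loc}$ in the ucp topology produces a linear limit $N\colon X^*\to\mathcal M^{\rm loc}$ with $M^kx^*\to Nx^*$ in ucp for every $x^*$. To verify $N\in\mathcal M_{\rm var}^{\rm loc}(X)$, I would apply \eqref{eq:triangeineqquadvar} in the form $|[[M^k]]^{1/2}-[[M^l]]^{1/2}|\leq[[M^k-M^l]]^{1/2}$ to conclude that $([[M^k]]^{1/2})$ is ucp-Cauchy with a continuous nondecreasing limit $Y$, and then pass to a subsequence along which $[M^kx^*_m]\to[Nx^*_m]$ and $[[M^k]]\to Y^2$ a.s.\ uniformly on compacts, simultaneously for all $m$ in a countable dense subset $(x^*_m)$ of the unit ball of $X^*$. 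Passing to the limit in $[M^kx^*_m]_t-[M^kx^*_m]_s\leq[[M^k]]_t-[[M^k]]_s$ gives $\mu_{[Nx^*_m]}\leq\mu_{Y^2}$ for each $m$, so Proposition~\ref{prop:[[M]]andF}(3) supplies $N\in\mathcal M_{\rm var}^{\rm loc}(X)$ with $[[N]]\leq Y^2$. Running the identical construction on the Cauchy sequence $(M^l-M^k)_l$ with $k$ fixed yields a continuous nondecreasing $R^k$ with $[[N-M^k]]^{1/2}\leq R^k$, and $\mathbb E[1\wedge R^k_n]=\lim_l\mathbb E[1\wedge[[M^l-M^k]]_n^{1/2}]\to 0$ as $k\to\infty$ by the Cauchy condition. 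Combined with the corresponding $L^0$-convergence of the initial values, this delivers $M^k\to N$ in the metric and completes the proof.
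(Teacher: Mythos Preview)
Your argument is correct. For the vector-space structure and the triangle inequality \eqref{eq:triangeineqquadvar} your route is essentially the paper's: both rely on the Kunita--Watanabe / Cauchy--Schwarz control of the cross term $[M_1x^*,M_2x^*]$, only packaged differently (the paper invokes the covariation operator $A_{M_1,M_2}$ and the estimate \eqref{proplinaerity} set up immediately before the proposition, whereas you expand the scalar quadratic variation directly and finish with the discrete $\ell^2$-Minkowski step over partitions). The metric axioms are handled identically.

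The genuine difference is in the completeness proof. The paper, after producing the limit $M$ and checking $M\in\mathcal M_{\rm var}^{\rm loc}(X)$, shows $[[M-M^{n_k}]]_t\to0$ a.s.\ via an auxiliary double sequence
\[
c_k^D=\Bigl(\sup_{\text{partitions}}\sum_l\sup_{1\le d\le D}\bigl([(M^{n_k}-M)x_d^*]_{t_l}-[(M^{n_k}-M)x_d^*]_{t_{l-1}}\bigr)\Bigr)^{1/2},
\]
using Lemma~\ref{lemma:limmaxmes} to identify $\lim_D c_k^D=[[M^{n_k}-M]]_t^{1/2}$ and then arguing that $(c_k)_k$ is Cauchy in $\ell^\infty$. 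Your approach bypasses this: you take $R^k$ to be the ucp limit of $([[M^l-M^k]]^{1/2})_l$ (which exists by the very triangle inequality just proved), reuse the same increment-passage argument to obtain $[[N-M^k]]^{1/2}\le R^k$, and conclude by dominated convergence that $\mathbb E[1\wedge R^k_n]=\lim_l\mathbb E[1\wedge[[M^l-M^k]]_n^{1/2}]\to0$. This is cleaner and avoids the appeal to Lemma~\ref{lemma:limmaxmes}; the paper's argument, on the other hand, is slightly more self-contained in that it does not rerun the ``identical construction'' a second time with $k$ fixed.
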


The above metric does not necessarily turn $\mathcal M_{\rm var}^{\rm loc}(X)$ into a topological vector space in the case $X$ is infinite dimensional. However, if the martingales are assumed to start at zero then it becomes a topological vector space.

\begin{proof}
Now for $M_1, M_2 \in \mathcal M_{\rm var}^{\rm loc}(X)$ one can easily prove, that $M_1+M_2 \in \mathcal M_{\rm var}^{\rm loc}{(X)}$. Indeed, by the definition of the quadratic (co)variation operator and linearity for all $x^*,y^*\in X^*, t\geq 0$, a.s.
\begin{multline*}
 [(M_1 + M_2) x^*, (M_1+M_2)y^*]_t = \langle (A_{M_1}(t) + A_{M_1,M_2}(t) + A_{M_2,M_1}(t) + A_{M_2}(t))x^*,y^*\rangle,
\end{multline*}
and so by \eqref{proplinaerity} and Definition \ref{def:cylmart} $[[M_1+M_2]]$ exists and a.s.\
\begin{equation*}
 [[M_1+M_2]]_t \leq [[M_1]]_t + [[M_2]]_t + 2\sqrt{[[M_1]]_t[[M_2]]_t},\;\;\; t\geq 0,
\end{equation*}
which proves \eqref{eq:triangeineqquadvar}. Since it is clear that $\mathcal M_{\rm var}^{\rm loc}(X)$ is closed under multiplication by scalars, it follows that $\mathcal M_{\rm var}^{\rm loc}(X)$ is a vector space.

To prove the completeness let $(M^n)_{n\geq 1}\subseteq \mathcal M_{\rm var}^{\rm loc}(X)$ be a Cauchy sequence, then $(M^nx^*)_{n\geq 1}$ is a Cauchy sequence in $\mathcal M^{\rm loc}$ for all $x^* \in X^*$, and so by Remark \ref{rem:Kal176} and completeness it converges to a continuous local martingale $Mx^*$ in the ucp topology. Let $(x_m^*)_{m=1}^{\infty}\subset X^*$ be a dense subset of $X^*$. Then due to a diagonalization argument there exists a subsequence $(n_k)_{k\geq 1}$ such that $[M^{n_k}x_m^*]_t$ converges a.s.\ for any $t\geq 0$ and $m\geq 1$, and $[[M^{n_k}]]_t$ has an a.s.\ limit for all $t\geq 0$ (recall that due to \eqref{eq:triangeineqquadvar}, $[[\cdot]]_t^{1/2}$ obeys a triangle inequality for each $t\geq 0$). Then a.s. for all $t\geq s\geq 0, m\geq 1,$
\[
[Mx_m^*]_t-[Mx_m^*]_s= \lim_{k\to \infty} ([M^{n_k}x_m^*]_t-[M^{n_k}x_m^*]_s)\leq \varliminf_{k\to \infty} ([[M^{n_k}]]_t-[[M^{n_k}]]_s) \|x_m^*\|^2.
\]
By Proposition \ref{prop:[[M]]andF} we find $M \in \mathcal M_{\rm var}^{\rm loc}(X)$ and $[[M]] \leq \lim_{n\to \infty} [[M^n]]$, where the last limit is taken in the ucp topology. Now fix $t>0$. To prove that a.s. \ $\lim_{k\to \infty}[[M-M^{n_k}]]_t=0$
one has firstly to consider a sequence $(c_k^D)_{k,D = 1}^{\infty}$, such that for all $k,D>0$
\begin{equation*}
c_k^D = \biggl(\lim_{\text{mesh}\to 0}\sum_{l=1}^{L} \sup_{1\leq d\leq D} [(M^{n_k}-M)x_{d}^*]_{t_l}-[(M^{n_k}-M)x_{d}^*]_{t_{l-1}}\biggr)^{\frac 12},
 \end{equation*}
 where the limit is taken over all partitions $0=t_0<\ldots<t_L = t$. Then by Lemma~\ref{lemma:limmaxmes} a.s.\ $c_k^D\to[[M^{n_k}-M]]^{\frac 12}_t$ as $D \to \infty$, and consequently $c_k :=(c_k^D)_{D=1}^{\infty}\in \ell^{\infty}$ for all $k\geq 1$, where $\ell^{\infty}$ is the space of bounded sequences. Then obviously by \eqref{eq:triangeineqquadvar} a.s.
 \[
 \sup_{D\geq 1}|c_k^D - c_l^D|\leq [[M^k-M^l]]_t^{\frac 12},\;\;\;k,l\geq 1,
 \]
 which yields that $(c_k)_{k=1}^{\infty}$ is a Cauchy sequence in $\ell^{\infty}$. Now one can easily show that $c_k^{D} \to 0$ as $k\to \infty$, so $c_k \to 0$, and a.s.\ $\sup_D (c_k^D)^2 = [[M-M^{n_k}]]_t\to 0$.
\end{proof}

As a positive definite bilinear form the covariation operator has the following properties a.s.\
$\forall t>s\geq 0, x^*\in X^*$:
\[
A_{M_1,M_2}(t,\omega)=\frac{A_{M_1+M_2}(t,\omega)-A_{M_1-M_2}(t,\omega)}{4},
\]
\begin{multline}\label{cov}
\langle (A_{M_1,M_2}(t) - A_{M_1,M_2}(s))x^*,x^*\rangle\\ \leq \sqrt{\langle(A_{M_1}(t) - A_{M_1}(s))x^*,x^*\rangle\langle(A_{M_2}(t) - A_{M_2}(s))x^*,x^*\rangle}.
\end{multline}

\begin{remark}
One can also define a  \textit{covariation process} $[[M_1,M_2]]$ by the formula
\[
[[M_1,M_2]]_t:= \lim_{{\rm mesh}\to 0}\sum_{n=1}^N \|A_{M_1,M_2}({t_n})-A_{M_1,M_2}({t_{n-1}})\|.
\]
 The limit exists a.s.\ thanks to the Cauchy-Schwartz inequality and the fact that a.s.\ for each $0\leq s<t$
 \[
 \|A_{M_1,M_2}({t})-A_{M_1,M_2}({s}) \|\leq \sqrt {\|A_{M_1}({t})-A_{M_1}({s})\|}\sqrt {\|A_{M_2}({t})-A_{M_2}({s})\|},
 \]
 where the last is an easy consequence of \eqref{cov}.

 The process $[[M_1,M_2]]$
 is continuous a.s.\ and has some properties of a covariation process of real-valued martingales.
 For instance, one can prove by the formula \eqref{cov} that for all $t>s\geq 0$
\begin{equation}\label{triangle}
|[[M_1,M_2]]_t - [[M_1,M_2]]_s| \leq \sqrt{([[M_1]]_t-[[M_1]]_s)([[M_2]]_t -[[M_2]]_s)} \;\;\; a.s.
\end{equation}
Unfortunately, in general $[[\cdot]]_t$ is not a quadratic form (except in the one-dimensional case).

\end{remark}
Thanks to the continuity of covariation process one can consider the Lebesgue-Stieltjes measure
$\mu_{[[M_1,M_2]]}$ for a.a. $\omega$.
By the same technique as it was mentioned before one can also construct $Q_{M_1,M_2}:\mathbb R_+\times \Omega \to \mathcal L(X^*,Y^{**})$:
\[
\langle A_{M_1,M_2}(t) x^*,y^*\rangle=\int_0^t\langle Q_{M_1,M_2}(s) x^*,y^*\rangle\ud [[M_1,M_2]],\;\;\; t\geq 0,
\omega \in \Omega.
\]

Note, that $\|Q_{M_1,M_2}(t)\|\leq 1$ a.s.\ and for $\mu_{[[M_1,M_2]]}$-a.a. $t>0$ by the same argument, as in
Proposition \ref{Q_M}. Also evidently
$Q_{M_1,M_2} = Q_{M_2,M_1}^*$.
One can derive the following result:
\begin{proposition}[Kunita-Watanabe inequality, cylindrical case] Let
$M_1\in \mathcal M_{\rm var}^{\rm loc}(X)$, $M_2\in \mathcal M_{\rm var}^{\rm loc}(Y)$ defined on the
same probability space $(\Omega, \mathbb F, \mathbb P)$, $f\colon \mathbb R_+ \times \Omega \to X^*$, $g \colon \mathbb R_+ \times \Omega \to Y^*$
be two strongly $\mathcal B_{\mathbb R_+} \otimes \mathcal F$-measurable
bounded functions. Then for all $t>0$ and for almost all $\omega\in \Omega$
\begin{multline*}
\left|\int_0^t \langle Q_{M_1,M_2}(s)f(s,\omega),g(s,\omega)\rangle\ud [[M_1,M_2]]_s(\omega)\right|^2\\
\leq\int_0^t \langle Q_{M_1}(s)f(s,\omega),f(s,\omega)\rangle \ud [[M_1]]_s(\omega)
\int_0^t \langle Q_{M_2}g(s,\omega),g(s,\omega)\rangle\ud [[M_2]]_s(\omega).
\end{multline*}
\end{proposition}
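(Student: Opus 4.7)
The plan is to reduce the inequality to the classical scalar Kunita--Watanabe inequality for real-valued continuous local martingales by approximation. Fix $\omega$ in a set $\Omega_0$ of full probability on which $[[M_1]]_t(\omega)$, $[[M_2]]_t(\omega)$, and $[[M_1,M_2]]_t(\omega)$ are finite and the defining Lebesgue--Stieltjes identities for $Q_{M_1}, Q_{M_2}, Q_{M_1,M_2}$ hold.

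First I would treat the case of step functions sharing a common partition: $f = \sum_{i=1}^n x_i^* \mathbf 1_{(s_{i-1},s_i]}$ and $g = \sum_{i=1}^n y_i^* \mathbf 1_{(s_{i-1},s_i]}$ with $0 = s_0 < s_1 < \cdots < s_n = t$. By the defining integral formulas for $Q_{M_1,M_2}$ and $Q_{M_i}$,
\[
\int_0^t \langle Q_{M_1,M_2}(s) f(s), g(s)\rangle \ud [[M_1,M_2]]_s = \sum_{i=1}^n \bigl([M_1 x_i^*, M_2 y_i^*]_{s_i} - [M_1 x_i^*, M_2 y_i^*]_{s_{i-1}}\bigr),
\]
and analogous identities hold for the two factors on the right-hand side. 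Applying the classical scalar Kunita--Watanabe inequality to each pair of continuous real-valued local martingales $(M_1 x_i^*, M_2 y_i^*)$ yields, for each $i$,
\[
\bigl|[M_1 x_i^*, M_2 y_i^*]_{s_i} - [M_1 x_i^*, M_2 y_i^*]_{s_{i-1}}\bigr|^2 \leq \bigl([M_1 x_i^*]_{s_i} - [M_1 x_i^*]_{s_{i-1}}\bigr)\bigl([M_2 y_i^*]_{s_i} - [M_2 y_i^*]_{s_{i-1}}\bigr).
\]
Taking absolute values, summing over $i$, and invoking the discrete Cauchy--Schwarz inequality gives the desired estimate for step functions.

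To conclude, I would use a density argument. Since $X^*$ and $Y^*$ are separable, for each $\omega \in \Omega_0$ the bounded strongly measurable functions $f(\cdot, \omega)$ and $g(\cdot, \omega)$ can be approximated pointwise in $t$ by step functions $f_m$ and $g_m$, uniformly bounded by $\|f(\cdot,\omega)\|_\infty$ and $\|g(\cdot,\omega)\|_\infty$; refining the partitions one may arrange $f_m$ and $g_m$ to share a common partition. Since $\|Q_{M_1,M_2}\| \leq 1$ and $\|Q_{M_i}\| \leq 1$ almost everywhere with respect to the relevant finite measures on $[0,t]$, the three integrands are dominated by bounded constants. Dominated convergence then yields convergence of all three integrals to their analogues with $(f,g)$, so the inequality passes to the limit.

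The principal technical point is constructing the common-partition step-function approximants: this is handled by first using separability and Pettis measurability to approximate each function by countably valued Borel simple functions, then approximating the level sets of finite measure by finite disjoint unions of intervals and taking a common refinement of the resulting partitions for $f$ and $g$.
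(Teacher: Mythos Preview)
Your proof is correct and follows essentially the same route as the paper, which simply states that the proof is analogous to Protter's Theorem~II.25 using inequalities of the form \eqref{cov}. Your reduction to step functions with a common partition, the application of the scalar Kunita--Watanabe inequality on each piece, the discrete Cauchy--Schwarz step, and the passage to the limit by dominated convergence are precisely the ingredients of that classical argument; the only additional care needed here---which you identify---is that the identities linking $Q_{M_1,M_2}$ and $[M_1 x^*, M_2 y^*]$ hold simultaneously for all pairs $(x^*,y^*)$ on a single set of full measure, which is guaranteed by the construction of $a_{M_1,M_2}$ as a bilinear-form-valued version (cf.\ \eqref{proplinaerity} and the proof of Theorem~\ref{thm:aMdef}).
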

The proof is analogous to the proof of \cite[Theorem~II.25]{Prot}, for which one has to apply
inequalities of the form \eqref{cov}.

Recall from \eqref{intcyl} that for suitable $\Phi$ and $M\in M_{\rm var}^{\rm loc}(H)$, $(\Phi \cdot M) \in M_{\rm var}^{\rm loc}(X)$ given by
\[(\Phi \cdot M) x^*:= \int_0^{\cdot}\Phi^* x^* \, d M,\;\;\;x^*\in X^*\]
is well-defined.

\begin{theorem}
Let $H$ be a separable Hilbert space, $M_1\in \mathcal M_{\rm var}^{\rm loc}(H)$, $M_2\in \mathcal M_{\rm var}^{\rm loc}(Y)$,
$\Phi:\mathbb R_+ \times\Omega \to \mathcal L(H,X)$ be such that
$\Phi^* x^*$ is a strongly progressively measurable process for each $x^* \in X^*$
and let $\|\Phi Q_{M_1} \Phi^*\|\in L^1_{\rm loc}(\mathbb R_+,[[M_1]])$ a.s.
Then for all $t \geq 0$ and for all $x^*\in X^*$, $y^* \in Y^*$ one has
\[
\langle A_{\Phi \cdot M_1,M_2}(t) x^*,y^*\rangle
= \int_0^t\langle Q_{M_1,M_2} \Phi^* x^* , y^*\rangle\ud [[M_1,M_2]]\;\;\; a.s.
\]

\end{theorem}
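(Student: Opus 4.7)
My plan is to unwind the definition of the covariation operator, reduce the claim to a scalar-valued identity, verify it on elementary integrands, and then pass to the limit. By the definition of $A_{\Phi\cdot M_1, M_2}$ it suffices to show that for every $x^*\in X^*, y^*\in Y^*$ one has a.s.\ for all $t\geq 0$
\[
[(\Phi\cdot M_1)x^*, M_2 y^*]_t = \int_0^t \langle Q_{M_1, M_2}(s)\Phi^*(s)x^*, y^*\rangle\, \ud [[M_1,M_2]]_s.
\]
Setting $\phi := \Phi^* x^*$, the left-hand side becomes $[\phi\cdot M_1, M_2 y^*]_t$, where $\phi\cdot M_1$ is the scalar stochastic integral from Remark \ref{realvalued}, well-defined by the assumption $\|\Phi Q_{M_1}\Phi^*\|\in L^1_{\rm loc}(\mathbb R_+, [[M_1]])$.

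I would first verify this for elementary progressive $\phi = \sum_n \mathbf 1_{(t_{n-1}, t_n]\times F_n} h_n$ with $h_n\in H$ and $F_n\in \mathcal F_{t_{n-1}}$. Here $\phi\cdot M_1$ is a finite combination of stopped increments of the scalar martingales $M_1 h_n$, so by bilinearity of the quadratic covariation and the standard scalar identity for step-function integrals,
\[
[\phi\cdot M_1, M_2 y^*]_t = \sum_n \int_0^t \mathbf 1_{F_n}\mathbf 1_{(t_{n-1}, t_n]}(s)\, \ud[M_1 h_n, M_2 y^*]_s.
\]
The defining property $\ud[M_1 h, M_2 y^*]_s = \langle Q_{M_1,M_2}(s) h, y^*\rangle\, \ud[[M_1, M_2]]_s$ of $Q_{M_1, M_2}$ then collapses the right-hand side to $\int_0^t \langle Q_{M_1,M_2}(s)\phi(s), y^*\rangle\, \ud [[M_1,M_2]]_s$, establishing the elementary case.

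For general $\phi$ I would first localise via Lemma \ref{lemma:stoppingtimequadvar}, using the stopping times $\tau_k = \inf\{t : [[M_1]]_t\geq k\}\wedge\inf\{t : [[M_2]]_t\geq k\}$, reducing to a setting where $[[M_1]]$ and $[[M_2]]$ are bounded. In this localised setting the density argument already used in Remark \ref{rem3} produces elementary progressive $\phi_n$ with $\phi_n Q_{M_1}^{1/2}\to \phi Q_{M_1}^{1/2}$ in $L^2(\mathbb R_+, [[M_1]]; H)$ a.s. Proposition \ref{prop:cylmartBM} together with the scalar It\^o isometry \eqref{eq:quadvarsimpleint} then gives $\phi_n\cdot M_1\to \phi\cdot M_1$ in ucp. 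The cylindrical Kunita-Watanabe inequality proved just above controls both sides: for the covariation,
\[
\bigl|[(\phi_n - \phi)\cdot M_1, M_2 y^*]_t\bigr|^2 \leq \Bigl(\int_0^t \langle Q_{M_1}(s)(\phi_n-\phi), \phi_n-\phi\rangle\, \ud[[M_1]]_s\Bigr)[M_2 y^*]_t,
\]
and an analogous estimate bounds the difference of the deterministic right-hand sides. Passing to the limit in the elementary identity then yields the claim.

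The main obstacle I anticipate is exactly this approximation step: both the reference measure $[[M_1]]$ and the weight $Q_{M_1}$ are random, and $\Phi^*$ is only scalarly progressively measurable, so some care is required to produce elementary predictable approximants in the random norm $\|\cdot\,Q_{M_1}^{1/2}\|_{L^2(\mathbb R_+, [[M_1]]; H)}$. After localisation this reduces to a standard density argument in an $L^2$-space with a $\sigma$-finite reference measure, and the construction has already been carried out implicitly in Remark \ref{rem3} when defining the cylindrical integral $\Phi\cdot M_1$, so I would invoke that construction rather than repeat it.
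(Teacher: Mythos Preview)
Your proposal is correct and follows essentially the same strategy as the paper: reduce to the scalar identity $[\phi\cdot M_1, M_2 y^*]_t = \int_0^t \langle Q_{M_1,M_2}\phi, y^*\rangle\,\ud[[M_1,M_2]]$, establish it for a simple class of integrands, and pass to the limit via the Kunita--Watanabe inequality.

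The only notable difference is the choice of ``simple'' class. The paper first treats $\phi$ taking values in a fixed finite-dimensional subspace $\mathrm{span}(h_1,\ldots,h_n)\subseteq H$, writes $\phi\cdot M_1 = \sum_i \langle\phi,h_i\rangle\cdot(M_1 h_i)$, and applies the scalar identity $[V\cdot M,N]=V\cdot[M,N]$ (\cite[Theorem~17.11]{Kal}) directly to each summand; the general $\phi$ is then approximated by orthogonal projections $P_n\phi$, for which $P_n\phi\to\phi$ pointwise in $H$ with $\|P_n\phi\|\leq\|\phi\|$, so convergence in $L^2(\mathbb R_+,[[M_1]];H)$ is immediate by dominated convergence. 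Your route via elementary progressive integrands is equally valid but requires the density argument you flag at the end, whereas the projection approximation sidesteps that issue entirely. Both approaches use the same Kunita--Watanabe control to pass to the limit.
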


\begin{proof}
 Fix $t\geq 0$ and $x^* \in X^*$, $y \in Y^*$. Put $\phi = \Phi^* x^*$.
 Firstly suppose that
 there exists $n>0$ such that $\phi$ takes its values in a finite-dimensional
 subspace $\text{span}(h_1,\ldots,h_n) \subseteq H$.
 Then by bilinearity of covariation process, the definition of $Q_{M_1,M_2}$,
 and thanks to \cite[Theorem~17.11]{Kal}
 \begin{align*}
  \langle A_{\Phi \cdot M_1,M_2}(t) x^*,y^*\rangle &= \Bigl[\int_0^{\cdot}\phi \ud M_1, M_2y^* \Bigr]_t
  =\sum_{i=1}^n \Bigl[\int_0^{\cdot} \langle \phi,h_i\rangle \ud (M_1 h_i), M_2 y^* \Bigr]_t\\
  &=\sum_{i=1}^n \int_0^{t} \langle \phi,h_i\rangle\ud [ M_1 h_i, M_2 y^* ]_t
\\ &  =\sum_{i=1}^n \int_0^{t} \langle \phi,h_i\rangle\langle h_i, Q_{M_2,M_1}y^*\rangle\ud [[ M_1, M_2 ]]_t\\
  &= \int_0^{t} \langle \phi, Q_{M_2,M_1}y^*\rangle\ud [[ M_1, M_2 ]]_t
\\ &  = \int_0^{t} \langle Q_{M_1,M_2}\phi, y^*\rangle\ud [[ M_1, M_2 ]]_t.
  \end{align*}
In the general case one can approximate {$\phi$ by $P_n \phi$, where $P_n\in \mathcal L(H)$ is an orthogonal projection on $\text{span}(h_1,\ldots,h_n)$,} and derive the desired
by using {\eqref{eq:quadvarsimpleint} and} inequalities of the type \eqref{cov}-\eqref{triangle}.
\end{proof}

One can prove the full analogues of \cite[Lemma 17.10]{Kal} and \cite[Theorem~17.11]{Kal}
using the same methods as in the proof above:
\begin{theorem}[Covariation of integrals] Let $H$ be a separable Hilbert space,
$M_1, M_2\in \mathcal M_{\rm var}^{\rm loc}(H)$, $\Phi_1:\mathbb R_+ \times\Omega \to \mathcal L(H,X)$, $\Phi_2:\mathbb R_+ \times\Omega \to \mathcal L(H,Y)$ be such that
$\Phi_1^* x^*$, $\Phi_2^* y^*$ are strongly progressively measurable processes for each $x^* \in X^*$, $y^*\in Y^*$
and assume that for $j\in \{1, 2\}$, $\|\Phi_j Q_{M_j} \Phi_j^*\|\in L^1_{\rm loc}(\mathbb R_+,[[M_j]])$ a.s.\
Then for all $t \geq 0$ and for all $x^*\in X^*$, $y^*\in Y^*$ one has
\[
\langle A_{\Phi_1 \cdot M_1,\Phi_2\cdot M_2}(t) x^*,y^*\rangle
= \int_0^t\langle Q_{M_1,M_2} \Phi_1^* x^* , \Phi_2^*y^*\rangle\ud [[M_1,M_2]]\;\;\; a.s.
\]

\end{theorem}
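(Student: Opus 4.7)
The plan is to mirror the proof of the previous theorem, handling now two operator-valued integrands instead of one. Let $\phi_j := \Phi_j^* x^*$ for $j=1$ and $\phi_j := \Phi_j^* y^*$ for $j=2$ (I keep the notation just for this sketch). We reduce to the case in which there exist $n\geq 1$ and a common orthonormal family $h_1,\ldots,h_n$ of $H$ such that $\phi_1$ and $\phi_2$ take values in $\mathrm{span}(h_1,\ldots,h_n)$; this is no loss of generality since we can always unite the two finite-dimensional ranges into a common subspace with an orthonormal basis.

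In that finite-dimensional case, I would expand $\phi_j=\sum_{i=1}^n \langle\phi_j,h_i\rangle h_i$ and use the bilinearity of the covariation together with \cite[Theorem 17.11]{Kal} applied coordinate-wise to write
\begin{align*}
\langle A_{\Phi_1\cdot M_1,\Phi_2\cdot M_2}(t)x^*,y^*\rangle
&=\Bigl[\int_0^\cdot \phi_1\,\ud M_1,\int_0^\cdot \phi_2\,\ud M_2\Bigr]_t\\
&=\sum_{i,k=1}^n\Bigl[\int_0^\cdot\langle\phi_1,h_i\rangle\,\ud(M_1h_i),\int_0^\cdot\langle\phi_2,h_k\rangle\,\ud(M_2h_k)\Bigr]_t\\
&=\sum_{i,k=1}^n\int_0^t\langle\phi_1,h_i\rangle\langle\phi_2,h_k\rangle\,\ud[M_1h_i,M_2h_k]_s.
\end{align*}
Then, invoking the definition of $Q_{M_1,M_2}$ through $\ud[M_1h_i,M_2h_k]_s=\langle Q_{M_1,M_2}(s)h_i,h_k\rangle\,\ud[[M_1,M_2]]_s$, the double sum collapses back to $\int_0^t\langle Q_{M_1,M_2}(s)\phi_1(s),\phi_2(s)\rangle\,\ud[[M_1,M_2]]_s$, which is the claim.

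For the general case I would pass to $P_n\phi_1, P_n\phi_2$, with $P_n$ the orthogonal projection onto $\mathrm{span}(h_1,\ldots,h_n)$ for a fixed orthonormal basis $(h_i)_{i\geq 1}$ of $H$, and then let $n\to\infty$. The left-hand side converges because, by the single-integrand theorem already proved together with \eqref{eq:quadvarsimpleint}, $(\Phi_j-\Phi_j P_n)\cdot M_j\to 0$ in $\mathcal M_{\rm var}^{\rm loc}$, and the bilinear estimate \eqref{proplinaerity} for the covariation operator transfers this to $\langle A_{\Phi_1 P_n\cdot M_1,\Phi_2 P_n\cdot M_2}(t)x^*,y^*\rangle\to\langle A_{\Phi_1\cdot M_1,\Phi_2\cdot M_2}(t)x^*,y^*\rangle$. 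The right-hand side converges by the cylindrical Kunita–Watanabe inequality: $\langle Q_{M_1,M_2}(\phi_1-P_n\phi_1),\phi_2\rangle$ is dominated via Cauchy–Schwarz in $[[M_1,M_2]]$ by the square root of an integral against $[[M_1]]$ of $\|\phi_1-P_n\phi_1\|^2\|Q_{M_1}\|$-type terms (and symmetrically for the other factor), which tends to zero by the integrability hypothesis $\|\Phi_j Q_{M_j}\Phi_j^*\|\in L^1_{\mathrm{loc}}(\mathbb R_+,[[M_j]])$ and dominated convergence.

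The main obstacle I expect is the approximation step: one needs the right monotone-type control to justify the simultaneous convergence on both sides, and the cleanest path is via the cylindrical Kunita–Watanabe inequality, which reduces the covariation estimate to the already-understood diagonal case bounded by $\|\Phi_j Q_{M_j}\Phi_j^*\|$ against $\ud[[M_j]]$. Everything else is either bilinearity of the covariation, reduction to finite dimensions, or the fact that in finite dimensions the classical Kallenberg identity applies directly; those steps are routine once the approximation argument is set up correctly.
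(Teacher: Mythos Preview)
Your proposal is correct and follows essentially the same route the paper indicates: reduce to finite-dimensional ranges, expand via bilinearity and \cite[Theorem~17.11]{Kal}, collapse using the definition of $Q_{M_1,M_2}$, and then pass to the limit via projections using \eqref{eq:quadvarsimpleint} and inequalities of type \eqref{cov}--\eqref{triangle} (your appeal to the cylindrical Kunita--Watanabe inequality is exactly this). The paper in fact omits the proof, stating only that the method of the preceding theorem applies, and your write-up fills this in correctly.
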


\begin{remark} To construct the analogy one has to see due to the equation above that in a
weak sense
\[
 A_{\Phi_1 \cdot M_1,\Phi_2\cdot M_2}(t) = \int_0^t \Phi_2 Q_{M_1,M_2} \Phi_2^*\ud [[M_1,M_2]]_s=
\int_0^t \Phi_2 \ud A_{M_1,M_2}(s) \Phi_2^*,
\]
which extends the scalar case.
\end{remark}

\section{Stochastic integration with respect to cylindrical continuous local martingales}\label{sec:char}

Let $H$ be a separable Hilbert space and let $X$ be a separable Banach space with a separable dual space. In the previous section we have introduced stochastic integrals as cylindrical continuous local martingales. Often one wants the stochastic integral to be an actual local martingale instead of a cylindrical one. In this section we will characterize when this is the case we prove two-sided estimates
for the stochastic integral
\[(\Phi \cdot M)_t = \int_0^t \Phi(s) \ud M(s),\]
where $\Phi$ is an $\calL(H,X)$-valued $H$-strongly progressively measurable processes.
Here $M\in \mathcal M_{\rm var}^{\rm loc}(H)$ (see Definition \ref{def:cylmart}).

For this characterization we need the language of $\gamma$-radonifying operators and the geometric condition UMD on the Banach space $X$. Both will be introduced in the next two subsection.

\subsection{$\gamma$-radonifying operators\label{subsec:gamma}}
We refer to \cite{HNVW2}, \cite{Ngamma} and \cite{KW} and references therein for further details.
Let $(\gamma_n')_{n \geq 1}$ be a sequence of independent standard Gaussian random variables
on a probability space $(\Omega', {\mathcal F'}, \mathbb P')$
(we reserve the notation $(\Omega, {\mathcal F}, \mathbb P)$ for the probability
space on which our processes live) and let $H$ be a separable real Hilbert
space. A bounded operator $R \in\mathcal L (H, X)$ is said to be $\gamma$-radonifying
if for some (or equivalently for each) orthonormal basis $(h_n)_{n\geq 1}$ of $H$ the Gaussian series
$\sum_{n\geq 1} \gamma_n' Rh_n$
converges in $L^2 (\Omega'; X)$. We then define
\[
\|R\|_{\gamma(H,X)} :=\Bigl(\mathbb E'\Bigl\|\sum_{n\geq 1} \gamma_n' Rh_n \Bigr\|^2\Bigr)^{\frac 12}.
\]
This number does not depend on the sequence $(\gamma_n')_{n\geq 1}$ and the basis $(h_n)_{n\geq 1}$, and
defines a norm on the space $\gamma(H, X)$ of all $\gamma$-radonifying operators from $H$ into
$X$. Endowed with this norm, $\gamma(H, X)$ is a Banach space, which is separable if $X$
is separable.
For later reference we note that the convergence of $\sum_{n\geq 1} \gamma_n' Rh_n$ in $L^p(\Omega';X)$ with $p\in(0, \infty)$, in probability and a.s.\ can  all be shown to be equivalent.

If $R \in\gamma(H, X)$, then $\|R\| \leq \|R\|_{\gamma(H, X)}$. If $X$ is a Hilbert space, then
$\gamma(H, X) =\mathcal L^2 (H, X)$ isometrically.
Let $G$ be another Hilbert space, $X$ be another Banach space. Then by the so-called  \textit{ideal property}
(see \cite{HNVW2}) the following holds true: for all $S \in\mathcal L(G,H)$ and all $T \in \mathcal L(X,Y)$
we have $TRS \in \gamma(G,Y)$ and
\begin{equation}\label{ideal}
\|TRS\|_{\gamma(G,Y)}\leq \|T\| \|R\|_{\gamma(H,X)}\|S\|.
\end{equation}

Let $\mu$ be a measure on a Borel set $J\subseteq \mathbb R_+$ {with a $\sigma$-field $\mathcal A$} such that $L^2(J,\mu)$ is separable and $p\in [1,\infty)$. We say that a function $\Phi : J \to \calL(H,X)$ belongs to $L^p (J, \mu;H)$  \textit{scalarly} if for all $x^* \in X^*$,
$\Phi^* x^* \in L^p (J, \mu;H)$. A function
$\Phi : J\to \calL(H,X)$ is said to  \textit{represent} an operator $R \in \gamma(L^2(J, \mu;H), X)$ if $\Phi$ belongs to $L^2(J, \mu;H)$ scalarly and for all $x^* \in X^*$ and $f \in L^2 (J, \mu;H)$ we have
\[
\langle Rf, x^*\rangle = \int_J f(s) \Phi(s)^* x^* \ud \mu(s).
\]
The above notion will be abbreviated by $\Phi\in \gamma(J, \mu; H,X)$. In the case $X$ is a Hilbert space, one has $\gamma(J, \mu; H,X) = L^2(J,\mu;\calL_2(H,X))$ isometrically, where $\calL_2(H,X)$ denotes the Hilbert-Schmidt operators from $H$ to $X$.

If $\mu$ is the Lebesgue measure we will also write $\gamma(L^2(J;H), X)$ and $\gamma(J; H, X)$
for $\gamma(L^2 (J, \mu; H), X)$ and
$\gamma(J, \mu; H, X)$ respectively.

Let $\nu:{\mathcal A\times \Omega}\to [0,\infty]$ be a random measure. Typically, $\nu$ will be the Lebesgue-Stieltjes measure associated $[[M]]$ for $M\in M_{\rm var}^{\rm loc}(H)$. In this case we will also identify $[[M]]$ and $\nu$.
We say that $\Phi : J \times \Omega \to \calL(H,X)$  \textit{is scalarly} in
$L^2 (J,\nu;H)$ a.s.\ if for all $x^* \in X^*$,
for almost all $\omega \in \Omega$, $\Phi(\cdot, \omega)^*x^* \in L^2 (J, \nu(\cdot, \omega);H))$.

For such a process $\Phi$ and a family $R = (R(\omega) : \omega \in \Omega)$ with
$R(\omega) \in \gamma(L^2 (J, \nu(\cdot, \omega); X)$ for almost all
$\omega \in \Omega$, we say that $\Phi$  \textit{represents} $R$ if for all $x^* \in X^*$, for almost all
$\omega \in \Omega$,
$\Phi(\cdot, \omega)^* x^* = R^* (\omega)x^*$ in $L^2 (J, \nu(\cdot, \omega);H)$. As before this will be abbreviated by $\Phi\in \gamma(J,\nu;H,X)$ a.s.

In the case that $\nu$ is the Lebesgue measure the above notion of representability reduces to the one given in \cite{NVW}.

\subsection{The UMD property}
The results will be stated for the important class of UMD Banach spaces and we refer to \cite{Burk01}, \cite{HNVW1}, \cite{Rubio86} for details. A Banach space $X$ is called a  \textit{UMD space} if for some (or equivalently, for all)
$p \in (1,\infty)$ there exists a constant $\beta>0$ such that
for every $n \geq 1$, every martingale
difference sequence $(d_j)^n_{j=1}$ in $L^p(\Omega; X)$, and every $\{-1, 1\}$-valued sequence
$(\varepsilon_j)^n_{j=1}$
we have
\[
\Bigl(\mathbb E \Bigl\| \sum^n_{j=1} \varepsilon_j d_j\Bigr\|^p\Bigr )^{\frac 1p}
\leq \beta \Bigl(\mathbb E \Bigl \| \sum^n_{j=1}d_j\Bigr\|^p\Bigr )^{\frac 1p}.
\]
The infimum over all admissible constants $\beta$ is denoted by $\beta_{p,X}$.

UMD spaces are always reflexive. Examples of UMD space include, the reflexive range of $L^q$-spaces, Besov spaces, Sobolev spaces. Example of spaces without the UMD property include all nonreflexive spaces, e.g. $L^1(0,1)$ and $C([0,1])$.

\subsection{Characterization of stochastic integrability}

The next result is the main result of this section.
\begin{theorem}\label{main}
Let $X$ be a UMD space, $M \in \mathcal M_{\rm var}^{\rm loc}(H)$. For a strongly progressively measurable process
$\Phi \colon \mathbb R_+ \times \Omega\rightarrow \mathcal L(H,X)$
such that $\Phi\,Q_M^{1/2}$ is scalarly in $ L^2(\mathbb R_+, [[M]];H)$~a.s.\
the following assertions are equivalent:
\begin{enumerate}

 \item[(1)] There exists  elementary progressive processes $(\Phi_n)_{n \geq 1}$ such that:
 \begin{enumerate}
  \item[(i)] for all $x^* \in X^*$, $\displaystyle \limn Q_M^{1/2}\Phi_n^*x^* =
  Q_M^{1/2}\Phi^*x^*$ in $L^0(\Omega; L^2(\mathbb R_+,[[M]];H))$;
  \item[(ii)] there exists a process $\zeta \in L^0(\Omega; C_b(\mathbb R_+; X))$ such that
  \[
  \zeta = \lim_{n \to \infty} \int_0^{\cdot} \Phi_n(t)\ud M(t) \;\;\; \text{in}\; L^0(\Omega; C_b(\mathbb R_+;X)).
  \]
 \end{enumerate}
 \item[(2)] There exists an a.s.\ bounded process $\zeta\colon \mathbb R_+\times \Omega\to X$ such that for all $x^* \in X^*$
 we have
  \[
  \langle\zeta,x^*\rangle = \int_0^{\cdot}\Phi^*(t)x^* \ud M(t) \;\;\;
  \text{in}\: L^0(\Omega; C_b(\mathbb R_+)).
  \]

 \item[(3)] $\Phi\;Q_M^{1/2} \in \gamma(L^2(\mathbb R_+,[[M]];H), X)$ almost surely;
\end{enumerate}
In this case $\zeta$ in (1) and (2) coincide and for all $p \in (0, \infty)$ we have
\[
\mathbb E \sup_{t \in \mathbb R_+}\|\zeta (t)\|^p \eqsim_{p,X} \mathbb E \|\Phi\: Q_M^{1/2}\|^p_{\gamma (L^2(\mathbb R_+,[[M]];H), X)}.
\]
\end{theorem}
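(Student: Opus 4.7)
The overall plan is to reduce everything to the cylindrical Brownian case, where the corresponding characterization of stochastic integrability together with two-sided $\gamma$-norm estimates has been established in \cite{NVW}, and then transport the result through a time change and a Brownian representation of $M$. The first reduction is a localization/truncation: replacing $M$ by $M^{\tau_n}$ with $\tau_n = \inf\{t \ge 0 : [[M]]_t \ge n\}$ as in Lemma \ref{lemma:stoppingtimequadvar} puts us in $\mathcal M_{\rm var}(H)$ with $[[M]]_\infty \le n$, and truncating $\Phi$ along $\|Q_M^{1/2}\Phi^*\|_{L^2(\mathbb R_+,[[M]];H)}\le k$ reduces all three conditions (1), (2), (3) to a setting where one can work in $L^p(\Omega;\gamma(L^2(\mathbb R_+,[[M]];H),X))$ for some finite $p$; the general statement for all $p \in (0,\infty)$ and the original $M$ then follows by the usual stopping-time and passage-to-the-limit arguments, exactly as in \cite{NVW}. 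The equivalence (1) $\Leftrightarrow$ (2), once $\Phi$ is bounded and compactly supported, is essentially a uniqueness statement for scalar stochastic integrals driven by $M$ and comes by pairing with $x^* \in X^*$, using the isometry \eqref{eq:quadvarsimpleint} to pass limits through.

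The key step is (2) $\Leftrightarrow$ (3), where the Brownian representation enters. Define the strictly increasing continuous random time change $T(s) := [[M]]_s + s$ and let $\sigma$ be its inverse. Setting $\widetilde M := M \circ \sigma$ and $\widetilde Q := Q_M \circ \sigma$, the push-forward quadratic variation $[[\widetilde M]]$ is absolutely continuous with respect to Lebesgue measure, so Ondreját's representation theorem (\cite[Theorem 2]{Ond}) provides an $H$-cylindrical Brownian motion $W_H$, possibly on an enlarged filtered probability space, such that
\[
\widetilde M h(t) = \int_0^t \widetilde Q^{1/2}(s) h \, \ud W_H(s), \qquad h \in H.
\]
By the scalar associativity relation \eqref{eq:quadvarsimpleint} together with the random-time-change formula for real-valued stochastic integrals, for each $x^* \in X^*$ the process $\int_0^{\cdot} \Phi^* x^* \ud M$ is indistinguishable from $\bigl[\int_0^{\cdot} (\Phi \circ \sigma)\, \widetilde Q^{1/2}\bigr]^* x^* \, \ud W_H$ composed with $T$. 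Thus (2) for $M$ and $\Phi$ is equivalent to the analogous statement (2) for $W_H$ and the pushed-forward integrand $\Psi := (\Phi \circ \sigma)\, \widetilde Q^{1/2}$.

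For the transfer of the $\gamma$-norm condition, I use the fact that $s \mapsto \sigma(s)$ is a continuous strictly increasing bijection carrying Lebesgue measure on $[0,T(\infty))$ to $\mu_{[[M]]}+\lambda$ on $\mathbb R_+$, together with the ideal property \eqref{ideal} and the invariance of the $\gamma$-norm under isometric changes of Hilbert-space representation. A direct computation on elementary tensors, extended by density, yields
\[
\|\Psi\|_{\gamma(L^2(\mathbb R_+;H), X)} = \|\Phi\, Q_M^{1/2}\|_{\gamma(L^2(\mathbb R_+,[[M]];H), X)} \quad \text{a.s.,}
\]
so (3) for $M$ is equivalent to the corresponding Brownian integrability condition for $\Psi$. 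Applying the main theorem of \cite{NVW} to $\Psi$ and $W_H$ in the UMD space $X$ delivers both the equivalence (2) $\Leftrightarrow$ (3) and the two-sided bound
\[
\mathbb E \sup_{t \in \mathbb R_+}\|\zeta(t)\|^p \eqsim_{p,X} \mathbb E \|\Psi\|^p_{\gamma(L^2(\mathbb R_+;H), X)} = \mathbb E \|\Phi\, Q_M^{1/2}\|^p_{\gamma(L^2(\mathbb R_+,[[M]];H), X)}.
\]

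Finally, for (3) $\Rightarrow$ (1) I use a density argument: approximating $\Phi \, Q_M^{1/2}$ in $\gamma(L^2(\mathbb R_+,[[M]];H),X)$ by step processes of the form appearing in \eqref{intnorm} and applying the just-established two-sided estimate to the differences guarantees Cauchy convergence in $L^0(\Omega;C_b(\mathbb R_+;X))$ of the integrals $\int_0^\cdot \Phi_n \ud M$. The main technical obstacle I anticipate is the bookkeeping around the time change when $[[M]]$ has flat intervals (where $M$ is constant and $\sigma$ is not strictly increasing in the relevant sense), and the verification that progressive measurability and the scalar representation of $M$ via $W_H$ can be performed jointly in $x^* \in X^*$ so that the vector-valued integral $\int (\Phi \circ \sigma)\widetilde Q^{1/2}\ud W_H$ represents, via $T$, a bounded $X$-valued process whose scalar projections recover $\int \Phi^* x^* \ud M$; adding the auxiliary $+s$ in $T$ and restricting to the predictable set where $\|Q_M\|=1$ from Proposition \ref{Q_M} makes these manipulations clean.
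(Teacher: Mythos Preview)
Your overall strategy --- time change to make $[[M]]$ absolutely continuous, then Ondrej\'at's Brownian representation, then invoke the $W_H$-theorem from \cite{NVW} --- is exactly the paper's route. The variant time change $T(s)=[[M]]_s+s$ with strictly increasing inverse $\sigma$ is a reasonable alternative to the paper's $\tau_s=\inf\{t:[[M]]_t>s\}$; note however that with your choice $[[\widetilde M]]_t=t-\sigma(t)$, so the density $z=\ud[[\widetilde M]]/\ud t$ is not identically $1$ and the representation from Proposition \ref{Th5} reads $\widetilde M h(t)=\int_0^t z^{1/2}\widetilde Q^{1/2}h\,\ud W_H$, not $\int_0^t\widetilde Q^{1/2}h\,\ud W_H$ as you wrote. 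This is harmless --- carry the $z^{1/2}$ factor along and the $\gamma$-norm identity still holds because $z\,\ud t$ on the $W_H$-side corresponds to $\ud[[M]]$ on the $M$-side --- but it must be tracked.

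The genuine gap is in $(3)\Rightarrow(1)$. You propose to approximate $\Phi Q_M^{1/2}$ in the $\gamma$-norm by elementary progressive step processes and then apply the two-sided estimate to the differences. But the two-sided estimate controls $\int\Phi_n\,\ud M$ in terms of $\|\Phi_n Q_M^{1/2}\|_\gamma$, not $\|\Phi_n\|_\gamma$: you need elementary $\Phi_n$ such that $\Phi_n Q_M^{1/2}\to\Phi Q_M^{1/2}$ in $\gamma$, and a generic elementary approximant $\chi_n$ of $\Phi Q_M^{1/2}$ cannot in general be factored as $\chi_n=\Phi_n Q_M^{1/2}$, since $Q_M^{1/2}$ may have nontrivial kernel (think of $Q_M$ of rank one). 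The paper resolves this with a measurable-selection lemma (Lemma \ref{technical} in the appendix): for the finite-dimensional projections $P_k$ onto $Q_N^{1/2}(\mathrm{span}(h_1,\dots,h_k))$ one constructs $H$-strongly measurable operators $L_k$ with $L_k Q_N^{1/2}=P_k$, and then $\Psi_{nk}:=\chi_n L_k$ satisfies $\Psi_{nk}Q_N^{1/2}=\chi_n P_k\to\chi_n P_0$, which in turn approximates $\Psi Q_N^{1/2}$. Without this device (or an equivalent pseudo-inverse construction) your density argument for (1) does not close. Relatedly, your claim that $(2)\Rightarrow(1)$ is ``essentially a uniqueness statement'' understates the issue --- it is an \emph{existence} statement for the approximating sequence, and it is precisely here that the measurable right-inverse of $Q_M^{1/2}$ is needed.
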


A process $\Phi:\mathbb R_+\times\Omega\to \calL(H,X)$ which satisfies the above conditions and the assertions (1)--(3) will be called  \emph{stochastically integrable with respect to $M$}.

\begin{remark}
The case of scalar-valued continuous local martingales of Theorem \ref{main} was considered in \cite{Ver}, where the Dambis, Dubins-Schwarz result is applied to write the continuous local martingale as a time changed Brownian motion. Unfortunately, in the vector-valued setting, this technique breaks down as one cannot do a different time change in infinitely many direction. The proof of Theorem \ref{main} will be given in Subsection \ref{subsec:proofmain} after we have introduced some techniques we will use.
\end{remark}

\subsection{Time transformations}
A nondecreasing, right-continuous family of stopping times $\tau_s:\Omega\to [0,\infty]$, $s \geq 0$, will be called
a  \textit{random time-change}.
If additionally $\tau_s:\Omega\to [0,\infty)$ then $\tau_s$, $s \geq 0$, will be called a  \textit{finite random time-change}. If $\mathbb F$ is right-continuous, then
according to \cite[Lemma~7.3]{Kal} the same holds true
for the  \textit{induced filtration} $\mathbb G = (\mathcal G_s)_{s \geq 0} = (\mathcal F_{\tau_s})_{s\geq 0}$
(see \cite[Chapter~7]{Kal}).
An $M\in\mathcal M_{\rm var}^{\rm loc}(X)$ is said to be  \textit{$\tau$-continuous} if for each $x^*\in X^*$, $Mx^*$ is an a.s. constant on every interval $[\tau_{s-}, \tau_s]$, $s \geq 0$, where we let $\tau_{0-} = 0$. Notice that if $M$ is $\tau$-continuous, then $[[M]]$ is $\tau$-continuous as well by \cite[Exercise 17.3]{Kal} and by using Proposition \ref{prop:[[M]]andF}. A vector-valued process $F$ is $\tau$-continuous if $F$ is an a.s. constant on every interval $[\tau_{s-}, \tau_s]$, $s \geq 0$.

\begin{proposition}[Kazamaki]\label{Kazamaki}
Let $\tau$ be a finite random time-change and let $M\in M_{\rm var}^{\rm loc}(H)$ with respect to~$\mathbb F$.
Let $X_0$ be a finite dimensional Banach space. Assume also that $M$ is $\tau$-continuous.
Let $\Phi: \mathbb R_+ \times \Omega \to \calL(H,X_0)$ be $\mathbb F$-progressively measurable and
assume
\[
\int_0^{\infty} \|\Phi Q_M^{1/2}\|^2_{\calL(H,X_0)} \ud [[M]] < \infty \textit{ a.s.}
\]
Define the process $\Psi:\mathbb R_+\times\Omega\to \calL(H,X_0)$ by $\Psi(s) =\Phi(\tau_s)$.
Then following assertions hold:
\begin{enumerate}
\item $N = M\circ \tau:H \to \mathcal{M}^{\rm loc}$ given by
\[
N h := (Mh)\circ\tau,\;\;\;h \in H.
\]
is in $\mathcal M_{\rm var}^{\rm loc}(H)$ with respect to~$\mathbb G$;
\item $[[N]]=[[M\circ \tau]] = [[M]]\circ \tau$~a.s.;
\item $Q_N = Q_M \circ \tau$;
\item $\Psi$ is $\mathbb G$-progressively measurable and
\begin{equation}\label{eq:subruleKaz}
 \int_0^\infty \|\Psi Q_N^{1/2}\|^2_{\calL(H,X_0)} \ud [[N]]< \infty \textit{ a.s.},
\end{equation}
\begin{equation}\label{Kaz}
(\Phi\circ \tau)\cdot (M \circ\tau) = (\Phi\cdot M)\circ \tau\textit{ a.s.}
\end{equation}
\end{enumerate}
\end{proposition}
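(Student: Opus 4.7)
The plan is to lift the classical scalar-valued Kazamaki time-change theorem (\cite[Theorem~17.24]{Kal}) to the cylindrical setting by working coordinate-wise on a dense subset of $H$, and then to use the characterization of $[[\cdot]]$ from Proposition~\ref{prop:[[M]]andF} to transfer the quadratic variation through the time change.

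First I would establish (1) and (2) jointly. For each $h\in H$ the $\tau$-continuity of $Mh$ together with the scalar time-change theorem yields that $Nh=(Mh)\circ\tau$ is a $\mathbb G$-continuous local martingale with $[Nh]=[Mh]\circ\tau$; continuity of $h\mapsto Nh$ in the ucp topology follows from the pointwise bound $\sup_{s\in[0,S]}|Nh(s)|\le\sup_{t\in[0,\tau_S]}|Mh(t)|$ after conditioning on $\{\tau_S\le T\}$ for large $T$. The upper bound in the quadratic variation direction is immediate from Proposition~\ref{prop:[[M]]andF}(3): property~(5) of $[[M]]$ gives $\mu_{[Nh_n]}((a,b])=[Mh_n](\tau_b)-[Mh_n](\tau_a)\le[[M]](\tau_b)-[[M]](\tau_a)=\mu_{[[M]]\circ\tau}((a,b])$ for unit $h_n$. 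For the reverse inequality, fix a dense $(h_n)$ in the unit ball of $H$ and use that the intervals $(\tau_{a_k},\tau_{b_k}]$ coming from a rational partition of $(a,b]$ are pairwise disjoint subsets of $(\tau_a,\tau_b]$ whose complement in $(\tau_a,\tau_b]$ has $\mu_{[[M]]}$-measure zero by the $\tau$-continuity of $[[M]]$; Proposition~\ref{prop:[[M]]andF}(2) applied to $M$ then gives
\[\sup\sum_k\sup_n\mu_{[Nh_n]}((a_k,b_k])=\mu_{[[M]]}((\tau_a,\tau_b])=\mu_{[[M]]\circ\tau}((a,b]),\]
which by Remark~\ref{maxbormes} and Proposition~\ref{prop:[[M]]andF} is exactly (2).

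For (3), polarization gives $\langle A_N(s)h,h'\rangle=[Mh,Mh']_{\tau_s}=\langle A_M(\tau_s)h,h'\rangle$, and the $\tau$-continuity of $[[M]]$ means $\mu_{[[M]]}$ vanishes on the complement of the image of $\tau$, which permits the change of variable
\[\int_0^{\tau_s}\langle Q_M(r)h,h'\rangle\,\ud[[M]]_r=\int_0^s\langle(Q_M\circ\tau)(u)h,h'\rangle\,\ud([[M]]\circ\tau)_u.\]
The uniqueness of $Q_N$ in Proposition~\ref{Q_M} combined with (2) then identifies $Q_N=Q_M\circ\tau$, $\mu_{[[N]]}$-a.e.

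For (4), the $\mathbb G$-progressive measurability of $\Psi$ is standard, and \eqref{eq:subruleKaz} follows from the same change of variables applied to the scalar function $\|\Phi(r)Q_M(r)^{1/2}\|^2$ combined with (2)–(3). To prove~\eqref{Kaz} I would test against $x^*\in X_0^*$, which suffices since $X_0$ is finite dimensional, and reduce to showing for each predictable $\phi=\Phi^*x^*:\mathbb R_+\times\Omega\to H$ with $\phi Q_M^{1/2}\in L^2(\mathbb R_+,[[M]];H)$ a.s.\ that $\int_0^s(\phi\circ\tau)\,\ud N=\bigl(\int_0^\cdot\phi\,\ud M\bigr)_{\tau_s}$. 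For elementary $\phi$ supported on a finite dimensional subspace the identity is an immediate consequence of the scalar Kazamaki theorem applied component-wise; the general case then follows from Lemma~\ref{lemma:stoppingtimequadvar} (to localize so that $[[M]]$ is bounded), the isometry~\eqref{eq:quadvarsimpleint} and Remark~\ref{rem3}, together with the change-of-variables established in (2)–(3), which shows that $L^2(\mathbb R_+,[[M]];H)$-convergence of the integrands transfers to $L^2(\mathbb R_+,[[N]];H)$-convergence of the time-changed integrands so that both sides of~\eqref{Kaz} converge simultaneously. I expect this last transfer to be the main obstacle: without $\tau$-continuity, $\mu_{[[M]]}$ could charge the gaps $(\tau_{s-},\tau_s)$ and the two integrals could disagree, which is precisely why the $\tau$-continuity hypothesis is indispensable here.
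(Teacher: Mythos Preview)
Your proposal is correct and follows essentially the same route as the paper's proof: coordinate-wise application of the scalar Kazamaki theorem \cite[Theorem~17.24]{Kal} to get $[Nh]=[Mh]\circ\tau$, then Proposition~\ref{prop:[[M]]andF} to identify $[[N]]=[[M]]\circ\tau$, a substitution/Radon--Nikod\'ym argument for $Q_N=Q_M\circ\tau$, and finally an elementary-plus-approximation argument for~\eqref{Kaz}. Your treatment of~(2) is in fact more explicit than the paper's, which simply writes $\sup_m\mu_{[Mx_m^*]\circ\tau}=\mu_{[[M]]\circ\tau}$ without spelling out why the supremum commutes with the time change; your partition argument is exactly the justification needed there.
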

Note that the stochastic integrals are well-defined by Remark \ref{rem3}.

\begin{proof}
(1):\ By \cite[Theorem 17.24]{Kal}
for each $h \in H$ the process $Nh = (Mh)\circ\tau$ is a continuous $\mathbb G$-local martingale
and $[M h \circ \tau] = [Mh]\circ \tau$.
Thus by Proposition \ref{prop:[[M]]andF} $M\circ \tau:H \to \mathcal{M}^{\rm loc}$ given by
\[
(M \circ \tau)h := (Mh)\circ\tau,\;\;\;h \in H.
\]
is in $\mathcal M_{\rm var}^{\rm loc}(H)$, since for any $h\in H$ one has that $\mu_{[Mh]\circ \tau}\leq \mu_{[[M]]\circ \tau}\|h\|^2$ a.s.\ (notice that thanks to $\tau$-continuity both $[Mh]\circ \tau$ and $[[M]]\circ \tau$ are a.s.\ continuous).

(2):\  Let $(x_m^*)_{m\geq 1}$ be a dense subset of the unit ball in $X^*$. Since $M$ is $\tau$-continuous, one has that a.s. $[[M]]$ and $[Mx_m^*]$ are $\tau$-continuous for each $m\geq 1$. Now by Proposition \ref{prop:[[M]]andF} we find that a.s.
\[
\mu_{[[N]]} = \sup_{m\geq 1}\mu_{[Nx_m^*]} = \sup_{m\geq 1}\mu_{[Mx_m^*]\circ\tau}=\mu_{[[M]]\circ\tau},
\]
and therefore, $[[M]]\circ \tau$ is a version of $[[N]]$.

(3): \ This follows from a substitution argument:
\[
\langle Q_N h_1, h_2\rangle = \frac{\ud(\langle A_M\circ \tau h_1, h_2\rangle)}{\ud([[M]]\circ\tau)}
= \frac{\ud\langle A_Mh_1,h_2\rangle}{\ud [[M]]}\circ \tau = \langle Q_Mh_1,h_2\rangle \circ \tau,
\;\;\; h_1,h_2 \in H.
\]

(4): \ The $\mathbb G$-progressive measurability of $\Psi$ can be proven in the same way as in the proof of \cite[Proposition 2]{Kaz}.  Assertion \eqref{eq:subruleKaz} can be obtained by (2), (3) and the general version of the substitution rule \eqref{rule}.

The existence of the left hand side of \eqref{Kaz} can be proved via \eqref{eq:subruleKaz} and Remark~\ref{rem3}.
The equation \eqref{Kaz} is obvious for elementary progressively measurable $\Phi$ and follows by an approximation argument as in Remark \ref{rem3}.
\end{proof}

We now prove a version of Proposition \ref{Kazamaki} for a special class of random time changes which are not necessarily finite.
\begin{corollary}\label{cor:inversekaz}
Let  $M\in \mathcal M_{\rm var}^{\rm loc}(H)$. Suppose that $(\tau_s)_{s\geq 0}$ has the following form:
\[\tau_s =
\begin{cases}
\inf\{t\geq 0:[[M]]_t>s\}, &\text{if}\;\; 0\leq s<S;\\
 \infty,&\text{otherwise},
\end{cases}
\]
where $S = \sup_{t\geq 0} [[M]]_t$. Then for each $h\in H$, $M_{\infty}h = \lim_{t\to \infty}M_th$ a.s.\ exists if $S<\infty$ and Proposition \ref{Kazamaki} holds true for $N := M\circ \tau$ defined as follows
\[N_s =
\begin{cases}
M_{\tau _s}, &\text{if}\;\; 0\leq s<S;\\
 M_{\infty},&\text{otherwise},
\end{cases}
\]
Moreover, if  $\Psi:\mathbb R_+\times\Omega\to\mathcal L(H,X_0)$ is stochastically integrable with respect to $N$, $\Phi:= \Psi\circ[[M]]$, then also $\Phi\circ\tau$ is stochastically integrable with respect to $N$ and a.s.\
\begin{equation}\label{eq:inversekaz}
 \int_0^{[[M]]_t} \Psi\ud N =\int_0^{[[M]]_t}\Phi\circ\tau\ud N = \int_0^t \Phi \ud M,\;\;t\geq 0.
\end{equation}
\end{corollary}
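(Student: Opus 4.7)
The plan is to reduce to the finite time-change case of Proposition \ref{Kazamaki} via the truncation $\tau^T_s := \tau_s\wedge T$ and then to pass to the limit $T\to\infty$. The main obstacle is that $\tau_s$ can be infinite, which prevents a direct application of Proposition \ref{Kazamaki}; the truncation resolves this by turning the question into a $\mathbb G$-localization of $N$, at the cost of verifying that $M$ (and later $\Phi\cdot M$) is automatically $\tau$-continuous.

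First I would establish the a.s.\ existence of $M_\infty h := \lim_{t\to\infty}M_t h$ on $\{S<\infty\}$ together with the $\tau$-continuity of $M$. For the former: on $\{S<\infty\}$ one has $[Mh]_\infty\leq S\|h\|^2<\infty$; stopping at $\rho_n := \tau_n$ gives $[[M^{\rho_n}]]\leq n$ by Lemma \ref{lemma:stoppingtimequadvar}, so each $M^{\rho_n}h$ is an $L^2$-bounded continuous martingale and converges a.s., and on $\{S\leq n\}$ the stopped and unstopped processes agree on $[0,\infty)$, so sending $n\to\infty$ produces $M_\infty h$ on $\{S<\infty\}$. For the latter: the intervals $[\tau_{s-},\tau_s]$ are precisely the level intervals of the continuous process $[[M]]$, and the bound $\mu_{[Mh]}\leq\|h\|^2\mu_{[[M]]}$ from Definition \ref{def:cylmart} forces $[Mh]$ to be flat on each such interval, whence $Mh$ itself is constant there (a continuous local martingale with vanishing quadratic variation on an interval is constant on that interval).

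Next I would construct $N$ and transfer the Kazamaki properties. For each $T>0$, $\tau^T$ is finite and $M$ remains $\tau^T$-continuous by the same level-interval argument (the additional intervals $[\tau^T_{s-},T]$ still lie in a level interval of $[[M]]$). Proposition \ref{Kazamaki} applied to $\tau^T$ yields $N^T := M\circ\tau^T\in\mathcal M_{\rm var}^{\rm loc}(H)$ together with $[[N^T]] = [[M]]\circ\tau^T$ and $Q_{N^T} = Q_M\circ\tau^T$. For $s<S$ the value $\tau^T_s$ stabilizes at $\tau_s$ once $T\geq\tau_s$, while for $s\geq S$ one has $\tau^T_s=T$ and $N^T_s=M_T\to M_\infty$; hence $N^T\to N$ pointwise. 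Moreover $N^T$ coincides with $N$ stopped at the $\mathbb G$-stopping time $\sigma_T := [[M]]_T$, which increases to $S$, so $N$ is a $\mathbb G$-local martingale with localizing sequence $(\sigma_T)_{T>0}$, and passing to the limit in the identities for $N^T$ yields $N\in\mathcal M_{\rm var}^{\rm loc}(H)$, $[[N]] = [[M]]\circ\tau$, and $Q_N = Q_M\circ\tau$.

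Finally, for \eqref{eq:inversekaz}: continuity of $[[M]]$ gives $[[M]]_{\tau_s} = s$ for $s<S$, so $\Phi\circ\tau = \Psi$ on $[0,S)$; together with the fact that $N$ is constant on $[S,\infty)$ this already yields the stochastic integrability of $\Phi\circ\tau$ with respect to $N$ and the first equality $\int_0^{[[M]]_t}\Psi\,dN = \int_0^{[[M]]_t}\Phi\circ\tau\,dN$. For the second equality, fix $T\geq t$ and apply \eqref{Kaz} to $\tau^T$:
\[
(\Phi\circ\tau^T)\cdot N^T = (\Phi\cdot M)\circ\tau^T.
\]
Evaluating at $s=[[M]]_t$ and using $\tau^T=\tau$ on $[0,[[M]]_T]\supseteq[0,[[M]]_t]$, the left-hand side becomes $\int_0^{[[M]]_t}\Phi\circ\tau\,dN$, while the right-hand side equals $(\Phi\cdot M)_{\tau_{[[M]]_t}\wedge T}$. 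Proposition \ref{prop:cylmartBM} identifies $[[\Phi\cdot M]]$ with $\int\|\Phi Q_M\Phi^*\|\,d[[M]]$, so the level-interval argument applies again to give $\tau$-continuity of $\Phi\cdot M$; since $t\leq \tau_{[[M]]_t}\wedge T$ and $[[M]]_{\tau_{[[M]]_t}\wedge T}=[[M]]_t$, the two times sit in a common level interval of $[[M]]$, and we conclude $(\Phi\cdot M)_{\tau_{[[M]]_t}\wedge T}=(\Phi\cdot M)_t=\int_0^t\Phi\,dM$.
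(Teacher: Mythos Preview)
Your overall strategy---truncating to the finite time change $\tau^T_s=\tau_s\wedge T$ and passing to the limit---is exactly what the paper does, and your treatments of $\tau$-continuity and of the identity \eqref{eq:inversekaz} are correct (and in fact more explicit than the paper's, which for the second equality simply invokes the extended form of \eqref{Kaz}).

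There is, however, one step that does not go through as written. You assert that ``$N$ is a $\mathbb G$-local martingale with localizing sequence $(\sigma_T)_{T>0}$'', where $\sigma_T=[[M]]_T$. But you yourself note that $\sigma_T\uparrow S$, and $S=[[M]]_\infty$ can be finite with positive probability; a localizing sequence must tend to $\infty$ a.s., so knowing that each $N^{\sigma_T}=N^T$ is a $\mathbb G$-local martingale does not by itself force $N$ to be one. The identity $N^T=N^{\sigma_T}$ is correct and useful, but the conclusion you draw from it is not justified on the event $\{S<\infty\}$.

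The paper closes this gap by a Cauchy/completeness argument rather than a localization argument: from $[[N^T]]_s=s\wedge[[M]]_T$ one gets $[[N^T-N^{T'}]]_s\leq |[[M]]_T-[[M]]_{T'}|\to 0$ in ucp, so $(N^T)$ is Cauchy in the complete metric space $\mathcal M_{\rm var}^{\rm loc}(H)$ of Proposition \ref{prop:linearity}, and its limit---identified pointwise with $N$---therefore lies in $\mathcal M_{\rm var}^{\rm loc}(H)$. If you replace your localizing-sequence sentence with this Cauchy step, the remainder of your argument (including the derivation of $[[N]]=[[M]]\circ\tau$ and $Q_N=Q_M\circ\tau$, and your level-interval argument for the second equality in \eqref{eq:inversekaz}) goes through unchanged.
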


Recall the substitution rule: for a strongly measurable $f \colon \mathbb R_+ \to X$ we have
$f \in L^1(\mathbb R_+,\mu_{[[M]]};X)$ if and only if $f\circ \tau \in L^1(0,S;X)$, and in that case
\begin{equation}\label{rule}
\int_{\mathbb R_+} f(t)\ud [[M]] = \int_{[0,S)}f(\tau(s))\ud s.
\end{equation}

\begin{proof}[Proof of Corollary \ref{cor:inversekaz}]
According to \cite[Proposition IV.1.26]{RY} and the fact that
for each $h\in H$, $[[M]]\|h\|\geq[Mh]$ a.s., one can define $M_{\infty}h$ if $S<\infty$, so $N$ is well-defined. Now we prove that $N \in \mathcal M_{\rm var}^{\rm loc}(H)$.

Define $\tau_s^n:= \inf\{t\geq 0:[[M]]_t>s\}\wedge n$ for each $n\geq 1$. Then $\tau_s^n$ is a finite random time change. Let $N^n :=M \circ \tau^n$. Then by Proposition \ref{Kazamaki} $(N^n)_{n\geq 1}\subset \mathcal M_{\rm var}^{\rm loc}(H)$ and $[[N^n]]_t = t\wedge[[M]]_n$ for all $n\geq 0$ a.s.\ for all $t\geq 0$. Also notice that $[[N^n]]_t \to t\wedge[[M]]_{\infty}$ as $n\to \infty$. Therefore $N^n$ is a Cauchy sequence in the ucp topology, and thanks to Proposition \ref{prop:linearity} there exists a limit $\tilde{N}\in \mathcal M_{\rm var}^{\rm loc}(H)$. Obviously $\tilde N_s = N_s$ a.s.\ for all $s< S$. If $s\geq S$, then $\tilde N_sh = \lim_{t\to \infty} M_th = M_{\infty} h = Nh$ a.s.\ for each $h\in H$. So, $N=\tilde N\in\mathcal M_{\rm var}^{\rm loc}(H)$.

By the same argument
\[
[[N]]_t = \lim_{n\to \infty}[[N^n]]_t = \lim_{n\to \infty}t\wedge[[M]]_{n} = t\wedge[[M]]_{\infty} = [[M]]_{\tau_t},
\]
which proves Proposition \ref{Kazamaki}(2). To prove Proposition \ref{Kazamaki}(3) note that since the measure $\ud[[N]]$ vanishes on $[S,\infty)$, one can put $Q_N(s) = 0$ if $\tau_s = \infty$, and for $\tau_s < \infty$ one has that
\[
Q_N(s) = \lim_{n\to \infty}Q_{N^n}(s) = \lim_{n\to \infty}Q_{M}(\tau^n_s) = Q_M(\tau_s).
\]
The proof of Proposition \ref{Kazamaki}(4) is analogous to one in the main proof.

Now let us prove the last statement of the corollary.

 Since a.s.\ $\tau\circ[[M]](s) = s$ for $\mu_{[[M]]}$-a.a.\ $s$, we find that a.s.\
 \[
 (\Phi\circ\tau - \Psi)\circ [[M]] = \Phi\circ\tau \circ [[M]] - \Psi \circ [[M]] = 0
 \]
 $\mu_{[[M]]}$-a.e. Therefore according to \eqref{rule}, Proposition \ref{Kazamaki}(2) a.s.\
 \[
 (\Phi\circ\tau - \Psi)\circ [[M]]\circ \tau = \Phi\circ\tau - \Psi = 0
 \]
 $\mu_{[[N]]}$-a.e., which means that a.s.\ $\int_0^{\infty} \|(\Phi\circ\tau - \Psi)Q_N^{1/2}\|^2d[[N]] = 0$, which yields stochastic integrability of $\Phi\circ\tau$ and the first equality of \eqref{eq:inversekaz} thanks to \cite[Exercise~17.3]{Kal}. The last equality of \eqref{eq:inversekaz} is nothing more than formula \eqref{Kaz}.
\end{proof}

The next lemma is a $\gamma$-version of this substitution result and can
be proved as in \cite[Lemma 3.5]{Ver} where the case $H = \mathbb R$ was considered.
\begin{lemma}\label{timechange}
Let $X$ be a Banach space, $H$ be a separable Hilbert space.
Let $F:\mathbb R_+ \to \mathbb R_+$ be increasing
and continuous with $F(0) = 0$ and let $\mu$ be the Lebesgue-Stieltjes measure corresponding to $F$.
Let $S:= \lim_{t \to \infty}F(t)\leq \infty$ and define $\tau:\mathbb{R}_+ \to [0,\infty]$ as
\[
\tau(s) =\begin{cases}
\inf\{t \geq 0:F(t)>s\}, &\text{for}\; 0\leq s<S;
\\
\infty,&\text{for}\; s\geq S.
\end{cases}
\]
Let $\Phi\colon\mathbb R_+ \to \mathcal L(H,X)$ be strongly measurable and define
$\Psi\colon\mathbb R_+ \to \mathcal L(H,X)$ by
\[
\Psi(s) = \begin{cases}
\Phi(\tau_s), &\text{for}\; 0\leq s<S;
\\
0,&\text{for}\; s\geq S.
\end{cases}
\]
Then $\Phi \in \gamma(L^2(\mathbb R_+,\mu;H),X)$ if and only if
$\Psi \in \gamma(L^2(\mathbb R_+;H),X)$. In that case
\begin{equation}\label{eqtimechange}
 \|\Phi\|_{\gamma(L^2(\mathbb R_+,\mu;H),X)} = \|\Psi\|_{\gamma(L^2(\mathbb R_+;H),X)}.
\end{equation}
\end{lemma}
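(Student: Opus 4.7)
The plan is to reduce the claim to the invariance of $\gamma$-norms under isometric isomorphisms of the underlying Hilbert space, together with the fact that extension by zero across a measurable subset preserves $\gamma$-norms. Specifically, I will exhibit a concrete isometric isomorphism $U \colon L^2([0,S); H) \to L^2(\mathbb R_+, \mu; H)$ that intertwines $\Phi$ and $\Psi$: the operator represented by $\Phi$ on the $\mu$-weighted space should correspond, via composition with $U$, to the operator represented by $\Psi|_{[0,S)}$ on the Lebesgue space $L^2([0,S);H)$.

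First, I would define $(Uf)(t) := f(F(t))$ and verify that $U$ is an isometric isomorphism. Since $F$ is continuous and increasing with $\mu((a,b]) = F(b)-F(a)$, the pushforward $F_* \mu$ coincides with Lebesgue measure on $[0,S)$. Consequently, by the substitution rule \eqref{rule} applied to the scalar function $t\mapsto \|f(F(t))\|_H^2$,
\[
\|Uf\|_{L^2(\mathbb R_+,\mu;H)}^2 = \int_{\mathbb R_+} \|f(F(t))\|_H^2 \ud \mu(t) = \int_{[0,S)} \|f(s)\|_H^2 \ud s,
\]
so $U$ is an isometry. Surjectivity is obtained by setting $U^{-1}g := g\circ\tau$ and checking that $\tau\circ F = \mathrm{id}$ off the countable union of maximal flat intervals of $F$; the latter is $\mu$-null, so $U(g\circ\tau) = g$ in $L^2(\mathbb R_+,\mu;H)$.

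Second, I would track the representing functions under $U$. Suppose $\Phi$ represents $R \in \gamma(L^2(\mathbb R_+,\mu;H),X)$. For every $x^*\in X^*$ and $f\in L^2([0,S);H)$ the substitution rule applied to $t\mapsto \lb f(F(t)),\Phi(t)^*x^*\rb_H$ gives
\[
\lb R(Uf),x^*\rb = \int_{\mathbb R_+} \lb f(F(t)),\Phi(t)^*x^* \rb_H \ud \mu(t) = \int_{[0,S)}\lb f(s),\Psi(s)^*x^*\rb_H \ud s,
\]
so $R \circ U \in \gamma(L^2([0,S);H),X)$ is represented by $\Psi|_{[0,S)}$, and its zero extension to $L^2(\mathbb R_+;H)$ is represented by $\Psi$. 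The ideal property \eqref{ideal} applied to $U$ and to $U^{-1}$ yields $\|R\|_{\gamma(L^2(\mathbb R_+,\mu;H),X)} = \|R\circ U\|_{\gamma(L^2([0,S);H),X)}$, and padding with zero on $L^2([S,\infty);H)$ preserves the $\gamma$-norm (the defining Gaussian series simply gains vanishing terms). Chaining these identities yields \eqref{eqtimechange}; the converse implication is identical upon exchanging the roles of $U$ and $U^{-1}$.

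The main obstacle is the surjectivity check for $U$: one must handle the case where $F$ has flat intervals, in which $\tau$ is not a pointwise inverse of $F$, and argue that the exceptional set $\{t: \tau(F(t))\neq t\}$ is $\mu$-negligible. Once this measure-theoretic point is settled, everything else is routine: matching representing functions across $U$ and invoking the standard stability properties of $\gamma(\cdot,X)$ under isometric isomorphisms and zero extensions.
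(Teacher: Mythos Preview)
Your argument is correct. The paper does not supply its own proof of this lemma, instead noting that it ``can be proved as in \cite[Lemma 3.5]{Ver} where the case $H = \mathbb R$ was considered''; your route---constructing the isometric isomorphism $(Uf)(t)=f(F(t))$ between $L^2([0,S);H)$ and $L^2(\mathbb R_+,\mu;H)$, checking it intertwines the representing functions, and invoking the ideal property \eqref{ideal} plus a zero-extension---is the standard argument for such statements and is almost certainly what the cited reference does. The one delicate point you flag, that $\{t:\tau(F(t))\neq t\}$ is $\mu$-null, is handled correctly: this set is contained in the at most countably many maximal intervals of constancy of $F$, each of which has $\mu$-measure zero.
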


\subsection{Representation and cylindrical Brownian motion}

The next theorem is an infinite time interval version of \cite[Theorem 3.6]{NVW}, while the second part
is modified thanks to \cite[Theorem 5.1]{OV} and the last part modified by \cite[Theorem 4.4]{NVW} and
\cite[Theorem 5.4]{CV}. It will play an important role in the proof of Theorem \ref{main}. It might be instructive for the reader to check that it is exactly Theorem \ref{main} in the special case that $M$ is a cylindrical Brownian motion.

\begin{theorem}\label{Wienercase}
Let $X$ be a UMD space. For a strongly measurable and adapted process
$\Phi \colon \mathbb R_+ \times \Omega\to \mathcal L(H,X)$ which is scalarly in $L^2(\mathbb R_+;H)$ a.s.\
the following assertions are equivalent:
\begin{enumerate}

 \item[(1)] There exists a sequence $(\Phi_n)_{n \geq 1}$ of elementary progressive processes such that:
 \begin{enumerate}
  \item[(i)] for all $x^* \in X^*$ we have
  $\lim\limits_{n \to \infty}\Phi_n^*x^* =
  \Phi^*x^*$ in $L^0(\Omega; L^2(\mathbb R_+, H))$,
  \item[(ii)] there exists a process $\zeta \in L^0(\Omega; C_b(\mathbb R_+; X))$ such that
  \[
  \zeta = \lim_{n \to \infty} \int_0^{\cdot} \Phi_n(t)\ud W_H(t) \;\;\; \text{in}\; L^0(\Omega; C_b(\mathbb R_+;X)).
  \]
 \end{enumerate}
 \item[(2)] There exists an a.s.\ bounded process $\zeta\colon \mathbb R_+\times \Omega\to X$
 such that for all $x^* \in X^*$
 we have
  \[
  \langle\zeta,x^*\rangle = \int_0^{\cdot}\langle \Phi(t),x^*\rangle \ud W_H(t) \;\;\;
  \text{in}\; L^0(\Omega; C_b(\mathbb R_+)).
  \]

 \item[(3)] $\Phi \in \gamma(L^2(\mathbb R_+;H), X)$ almost surely;
\end{enumerate}
In this case $\zeta$ in (1) and (2) coincide and is in $\mathcal M_{\rm var}^{\rm loc}(X)$. Furthermore, for all $p \in (0, \infty)$ we have

\begin{equation}\label{eqwiener}
\mathbb E \sup_{t \in \mathbb R_+}\|\zeta (t)\|^p \eqsim_{p,X} \mathbb E \|\Phi\|^p_{\gamma (L^2(\mathbb R_+;H), X)}.
\end{equation}
\end{theorem}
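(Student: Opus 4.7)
My plan is to bootstrap from the finite-horizon version of this result---namely \cite[Theorem 3.6]{NVW} together with its refinements in \cite[Theorem 5.1]{OV} and \cite[Theorem 5.4]{CV}, all cited by the authors---which establishes the equivalences (1)$\Leftrightarrow$(2)$\Leftrightarrow$(3) and the two-sided estimate when $\mathbb R_+$ is replaced by a bounded interval $[0,T]$. The extension to $\mathbb R_+$ is carried out by localization at an increasing sequence of stopping times, exploiting that the map $T\mapsto\|\Phi\mathbf 1_{[0,T]}\|_{\gamma(L^2(\mathbb R_+;H),X)}$ is monotone nondecreasing, and that the stochastic integral is consistent under optional stopping (applied scalarly by the classical Dambis--Dubins--Schwarz / time-change theory).

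For (3)$\Rightarrow$(1)\,and\,(2), assuming $\Phi\in\gamma(L^2(\mathbb R_+;H),X)$ a.s., I will define the stopping times
\[
\rho_n:=\inf\bigl\{t\geq 0:\|\Phi\mathbf 1_{[0,t]}\|_{\gamma(L^2(\mathbb R_+;H),X)}>n\bigr\}\wedge n,
\]
which satisfy $\rho_n\uparrow\infty$ a.s. For each $n$ the truncated integrand $\Phi\mathbf 1_{[0,\rho_n]}$ has $\gamma$-norm bounded by $n$, so by the finite-horizon theorem applied on $[0,n]$ it is stochastically integrable and produces a process $\zeta^{(n)}\in L^p(\Omega;C_b(\mathbb R_+;X))$ with $\mathbb E\sup_t\|\zeta^{(n)}\|^p\lesssim_{p,X} n^p$. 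Uniqueness on overlapping intervals forces $\zeta^{(n+1)}=\zeta^{(n)}$ on $[0,\rho_n]$, so these processes paste into a continuous $\zeta\in L^0(\Omega;C_b(\mathbb R_+;X))$, and the a.s. boundedness on $\mathbb R_+$ follows since $\{\rho_n=\infty\}\uparrow\Omega$. For the elementary approximation in (1), I will apply the finite-horizon approximation theorem to $\Phi\mathbf 1_{[0,\rho_n]}$ on $[0,n]$ with extension by zero, and obtain a suitable sequence by diagonal extraction.

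For (2)$\Rightarrow$(3), I will localize with $\tau_n:=\inf\{t\geq 0:\|\zeta(t)\|>n\}$, which tends to infinity a.s.\ by the assumed a.s.\ boundedness of $\zeta$. Applying the finite-horizon implication (2)$\Rightarrow$(3) to $\zeta^{\tau_n}$ on each $[0,T]$ yields $\Phi\mathbf 1_{[0,\tau_n]}\in\gamma(L^2(0,T;H),X)$ together with a norm bound proportional to $n$; letting first $T\to\infty$ and then $n\to\infty$ via the monotone-convergence property of the $\gamma$-norm on nested intervals produces (3). The implication (1)$\Rightarrow$(2) is routine from completeness of the ucp topology and the scalar stochastic integration theory applied to $\Phi_n^*x^*\to\Phi^*x^*$. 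Finally, the two-sided estimate \eqref{eqwiener} follows by applying the finite-horizon estimate on $[0,n]$,
\[
\mathbb E\sup_{t\in[0,n]}\|\zeta(t)\|^p\eqsim_{p,X}\mathbb E\|\Phi\mathbf 1_{[0,n]}\|^p_{\gamma(L^2(\mathbb R_+;H),X)},
\]
and passing to the limit $n\to\infty$ by monotone convergence on both sides. I expect the principal technical obstacle to be the measurability of $t\mapsto\|\Phi\mathbf 1_{[0,t]}\|_{\gamma(L^2(\mathbb R_+;H),X)}$ which is needed for $\rho_n$ to be a genuine stopping time; this requires the separability of $X$ and $L^2(\mathbb R_+;H)$ so that the defining Gaussian series can be controlled on a single full-measure event, together with the left-continuity of the truncated $\gamma$-norm.
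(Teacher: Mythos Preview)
The paper does not prove this theorem; it records it as the infinite--time--interval version of \cite[Theorem~3.6]{NVW}, with the weak formulation (2) coming from \cite[Theorem~5.1]{OV} and the full range $p\in(0,\infty)$ in \eqref{eqwiener} from \cite[Theorem~4.4]{NVW} and \cite[Theorem~5.4]{CV}. No argument for the passage from $[0,T]$ to $\mathbb R_+$ is given. Your localization strategy is precisely the natural way to fill this in, and the two-sided estimate \eqref{eqwiener} via monotone convergence in $n$ is exactly right. Two points deserve closer attention.

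First, in your (2)$\Rightarrow$(3) step the hypothesis only says that there \emph{exists} an a.s.\ bounded process $\zeta$ satisfying the scalar identity; no adaptedness, continuity, or even joint measurability is assumed, so $\tau_n=\inf\{t:\|\zeta(t)\|>n\}$ need not be a stopping time. The fix is to bootstrap: apply the finite-horizon implication (2)$\Rightarrow$(1) on each $[0,T]$ first, which shows that $\zeta|_{[0,T]}$ agrees (up to indistinguishability) with the continuous adapted process $\int_0^{\cdot}\Phi\,\ud W_H$; these paste consistently over $T\in\mathbb N$ to furnish a continuous adapted version of $\zeta$ on all of $\mathbb R_+$, and only then can you localize with $\tau_n$.

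Second, the ``monotone-convergence property of the $\gamma$-norm on nested intervals'' that you invoke to pass from $\sup_T\|\Phi\mathbf 1_{[0,T]}\|_{\gamma}<\infty$ a.s.\ to $\Phi\in\gamma(L^2(\mathbb R_+;H),X)$ a.s.\ is not automatic: it relies on the fact that in a space not containing $c_0$ the $\gamma$-summing and $\gamma$-radonifying norms coincide (see \cite[\S11]{Ngamma}). This is available here because UMD spaces are reflexive and hence do not contain~$c_0$, but it should be made explicit. With these two clarifications your argument is complete and matches what the paper intends by its citations.
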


For the proof of Theorem \ref{main} we will also need the following result which is a simple consequence of \cite[Theorem 2]{Ond}.
\begin{proposition}\label{Th5}
Let $X$ be a reflexive separable Banach space and let
$M\in\mathcal M_{\rm var}^{\rm loc}(X)$. If $[[M]]$ is absolutely continuous
with respect to the Lebesgue measure, then there exists
a separable Hilbert space $H$, an $H$-cylindrical Brownian motion $W_H$ on an enlarged
probability space $(\overline{\Omega}, \overline{\mathbb F}, \overline{\mathbb P})$,
a progressively measurable process $z \colon \mathbb R_+ \times \Omega\to \mathbb R_+$
and a~scalarly progressively measurable process $Q_M^{1/2}: \mathbb R_+ \times \Omega \to \mathcal L(X^*,H)$
which satisfies $Q_M^{1/2*}Q_M^{1/2} = Q_M$ a.s.\ and
$z^{1/2}Q_M^{1/2} \in L^0(\Omega; L^2_{\rm loc}(\mathbb R_+; \mathcal L(X^*,H)))$ such
that a.s.
\[
M_t x^* = \int_0^t z^{1/2}(s) (Q_M^{1/2}(s) x^*)^*\ud W_H(s), \;\;\; t \in \mathbb R_+,x^* \in X^*.
\]
Moreover, if $X$ is a Hilbert space, then for each progressively strongly measurable
$\Phi\colon \mathbb R_+ \times\Omega \to  X^*$ such that
$\int_0^{\infty}\langle Q_M\Phi,\Phi\rangle \ud [[M]] <~\infty$ a.s.\ one has
\begin{equation}\label{eq:intTh5}
 \int_0^{t}\Phi(s) \ud M(s) = \int_0^t z^{1/2}(s) Q_M^{1/2}(s) \Phi(s)\ud W_H(s),\;\;\;\;t \in \mathbb R_+.
\end{equation}
\end{proposition}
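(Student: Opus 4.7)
The plan is to reduce the statement directly to the Hilbert-valued Brownian representation of \cite[Theorem 2]{Ond}, using the polar decomposition from Proposition \ref{Q_M} and the factorisation of Lemma \ref{sqroot}.

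First I would set up the data needed to invoke \cite[Theorem 2]{Ond}. Since $[[M]]$ is absolutely continuous with respect to the Lebesgue measure, the Radon--Nikod\'ym theorem gives a progressively measurable $z\colon \mathbb R_+\times\Omega\to\mathbb R_+$ with $\ud [[M]]_s = z(s)\ud s$; measurability is obtained from the predictability of $[[M]]$ together with Lemma \ref{lem:Lebesguediff} applied to $[[M]]_t - [[M]]_{t-1/k}$. Since $X$ is reflexive and separable, applying Lemma \ref{sqroot} pointwise together with Remark \ref{sqrootmes} to the scalarly measurable positive operator-valued process $Q_M$ from Proposition \ref{Q_M} produces a separable Hilbert space $H$ and a scalarly progressively measurable process $Q_M^{1/2}\colon\mathbb R_+\times\Omega\to\calL(X^*,H)$ with $Q_M^{1/2*}Q_M^{1/2} = Q_M$. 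Combining these, for every $x^*\in X^*$ we have, by Proposition \ref{Q_M},
\[
[Mx^*]_t \;=\; \int_0^t \lb Q_M(s)x^*,x^*\rb \ud[[M]]_s \;=\; \int_0^t \|z^{1/2}(s)Q_M^{1/2}(s)x^*\|_H^2 \ud s,
\]
so in particular $z^{1/2}Q_M^{1/2}x^*\in L^2_{\rm loc}(\mathbb R_+;H)$ a.s.\ for every $x^*$, i.e.\ the process $z^{1/2}Q_M^{1/2}$ lies scalarly in the right space for the integral on the right-hand side to make sense as an $H$-cylindrical Brownian integral.

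Next I would apply \cite[Theorem 2]{Ond} to the cylindrical martingale $M$. That theorem states, essentially, that any cylindrical continuous local martingale on a reflexive separable Banach space whose covariation structure is absolutely continuous with respect to Lebesgue measure and factorises through a scalarly measurable operator process into a separable Hilbert space admits, on a possibly enlarged stochastic basis $(\overline\Omega,\overline{\mathbb F},\overline{\mathbb P})$, a Brownian representation against an $H$-cylindrical Brownian motion $W_H$. Feeding it the factorisation $[Mx^*]_t = \int_0^t\|z^{1/2}(s)Q_M^{1/2}(s)x^*\|_H^2\ud s$ from the previous paragraph produces the desired $W_H$ and the identity
\[
M_t x^* \;=\; \int_0^t (z^{1/2}(s)Q_M^{1/2}(s)x^*)^* \ud W_H(s), \qquad t\geq 0,\; x^*\in X^*.
\]
The required measurability and integrability of the integrand are exactly what we just established.

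Finally, for the Hilbert space case I would prove \eqref{eq:intTh5} by checking both sides have the same covariation with every test cylindrical martingale and agree at zero; equivalently, via Remark \ref{rem:Kal176} and the isometry for scalar stochastic integrals, it suffices to show that for every $y^*\in X^*$,
\[
\Big[\int_0^{\cdot}\Phi\ud M,\, My^*\Big]_t \;=\; \Big[\int_0^{\cdot} z^{1/2}Q_M^{1/2}\Phi\ud W_H,\, My^*\Big]_t \quad\text{a.s.\ for all } t\geq 0.
\]
The left side equals $\int_0^t\lb Q_M(s)\Phi(s),y^*\rb\ud[[M]]_s$ by Remark \ref{realvalued} and the definition of $Q_M$. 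Using the representation of $My^*$ established above and the scalar Kunita--Watanabe/It\^o covariation identity for integrals against $W_H$, the right side equals $\int_0^t\lb z^{1/2}Q_M^{1/2}\Phi,\,z^{1/2}Q_M^{1/2}y^*\rb_H\ud s = \int_0^t\lb Q_M(s)\Phi(s),y^*\rb z(s)\ud s$, which matches. For general $\Phi$ this is obtained by an approximation argument using elementary processes and Remark \ref{rem3}.

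The main obstacle I expect is the careful verification that the hypotheses of \cite[Theorem 2]{Ond} are met in the present cylindrical, infinite-dimensional setup, in particular that the square root $Q_M^{1/2}$ produced by Lemma \ref{sqroot} can genuinely be chosen scalarly progressively measurable (which is where Remark \ref{sqrootmes} is crucial) and that $z^{1/2}Q_M^{1/2}$ is locally scalarly $L^2$ uniformly enough to feed into the representation theorem. Once those technical points are pinned down, everything else is bookkeeping.
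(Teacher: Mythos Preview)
Your reduction to \cite[Theorem 2]{Ond} via the Radon--Nikod\'ym density $z$ of $[[M]]$ and the square-root factorisation from Lemma \ref{sqroot}/Remark \ref{sqrootmes} is exactly the paper's argument; the first half of your proposal matches the paper essentially line by line.

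For \eqref{eq:intTh5}, however, your proposed reduction has a gap. You claim it suffices to check that $\bigl[\int_0^{\cdot}\Phi\ud M,\,My^*\bigr]=\bigl[\int_0^{\cdot} z^{1/2}Q_M^{1/2}\Phi\ud W_H,\,My^*\bigr]$ for every $y^*\in X^*$. This is not enough on the \emph{enlarged} filtered space $(\overline\Omega,\overline{\mathbb F},\overline{\mathbb P})$: the construction of $W_H$ in \cite{Ond} typically augments the filtration by an independent cylindrical Brownian motion to fill the directions not hit by $z^{1/2}Q_M^{1/2}$, so the family $\{My^*:y^*\in X^*\}$ need not be separating for $\overline{\mathbb F}$-local martingales. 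Two continuous local martingales can have identical covariation with every $My^*$ and still differ by a martingale living in the added noise. Your invocation of Remark \ref{rem:Kal176} does not bridge this; that remark only converts quadratic-variation convergence into ucp convergence, it does not say that covariation against $M$ determines the process.

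The paper avoids this entirely by the route you mention only as an afterthought: for elementary $\Phi=\sum_i \mathbf 1_{(t_{i-1},t_i]\times B_i}\,x_i^*$ the identity \eqref{eq:intTh5} is an immediate consequence of the representation $M_tx^*=\int_0^t(z^{1/2}Q_M^{1/2}x^*)^*\ud W_H$ and linearity of the stochastic integral, with no covariation computation needed; the general case then follows by density using the isometry in \cite[Remark~30]{Ond}. Drop the covariation step and promote your final approximation remark to the actual argument.
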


\begin{remark}\label{rem:existintMabscont}
 The integral in the left hand side of \eqref{eq:intTh5} exists for the special $M$ with absolutely continuous quadratic variation thanks to the isometry given in \cite[Remark~30]{Ond} and the construction given in \cite[p.1022]{Ond}.
\end{remark}

\begin{proof}
Since $[[M]]$ is absolutely continuous with respect to the Lebesgue measure, one can find $z \colon \mathbb R_+ \times\Omega\to \mathbb R_+$
such that $[[M]]_t = \int_0^t z(s)ds$ for each $t \in \mathbb R_+$ a.s.\
Define $H$ and $Q_M^{1/2}: \mathbb R_+ \times \Omega \to \mathcal L(X^*,H)$ as in Lemma \ref{sqroot}.
By Remark \ref{sqrootmes} the process $Q_M^{1/2}$ is scalarly progressively measurable.
Then for all $x^*,y^* \in X^*$
\begin{align*}
[Mx^*,My^*]_t & = \int_0^t\ud [Mx^*,My^*]_s =
\int_0^t \langle Q_Mx^*,y^*\rangle \ud [[M]]_s
\\ & = \int_0^t \langle z(s)Q_M(s)x^*,y^*\rangle\ud s
\\ &=\int_0^t \langle (z(s)^{1/2}Q_M(s)^{1/2})x^*,(z(s)^{1/2}Q_M(s)^{1/2})y^*\rangle\ud s,
\end{align*}
and the rest follows from \cite[Theorem 2]{Ond}.

The last equation is evident for elementary functions, and the general case
follows from a density argument, Remark \ref{rem:existintMabscont} and the isometry, mentioned in \cite[Remark~30]{Ond}.
\end{proof}

\subsection{Proof of the main characterization Theorem \ref{main}\label{subsec:proofmain}}

To prove the result we will reduce to  Theorem \ref{Wienercase} by using the time transformation from Corollary \ref{cor:inversekaz} and the representation of Proposition \ref{Th5}.
\begin{proof}[Proof of Theorem \ref{main}]

Define $\tau: \Omega\times {\mathbb R_+} \to [0,\infty]$ as follows:
\begin{equation}\label{proofmain1}
 \tau_s = \begin{cases}
\inf\{t\geq 0:[[M]]_t>s\}, &\text{for}\; 0\leq s<[[M]]_{\infty};
\\
\infty,&\text{for}\; s\geq [[M]]_{\infty}.
\end{cases}
\end{equation}
Put
\begin{equation}
 \Psi(s) = \begin{cases}\label{proofmain2}
\Phi(\tau_s), &\text{for}\; 0\leq s<[[M]]_{\infty};
\\
0,&\text{for}\; s\geq [[M]]_{\infty}.
\end{cases}
\end{equation}

For each $s \geq 0$ it holds true that $[[M]]_{\tau_s} - [[M]]_{\tau_{s-}} = 0$
a.s.\ So, since for fixed $h\in H$, $\mu_{[[M]]}\geq \mu_{[Mh]}$, then
also $[Mh]_{\tau_s} - [Mh]_{\tau_{s-}} = 0$ a.s.\ Therefore thanks to the fact that
$\tau_{s-}$ is a stopping time, so $(Mh)^{\tau_s} -(Mh)^{\tau_{s-}}$
is a continuous local martingale with zero quadratic variation (see \cite[Theorem~I.18]{Prot}),
and by Remark \ref{rem:Kal176} and
\cite[Problem 1.5.12]{KS} one concludes that
$Mh$ is $\tau$-continuous.

It also follows that $([[M]]\circ \tau)_s = s$ for $s < [[M]]_{\infty}$.
Let $\mathbb G$ be as in Corollary \ref{cor:inversekaz}.
By Corollary \ref{cor:inversekaz} one can define a local $H$-cylindrical continuous $\mathbb{G}$-martingale
$N:\mathbb R_+ \times H\to L^0(\Omega)$ such that $N = M \circ \tau$, $[[N]]_s =s$ for $s <[[M]]_{\infty}$, and
$Q_N = Q_M \circ \tau$.

Let
$W_H$ and $(\overline{\Omega}, \overline{\mathbb F}, \overline{\mathbb P})$ be as in
Proposition \ref{Th5}. We will prove the result by showing that (1), (2) and (3)
for $\Phi$ are equivalent with (1), (2) and (3) in Theorem \ref{Wienercase} for~$\Psi\,Q_N^{1/2}$.
(Notation $(k,\Phi)\Leftrightarrow(k,\Psi)$ for $k=1,2,3$).

(1, $\Phi$) $\Rightarrow$ (1, $\Psi$): Assume (1) holds for a sequence
of elementary progressive processes $(\Phi_n)_{n \geq 1}$. For all $n \geq 1$
define $\Psi_n:\mathbb R_+\times \Omega \to \mathcal L(H,X)$ as

\[
\Psi_n(s) = \begin{cases}
\Phi_n(\tau_s), &\text{for}\; 0\leq s<[[M]]_{\infty},
\\
0,&\text{for}\; s\geq [[M]]_{\infty}.
\end{cases}
\]

Then it follows from the Pettis measurability theorem and Corollary \ref{cor:inversekaz}
that each $\Psi_n$ is strongly progressively measurable with respect to the time transformed filtration, and the same holds true for each
$\Psi_n\,Q_N^{1/2}$, because $\Phi_n$ takes their values in finite dimensional subspace of $X$. So since
$\Phi_n$ is elementary progressive it follows from \eqref{Kaz}, Corollary \ref{cor:inversekaz}, Proposition \ref{Th5}, Remark \ref{rem3}
that for all $n \geq 1$ for all $s\in \mathbb R_+$ we have a.s.
\[
\zeta_{\Psi_n\,Q_N^{1/2}}(s) = \int_{0}^s\Psi_n(r)\,Q_N^{1/2}(r)\ud W_H(r) = \int_0^s\Psi_n(r)\ud N(r)
=\int_0^{\tau_s}\Phi_n(r)\ud M(r)
\]
(recall that $z(s) = [[N]]_s' = 1$ for $s < [[M]]_{\infty}$).

Therefore, it follows that $(\zeta_{\Psi_n\,Q_N^{1/2}})_{n\geq 1}$ is a Cauchy sequence in
$L^0(\Omega;C_b(\mathbb R_+;X))$, and hence it converges to some
$\zeta_{\chi} \in L^0(\Omega;C_b(\mathbb R_+;X))$. By \eqref{rule}, Theorem \ref{main}
(1) (i), by the special choice of $\overline{\Omega}$
and by Fubini's theorem it follows that for every $x^* \in X^*$ we have
$\lim_{n \to \infty}Q_N^{1/2}\Psi_n^*x^* =
Q_N^{1/2}\Psi^*x^*$ in $L^0(\overline{\Omega}; L^2(\mathbb R_+;H))$.
Since $\Psi_n\,Q_N^{1/2}h$
take values in finite dimensional subspace of $X$ for each $h \in H$,
one can approximate $(\Psi_n\, Q_N^{1/2})_{n \geq 1}$ to obtain
a sequence of elementary progressive processes $(\hat{\chi}_n)_{n \geq 1}$ that satisfies
Theorem \ref{Wienercase} (1) (i) and (ii).

(1, $\Psi$) $\Rightarrow$ (1, $\Phi$): Let Theorem \ref{Wienercase} (1)
be satisfied for $\Psi\,Q_N^{1/2}$ on the enlarged probability space
$(\overline{\Omega}, \overline{\mathcal F}, \overline{\mathbb P})$. Then it follows from
Theorem \ref{Wienercase} that $\Psi\,Q_N^{1/2} \in \gamma(L^2(\mathbb R_+;H), X)$
$\overline{\mathbb P}$-a.s.\ By special choice of $\overline{\Omega}$
and by Fubini's theorem we may conclude that $\Psi \,Q_N^{1/2} \in \gamma(L^2(\mathbb R_+;H), X)$
$\mathbb P$-almost everywhere. By \cite[Remark 2.8]{NVW}
$\Psi \,Q_N^{1/2} \in L^0(\Omega;\gamma(L^2(\mathbb R_+;H), X))$.
Then by \cite[Lemma 3.2]{Ver}, \cite[Proposition 2.10]{NVW} and \cite[Proposition 2.12]{NVW}
there exist elementary progressive processes $(\chi_n)_{n\geq 1}$ in
$L^0(\Omega;\gamma(L^2(\mathbb R_+;H), X))$ such that $\Psi\,Q_N^{1/2} = \lim_{n\to \infty} \chi_n$
in $L^0(\Omega;\gamma(L^2(\mathbb R_+;H), X))$.

\medskip
Let $n$ be fixed. Without loss of generality
one can suppose that $\chi_n$ has the following form:
\[
\chi_n = \sum_{i=1}^{I} \sum_{j=1}^J \mathbf 1_{(t_{i-1},t_i]\times B_{ij}}\sum_{k=1}^K h_k\otimes x_{ijk}.
\]
Fix $\omega \in \Omega$. Let $P_0:\mathbb R_+ \times \Omega \to \mathcal L(H)$
be the projection onto
$\overline{\text{ran } Q_N^{1/2}(t,\omega)}$.
It is easy to check that $P_0$ is scalarly progressively measurable and
$\|P_0\|\leq 1$. By the ideal property \eqref{ideal} one has $\P$-a.e.
\begin{align*}
\|\Psi Q_N^{1/2} - \chi_n P_0\|_{\gamma(L^2(\mathbb R_+;H),X)}
&= \|\Psi Q_N^{1/2}P_0 - \chi_n P_0\|_{\gamma(L^2(\mathbb R_+;H),X)}\\
&\leq\|\Psi Q_N^{1/2} - \chi_n \|_{\gamma(L^2(\mathbb R_+;H),X)},
\end{align*}
thanks to $P_0Q_N^{1/2}=Q_N^{1/2}P_0 = Q_N^{1/2}$.

Now for each $k \geq 1$ define $P_k \in \mathcal L(H)$ in the same way as $P_0$,
but by taking projections onto $Q_N^{1/2}(\text{span }(h_1,\ldots,h_k))$.
Note that $P_k$ is a scalarly measurable operator. By \cite[Proposition 2.4]{NVW},
pointwise on $\Omega$ we have $\|\chi_n P_k - \chi_n P_0\|_{\gamma(L^2(\mathbb R_+;H),X)} \to 0$
as $k \to \infty$.

Fix $k\geq 1$. By Lemma \ref{technical} (applied with $F = Q_N^{1/2}$) we can find $H$-strongly progressive $\tilde{P}_k, L_k:\mathbb R_+\times\Omega \to \mathcal L(H)$ such that
\begin{equation}\label{eq:niceidenti}
\tilde P_k Q_N^{1/2} = Q_N^{1/2} P_k \ \ \text{and} \ \  L_k Q_N^{1/2} = P_k.
\end{equation}
For each $n,k \geq 1$ one let $\Psi_{nk} = \chi_{n} L_k\in L^0(\Omega; \gamma(L^2(\mathbb R_+;H),X))$.
Then by \eqref{eq:niceidenti} $\Psi_{nk}Q_N^{1/2} = \chi_nP_k$. Since $\Psi_{nk}Q_N^{1/2} \to \chi_nP_0$ as $k \to \infty$, we can choose a subsequence $(k_n)_{n\geq 0}$
and define $\Psi_n := \Psi_{nk_n}$ such that $\Psi Q_N^{1/2} = \lim_{n\to \infty} \Psi_n Q_N^{1/2}$ in $L^0(\Omega;\gamma(L^2(\mathbb R_+;H), X))$.

Without loss of generality assume
that $\Psi_n \,Q_N^{1/2}(s) = 0$ for $s \geq [[M]]_{\infty}$. For each $n \geq 1$
define $\Phi_n \colon \mathbb R_+ \times \Omega \to \mathcal L(H,X)$ as
${\Phi}_n =\Psi_n \circ [[M]]$. It is easy to see that ${\Phi}_n\,Q_M^{1/2} = (\Psi_n\,Q_N^{1/2})\circ [[M]]$ for each $n>0$.
Then $\Phi_n\,Q_M^{1/2}$ is a sequence of strongly progressively measurable processes,
and $(\Phi_n\,Q_M^{1/2})\circ \tau = \Psi_n\,Q_N^{1/2}$.

By the substitution rule \eqref{rule} for all $x^* \in X^*$ one has
\[
\left\|Q_M^{1/2}\Phi^*x^*- Q_M^{1/2}\Phi_n^*x^*\right\|_{L^2(\mathbb R_+,[[M]];H )}=
\left\|Q_N^{1/2}\Psi^*x^*- Q_N^{1/2}\Psi_n^*x^*\right\|_{L^2(\mathbb R_+;H)},
\]
and we derive (1) (i) because the last expression converges to 0 in probability. By the It\^{o} homeomorphism
\cite[Theorem 5.5]{NVW} and the fact that $\Psi_n Q_N^{1/2} \to \Psi Q_N^{1/2}$ in
$L^0(\Omega;\gamma(L^2(\mathbb R_+;H), X))$ one obtains
\[
\int_0^{\cdot} \Psi(t)Q_N^{1/2}(t) \ud W_H(t) = \lim_{n \to \infty}\int_0^{\cdot} \Psi_n(t)Q_N^{1/2}(t) \ud W_H(t)
\,\,\,\,\,\text{in}\,\,\, L^0(\Omega; C_b(\mathbb R_+;X)).
\]
Since $\Psi_n\,Q_N^{1/2}$ are progressively strongly measurable processes one concludes from Proposition
\ref{Th5} and the fact that $[[N]]_s = s$ for $s \leq [[M]]_{\infty}$ (and so $z(s) = [[N]]_s' = 1$) that,
almost surely for all $t\in \mathbb R_+$ and for all $n \geq 1$
\begin{equation}\label{eq:intfortimechange}
\int_0^{[[M]]_t} \Psi_n(s)Q_N^{1/2}(s)\ud W_H(s) = \int_0^{[[M]]_t} \Psi_n(s)\ud N(s)=\int_0^t \Phi_n(s) \ud M(s).
\end{equation}
Here the second identity follows from Corollary \ref{cor:inversekaz}.

It follows that $(\int_0^t \Phi_n(s) \ud M(s))_{n\geq 1}$ is a Cauchy sequence in $L^0(\Omega; C_b(\mathbb R_+;X))$.
Now as in the proof of the previous step one may conclude (1) (ii) via an approximation argument.

(2, $\Phi$) $\Rightarrow$ (2, $\Psi$): Let $\zeta: \mathbb R_+ \times\Omega \to X$
be the given stochastic integral process. Let $\zeta_{\Psi}:\mathbb R_+ \times\Omega \to X$ be defined as
\[
\zeta_{\Psi}(s) =
\begin{cases}
\zeta(\tau_s), &\text{for}\; 0\leq s<[[M]]_{\infty},
\\
\text{weak}-\lim_{t \to \infty}\zeta(t),&\text{for}\; s\geq [[M]]_{\infty}.
\end{cases}
\]
The weak limit exists a.e. and it is strongly measurable by \cite[Lemma 3.8]{Ver}.

Moreover, by Corollary \ref{cor:inversekaz} and Proposition \ref{Th5}
\[
\langle\zeta_{\Psi},x^*\rangle = \int_0^{\cdot}\Psi(t)^* x^* \ud N(t)
= \int_0^{\cdot}\langle Q_N^{1/2} \Psi(t)^* (t) x^* \ud W_H(t)\;\;\;
  \text{in}\; L^0(\Omega; C_b(\mathbb R_+)).
\]
On the other hand since $\zeta$ is a.s.\ bounded, the same holds for $\zeta_{\Psi}$. Therefore, Theorem \ref{Wienercase} (2) holds for $\Psi\, Q_N^{1/2}$ and ${\zeta}_{\Psi}$.

(2, $\Psi$) $\Rightarrow$ (2, $\Phi$): Let ${\zeta}_{\Psi}$ be the stochastic
integral process of $\Psi\,Q_N^{1/2}$ with respect to $W_H$. Let $\zeta:\mathbb R_+ \times \Omega \to X$
be defined as $\zeta=\zeta_{\Psi}\circ [[M]]$. Then $\zeta \in L^0(\Omega; C_b(\mathbb R_+;X))$
and it follows from Proposition \ref{Th5} that for all $x^* \in X^*$, for all $t \in \mathbb R_+$ a.s.\ we have
\begin{align*}
 \langle\zeta(t),x^*\rangle =\langle\zeta_{\Psi}([[M]]_t),x^*\rangle &= (\langle\zeta_{\Psi},x^*\rangle)([[M]]_t)
 =\int_0^{[[M]]_t} Q_N^{1/2} \Psi^* x^* \ud W_H(r)\\
 &= \int_0^{[[M]]_t} \Psi^* x^* \ud N(r) =\int_0^{t} \Phi^* x^* \ud M(r).
\end{align*}
Here the last identity follows from Corollary \ref{cor:inversekaz}.

(3, $\Phi$) $\Leftrightarrow$ (3, $\Psi$): This statement is obvious
by Lemma \ref{timechange}. Furthermore, from \eqref{eqtimechange} it follows that
$\P$-a.s.\ we have
\begin{equation}\label{eq7}
 \|\Phi\,Q_M^{1/2}\|_{\gamma(L^2(\mathbb R_+,[[M]];H),X)} = \|\Psi\,Q_N^{1/2}\|_{\gamma(L^2(\mathbb R_+;H),X)}.
\end{equation}

Therefore $\|\Phi\,Q_M^{1/2}\|_{\gamma(L^2(\mathbb R_+,[[M]];H),X)}$ is a measurable
function on $\Omega$. Since $\zeta(t) = \zeta_{\Psi}([[M]]_t)$ and by using Proposition \ref{Th5}, \eqref{eqwiener} and \eqref{eq7} one derives
for $p\in(0,\infty)$
\begin{align*}
\mathbb E \sup_{t\in\mathbb R_+} \|\zeta(t)\|^p & =
\mathbb E \sup_{t\in\mathbb R_+}\left\|\int_0^t \Phi \ud M\right\|^p =
\mathbb E \sup_{t\in\mathbb R_+}\left\|\int_0^{t} \Psi \ud N\right\|^p
\\ &=\mathbb E \sup_{t\in\mathbb R_+}\left\|\int_0^{t} \Psi\,Q_N^{1/2} \ud W_H\right\|^p
\\ & \eqsim_{p,X}\mathbb E \|\Psi\, Q_N^{1/2}\|^p_{\gamma (L^2(\mathbb R_+;H), X)}
=\mathbb E \|\Phi\, Q_M^{1/2}\|^p_{\gamma (L^2(\mathbb R_+, [[M]];H), X)},
\end{align*}
which proves the last part of Theorem \ref{main}.
\end{proof}

By the above proof and a limiting argument in $L_0(\Omega; C_b(\mathbb R_+;X))$ one obtains the following theorem, which can be seen as a vector-valued generalization of the famous Dambis-Dubins-Schwarz theorem (see \cite[Theorem 18.4]{Kal} for the isotropic case in finite dimensions).

\begin{theorem}\label{rem:intfortimechange}
Let $H$ be a Hilbert space, $X$ be a UMD Banach space, $M \in \mathcal M_{\rm var}^{\rm loc}(H)$, $(\tau_s)_{s\geq 0}$ be the time change defined as in \eqref{proofmain1}. Then we have that there exists an $H$-cylindrical Brownian motion $W_H$ that does not depend on $X$ such that for any $\Phi:\mathbb R_+\times \Omega \to \mathcal L(H,X)$ which is stochastically integrable with respect to $M$, one has a.s.
\[
\int_0^{t}\Phi(s)\ud M = \int_0^{[[M]]_t}(\Phi(s)Q_M(s))\circ \tau \ud W_H, \ \ t\geq 0.
\]
\end{theorem}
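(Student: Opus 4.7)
The strategy is to extract the formula directly from the chain of identities established in the proof of Theorem \ref{main} for elementary progressive integrands, and then pass to a limit, taking care to verify that the Brownian motion produced depends only on $M$ (and not on $X$ or $\Phi$). The hard part is not any single step, but book-keeping: ensuring the same $W_H$ works uniformly across all stochastically integrable $\Phi$.

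First I would recall the construction already used in the proof of Theorem \ref{main}: let $\tau$ be the time change defined in \eqref{proofmain1} and set $N := M\circ \tau$. By Corollary \ref{cor:inversekaz}, $N\in \mathcal{M}_{\rm var}^{\rm loc}(H)$ with $[[N]]_s = s\wedge [[M]]_\infty$ and $Q_N = Q_M\circ\tau$. Since $[[N]]$ is absolutely continuous with respect to Lebesgue measure (with density $z\equiv 1$ on $[0,[[M]]_\infty)$), Proposition \ref{Th5} applied to $N$ (with $X=H$, so Lemma \ref{sqroot} gives the usual operator square root) produces, on an enlarged probability space, an $H$-cylindrical Brownian motion $W_H$ such that
\[
N_t h = \int_0^t Q_N^{1/2}(s)\,h\,\ud W_H(s), \qquad h\in H,\,t\geq 0.
\]
This $W_H$ is constructed from $M$ alone, so it has the claimed independence of $X$ and $\Phi$.

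Next I would handle elementary progressive $\Phi$ directly. Combining \eqref{Kaz} of Proposition \ref{Kazamaki}, the substitution rule \eqref{rule}, and the scalar case of Proposition \ref{Th5} (applied coordinate-wise to $\Phi\circ\tau$, which takes values in a finite-dimensional subspace of $X$), one obtains
\begin{align*}
 \int_0^t \Phi(s)\,\ud M(s)
 = \int_0^{[[M]]_t}(\Phi\circ\tau)(s)\,\ud N(s)
 = \int_0^{[[M]]_t}(\Phi\circ\tau)(s)\,Q_N^{1/2}(s)\,\ud W_H(s),
\end{align*}
and the pointwise identity $Q_N^{1/2} = Q_M^{1/2}\circ\tau$ (operator square roots commute with time substitution) rewrites the right hand side as $\int_0^{[[M]]_t}(\Phi\,Q_M^{1/2})\circ\tau\,\ud W_H$. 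This is exactly the asserted identity for elementary $\Phi$.

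Finally I would pass to the limit. Given a stochastically integrable $\Phi$, choose elementary progressive $\Phi_n$ as in Theorem \ref{main}(1), so that $\int_0^\cdot\Phi_n\,\ud M\to \int_0^\cdot\Phi\,\ud M$ in $L^0(\Omega;C_b(\mathbb R_+;X))$ and $\Phi_n Q_M^{1/2}\to \Phi Q_M^{1/2}$ in $L^0(\Omega;\gamma(L^2(\mathbb R_+,[[M]];H),X))$. By Lemma \ref{timechange},
\[
\|(\Phi_n Q_M^{1/2})\circ\tau - (\Phi Q_M^{1/2})\circ\tau\|_{\gamma(L^2(\mathbb R_+;H),X)}
= \|\Phi_n Q_M^{1/2} - \Phi Q_M^{1/2}\|_{\gamma(L^2(\mathbb R_+,[[M]];H),X)} \to 0
\]
in probability, and the It\^o isomorphism of Theorem \ref{Wienercase} then implies that the Brownian integrals $\int_0^{\cdot}(\Phi_n Q_M^{1/2})\circ\tau\,\ud W_H$ converge in $L^0(\Omega;C_b(\mathbb R_+;X))$ to $\int_0^{\cdot}(\Phi Q_M^{1/2})\circ\tau\,\ud W_H$. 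Taking limits in the identity established for the $\Phi_n$ and evaluating at time $[[M]]_t$ yields the statement. The same $W_H$ serves every $\Phi$ and every UMD space $X$, since its construction in the first paragraph referred only to $M$.
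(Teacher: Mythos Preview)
Your proposal is correct and follows the same line as the paper, which only records the one-line justification ``By the above proof and a limiting argument in $L^0(\Omega; C_b(\mathbb R_+;X))$.'' You spell out precisely that argument: construct $W_H$ once and for all from $N=M\circ\tau$ via Proposition~\ref{Th5}, verify the identity for elementary $\Phi$ through the chain \eqref{Kaz}--\eqref{eq:intfortimechange}, and pass to the limit using Lemma~\ref{timechange} and the It\^o homeomorphism. One small point of bookkeeping: the convergence $\Phi_n Q_M^{1/2}\to\Phi Q_M^{1/2}$ in $L^0(\Omega;\gamma(L^2(\mathbb R_+,[[M]];H),X))$ is not literally part of condition~(1) in Theorem~\ref{main}, but it follows immediately from (1)(ii) together with Corollary~\ref{cor3.10} applied to $\Phi_n-\Phi$.
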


\subsection{Further consequences}
During the proof of Theorem \ref{main} we have obtained the following corollary, which is absolutely
analogous to \cite[Corollary~3.9]{Ver}:
\begin{corollary}[Kazamaki, infinite dimensional case]\label{cor3.9}
Assume the conditions of Theorem \ref{main} hold and formula \eqref{proofmain1}.
If $\Phi:\mathbb R_+\times \Omega \to \mathcal L(H,X)$ is scalarly $\mathbb F$-measurable
and satisfies $\Phi Q_M^{1/2} \in \gamma(L^2(\mathbb R_+, [[M]]; H),X)$ a.s., then the process
$\Psi :\mathbb R_+ \times \Omega \to \mathcal L(H,X)$ defined as in \eqref{proofmain2}
is $\mathbb G$-adapted and satisfies $\Psi Q_N^{1/2} \in \gamma(L^2(\mathbb R_+; H),X)$ a.s.,
and the $X$-valued version of \eqref{Kaz} holds.
\end{corollary}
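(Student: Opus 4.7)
The plan is to transport the problem to one about a cylindrical Brownian motion via the time change $\tau$, reusing the machinery already assembled inside the proof of Theorem \ref{main}. By Corollary \ref{cor:inversekaz} the process $N := M\circ\tau$ lies in $\mathcal M_{\rm var}^{\rm loc}(H)$ with respect to the right-continuous induced filtration $\mathbb G$, and satisfies $[[N]]_s = s\wedge [[M]]_{\infty}$ together with $Q_N = Q_M\circ\tau$ on $\{\tau_s<\infty\}$. The $\mathbb G$-adaptedness of $\Psi$ is then routine: since $\Phi^*x^*$ is $\mathbb F$-scalarly measurable for each $x^*\in X^*$, the composition $\Psi^*x^* = (\Phi^*x^*)\circ\tau$ is $\mathbb G$-scalarly measurable on $[0,[[M]]_{\infty})$ and vanishes afterwards, precisely as in Proposition \ref{Kazamaki}(4).

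Next I would verify the $\gamma$-integrability of $\Psi\,Q_N^{1/2}$. Select the square root compatibly by $Q_N^{1/2}(s,\omega) := Q_M^{1/2}(\tau_s(\omega),\omega)$ whenever $\tau_s(\omega)<\infty$ and zero otherwise; this is scalarly measurable by Remark \ref{sqrootmes} and satisfies $(Q_N^{1/2})^*Q_N^{1/2} = Q_N$. Then $\Psi\,Q_N^{1/2} = (\Phi\,Q_M^{1/2})\circ \tau$ pathwise, so Lemma \ref{timechange} applied $\omega$-by-$\omega$ with $F = [[M]](\omega)$ yields both the required membership $\Psi\,Q_N^{1/2}\in\gamma(L^2(\mathbb R_+;H),X)$ a.s.\ and the a.s.\ norm identity
\[
\|\Psi\,Q_N^{1/2}\|_{\gamma(L^2(\mathbb R_+;H),X)} = \|\Phi\,Q_M^{1/2}\|_{\gamma(L^2(\mathbb R_+,[[M]];H),X)}.
\]

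Finally, for the $X$-valued analogue of \eqref{Kaz}, which I interpret as the identity
\[
(\Psi\cdot N)_{[[M]]_t} = (\Phi\cdot M)_t \quad \text{a.s., for all } t\geq 0,
\]
the plan is to choose elementary progressive $(\Phi_n)$ as furnished by the implication (3)$\Rightarrow$(1) of Theorem \ref{main}, with $\Phi_n Q_M^{1/2}\to \Phi\,Q_M^{1/2}$ scalarly in $L^0(\Omega;L^2(\mathbb R_+,[[M]];H))$ and $\Phi_n\cdot M\to\Phi\cdot M$ in ucp. Setting $\Psi_n := \Phi_n\circ\tau$ produces elementary progressive $\mathbb G$-processes with values in a fixed finite-dimensional subspace, so the identity holds for the approximants by the finite-dimensional Kazamaki formula \eqref{Kaz}. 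The norm identity from the previous paragraph, together with Theorem \ref{Wienercase} applied to $(\Psi,N)$ via the representation of Proposition \ref{Th5}, forces $\Psi_n\cdot N\to\Psi\cdot N$ in ucp; passing to the limit on both sides of the identity for the approximants gives the claim. The only real technical obstacle is the careful bookkeeping past $[[M]]_{\infty}$ in the definitions of $N$, $Q_N^{1/2}$, $\Psi$ and $\Psi_n$, but this has already been settled inside the proof of Theorem \ref{main}, so the present corollary is essentially an assembly of pieces already on hand.
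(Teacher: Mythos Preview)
Your approach is essentially the same as the paper's: the corollary is not given a separate proof there but is declared to have been ``obtained during the proof of Theorem~\ref{main}'', and your assembly of Corollary~\ref{cor:inversekaz}, Lemma~\ref{timechange}, and the approximation scheme from that proof is exactly the intended route.

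Two small corrections. First, $\Psi_n := \Phi_n\circ\tau$ is not elementary progressive with respect to $\mathbb G$; it is merely $\mathbb G$-progressively measurable with range in a fixed finite-dimensional subspace of $X$. This is harmless, since the finite-dimensional Kazamaki formula \eqref{Kaz} (via Corollary~\ref{cor:inversekaz} and Remark~\ref{rem3}) applies to such processes. Second, your justification that $\Psi_n\cdot N \to \Psi\cdot N$ in ucp via ``the norm identity together with Theorem~\ref{Wienercase}'' is slightly off: the norm identity $\|\Psi\,Q_N^{1/2}\|_\gamma = \|\Phi\,Q_M^{1/2}\|_\gamma$ does not by itself yield $\gamma$-convergence of the approximants, since condition (1)(i) of Theorem~\ref{main} only guarantees scalar $L^2$-convergence. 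The cleaner argument, and the one the paper actually uses in the step (1,\,$\Phi$)$\Rightarrow$(1,\,$\Psi$), is to note that the finite-dimensional identity $(\Psi_n\cdot N)_s = (\Phi_n\cdot M)_{\tau_s}$ already holds for each $n$, and then pass to the limit using the ucp convergence of $\Phi_n\cdot M$ given by (1)(ii); composition with the time change preserves ucp convergence.
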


Using this corollary one can prove the following analogue of \cite[Corollary~3.10]{Ver}:
\begin{corollary}\label{cor3.10}
Let $X$ be a UMD space. For each $n\geq$ let $\Phi_n:\mathbb R_+ \times \Omega\to \mathcal L(H,X)$
be stochastically integrable and let $\zeta_n \in L^0(\Omega, C_b(\mathbb R_+,X))$ denote its stochastic
integral. Then we have $\Phi_n Q_M^{1/2} \to 0$ in $L^0(\Omega;\gamma(L^2(\mathbb R_+,[[M]];H),X))$
if and only if $\zeta_n \to 0$ in $L^0(\Omega; C_b(\mathbb R_+;X))$.
\end{corollary}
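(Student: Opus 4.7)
The plan is to reduce the statement to the cylindrical Brownian motion case via the deterministic time change from \eqref{proofmain1}--\eqref{proofmain2}, where the analogous $L^0$--homeomorphism is classical through the It\^o homeomorphism of \cite[Theorem 5.5]{NVW}. First I would set up, exactly as in the proof of Theorem \ref{main}, the time change $\tau$ of \eqref{proofmain1}, the cylindrical continuous local martingale $N=M\circ\tau\in \mathcal M_{\rm var}^{\rm loc}(H)$ of Corollary \ref{cor:inversekaz}, and the $H$-cylindrical Brownian motion $W_H$ on the enlarged probability space $(\overline\Omega,\overline{\mathbb F},\overline{\mathbb P})$ provided by Proposition \ref{Th5}. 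For each $n$ let $\Psi_n$ be the time transform of $\Phi_n$ as in \eqref{proofmain2}; Corollary \ref{cor3.9} tells me that $\Psi_n$ is $\mathbb G$-adapted with $\Psi_n Q_N^{1/2}\in \gamma(L^2(\mathbb R_+;H),X)$ a.s., and Proposition \ref{Th5} identifies its stochastic integral against $N$ with $\eta_n:=\int_0^\cdot \Psi_n Q_N^{1/2}\,\mathrm dW_H$, while $\eta_n([[M]]_t)=\zeta_n(t)$ a.s.

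Next I would check that both convergences of interest are preserved under this reduction. On the integrand side, Lemma \ref{timechange} yields the pathwise identity
\[
\|\Phi_n Q_M^{1/2}\|_{\gamma(L^2(\mathbb R_+,[[M]];H),X)}
=\|\Psi_n Q_N^{1/2}\|_{\gamma(L^2(\mathbb R_+;H),X)},
\]
so $\Phi_n Q_M^{1/2}\to 0$ in $L^0(\Omega;\gamma(L^2(\mathbb R_+,[[M]];H),X))$ is equivalent to $\Psi_n Q_N^{1/2}\to 0$ in $L^0(\overline\Omega;\gamma(L^2(\mathbb R_+;H),X))$. On the integral side, the convention \eqref{proofmain2} forces $\Psi_n$ (and therefore $\eta_n$) to be a.s.\ constant on $[[[M]]_\infty,\infty)$, which gives
\[
\sup_{t\in\mathbb R_+}\|\zeta_n(t)\|_X
=\sup_{s\in\mathbb R_+}\|\eta_n(s)\|_X\quad\text{a.s.},
\]
so $\zeta_n\to 0$ in $L^0(\Omega;C_b(\mathbb R_+;X))$ if and only if $\eta_n\to 0$ in $L^0(\overline\Omega;C_b(\mathbb R_+;X))$.

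Finally I would invoke the cylindrical Brownian case: $\Psi_n Q_N^{1/2}\to 0$ in $L^0(\overline\Omega;\gamma(L^2(\mathbb R_+;H),X))$ if and only if $\eta_n\to 0$ in $L^0(\overline\Omega;C_b(\mathbb R_+;X))$. This is the It\^o homeomorphism from \cite[Theorem 5.5]{NVW}, and alternatively it follows from the two-sided bound \eqref{eqwiener} of Theorem \ref{Wienercase} by a standard Lenglart-type localization that upgrades $L^p$ comparability to an $L^0$ equivalence, exactly as in \cite[Corollary 3.10]{Ver}. Combining this with the two equivalences of the previous paragraph proves the claim. The main obstacle I anticipate is not analytic but notational: one has to be careful with the bookkeeping near $[[M]]_\infty$ when the latter is finite, verifying that $\Psi_n Q_N^{1/2}$ indeed vanishes past $[[M]]_\infty$ so that $\eta_n$ stabilizes there and the supremum identity above is legitimate; once this is set up, the transfer to $W_H$ is routine.
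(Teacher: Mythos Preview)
Your proposal is correct and follows essentially the same route the paper indicates: the paper merely states that Corollary \ref{cor3.10} is the analogue of \cite[Corollary~3.10]{Ver} and is proved using Corollary \ref{cor3.9}, which is precisely your time-change reduction to $W_H$ via \eqref{proofmain1}--\eqref{proofmain2}, Lemma \ref{timechange}, Proposition \ref{Th5}, and the It\^o homeomorphism of \cite[Theorem~5.5]{NVW}. The bookkeeping near $[[M]]_\infty$ you flag is handled exactly as you describe, and the passage between $L^0(\Omega)$ and $L^0(\overline\Omega)$ is harmless by Fubini since the relevant norms depend only on $\omega\in\Omega$.
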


\begin{corollary}[Local property]\label{cor:whenintis0}
 Let $X$ be a UMD space, $\Phi:\mathbb R_+ \times \Omega\to \mathcal L(H,X)$ be stochastically integrable. Suppose that there exists $A\in \mathcal F$ such that for all $x^* \in X^*$ a.s.\ for all $t\geq 0$, $\Phi^*(t)x^* = 0$. Then a.s.\ in $A$ for all $t\geq 0$
 \[
 \int_0^t \Phi \ud M = 0.
 \]
\end{corollary}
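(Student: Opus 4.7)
The plan is to reduce to the classical one-dimensional fact that a continuous local martingale starting at $0$ whose quadratic variation vanishes identically must itself vanish identically, and then use separability of $X^*$ to upgrade from scalar to vector statements. First, fix a countable dense sequence $(x_n^*)_{n\ge 1}$ in $X^*$; unioning the null sets in the hypothesis over $n$, we obtain a single null set outside which, whenever $\omega\in A$, $\Phi^*(t)x_n^*=0$ for all $t\ge 0$ and all $n\ge 1$. Continuity of $x^*\mapsto\Phi^*(t)x^*$ (which follows from boundedness of $\Phi^*(t)$) extends this to every $x^*\in X^*$.

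Now fix $x^*\in X^*$. By Theorem~\ref{main}(2), $N^{x^*}:=\langle\zeta,x^*\rangle=\int_0^{\cdot}\Phi^*x^*\,\ud M$ is a continuous real-valued local martingale, and by \eqref{eq:quadvarsimpleint} its quadratic variation is
\[
[N^{x^*}]_t=\int_0^t \langle Q_M(s)\Phi^*(s)x^*,\Phi^*(s)x^*\rangle\,\ud[[M]]_s,\qquad t\ge 0.
\]
The first step forces the integrand to vanish identically in $s$ whenever $\omega\in A$, so $[N^{x^*}]_t=0$ for every $t\ge 0$, almost surely on $A$.

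The main step, and the most delicate point, is to pass from ``$[N^{x^*}]_t=0$ for all $t$ on $A$'' to ``$N^{x^*}_t=0$ for all $t$ on $A$'' when $A$ is only $\mathcal F$-measurable rather than a stopping-time set. For this, set $\tau:=\inf\{t\ge 0:[N^{x^*}]_t>0\}$, which is a stopping time by continuity and adaptedness of $[N^{x^*}]$. By continuity of $[N^{x^*}]$ we have $[N^{x^*}]_{t\wedge\tau}=0$ for every $t\ge 0$, so $(N^{x^*})^\tau$ is a continuous local martingale starting at $0$ with identically vanishing quadratic variation; hence $(N^{x^*})^\tau\equiv 0$ almost surely by the standard scalar argument (localizing to square-integrable martingales and applying the identity $\mathbb E N_t^2=\mathbb E[N]_t$, or BDG). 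Since $A\subseteq\{\tau=\infty\}$ up to a null set, this yields $N^{x^*}\equiv 0$ on $A$ almost surely.

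Finally, applying this to each $x_n^*$ and taking a common null set, we obtain $\langle\zeta(t),x_n^*\rangle=0$ for all $t\ge 0$ and $n\ge 1$ whenever $\omega\in A$. Density of $(x_n^*)$ in $X^*$ combined with pathwise continuity of $\zeta$ then gives $\zeta(t)=0$ for every $t\ge 0$ on $A$, which is the desired conclusion. The only real obstacle is the third paragraph: the need to handle an event $A$ that is not part of the filtration, circumvented by the stopping-time trick that embeds $A$ into $\{\tau=\infty\}$.
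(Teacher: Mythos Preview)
Your proof is correct and follows essentially the same route as the paper: reduce to the scalar case via separability of $X^*$, show that $[N^{x^*}]\equiv 0$ on $A$ using \eqref{eq:quadvarsimpleint}, and conclude $N^{x^*}\equiv 0$ on $A$. The only difference is that where the paper invokes \cite[Exercise~17.3]{Kal} for the last step, you spell out the stopping-time argument explicitly, which is a nice self-contained version of exactly that exercise.
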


\begin{proof}
By Hahn--Banach and strong measurability, it is enough to show that for each $x^* \in X^*$ a.s.\ in $A$ for all $t\geq 0$
\[
N_t := \int_0^t \Phi^*x^*\ud M = 0.
\]
But we know that by Remark \ref{realvalued} a.s.\ on $A$
\[
[N]_{\infty} = \int_0^{\infty} (\Phi^*x^*) Q_M (\Phi^*x^*)^* \ud [[M]] = 0,
\]
what yields the desired by \cite[Exercise~17.3]{Kal}.
\end{proof}

\begin{remark}
Due to \cite[Proposition 3.2]{NVW} the implication $(1)\Rightarrow(2)$ can be proven
for any Banach space $X$, because in the proofs of $(1,\Phi)\Rightarrow (1,\Psi)\Rightarrow(2,\Psi)\Rightarrow(2,\Phi)$
one does need the UMD property. The same holds true for $(3,\Phi)\Leftrightarrow (3,\Psi)$
because there is no restriction on $X$ in Lemma \ref{timechange}.
\end{remark}

The next corollary is a generalization of both \cite[Corollary 4.1]{Ver} and \cite[Proposition 6.1]{NVW07c}.
Let $\mathcal{P}$ denote the progressive measurable $\sigma$-algebra in the result below.
\begin{corollary}
Let $X$ be a UMD Banach function space over a $\sigma$-finite measure space $(S,\Sigma,\mu)$
and let $p \in (0,\infty)$. Let $\Phi: \mathbb R_+ \times \Omega \to \mathcal L(H,X)$
be scalarly progressive and assume that there exists a $\mathcal{P}\times \Sigma$-measurable
process $\phi:\mathbb R_+ \times \Omega \times S \to H$ such that for all $h \in H$
and $t \geq 0$
\[
(\Phi(t)h)(\cdot) = \langle \phi(t,\cdot),h\rangle,
\]
where the equality holds in $X$. Then $\Phi$ is stochastically integrable
with respect to $M$ if and only if almost surely
\[
\left\|\left( \int_{\mathbb R_+} \|Q_M^{1/2}(t)\phi(t,\cdot)\|_{H}\ud [[M]]_t \right)^{\frac12} \right\|_X < \infty.
\]
In this case
\[
\mathbb E \sup_{t\geq 0} \Bigl\|\int_{\mathbb R_+}\Phi(t)\ud M(t)\Bigr\|_X^p\eqsim_{p,X}
\mathbb E \Bigl\| \Bigl( \int_{\mathbb R_+}\|Q_M^{1/2}(t)\phi(t,\cdot)\|_H\ud [[M]]_t \Bigr)^{\frac 12} \Bigr\|_X^p.
\]
\end{corollary}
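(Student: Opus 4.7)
The plan is to invoke Theorem \ref{main} to reduce the problem to identifying the $\gamma$-norm of $\Phi\,Q_M^{1/2}$ as an operator from $\mathcal H:=L^2(\mathbb R_+,[[M]];H)$ into the UMD Banach function space $X$, and then to apply the square-function characterization of the $\gamma$-norm on Banach function spaces of finite cotype (which every UMD space automatically has).

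First I would unwind the kernel representation of $R:=\Phi\,Q_M^{1/2}$. Since $\Phi(t)\in\mathcal L(H,X)$ is determined by $\phi(t,\cdot)$ via $(\Phi(t)h)(s)=\langle\phi(t,s),h\rangle_H$, and since $Q_M^{1/2}(t)$ is self-adjoint on $H$, the operator $R:\mathcal H\to X$ acts, for every $f\in\mathcal H$, by
\[
(Rf)(s)=\int_{\mathbb R_+}\langle Q_M^{1/2}(t)\phi(t,s),f(t)\rangle_H\,\ud[[M]]_t,\quad s\in S.
\]
Thus $R$ is represented by the $H$-valued kernel $k(t,s):=Q_M^{1/2}(t)\phi(t,s)$, with $t$ playing the role of the $\mathcal H$-variable and $s$ indexing the Banach function space. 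The measurability required to interpret this kernel pathwise is provided by the $\mathcal P\otimes\Sigma$-measurability of $\phi$ together with the scalar measurability of $(t,\omega)\mapsto\langle Q_M^{1/2}(t)x,y\rangle_H$ coming from Proposition \ref{Q_M} and Remark \ref{sqrootmes}.

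The analytic core is then the $\gamma$-square-function theorem for Banach function spaces of finite cotype (UMD implies finite cotype); see e.g.\ \cite[Chapter 9]{HNVW2} or \cite[Theorem 11.6]{Ngamma}. Picking an orthonormal basis $(e_m)_{m\geq1}$ of the scalar space $L^2(\mathbb R_+,[[M]])$ and an orthonormal basis $(h_n)_{n\geq1}$ of $H$, one gets an orthonormal basis $(e_m\otimes h_n)_{m,n\geq1}$ of $\mathcal H$. Computing $R(e_m\otimes h_n)(s)$ and applying Parseval fibrewise in $s$ converts the discrete square sum into the continuous one:
\[
\Bigl(\sum_{m,n\geq1}|R(e_m\otimes h_n)(s)|^2\Bigr)^{1/2}=\Bigl(\int_{\mathbb R_+}\|Q_M^{1/2}(t)\phi(t,s)\|_H^2\,\ud[[M]]_t\Bigr)^{1/2}.
\]
Hence, a.s.,
\[
\|R\|_{\gamma(\mathcal H,X)}\eqsim_X\Bigl\|\Bigl(\int_{\mathbb R_+}\|Q_M^{1/2}(t)\phi(t,\cdot)\|_H^2\,\ud[[M]]_t\Bigr)^{1/2}\Bigr\|_X.
\]
Combining this a.s.\ identification with Theorem \ref{main} yields simultaneously the claimed stochastic integrability criterion and, raising to the power $p$ and taking expectations, the two-sided $L^p$-estimate.

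The main technical point is that the square-function comparison has to be applied in an $H$-valued (rather than scalar) form. This is handled by the standard $\gamma$-Fubini argument on Banach function spaces, which allows one to \emph{factor out} $H$ from the $\gamma$-norm via the orthonormal basis $(h_n)_{n\geq1}$; after that the remaining scalar-valued square-function identification is the classical one. No further new ingredient is required.
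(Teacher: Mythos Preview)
Your proposal is correct and follows essentially the same route as the paper: reduce to Theorem \ref{main} and then identify $\|\Phi Q_M^{1/2}\|_{\gamma(L^2(\mathbb R_+,[[M]];H),X)}$ with the square-function expression via the standard characterization of $\gamma$-norms on Banach function spaces of finite cotype. The paper simply cites \cite[Proposition 6.1]{NVW07c} for this identification, whereas you spell out the kernel representation and the Parseval step; the content is the same.
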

\begin{proof}
To prove this statement note that as in  \cite[Proposition 6.1]{NVW07c}
\[\Bigl\| \Bigl( \int_{\mathbb R_+}\|Q_M^{1/2}(t)\phi(t,\cdot)\|_H\ud [[M]]_t \Bigr)^{\frac 12} \Bigr\|_X \eqsim \|\Phi\, Q_M^{1/2}\|_{\gamma (L^2(\mathbb R_+, [[M]];H), X)}\]
and hence the results follows from Theorem \ref{main}.
\end{proof}

Due to the canonical embedding $L^2(\mathbb R_+,\mu;\gamma(H,X))\hookrightarrow \gamma(L^2(\mathbb R_+,\mu;H),X)$\
for~a~measure $\mu$ for type 2 spaces, and the reversed embedding for cotype 2 spaces, stated
in \cite[Theorem 11.6]{Ngamma}, one obtains the full analogue of \cite[Corollary~4.2]{Ver}:
\begin{corollary} Let $X$ be a UMD space, $p\in (0,\infty)$ and $M \in \mathcal M_{\rm var}^{\rm loc}(H)$.
 \begin{itemize}
  \item[(1)] If $X$ has type 2, then every scalarly progressively measurable process $\Phi:\mathbb R_+\times\Omega\to \calL(H,X)$ such that
  $\Phi Q_M^{1/2} \in L^2(\mathbb R_+,[[M]];\gamma(H,X))$ almost surely is stochastically integrable with respect to $M$
  and we have
  \[
  \mathbb E \sup_{t\geq 0} \Bigl\| \int_{\mathbb R_+} \Phi(t) \ud M(t) \Bigr\|^p \lesssim_{p,X}
  \mathbb E\|\Phi Q_M^{1/2}\|^p_{L^2(\mathbb R_+, [[M]];\gamma(H,X))}.
  \]
  \item[(2)] If $X$ has cotype 2, then every scalarly progressively measurable process
  $\Phi$ which is integrable with respect to $M$ satisfies $\Phi Q_M^{1/2}\in L^2(\mathbb R_+, [[M]];\gamma(H,X))$ almost surely and we have
  \[
   \mathbb E\|\Phi Q_M^{1/2}\|^p_{L^2(\mathbb R_+, [[M]];\gamma(H,X))} \lesssim_{p,X}
  \mathbb E \sup_{t\geq 0} \Bigl\| \int_{\mathbb R_+} \Phi(t) \ud M(t) \Bigr\|^p.
  \]
  \item[(3)] If $X$ is a Hilbert space, then $\Phi$ is integrable with respect to $M$ if and only if $\Phi Q_M^{1/2}\in L^2(\mathbb R_+, [[M]];\calL_2(H,X))$ almost surely, and we have
  \[
  \mathbb E \sup_{t\geq 0} \Bigl\| \int_{\mathbb R_+} \Phi(t) \ud M(t) \Bigr\|^p \eqsim_{p} \mathbb E\|\Phi Q_M^{1/2}\|^p_{L^2(\mathbb R_+, [[M]];\calL(H,X))}.
  \]
 \end{itemize}
\end{corollary}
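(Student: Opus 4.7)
The plan is to deduce all three parts directly from Theorem \ref{main} combined with the $\gamma$-norm embedding theorem \cite[Theorem 11.6]{Ngamma}. Recall that the latter states that for a measure space $(S,\Sigma,\mu)$ and a Banach space $Y$, one has the continuous embedding $L^2(S,\mu;\gamma(H,Y)) \hookrightarrow \gamma(L^2(S,\mu;H),Y)$ if and only if $Y$ has type $2$, and the reverse embedding if and only if $Y$ has cotype $2$. In the Hilbert space case both embeddings are isometries and $\gamma(H,X) = \calL_2(H,X)$ isometrically.

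For part (1), I would first argue that under the type $2$ assumption and the hypothesis $\Phi Q_M^{1/2}\in L^2(\mathbb R_+,[[M]];\gamma(H,X))$ a.s., the process $\Phi Q_M^{1/2}$ is in fact strongly measurable as a $\gamma(H,X)$-valued process (via the Pettis measurability theorem, using separability of $X$ and hence of $\gamma(H,X)$, together with scalar progressive measurability). Then the embedding $L^2(\mathbb R_+,[[M]];\gamma(H,X))\hookrightarrow \gamma(L^2(\mathbb R_+,[[M]];H),X)$ gives the pathwise bound
\[
\|\Phi Q_M^{1/2}\|_{\gamma(L^2(\mathbb R_+,[[M]];H),X)}\lesssim_{X}\|\Phi Q_M^{1/2}\|_{L^2(\mathbb R_+,[[M]];\gamma(H,X))}
\]
a.s., so condition (3) of Theorem \ref{main} is satisfied and $\Phi$ is stochastically integrable. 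Taking $p$-th moments and applying the two-sided estimate in Theorem \ref{main} yields the claimed inequality.

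For part (2), I would invoke the reverse embedding $\gamma(L^2(\mathbb R_+,[[M]];H),X)\hookrightarrow L^2(\mathbb R_+,[[M]];\gamma(H,X))$ valid under cotype $2$. Since stochastic integrability of $\Phi$ implies via Theorem \ref{main}(3) that $\Phi Q_M^{1/2}\in\gamma(L^2(\mathbb R_+,[[M]];H),X)$ a.s., the embedding gives $\Phi Q_M^{1/2}\in L^2(\mathbb R_+,[[M]];\gamma(H,X))$ a.s., with the pathwise bound
\[
\|\Phi Q_M^{1/2}\|_{L^2(\mathbb R_+,[[M]];\gamma(H,X))}\lesssim_X \|\Phi Q_M^{1/2}\|_{\gamma(L^2(\mathbb R_+,[[M]];H),X)}\quad \text{a.s.}
\]
Combining with the lower bound of Theorem \ref{main} yields the desired $L^p$-estimate.

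Part (3) then follows by combining both inequalities, since a Hilbert space has both type $2$ and cotype $2$ with equality of the $\gamma$-norms in the embeddings, and because $\gamma(H,X) = \calL_2(H,X)$ isometrically. The only point requiring some care is the measurability issue in (1), where scalar progressive measurability of $\Phi$ must be upgraded to strong progressive measurability of $\Phi Q_M^{1/2}$ valued in the separable space $\gamma(H,X)$; this is where the separability of $X$ (standing assumption) and of the dual enters, and it is really the only technical step beyond the direct application of Theorem \ref{main} and \cite[Theorem 11.6]{Ngamma}.
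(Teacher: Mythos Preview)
Your proposal is correct and follows essentially the same approach as the paper: both derive the corollary directly from Theorem \ref{main} together with the type/cotype~2 embeddings between $L^2(\mathbb R_+,\mu;\gamma(H,X))$ and $\gamma(L^2(\mathbb R_+,\mu;H),X)$ from \cite[Theorem 11.6]{Ngamma}. Your version is more detailed (in particular the measurability remark), but the underlying argument is identical.
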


\subsection{It\^{o}'s formula}

We will say that $\Phi\in \gamma_{\rm loc}(L^2(\mathbb R_+,[[M]];H),X)$ a.s.\ if for every $T>0$,
$\Phi \mathbf 1_{[0,T]}\in \gamma(L^2(\mathbb R_+,[[M]];H),X)$ a.s.\ It is an easy consequence of Theorem~\ref{main}
that $\Phi$ is  \emph{locally stochastically integrable} if and only if
$\Phi\,Q_M^{1/2}\in \gamma_{\rm loc}(L^2(\mathbb R_+,[[M]];H),X)$.

A function $f: \mathbb R_+ \times X \to Y$ is said to be of  \emph{class $C^{1,2}$} if it is differentiable
in the first variable and twice Fr\'echet differentiable in the second variable and the
functions $f$, $D_1f$, $D_2f$ and $D^2_2f$ are continuous on $\mathbb R_+\times X$

For $R\in \gamma(H,X)$ and $T\in \calL(X,X^*) = \mathbf B(X,X)$,
\[{\rm Tr}_R(T) = \sum_{n\geq 1} T (Rh_n, Rh_n),\]
where $(h_n)_{n\geq 1}$ is any orthonormal basis for $H$ (see \cite[Lemma 2.3]{BNVW} for details). The following version of It\^o's formula holds
\begin{theorem}
 Let $H$ be a Hilbert space, $X$ and $Y$ be UMD Banach spaces, $M \in \mathcal M_{\rm var}^{\rm loc}(H)$ and let $A: \mathbb R_+\times \Omega \to \mathbb R$ be adapted, a.s. continuous and locally of finite variation. Assume that $f: \mathbb R_+ \times X \to Y$ is of class $C^{1,2}$. Let $\Phi: \mathbb R_+ \times \Omega \to \mathcal L(H,X)$ be an $H$-strongly progressively measurable which is stochastically integrable with respect to $M$ and assume that $\Phi Q_M^{1/2}$ belongs to $L^2_{loc}(\mathbb R_+, [[M]]; \gamma(H,X))$. Let $\psi:\mathbb R_+ \times\Omega \to X$ be strongly progressively measurable with paths in $L^1_{loc}(\mathbb R_+,A;X)$ a.s.\ Let $\xi:\Omega \to X$ be strongly $\mathcal F_0$-measurable. Define $\zeta:\mathbb R_+ \times \Omega \to X$ as
 \[
 \zeta = \xi + \int_0^{\cdot}\psi(s)\ud A(s) + \int_0^{\cdot} \Phi(s)\ud M(s).
 \]
 Then $s \mapsto D_2f(s,\zeta(s))\Phi(s)$ is locally stochastically integrable with respect to $M$ and almost sure we have for all $t\geq 0$
 \begin{equation}\label{eq:Itoformula}
 \begin{split}
  f(t,\zeta(t))-f(0,\zeta(0)) &= \int_0^t D_1 f(s,\zeta(s))\ud s + \int_0^t D_2f(s,\zeta(s))\psi\ud A(s)\\
  &+ \int_0^t D_2 f(s,\zeta(s))\Phi(s)\ud M(s)\\
  &+ \frac 12 \int_0^t {{\rm Tr}}_{\Phi(s){Q_M^{1/2}(s)}} (D_2^2f(s,\zeta(s)))\ud [[M]]_s.\
  \end{split}
\end{equation}
\end{theorem}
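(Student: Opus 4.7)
The plan is to approximate $\Phi, \psi, \xi$ by simple processes with finite-dimensional range, apply the classical finite-dimensional (vector-valued) It\^o formula to each approximation, and pass to the limit using Theorem \ref{main} together with the stability result Corollary \ref{cor3.10}. A preliminary localization at $\sigma_k = \inf\{t : [[M]]_t + |A|_t + \|\zeta(t)\|_X \geq k\} \wedge k$ (compatible with Lemma \ref{lemma:stoppingtimequadvar}) reduces matters to the case where $[[M]]_\infty$, the total variation of $A$, $\|\xi\|$, and $\sup_t \|\zeta(t)\|$ are uniformly bounded, $\Phi Q_M^{1/2} \in L^2(\mathbb R_+,[[M]];\gamma(H,X))$ a.s., and $f, D_1 f, D_2 f, D_2^2 f$ are bounded and uniformly continuous on a compact subset of $\mathbb R_+ \times X$ containing the range of $(s,\zeta(s))$.

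By Theorem \ref{main}, choose elementary progressive $\Phi_n$ taking values in $\mathcal L(H, X_n)$ for finite-dimensional $X_n \subseteq X$, so that $\Phi_n Q_M^{1/2} \to \Phi Q_M^{1/2}$ in $L^0(\Omega;\gamma(L^2(\mathbb R_+,[[M]];H),X))$; Corollary \ref{cor3.10} then ensures $\int_0^\cdot \Phi_n\,\ud M \to \int_0^\cdot \Phi\,\ud M$ in $L^0(\Omega;C_b(\mathbb R_+;X))$. Approximate $\psi$ similarly by simple processes $\psi_n$ with finite-dimensional range, convergent in $L^1(\mathbb R_+,A;X)$ a.s., and $\xi$ by finite-dimensional $\xi_n \to \xi$. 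Setting $\zeta_n := \xi_n + \int \psi_n\,\ud A + \int \Phi_n\,\ud M$, each $\zeta_n$ takes values in a fixed finite-dimensional affine subspace of $X$, and $\zeta_n \to \zeta$ in ucp. Apply the classical finite-dimensional It\^o formula, obtained by pairing with $y^* \in Y^*$ and invoking the scalar It\^o formula from \cite[Theorem~17.18]{Kal}, to $\zeta_n$ and the restriction of $f$: the resulting expansion is exactly \eqref{eq:Itoformula} with data $(\xi_n,\psi_n,\Phi_n)$, once one identifies the finite-dimensional trace as $\tfrac12\,{\rm Tr}_{\Phi_n Q_M^{1/2}}(D_2^2 f)$ via Remark \ref{realvalued}, the definition of ${\rm Tr}_R$, and a basis expansion in $X_n$.

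Passing to the limit $n \to \infty$: the left-hand side, the Lebesgue-time integral, and the $A$-integral converge by ucp convergence of $\zeta_n$, uniform continuity of $f, D_1 f, D_2 f$, and dominated convergence; the stochastic integral $\int D_2 f(\cdot,\zeta_n)\Phi_n\,\ud M$ converges in ucp to $\int D_2 f(\cdot,\zeta)\Phi\,\ud M$ by Corollary \ref{cor3.10}, whose $\gamma$-norm hypothesis is verified using the ideal property $\|D_2 f(\cdot,\zeta)\Phi Q_M^{1/2}\|_{\gamma(H,Y)} \leq \|D_2 f\|\,\|\Phi Q_M^{1/2}\|_{\gamma(H,X)}$; finally, the trace term converges through the continuity of $R \mapsto {\rm Tr}_R(T)$ on $\gamma(H,X)$ (see \cite[Lemma~2.3]{BNVW}), the estimate $|{\rm Tr}_R(T)| \leq \|T\|\,\|R\|_{\gamma(H,X)}^2$, and dominated convergence against $\ud[[M]]$. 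The main obstacle is precisely the trace-term convergence, which requires the full $\gamma$-norm convergence of $\Phi_n Q_M^{1/2}$ (not merely scalar convergence) supplied by Theorem \ref{main}, together with the above bilinear estimate from \cite[Lemma~2.3]{BNVW}. As a by-product, the ideal property also yields the claimed local stochastic integrability of $D_2 f(\cdot,\zeta)\Phi$ with respect to $M$, so that the limiting stochastic integral in \eqref{eq:Itoformula} is well-defined.
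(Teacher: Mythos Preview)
Your overall strategy --- localize, approximate by finite-dimensional data, apply a scalar It\^o formula, and pass to the limit --- coincides with the paper's, but two passages in the limiting step are not justified as written.

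First, the trace-term convergence. You invoke dominated convergence against $\ud[[M]]$ together with the estimate $|{\rm Tr}_R(T)|\le\|T\|\,\|R\|_{\gamma(H,X)}^2$. For this you need $\Phi_n Q_M^{1/2}\to \Phi Q_M^{1/2}$ in $L^0(\Omega;L^2(\mathbb R_+,[[M]];\gamma(H,X)))$, so that after passing to a subsequence one obtains $\mu_{[[M]]}$-a.e.\ convergence in $\gamma(H,X)$ and an $L^2$ majorant. Theorem \ref{main}, however, only supplies convergence in $L^0(\Omega;\gamma(L^2(\mathbb R_+,[[M]];H),X))$, and for a general UMD space these two norms are genuinely different. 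The paper handles this via Lemma \ref{lemma:foritoproof1}, which produces approximants converging \emph{simultaneously} in both norms; this is precisely where the extra hypothesis $\Phi Q_M^{1/2}\in L^2_{\rm loc}(\mathbb R_+,[[M]];\gamma(H,X))$ enters.

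Second, the stochastic-integral term and the integrability of $D_2 f(\cdot,\zeta)\Phi$. You verify the hypothesis of Corollary \ref{cor3.10} via the \emph{pointwise} ideal inequality $\|D_2 f(s,\zeta(s))\Phi(s)Q_M^{1/2}(s)\|_{\gamma(H,Y)}\le\|D_2 f\|\,\|\Phi(s)Q_M^{1/2}(s)\|_{\gamma(H,X)}$. But Corollary \ref{cor3.10} requires convergence in $\gamma(L^2(\mathbb R_+,[[M]];H),Y)$, and pointwise multiplication by the operator-valued map $s\mapsto D_2 f(s,\zeta(s))$ does not in general send $\gamma(L^2;H,X)$ into $\gamma(L^2;H,Y)$: that would need $\gamma$-boundedness of the family $\{D_2 f(s,\zeta(s)):s\ge 0\}$, not mere uniform boundedness. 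The same obstruction blocks your last sentence: the pointwise ideal bound only places $D_2 f(\cdot,\zeta)\Phi Q_M^{1/2}$ in $L^2(\mathbb R_+,[[M]];\gamma(H,Y))$, whereas Theorem \ref{main}(3) demands membership in $\gamma(L^2(\mathbb R_+,[[M]];H),Y)$, and the embedding of the former into the latter requires $Y$ to have type $2$. The paper avoids this entirely by first reducing to $Y=\mathbb R$: once the scalar formula is established, pairing with each $y^*\in Y^*$ exhibits $\int_0^\cdot (D_2 f(\cdot,\zeta)\Phi)^* y^*\,\ud M$ as the difference of a.s.\ bounded $Y$-valued processes, and Theorem \ref{main}(2) then yields the $Y$-valued stochastic integrability directly, without any multiplier-type argument.
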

A typical application of this formula are the case where $f:X\to \mathbb R$ is given by $f(x) = \|x\|^p$ whenever this two time Fr\'echet differentiable and satisfies appropriate estimates (e.g. $X = L^p$ with $p\geq 2$).  Another application is $f:X\times X^*\to \mathbb R$ given by $f(x, x^*) = \lb x, x^*\rb$.

To prove this result we can reduce to the case $Y = \mathbb R$ in a similar way as in \cite[Theorem 2.4]{BNVW} step 1. Indeed, if the formula holds true for $F = \mathbb R$, then we can apply the result to $\langle f,y^*\rangle$ for each $y^*\in Y$. After that we can apply Theorem \ref{main} (2) to derive the stochastic integrability of $s\mapsto D_2 f(s,\zeta(s))\Phi(s)$. The identity \eqref{eq:Itoformula} then follows from the Hahn-Banach theorem.

The next step is to reduce the proof to the case where $\xi$ is simple and both $\psi$ and $\Phi$ have finite dimensional range (see \cite[Theorem 2.4]{BNVW} step 2).  As soon as we have this reduction, then there exists a fixed finite dimensional subspace $H_0 \subset H$ such that $H = H_0 \oplus \ker \Phi$. Then one can restrict $M$ onto this subspace, and thanks to Example \ref{ex:findim} one can use the usual finite-dimensional It\^o formula to derive the required result (see e.g.\ \cite[Section 3.3]{MP}).

\begin{lemma}\label{lemma:foritoproof1}
Let $X$ be a UMD Banach space, $H$ be a Hilbert space, $M \in M_{\rm var}^{\rm loc}(H)$. Let $\Phi:\mathbb R_+\times \Omega \to \mathcal L(H,X)$ be stochastically integrable with respect to $M$. Assume that its paths are in $L^2(\mathbb R_+,[[M]];\gamma(H,X))$ almost surely. Then there exists a sequence of progressive processes $(\Phi_n)_{n\geq 1}$ such that each $\Phi_n$ takes values in a finite dimensional subspace of $X$ and is supported on a finite dimensional subspace of $H$ and
\[
\Phi_n Q_M^{1/2} \to \Phi Q_M^{1/2}\; in\;\; L^2(\mathbb R_+,[[M]];\gamma(H,X))\cap \gamma(L^2(\mathbb R_+,[[M]];H),X) \ \ \text{in probability}.
\]
\end{lemma}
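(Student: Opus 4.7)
The plan is to reduce to the cylindrical Brownian motion case via the time change from Corollary \ref{cor:inversekaz} and then refine the recipe used in the proof of Theorem \ref{main} to obtain simultaneous convergence in both $L^2(\mathbb R_+,[[M]];\gamma(H,X))$ and $\gamma(L^2(\mathbb R_+,[[M]];H),X)$, in probability. First I would define $\tau$ and $\Psi$ via \eqref{proofmain1}--\eqref{proofmain2} and set $N := M \circ \tau$; by Corollary \ref{cor:inversekaz}, $N \in \mathcal M_{\rm var}^{\rm loc}(H)$ with $[[N]]_s = s$ for $s<[[M]]_\infty$ and $Q_N = Q_M \circ \tau$. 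By Lemma \ref{timechange} combined with the substitution rule \eqref{rule}, the map $\Phi Q_M^{1/2} \mapsto \Psi Q_N^{1/2}$ is an a.s.\ isometry with respect to both function-space norms, so it suffices to produce $\Psi_n$ with finite-dimensional range in $X$ and finite-dimensional support in $H$ such that $\Psi_n Q_N^{1/2} \to \Psi Q_N^{1/2}$ in probability in both norms, after which $\Phi_n := \Psi_n \circ [[M]]$ gives the required sequence.

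Next I would construct elementary progressive processes $(\chi_n)_{n\geq 1}$ with $\chi_n \to \Psi Q_N^{1/2}$ in probability in $L^2(\mathbb R_+;\gamma(H,X)) \cap \gamma(L^2(\mathbb R_+;H),X)$. The scheme is: truncate in time to $[0,T]$; take a conditional expectation along a dyadic partition of $[0,T]$; and replace each step value (an element of $\gamma(H,X)$) by a finite-rank operator via its Gaussian-series representation. Each of these operations converges in both norms --- the $L^2(\gamma)$ part by Bochner dominated convergence, and the $\gamma(L^2)$ part because the time-partition conditional expectation corresponds to composition on the right with an $L^2$-projection converging strongly to the identity, while the explicit identity $\|R\mathbf 1_I\|_{\gamma(L^2(\mathbb R_+;H),X)} = \sqrt{|I|}\,\|R\|_{\gamma(H,X)}$ combined with the triangle inequality ensures that a finite-rank replacement of step values in the $\gamma(H,X)$-norm also controls the $\gamma(L^2)$-norm. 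A standard adaptedness step (a further conditional expectation on $\mathcal F_{t_{i-1}}$) makes the approximants elementary progressive.

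Following the proof of Theorem \ref{main}, I would fix an orthonormal basis $(h_j)$ of $H$, let $P_k^{(H)}$ be the orthogonal projection onto $\mathrm{span}(h_1,\dots,h_k)$, and apply Lemma \ref{technical} with $F = Q_N^{1/2}$ to obtain an $H$-strongly progressive $L_k\colon\mathbb R_+ \times \Omega \to \mathcal L(H)$ satisfying $L_k Q_N^{1/2} = P_k^{(H)}$ and $\mathrm{rank}(L_k) \leq k$. Setting $\Psi_{n,k} := \chi_n L_k$, observe that $\Psi_{n,k}$ takes values in the fixed finite-dimensional range of $\chi_n$ and is supported on the finite-dimensional subspace $(\ker L_k)^\perp$, while $\Psi_{n,k} Q_N^{1/2} = \chi_n P_k^{(H)}$. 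Since $\chi_n P_k^{(H)} \to \chi_n$ as $k \to \infty$ in both norms by the same mechanisms as above, a diagonal choice $k = k_n$ yields $\Psi_n := \chi_n L_{k_n}$ with $\Psi_n Q_N^{1/2} \to \Psi Q_N^{1/2}$ in probability in both norms; the transfer $\Phi_n := \Psi_n \circ [[M]]$ then gives the required processes.

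The most delicate part is the simultaneous density in Step 2: the two norms $L^2(\gamma)$ and $\gamma(L^2)$ are incomparable in a general UMD space, so one cannot simply appeal to density in each separately. The resolution is that on step functions with finite-rank values both norms admit explicit upper bounds in terms of the step lengths $|I_i|$ and the $\gamma(H,X)$-norms of the values, which is precisely what allows a single sequence of approximations to control both norms at once.
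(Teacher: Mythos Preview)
Your overall strategy coincides with the paper's: reduce via the time change of Corollary~\ref{cor:inversekaz}, approximate $\Psi Q_N^{1/2}$ by elementary processes, and then use the projection machinery around \eqref{eq:niceidenti} to pass to processes with finite-dimensional range and support. The paper's proof is very terse and essentially just asserts that the construction from the proof of Theorem~\ref{main} already yields convergence in \emph{both} norms; your explicit treatment of the simultaneous $L^2(\gamma)\cap\gamma(L^2)$ approximation in Step~2 is in fact more careful than what the paper writes down.

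There is, however, a genuine slip in your Step~3. Lemma~\ref{technical} applied with $F=Q_N^{1/2}$ and $H_0=\mathrm{span}(h_1,\dots,h_k)$ produces $L_k$ with $L_kQ_N^{1/2}=P_k$, where $P_k$ is the orthogonal projection onto $Q_N^{1/2}H_0$, \emph{not} onto $H_0$ itself; so your identity $L_kQ_N^{1/2}=P_k^{(H)}$ is false. More importantly, the resulting $\Psi_{n,k}=\chi_n L_k$ is supported on $(\ker L_k)^\perp$, which has dimension at most $k$ but \emph{depends on $(t,\omega)$}. The lemma (and its intended use in the It\^o formula, where one restricts $M$ to a fixed finite-dimensional $H_0$) requires a \emph{fixed} finite-dimensional support. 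The paper handles this by an additional step: having obtained $\tilde\Phi_n$ with finite-dimensional $X$-range via the $L_k$ construction, it then sets $\Phi_n:=\tilde\Phi_n P_{k_n}$ with $P_{k_n}$ the orthogonal projection onto the fixed space $\mathrm{span}(h_1,\dots,h_{k_n})$, and checks that $\tilde\Phi_n P_k Q_M^{1/2}\to\tilde\Phi_n Q_M^{1/2}$ in both norms (by \cite[Proposition~2.4]{NVW} for the $\gamma$-norm and dominated convergence for the $L^2(\gamma)$-norm, using that $\tilde\Phi_n$ is already finite-rank). Inserting this extra right-composition with a fixed projection closes the gap in your argument.
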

\begin{proof}
Let $(\tilde\Phi_n)_{n\geq 1}$ be constructed as in \eqref{eq:niceidenti}. Then $(\tilde\Phi_n Q_M^{1/2})_{n\geq 1}$ is an approximation of $\Phi Q_M^{1/2}$ in $ L^2(\mathbb R_+,[[M]];\gamma(H,X)))\cap \gamma(L^2(\mathbb R_+,[[M]];H),X)$ in probability. By \cite[Proposition 2.4]{NVW} $(\tilde\Phi_n P_k Q_M^{1/2})_{k\geq 1}$ approximates $\tilde\Phi_n Q_M^{1/2}$ for each $n\geq 1$, where $P_k$ is an orthogonal projection onto $\text{span}(h_1,\ldots,h_k)$. So, choosing a subsequence $\Phi_n := \tilde\Phi_n P_{k_n}$ one derives the desired.
\end{proof}

The next lemmas is taken from \cite[Lemma~2.8]{BNVW}:

\begin{lemma}\label{lemma:foritoproof2}
Let $X$ be a Banach space, $A:\mathbb R_+ \to \mathbb R_+$ be an increasing continuous function, and $\psi \in L^0(\Omega;L^1(\mathbb R_+,A;X))$ be a progressively measurable process. Then there exists a sequence of elementary progressive processes $(\psi_n)_{n\geq 1}$ such that $\psi = \lim_{n\to \infty} \psi_n$ in $L^0(\Omega; L^1(\mathbb R_+,A;X))$.
\end{lemma}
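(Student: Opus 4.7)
The strategy is a standard three-stage density argument in the Bochner space $L^1(\mathbb R_+, A; X)$, reducing progressively to simpler classes of processes.

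First, I will truncate. Setting $\psi^{(N)} := \psi\,\mathbf 1_{[0,N]}\,\mathbf 1_{\{\|\psi\|_X\le N\}}$, the process $\psi^{(N)}$ is again progressively measurable, uniformly bounded in $X$, and supported on $[0,N]\times\Omega$. Since $\psi$ lies in $L^1(\mathbb R_+, A; X)$ almost surely, dominated convergence applied path by path gives $\psi^{(N)}\to\psi$ in $L^1(\mathbb R_+, A; X)$ a.s., and hence in $L^0(\Omega; L^1(\mathbb R_+, A; X))$. Thus I may assume $\psi$ is uniformly bounded in $X$ and supported on some fixed $[0,T]\times\Omega$.

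Second, I will approximate the bounded $\psi$ by simple progressive processes. By the Pettis measurability theorem, $\psi$ takes values in a separable subspace $X_0\subseteq X$ off a null set. Fix a dense sequence $(x_k)_{k\ge 1}\subseteq X_0$ and let $\psi^n(t,\omega) := x_{k(t,\omega,n)}$, where $k(t,\omega,n)$ is the least index $k\le n$ minimizing $\|\psi(t,\omega)-x_k\|_X$; this $k$ is a measurable function of finitely many progressively measurable quantities, so $\psi^n=\sum_{k=1}^n x_k \mathbf 1_{C_k^n}$ with $C_k^n$ in the progressive $\sigma$-algebra. Since $\psi^n\to\psi$ pointwise with $\|\psi^n\|_X$ uniformly bounded, and $A$ is continuous so $\mu_A$ is finite on $[0,T]$, dominated convergence yields $\psi^n\to\psi$ in $L^1(\mathbb R_+, A; X)$ a.s.

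Third, it suffices to approximate, for each fixed $x\in X$ and each progressive set $C\subseteq [0,T]\times\Omega$, the process $x\,\mathbf 1_C$ in $L^1(d\mu_A\otimes d\mathbb P; X)$ by finite sums of elementary processes $\sum_j x\,\mathbf 1_{(s_{j-1},s_j]\times B_j}$ with $B_j\in\mathcal F_{s_{j-1}}$. I will establish this by a Dynkin/$\pi$-$\lambda$ argument: the class $\mathcal G$ of sets admitting such an approximation contains the $\pi$-system of predictable rectangles $(s,t]\times B$ with $B\in\mathcal F_s$ (which are already elementary up to refining the partition) and is a $\lambda$-system under the finite measure $\mu_A|_{[0,T]}\otimes\mathbb P$. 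Hence $\mathcal G$ contains the predictable $\sigma$-algebra $\mathcal P$; and because $\mu_A$ is non-atomic (being generated by the continuous $A$), every progressive set agrees with a predictable set up to a $\mu_A\otimes\mathbb P$-null set, via the predictable projection. Assembling the three stages via a diagonal subsequence produces the required elementary approximants.

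The main obstacle is the last stage: elementary processes as defined in the paper have time-cells $(t_{n-1},t_n]\times B_{mn}$ with $B_{mn}\in \mathcal F_{t_{n-1}}$, which is in fact a \emph{predictable} structure, whereas $\psi$ is only assumed progressive. Bridging this gap in $L^1$ requires the continuity of $A$, which kills mass at individual time points and thereby erases the distinction between progressive and predictable measurability modulo $d\mu_A\otimes d\mathbb P$-null sets; without this continuity hypothesis the lemma would fail.
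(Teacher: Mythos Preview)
The paper does not supply its own proof of this lemma; it simply records that the statement is taken from \cite[Lemma~2.8]{BNVW}. So there is no in-paper argument to compare against, and your proposal has to be judged on its own.

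Your three-stage outline (truncate; reduce to simple progressive processes; approximate indicators of progressive sets by elementary rectangles) is the standard route and is essentially correct. Stages~1 and~2 are unproblematic, and your identification of the real issue---that elementary processes are \emph{predictable} while $\psi$ is only progressive, and that the continuity of $A$ is exactly what bridges this gap---is on the mark. The one soft spot is the phrase ``via the predictable projection'' in Stage~3. The predictable projection ${}^p\phi$ is characterised through its values at predictable stopping times (equivalently, through integrals against predictable increasing processes), and this by itself yields only that $\mathbf 1_C-{}^p(\mathbf 1_C)$ is orthogonal in $L^1(\mu_A\otimes\mathbb P)$ to all \emph{predictable} test functions, not that it vanishes $\mu_A\otimes\mathbb P$-a.e. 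A direct and self-contained justification of your claim, using tools already in the paper, is the moving-average argument: for a bounded progressive $\phi$ set
\[
\psi_n(t,\omega)\;=\;\frac{1}{\mu_A\!\bigl((t-\tfrac1n,t]\bigr)}\int_{(t-\frac1n,t]}\phi(s,\omega)\,d\mu_A(s)
\qquad(\text{convention }0/0=0).
\]
Both numerator and denominator are continuous adapted (hence predictable) processes, so $\psi_n$ is predictable, and Lemma~\ref{lem:Lebesguediff} (which applies because $\mu_A$ is non-atomic) gives $\psi_n(t,\omega)\to\phi(t,\omega)$ for $\mu_A$-a.e.\ $t$ and every $\omega$. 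Thus $\phi$ coincides $\mu_A\otimes\mathbb P$-a.e.\ with the predictable process $\limsup_n\psi_n$, after which your $\pi$--$\lambda$ argument on the predictable $\sigma$-algebra and the diagonal extraction finish the proof.
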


\section{Stochastic evolution equations and cylindrical noise\label{sec:SE}}

In this section we study existence and uniqueness of solutions to the stochastic evolution equation on a UMD space $X$:
\[\ud u = (Au(t)+ F(t,u))\ud t + G(t,u)\ud M, \ \  t\in [0,T],\]
where $u(0) = u_0$. Here $A$ is the generator of an analytic semigroup on $X$, $F$ and $G$ are nonlinearities and $M$ is a cylindrical continuous local martingale on a Hilbert space $H$ which admits a quadratic variations as introduced in Definition \ref{def:cylmart}. We will treat the above problem by semigroup methods. The case $M = W_H$ has been extensively studied in the literature (see \cite{Brz2,DPZ,NVW1}). Before we start we need some preliminaries from analysis.

\subsection{Analytic preliminaries}

Let $X$ and $Y$ be two Banach spaces, $(r_n)_{n \geq 1}$ be a Rademacher sequence,
i.e. a sequence of independent random variables satisfying $\mathbb P(r_n = 1) = \mathbb P(r_n = -1) = \frac 12$.
A family $\mathcal T \subseteq \mathcal L(X, Y)$ is called $R$-bounded if
there exists a constant $C$ such that for each $N>0$, $(x_n)_{n=1}^N \subseteq X$ and
$(T_n)_{n=1}^N \subseteq \mathcal T$ one has
\[
\Bigl(\mathbb E\Bigl\|\sum_{n=1}^Nr_nT_nx_n\Bigr\|^2\Bigr)^{\frac 12} \leq
C\Bigl(\mathbb E\Bigl\|\sum_{n=1}^Nr_nx_n\Bigr\|^2\Bigr)^{\frac 12}.
\]
The least such $C$ is called $R$-bound of $\mathcal T$, notation $R(\mathcal T)$.

If one replaces the Rademacher sequence by a sequence of independent Gaussian variables in the definition above,
then one obtains the notion of $\gamma$-bounded family of operators, whose $\gamma$-bound is denoted
by $\gamma(\mathcal T)$. A simple randomization argument shows that $R$-boundedness implies $\gamma$-boundedness, and in this case $\gamma(\mathcal T)\leq R(\mathcal T)$ and the converse fails in general (see \cite{KVW14}).

A set $(\Lambda,\leq)$ with an order $\leq$ is called a  \textit{set with a total order} if
for any $x,y \in \Lambda$ it holds true that $x \leq y$ or $y\leq x$. The next result is due to \cite{Bour} (for a proof see \cite{HNVW1}):

\begin{lemma}[Vector-valued Stein's inequality] \label{vectorstein}
 Let $(S, \mathcal A, \mu)$ be a probability space, $X$ be a UMD space. Let
 $\Lambda$ be a set with a total order. Then for all $1<p<\infty$ and every increasing set
 $\{\mathcal A_{\alpha}\}_{\alpha \in \Lambda}$ of sub-$\sigma$-algebras of $\mathcal A$
 one has that the family of conditional expectations
 \[
 \mathcal E_{p} = \{\mathbb E(\cdot|\mathcal A_{\alpha}),\alpha \in \Lambda\}\subseteq \mathcal L(L^p(\Omega;X))
 \]
 is $R$-bounded as a set of operators with an $R$-bound depending only on $p$ and $X$.
\end{lemma}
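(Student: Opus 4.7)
The plan is to prove this via Bourgain's classical argument linking the $R$-boundedness of conditional expectations on $L^p(S;X)$ to the defining martingale transform inequality of UMD spaces.

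First I would reduce to a finite subfamily $\alpha_1 < \cdots < \alpha_N$ of $\Lambda$ by a standard approximation argument: the required $R$-bound amounts to showing
\[
\Bigl\|\sum_{n=1}^N r_n E_{\alpha_n} f_n\Bigr\|_{L^p(\Omega'\times S; X)} \leq C_{p,X}\Bigl\|\sum_{n=1}^N r_n f_n\Bigr\|_{L^p(\Omega'\times S; X)}
\]
for any $f_1, \ldots, f_N \in L^p(S;X)$ and any Rademacher sequence $(r_n)$ on an independent auxiliary probability space $(\Omega', \mathbb{P}')$, with constant $C_{p,X}$ independent of $N$, the choice of indices, and the filtration. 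Here $E_\alpha := \mathbb{E}[\,\cdot\,|\mathcal{A}_\alpha]$.

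Next, on $\Omega'\times S$ equipped with the increasing product filtration $\mathcal{G}_n := \sigma(r_1,\ldots,r_n)\otimes \mathcal{A}_{\alpha_n}$, the quantities $d_n := r_n E_{\alpha_n} f_n$ are $\mathcal{G}_n$-measurable and, since $r_n$ has mean zero and is independent of $\mathcal{G}_{n-1}$, they form a $\mathcal{G}$-martingale difference sequence. A parallel computation (using $\mathbb{E}[r_k f_k \mid \mathcal{G}_n] = 0$ for $k>n$ and $r_k E_{\alpha_n}f_k$ for $k\leq n$) shows that the $\mathcal{G}$-martingale $M_n := \mathbb{E}[\sum_k r_k f_k \mid \mathcal{G}_n]$ has terminal value $\sum_k r_k f_k$ and $n$th difference
\[
\Delta_n = d_n + \sum_{k<n} r_k (E_{\alpha_n} - E_{\alpha_{n-1}}) f_k,
\]
so the target $\sum_n d_n$ is the ``diagonal'' part of $M_N$, with the cross-terms in $\Delta_n$ playing the role of off-diagonal corrections.

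The main obstacle is then extracting this diagonal part as a bounded operator on $L^p$. Following \cite{HNVW1}, the strategy is to realise $\sum_n d_n$ as a $\pm 1$-martingale transform of a suitably enlarged $\mathcal{G}$-martingale, obtained by introducing an auxiliary independent Rademacher sequence and symmetrising so that the cross-terms $r_k(E_{\alpha_n}-E_{\alpha_{n-1}})f_k$ average out while the diagonal contributions $d_n$ are preserved. Once this representation is in place, the UMD inequality $\|\sum_n \varepsilon_n \delta_n\|_{L^p(X)} \leq \beta_{p,X} \|\sum_n \delta_n\|_{L^p(X)}$ yields the required comparison with $C_{p,X}$ depending only on $p$ and $\beta_{p,X}$. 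Unravelling the reduction gives the $R$-boundedness of $\mathcal{E}_p$ with the claimed dependence.
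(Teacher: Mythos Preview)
The paper does not give its own proof of this lemma; it simply attributes the result to Bourgain \cite{Bour} and refers the reader to \cite{HNVW1} for details. Your sketch follows precisely this classical route---Bourgain's martingale-transform argument on the product filtration, using the UMD property to extract the diagonal $\sum_n r_n E_{\alpha_n} f_n$---so the approaches coincide.

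One small correction worth noting: the terminal value of your martingale $M_N$ is $\sum_{k\le N} r_k E_{\alpha_N} f_k$, not $\sum_k r_k f_k$ itself (unless you first augment the filtration with a final step $\mathcal A_{\alpha_{N+1}}=\mathcal A$). This is harmless, since $M_N$ is a conditional expectation of $\sum_k r_k f_k$ and hence dominated in $L^p$-norm, so the estimate you need survives. Also, the ``auxiliary Rademacher sequence'' you mention is not strictly required: the cleanest way to isolate the diagonal is to refine the filtration to $2N$ steps, alternating the $\sigma$-algebras generated by the Rademachers and by the $\mathcal A_{\alpha_n}$, so that the even martingale differences are exactly the $d_n$ and a single $\pm 1$ UMD transform (with signs $+1$ on even steps, $-1$ on odd) plus averaging with the untransformed martingale yields $\sum_n d_n$ directly. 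This gives the explicit constant $\tfrac{1+\beta_{p,X}}{2}$.
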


We will need the following technical lemma about $\gamma$-spaces:
\begin{lemma}\label{gammameasure}
Let $X$ be a Banach space, $T >0$. Let $\psi:(0,T)\to X$ be strongly measurable and let $\mu$ be a finite positive Borel measure on $[0,T]$.  Suppose that $\lb \psi, x^*\rb \in L^2(0,T;X)$ for each $x^* \in X^*$.
Then $\int_0^{\cdot}\psi dt \in \gamma(0,T,\mu;X)$
and
\[
\Bigl\| \int_0^{\cdot} \psi \ud t\Bigr\|_{\gamma(0,T,\mu;X)}
\leq \sup_{\|x^*\|\leq 1}\|\lb \psi, x^*\rb \|_{L^2(0,T)} \Big(\int_0^T t \ud \mu(t)\Big)^{\frac12}
\]
\end{lemma}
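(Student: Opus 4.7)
The plan is to factor the integral operator associated to $\Psi(t) := \int_0^t \psi(s)\,ds$ through a Hilbert--Schmidt operator, and then apply the ideal property \eqref{ideal} of the $\gamma$-norm. Let $R: L^2(0,T,\mu) \to X$ be the operator represented by $\Psi$, so that formally $Rf = \int_0^T \Psi(t) f(t)\,d\mu(t)$. Applying Fubini scalarly, for each $x^* \in X^*$ with $\|x^*\|\leq 1$,
\[
\langle Rf,x^*\rangle = \int_0^T f(t)\int_0^t \langle\psi(s),x^*\rangle\,ds\,d\mu(t) = \int_0^T \langle\psi(s),x^*\rangle\, g(s)\,ds,
\]
where $g(s) := \int_s^T f(t)\,d\mu(t)$. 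This exhibits $R = S\circ T_1$ as the composition of
\[
T_1 : L^2(0,T,\mu)\to L^2(0,T),\quad (T_1f)(s)= \int_s^T f(t)\,d\mu(t),
\]
with the operator $S : L^2(0,T)\to X$ scalarly defined by $\langle Sg,x^*\rangle = \int_0^T g(s)\langle\psi(s),x^*\rangle\,ds$.

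Next I would estimate the two factors separately. The operator $T_1$ is an integral operator between Hilbert spaces with kernel $K(s,t) = \mathbf 1_{s\leq t}$, and since on Hilbert spaces the $\gamma$-norm equals the Hilbert--Schmidt norm,
\[
\|T_1\|_{\gamma(L^2(0,T,\mu),L^2(0,T))}^2 = \int_0^T\!\!\int_0^T \mathbf 1_{s\leq t}\,ds\,d\mu(t) = \int_0^T t\,d\mu(t).
\]
For $S$, the Cauchy--Schwarz inequality gives
\[
\|Sg\|_X = \sup_{\|x^*\|\leq 1}\Big|\int_0^T g(s)\langle\psi(s),x^*\rangle\,ds\Big|
\leq \sup_{\|x^*\|\leq 1}\|\langle\psi,x^*\rangle\|_{L^2(0,T)}\,\|g\|_{L^2(0,T)},
\]
so $\|S\|_{\mathcal L(L^2(0,T),X)}\leq \sup_{\|x^*\|\leq 1}\|\langle\psi,x^*\rangle\|_{L^2(0,T)}$.

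The conclusion then follows immediately by the ideal property \eqref{ideal} applied to the factorization $R = S\circ T_1$:
\[
\|R\|_{\gamma(L^2(0,T,\mu),X)} \leq \|S\|\cdot \|T_1\|_{\gamma(L^2(0,T,\mu),L^2(0,T))} \leq \sup_{\|x^*\|\leq 1}\|\langle\psi,x^*\rangle\|_{L^2(0,T)}\Big(\int_0^T t\,d\mu(t)\Big)^{1/2}.
\]
The only subtle point is to check that $S$ (hence $R$) really takes values in $X$ rather than $X^{**}$; this is where the strong measurability hypothesis on $\psi$ is used, since it allows one to reduce to a separable closed subspace of $X$ and interpret $Sg$ as a Pettis integral against the scalarly $L^2$ (hence scalarly $L^1$, as $\mu$ and $ds$ are finite on $(0,T)$) function $\psi$. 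Once this is in place, the factorization argument above is essentially a bookkeeping exercise and presents no further obstacle.
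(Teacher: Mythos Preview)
Your proof is correct and is essentially the same argument as the paper's, just packaged more cleanly. The paper works directly from the definition of the $\gamma$-norm: with $(\phi_n)$ an orthonormal basis of $L^2(0,T,\mu)$ it writes $\sum_n \gamma_n' R\phi_n = \int_0^T \psi(s)\xi(s)\,ds$ where $\xi(s)=\sum_n \gamma_n'\langle \mathbf 1_{(s,T)},\phi_n\rangle_{L^2(\mu)}$, and then computes $\mathbb E\|\xi\|_{L^2(0,T)}^2=\int_0^T t\,d\mu(t)$ via Parseval; but this $\xi$ is exactly $\sum_n \gamma_n' T_1\phi_n$, so the paper is computing your Hilbert--Schmidt norm $\|T_1\|_{\gamma}$ by hand and then applying your bound on $S$ pointwise in $\omega'$, which is the ideal property unpacked.
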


The integral $\int_0^{\cdot}\psi dt$ is defined as a Pettis integral (see \cite{HNVW1}). Note that the above supremum is finite by the closed graph theorem.

\begin{proof}
Let $\Psi(t) = \int_0^t \psi(s) \ud s$ for $t\in [0,T]$. Let $(\gamma_n')_{n\geq 1}$ be a sequence of standard independent Gaussian random variables on a probability space $(\Omega', \mathbb{F}', \P')$. Let $(\phi_n)_{n\geq 1}$ be an orthonormal basis for $L^2(0,T;\mu)$.
Then for a fixed $\phi\in L^2(0,T;\mu)$ and $n\geq 1$ we can write
\begin{align*}
\int_0^T \Psi(t) \phi(t) \ud \mu(t) & = \int_0^T \psi(s)\int_0^T \one_{(s,T)}(t) \phi(t) \ud \mu(t) \ud s=
\int_0^T \psi(s) \lb \one_{(s,T)}, \phi\rb_{L^2(\mu)} \ud s,
\end{align*}
where the latter is defined as a Pettis integral. By Parseval's identity we have
\begin{align*}
\sum_{n\geq 1}|\lb \one_{(s,T)}, \phi_n\rb_{L^2(\mu)}|^2 = \int_0^T \one_{(s,T)} \ud \mu.
\end{align*}
Therefore, defining $\xi:\Omega\to L^2(0,T)$ by
\begin{align*}
\xi(s) = \sum_{n\geq 1} \gamma_n' \lb \one_{(s,T)}, \phi_n\rb_{L^2(\mu)},
\end{align*}
by the previous estimate, the orthogonality of the $\gamma_n'$ and the three series theorem
(see \cite[p.\ 289]{VTC}) we find
\[\mathbb E\|\xi\|_{L^2(0,T)}^2 = \int_0^T  \int_0^T \one_{(s,T)} \ud \mu \ud s = \int_0^T  t \ud \mu=:C_T.\]
and the series defining $\xi$ converges a.s.\ in $L^2(0,T)$. It follows that
\begin{align*}
\sum_{n\geq 1} \gamma_n'  \int_0^T \Psi(t) \phi_n(t) \ud \mu(t)  = \int_0^T \psi(s) \xi(s) \ud s,
\end{align*}
converges a.s.\ in $X$ and
\begin{align*}
\Big\|\sum_{n\geq 1} \gamma_n'  \int_0^T \Psi(t) \phi_n(t) \ud \mu(t) \Big\|_{X} & = \Big\|\int_0^T \psi(s) \xi(s) \ud s\Big\| \leq C_{\psi}\|\xi\|_{L^2(0,T)},
\end{align*}
where $C_{\psi} = \sup_{\|x^*\|\leq 1}\|\lb \psi, x^*\rb \|_{L^2(0,T)}$.
Taking $L^2(\Omega')$ norms it follows from the definition of the $\gamma$-norm (note that a.s.\ convergence and convergence in $L^p(\Omega';X)$ are equivalent in this setting) that
\[\|\Psi\|_{\gamma(0,T,\mu; X)}\leq C_{\psi} \|\xi\|_{L^2(\Omega';L^2(0,T))} \leq C_{\psi} C_T.\]
\end{proof}

Let $X$ be a Banach space, $(S,\mathcal A,\mu)$ be a $\sigma$-finite measurable space, $1\leq p < \infty$
and $H$ be Hilbert space. Then one can prove that
\begin{equation}\label{fubis}
 L^p(S;\gamma(H,X))\simeq \gamma(H,L^p(S;X)).
\end{equation}
This relation is called  \textit{$\gamma$-Fubini isomorphism} (for more information see \cite{HNVW2}).

\medskip

A Banach space $X$ has  \textit{property $(\alpha)$} if for all $N\in \mathbb N$ and all sequences
$(x_{mn})_{m,n=1}^N \subseteq X$ it holds true that
\[
\mathbb E \Bigl\| \sum_{m,n=1}^Nr_{mn}x_{mn} \Bigr\|^2
\eqsim \mathbb E'\mathbb E''  \Bigl\| \sum_{m,n=1}^Nr_{m}'r_n''x_{mn} \Bigr\|^2,
\]
where $(r_{mn})_{m,n\geq 1}$, $(r_{m}')_{m\geq 1}$ and $(r_{n}'')_{n\geq 1}$ are independent Rademacher
sequences. This property was introduced in a slightly different manner in \cite{Pis1} (see \cite{HNVW2} for the proof of the equivalence).

\paragraph{Sectorial operators and $H^{\infty}$-calculus}
For each $\phi\in (0,\pi)$ let
\[
S_{\phi}:= \{\lambda \in \mathbb C\setminus \{0\}:\arg(\lambda) < \phi\}
\]
be an open sector of angle $\phi$ in the complex plane. A closed and densely defined operator
$A$ on $X$ is  \textit{sectorial of type $\phi \in [0,\pi)$} (see \cite{Haase1}) if $A$ is bijective with dense range,
$\sigma(A) \subseteq \overline {S}_{\phi}$ and for all $\omega \in (\phi,\pi)$
\[
\sup_{\lambda \notin \overline{S}_{\omega}}\|\lambda R(\lambda,A)\|<\infty.
\]

For details on $H^{\infty}$-calculus for sectorial operators we refer the reader to \cite{Haase1,KunWeis04}.

\subsection{Hypotheses and problem formulation}

Consider the following hypothesis.
\begin{itemize}
\item[(A0)] $H$ is a separable Hilbert space. $X$ is a separable Banach space which has UMD and satisfies property $(\alpha)$. $M\in \mathcal M_{\rm var}^{\rm loc}(H)$. The operator $A$ has a bounded $H^\infty$-calculus of angle $<\pi/2$.
\end{itemize}

Consider the following stochastic evolution equation:
\begin{equation}\label{stoceq}
  \begin{cases}
\ud u = (Au(t)+ F(t,u))\ud t + G(t,u)\ud M,
\\
u(0) = u_0,
\end{cases}
\end{equation}
where $A$ is the generator of an analytic $C_0$-semigroup $(S(t))_{t\geq 0}$
on $X$ (see \cite{EN, Par} for details).

We make the following assumption on $F$~and~$G$:
\begin{itemize}
 \item[(A1)] The function $F: \mathbb R_+\times\Omega\times X \to X$ is Lipschitz of
 linear growth uniformly in $\mathbb R_+ \times \Omega$, i.e., there are constants
 $L_F$ and $C_F$ such that for all $t \in \mathbb R_+$, $\omega \in \Omega$ and
 $x,y \in X$
 \begin{align*}
  \|F(t,\omega,x)-F(t,\omega,y)\|_X &\leq L_F\|x-y\|_X,\\
  \|F(t,\omega,x)\|_X &\leq C_F(1+\|x\|_X).
 \end{align*}
 Moreover, for all $x\in X$, $(t,\omega) \mapsto F(t,\omega,x)$ is strongly measurable
 and adapted in $X$.

\smallskip
 \item[(A2)] The function $G: \mathbb R_+\times \Omega\times X\to \mathcal L(H,X)$
 is Lipschitz of linear growth in a $\gamma$-sense uniformly in $\Omega$ and $T$, i.e., there
 are constants $L_G^{\gamma}$ and $C_G^{\gamma}$ s.t.\ for all $b\geq a \geq 0$
 and for all $\phi_1,\phi_2 :\mathbb R_+ \to X$ which are in
 $L^2(\mathbb R_+;X) \cap {\gamma}(\mathbb R_+,[[M]];X)$, a.s.
 \begin{align*}
  \|(G(\cdot,\phi_1)-& G(\cdot,\phi_2))Q_M^{1/2}\|_{\gamma(L^2(a,b, [[M]];H),X)}\\
& \leq L_G^{\gamma}(\|\phi_1-\phi_2\|_{L^2(a,b;X)}+
 \|\phi_1-\phi_2\|_{{\gamma}(a,b,[[M]];X)}),
\\  \|G(\cdot, \omega, \phi_1)& Q_M^{1/2}\|_{\gamma(L^2(a,b,[[M]];H),X)}
\\ & \leq  C_G^{\gamma} (1+\|\phi_1\|_{L^2(a,b;X)}+  \|\phi_1\|_{{\gamma}(a,b,[[M]];X)}).
 \end{align*}
 Moreover, for all $x \in X$, $(t,\omega) \mapsto G(t,\omega,x)$ is $H$-strongly progressively
 measurable.

\smallskip
 \item[(A3)] The initial value $u_0: \Omega \to X$ is strongly $\mathcal F_0$-measurable.

\end{itemize}

In the case $M = W_H$, the above Lipschitz assumptions reduce to the assumptions in \cite{NVW1}.
A key difference with \cite{NVW1} is that the nonlinearities can be defined on interpolation space between $X$ and $D(A)$,
but this cannot be done for general martingales except under additional assumptions on $[[M]]$.

\subsection{Existence and uniqueness result}

For deterministic and stochastic convolutions we will use the following
notations (see \cite{NVW1,VW1}):
\begin{align*}
 S*F(t) &:= \int_0^t S(t-s) F(s)\ud s,\\
 S\diamond G(t) &:= \int_0^t S(t-s) G(s) \ud M_s.
\end{align*}

We call a process $(u(t))_{t\in \mathbb R_+}$ a  \textit{mild solution} of $\eqref{stoceq}$ if

\begin{itemize}
 \item[(i)] $u:\mathbb R_+ \times\Omega \to X$ is strongly measurable and adapted,
 \item[(ii)] for all $t\in \mathbb R_+$, $s \mapsto S(t-s)F(s,u(s))$ is in $L^1(0,t;X)$ a.s.,
 \item[(iii)] for all $t \in \mathbb R_+$, $s \mapsto S(t-s)G(s,u(s))$ is $H$-strongly progressively measurable
and $GQ_M^{1/2}$ is in $\gamma(L^2(0,t,[[M]];H), X)$ a.s.,
 \item[(iv)] for all $t \in \mathbb R_+$, almost surely
 \begin{equation}\label{eq:mild}
   u(t) = S(t)u_0+S*F(\cdot,u)(t) + S\diamond G(\cdot,u)(t).
 \end{equation}
\end{itemize}

\begin{definition}
Fix $b\geq a \geq 0$ and $p \in (1,\infty)$.
\begin{enumerate}
\item We define $V^{p}(a,b,M;X)$ as the space of all
strongly progressively measurable processes $\phi : \mathbb R_+\times \Omega \to X$ for which
\begin{equation*}
 \|\phi\|_{V^p(a,b,M;X)}
 :=
(\mathbb E\|\phi\|_{L^2((a,b);X)}^p)^{\frac1p}+(\mathbb E\|\phi\|_{{\gamma}(a,b,[[M]];X)}^p)^{\frac1p}<\infty
\end{equation*}

\item We define $V(a,b,M;X)$ as the space of all
progressively measurable processes $\phi : \mathbb R_+\times \Omega \to X$ for which almost surely
\begin{equation*}
\|\phi\|_{L^2((a,b);X)} +\|\phi\|_{{\gamma}(L^2(a,b,[[M]]),X)} < \infty.
\end{equation*}
\end{enumerate}
\end{definition}

\begin{remark}\label{localvp}
 Due to the ideal property \eqref{ideal} one can show, that if $\tau$ is a stopping time and
 $\phi\in V^{p}(a,b,M;X)$,
 then $\phi\in V^{p}(a,b,M^{\tau};X)$ as well and $\|\phi\|_{V^{p}(a,b,M^{\tau};X)}\leq
 \|\phi\|_{V^{p}(a,b,M;X)}$.
\end{remark}

The following result is the main existence and uniqueness result:

\begin{theorem}[Existence and uniqueness]\label{exandun3}
Suppose that \textup{(A0)--(A3)} are satisfied. Then there exists a unique solution $U$ in $V(0,T,M;X)$ of \eqref{stoceq}.
\end{theorem}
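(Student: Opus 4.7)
The plan is to apply Banach's fixed point theorem to the solution map
\[
L\phi(t) := S(t)u_0 + S*F(\cdot,\phi)(t) + S\diamond G(\cdot,\phi)(t)
\]
on $V^p(0,T_0,M;X)$ for some $p\in(1,\infty)$ and sufficiently small $T_0$, and then to extend to $[0,T]$ by concatenation. As a preliminary reduction I localize via $\tau_n := \inf\{t\ge 0 : \|u_0\|_X>n \text{ or } [[M]]_t>n\}$: by Lemma \ref{lemma:stoppingtimequadvar} the stopped $M^{\tau_n}$ lies in $\mathcal M_{\rm var}(H)$ with a.s.\ bounded quadratic variation, and Corollary \ref{cor:whenintis0} guarantees that stochastic convolutions localize correctly. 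Thus one may assume $u_0\in L^\infty(\Omega;X)$ and $[[M]]_T$ is bounded a.s.; solutions to the original problem are recovered by patching the localized solutions on $\{t<\tau_n\}$, relying on Remark \ref{localvp} to keep $V^p$-norms under control.

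For the self-mapping property of $L$, the deterministic orbit $S(\cdot)u_0$ lies in $V^p(0,T_0,M;X)$ by boundedness of the semigroup and Lemma \ref{gammameasure}; the convolution $S*F(\cdot,\phi)$ is controlled through (A1), Young's inequality, and Lemma \ref{gammameasure} applied to $s\mapsto S(t-s)F(s,\phi(s))$, with a prefactor vanishing as $T_0\to 0$. The key estimate is on the stochastic convolution: combining the BDG-type inequality of Theorem \ref{main}, the vector-valued Stein inequality (Lemma \ref{vectorstein}, requiring UMD), the $\gamma$-Fubini isomorphism \eqref{fubis} (requiring property $(\alpha)$), and the $\gamma$-boundedness of $\{S(t):t\ge 0\}$ obtained from the bounded $H^\infty$-calculus of $A$ of angle $<\pi/2$ via Kalton--Weis, one obtains
\[
\|S\diamond G(\cdot,\phi)\|_{V^p(0,T_0,M;X)} \le K(T_0)\,\|G(\cdot,\phi)Q_M^{1/2}\|_{L^p(\Omega;\gamma(L^2(0,T_0,[[M]];H),X))},
\]
and the growth bound in (A2) controls the right-hand side by $1+\|\phi\|_{V^p(0,T_0,M;X)}$. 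The identical machinery applied to $\phi_1-\phi_2$ together with the Lipschitz parts of (A1) and (A2) yields $\|L\phi_1-L\phi_2\|_{V^p}\le c(T_0)\|\phi_1-\phi_2\|_{V^p}$ with $c(T_0)\to 0$ as $T_0\downarrow 0$, so $L$ is a strict contraction for small enough $T_0$.

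Banach's theorem then produces a unique fixed point on $[0,T_0]$. Since the contraction constant depends only on $T_0$ and the universal constants from (A0)--(A2) rather than on the initial data, the procedure iterates on $[T_0,2T_0],[2T_0,3T_0],\dots$ with the terminal value of the preceding piece as the next initial condition, reaching $[0,T]$ in finitely many steps; undoing the stopping $\tau_n$ delivers the global mild solution, and uniqueness follows from the same contraction argument localized on differences of putative solutions. The main obstacle is the stochastic convolution estimate in the $\gamma$-component of the $V^p$-norm: Theorem \ref{main} only supplies a BDG inequality for a fixed terminal time, so upgrading it to an estimate that simultaneously controls $\|S\diamond G\|_{L^p(0,T_0;X)}$ and $\|S\diamond G\|_{\gamma(0,T_0,[[M]];X)}$ requires the full interplay of UMD, property $(\alpha)$ and the $H^\infty$-calculus -- essentially a stochastic maximal regularity theorem adapted to the non-Brownian integrator $M$. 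Once that bound is in place, the remainder of the argument is essentially formal.
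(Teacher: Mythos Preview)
Your overall strategy is right, and the paper follows the same pattern: Proposition \ref{stocheq} establishes the contraction estimate, then a sequence of localizations and iterations yields the global solution. There is, however, a genuine gap in your iteration step.

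The contraction constant $c(T_0)$ does \emph{not} depend only on $T_0$; it depends on $\max\{T_0^{1/2}, \|[[M]]_{T_0}\|_{L^\infty(\Omega)}^{1/2}\}$ (see \eqref{eq:CTM}). Your localization via $\tau_n$ gives $[[M]]_T\le n$ a.s., but this does not force $\|[[M]]_{T_0}\|_{L^\infty}\to 0$ as $T_0\to 0$: a continuous increasing process with $[[M]]_0=0$ and $[[M]]_T\le n$ can still satisfy $\operatorname*{ess\,sup}_\omega [[M]]_{T_0}(\omega)=n$ for every $T_0>0$, simply because the growth rate of $[[M]]$ need not be uniform in $\omega$. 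Hence $c(T_0)$ need not be small, and you cannot start the fixed-point argument. The same issue recurs at the iteration step: on $[kT_0,(k+1)T_0]$ the relevant quantity is $\|[[M]]_{(k+1)T_0}-[[M]]_{kT_0}\|_{L^\infty}$, which your localization does not control either.

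The paper fixes this by working with stopped martingales $M^{\rho_n}$ where the stopping times $\rho_{nk}$ in \eqref{rho_nk} are designed so that the quadratic-variation \emph{increment} of $M^{\rho_n}$ over each subinterval $[\tfrac{(k-1)T}{2^n},\tfrac{kT}{2^n}]$ is bounded by $1/(4\tilde C^2)$ in $L^\infty$. One then solves \eqref{stoceq} with respect to each $M^{\rho_n}$ (Theorem \ref{exandun1}), shows by the consistency Lemmas \ref{u1=u2} and \ref{forun2} that the resulting solutions agree on overlaps, and passes to the limit $\rho_n\nearrow\infty$ (Theorem \ref{exandun2}). Only then does one truncate $u_0$.

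A second point: the stochastic-convolution estimate in the $\gamma$-component is obtained in the paper not via ``Kalton--Weis plus Stein'' directly, but via the Fr\"ohlich--Weis dilation $JS(t)=P\tilde S(t)J$ to a $C_0$-group on $\gamma(\mathbb R;X)$, combined with the time-change Theorem \ref{rem:intfortimechange} reducing to the Brownian case, after which Stein's inequality (Lemma \ref{vectorstein}) and the $\gamma$-Fubini isomorphism apply. Your sketch is compatible with this, but the dilation step is essential and should be named.
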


Moreover, if the unbounded operator $A$ is omitted, then property $(\alpha)$ is not needed in the above result.
\begin{proposition}\label{prop:SEEwithoutA}
Let $H$ be a Hilbert space, $M\in \mathcal M_{\rm var}^{\rm loc}(H)$, $X$ be a UMD space.
Consider the equation:
\begin{equation}\label{stoceq1}
  \begin{cases}
\ud u = F(t,u)\ud t + G(t,u)\ud M,
\\
u(0) = u_0,
\end{cases}
\end{equation}
Suppose that \textup{(A1)--(A3)} are satisfied.
Then there exists a unique solution $U$ in $V(0,T,M;X)$ of \eqref{stoceq}.
\end{proposition}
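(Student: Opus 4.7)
The plan is to apply Banach's fixed point theorem to the mapping
\[
L(u)(t) := u_0 + \int_0^t F(s,u(s))\,\mathrm{d}s + \int_0^t G(s,u)\,\mathrm{d}M, \qquad t \in [0,T],
\]
on a suitable closed subset of $V^p(0,T,M;X)$ for some fixed $p \in (1,\infty)$. The identification of the space $V^p$ is natural here: (A2) is formulated precisely in terms of the $V^p$-norm, and Theorem \ref{main} provides the two-sided estimate that turns (A2) into a bound for the stochastic convolution.

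First I would localize. Using $\tau_n := \inf\{t \geq 0: [[M]]_t + \|u_0\|\mathbf 1_{\{t=0\}} \geq n\}$ together with Lemma \ref{lemma:stoppingtimequadvar} and Remark \ref{localvp}, it suffices to construct a unique solution under the extra hypothesis that $[[M]]_T$ and $\|u_0\|$ are a.s.\ bounded and $u_0 \in L^p(\Omega;X)$; the global solution is obtained by patching on $\{\tau_n \geq T\}$ and letting $n\to\infty$, with uniqueness across localizations guaranteed by Corollary \ref{cor:whenintis0}.

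Next I would check that $L$ maps $V^p(0,T,M;X)$ into itself. For $u_0$ and the deterministic convolution, the $L^2(0,T;X)$-bound is immediate from (A1); the $\gamma(0,T,[[M]];X)$-bound follows from Lemma \ref{gammameasure} applied to the a.s.\ bounded integrand $F(\cdot,u(\cdot))$, which produces a factor $(\int_0^T t\,\mathrm d[[M]]_t)^{1/2} \leq T^{1/2}[[M]]_T^{1/2}$. For the stochastic integral, Theorem \ref{main} (applied to $\Phi = G(\cdot,u)$ and combined with the ideal property to control the $L^2(0,T;X)$ and $\gamma$-norms of $L(u) - u_0 - S*F$) together with (A2) gives $\|L(u)\|_{V^p(0,T,M;X)} \lesssim 1 + \|u\|_{V^p(0,T,M;X)}$.

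For the contraction step, a bare application of (A1)--(A2) will not produce a small constant by shrinking $T$, because the $\gamma$-norm over $(0,T)$ against $\mathrm{d}[[M]]$ does not tend to zero with $T$ for general $M$. I would therefore introduce the weighted norm
\[
\|\phi\|_{V^p_\lambda} := \bigl(\mathbb E\,\|e^{-\lambda(\cdot + [[M]]_\cdot)}\phi\|^p_{L^2(0,T;X)}\bigr)^{1/p}
+ \bigl(\mathbb E\,\|e^{-\lambda(\cdot+[[M]]_\cdot)}\phi\|^p_{\gamma(0,T,[[M]];X)}\bigr)^{1/p},
\]
which is equivalent to $\|\cdot\|_{V^p(0,T,M;X)}$ for each $\lambda > 0$. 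Using (A1), Lemma \ref{gammameasure} and Theorem \ref{main} applied to $(G(\cdot,u)-G(\cdot,v))Q_M^{1/2}$, together with an integration-by-parts of the form $\int_0^T e^{-2\lambda([[M]]_t + t)}\,(\mathrm{d}t + \mathrm{d}[[M]]_t) \leq \lambda^{-1}$, I expect to obtain $\|L(u)-L(v)\|_{V^p_\lambda} \leq C\lambda^{-1/2}\|u-v\|_{V^p_\lambda}$, so that choosing $\lambda$ large yields a contraction on the whole interval $[0,T]$. The resulting fixed point $U$ lies in $V^p(0,T,M;X) \subseteq V(0,T,M;X)$ and satisfies \eqref{eq:mild} by construction. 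Uniqueness in $V(0,T,M;X)$ follows by applying the same contraction estimate to two candidate solutions after localizing them via stopping times that bring them into $V^p$.

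The main obstacle is the contraction step: the $\gamma$-norm is non-local in time, so shrinking the interval does not produce a small Lipschitz constant. The weighted-norm device is the key idea and relies crucially on the fact that (A2) is formulated with constants that are uniform over subintervals.
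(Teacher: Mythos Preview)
Your localization and mapping steps are fine, and you have correctly identified the real obstacle: neither $\|\cdot\|_{L^2(0,T;X)}$ nor $\|\cdot\|_{\gamma(0,T,[[M]];X)}$ becomes small just by shrinking $T$. The paper resolves this differently from what you propose: it follows the proof of Theorem~\ref{exandun3} verbatim, with $S(t)=\tilde S(t)=I$. Concretely, the contraction constant in Proposition~\ref{stocheq} satisfies $C_{T,M}\le \tilde C\max\{T^{1/2},\,\|[[M]]_T\|_{L^\infty(\Omega)}^{1/2}\}$, and the estimate for $\|\zeta_M\|_{L^p(\Omega;\gamma(0,T,[[M]];X))}$ produces the factor $\|[[M]]_T\|_{L^\infty(\Omega)}^{1/2}$ via the time change to Brownian motion, the martingale identity $\tilde\zeta_{W_H}(s)=\mathbb E[\tilde\zeta_{W_H}(T_{M,T})\mid\mathcal G_s]$, the $\gamma$-Fubini isomorphism, and the vector-valued Stein inequality. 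One then makes both $T$ and $\|[[M]]_T\|_{L^\infty}$ small by introducing the stopping times $\rho_{nk}$ of~\eqref{rho_nk}, solving on each random subinterval with the stopped martingale, and patching via Lemmas~\ref{u1=u2} and~\ref{forun2}. Since $S=I$, no dilation and hence no property~$(\alpha)$ is needed.

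Your weighted-norm device, by contrast, has a genuine gap at the $\gamma$-estimate of the stochastic part. The integration-by-parts bound $\int_0^T e^{-2\lambda(t+[[M]]_t)}(\mathrm dt+\mathrm d[[M]]_t)\le\lambda^{-1}$ yields a factor $\lambda^{-1/2}$ only for norms that are pointwise in time, such as the $L^2(0,T;X)$-norm. It does not help with $\|e^{-\lambda(\cdot+[[M]]_\cdot)}\zeta\|_{\gamma(0,T,[[M]];X)}$, because the $\gamma$-norm is an operator norm and does not in general admit a bound $\|\phi\|_{\gamma(0,T,\mu;X)}\le \mu([0,T])^{1/2}\sup_t\|\phi(t)\|$ outside type~2 spaces. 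More seriously, even if you controlled $\sup_t\|\zeta(t)\|$ via Theorem~\ref{main}, assumption~(A2) would hand you back the \emph{unweighted} norm $\|u-v\|_{V^p(0,T,M;X)}$ on the right, and the equivalence constant between the weighted and unweighted norms blows up with $\lambda$, so no contraction follows. Slicing into subintervals on which the weight is nearly constant does not rescue the argument either: $\gamma$-norms over disjoint intervals do not combine as an $\ell^2$-sum in a general UMD space. In short, the ``weighted-norm on the whole interval'' idea, which is standard for Hilbert-space or type~2 settings, does not go through here; the paper's route via stopping the quadratic variation is the substantive replacement.
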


Unlike in the Brownian case one cannot ensure $L^p(\Omega)$-integrability of the solution even if the initial value is constant in $\Omega$.

\subsection{The fix point argument}

Consider the fixed point operator
\[
L_T(\phi) =[t \mapsto S(t)u_0+S*F(\cdot,\phi)(t) + S\diamond G(\cdot,\phi)(t)].
\]

\begin{proposition}\label{stocheq}
Suppose that \textup{(A0)--(A3)} are satisfied.
If $u_0 \in L^p(\Omega,\mathcal F_0; X)$, $[[M]]_T\in L^{\infty}(\Omega)$,
then the operator $L_T$ is bounded and well-defined on $V^{p}(0,T,M;X)$
and there exists a constant $C_{T,M}$, with $\lim C_{T,M} = 0$
as $T \to 0$ and $T_{M,T}:=\|[[M]]_T\|_{L^{\infty}(\Omega)} \to 0$,
such that for all $\phi_1, \phi_2 \in V^{p}(0,T,M;X)$,
\begin{equation}\label{stocheq1}
  \| L_T(\phi_1)-L_T(\phi_2) \|_{V^{p}(0,T,M;X)}\leq C_{T,M} \|\phi_1-\phi_2\|_{V^{p}(0,T,M;X)}.
 \end{equation}
Moreover, for $T\in (0,1]$, there exists a constant $\tilde{C}$ independent of $T$ and $M$ such that
\begin{equation}\label{eq:CTM}
C_{T,M}\leq \tilde{C} \max\{T^{\frac12},T^{\frac12}_{M,T}\}.
\end{equation}
Furthermore, there is a constant $C\geq 0$, independent of $u_0$, such that
for all $\phi \in V^{p}(0,T,M;X)$
\begin{equation}\label{stocheq2eq}
 \|L_T(\phi)\|_{V^{p}(0,T,M;X)} \leq C(1+\|u_0\|_{L^p(\Omega;X)}) + C_{T,M} \|\phi\|_{V^{p}(0,T,M;X)},
\end{equation}
and $L_T(\phi)$ has a continuous version and
\begin{equation}\label{eq:stochcont}
 \|L_T(\phi)\|_{L^p(\Omega;C([0,T];X))} \leq C(1+\|u_0\|_{L^p(\Omega;X)}) + C_2 \|\phi\|_{V^{p}(0,T,M;X)}.
\end{equation}
\end{proposition}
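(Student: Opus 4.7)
The plan is to split $L_T(\phi)$ into the three pieces $t\mapsto S(t)u_0$, $t\mapsto (S*F(\cdot,\phi))(t)$, and $t\mapsto (S\diamond G(\cdot,\phi))(t)$, and to estimate each in both components of the $V^p$-norm, namely $L^p(\Omega;L^2(0,T;X))$ and $L^p(\Omega;\gamma(0,T,[[M]];X))$. For \eqref{stocheq1} the initial-data term drops out, and the Lipschitz halves of (A1)--(A2) reduce matters to bounds on $\phi_1-\phi_2$; the quantitative scaling \eqref{eq:CTM} will come out by tracking how each estimate depends on $T$ and $T_{M,T}$, deterministic-in-time pieces producing factors of $T^{1/2}$ and stochastic ones producing factors of $T_{M,T}^{1/2}$. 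The growth estimate \eqref{stocheq2eq} is then obtained by the same argument with the linear-growth halves of (A1)--(A2), picking up the initial-datum contribution from the estimate on $t\mapsto S(t)u_0$.

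For the deterministic pieces, uniform boundedness of $(S(t))_{t\in[0,T]}$ together with Cauchy--Schwarz in time gives the $L^p(\Omega;L^2(0,T;X))$ estimates directly, producing a factor of $T$ in front of the Lipschitz constant $L_F$. For the $\gamma$-in-time component the key tool is Lemma \ref{gammameasure}: applying it with $\mu=\mu_{[[M]]}$, after handling the convolution by a $\gamma$-multiplier style rearrangement exploiting the $\gamma$-boundedness of $(S(t))_{t\in[0,T]}$, one obtains a bound proportional to $(T\cdot T_{M,T})^{1/2}$ times $\|F(\cdot,\phi)\|_{L^2(0,T;X)}$ on $S*F$, and a bound proportional to $T_{M,T}^{1/2}\|u_0\|$ on $S(\cdot)u_0$.

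The main work is the stochastic convolution. For the $L^p(\Omega;L^2(0,T;X))$ component I would fix $t$, apply the BDG-type inequality of Theorem \ref{main} to control $\|(S\diamond G(\cdot,\phi))(t)\|_{L^p(\Omega;X)}$ by the $L^p(\Omega;\gamma(L^2(0,t,[[M]];H),X))$-norm of $s\mapsto S(t-s)G(s,\phi(s))Q_M^{1/2}$, use the ideal property \eqref{ideal} with the uniform bound on $S$ to pull it out, use (A2) to turn the $\gamma$-norm into the $V^p$-ingredients of $\phi$, and integrate in $t$, gaining a factor $T^{1/2}$. For the $L^p(\Omega;\gamma(0,T,[[M]];X))$ component the essential ingredients are (i) $R$-boundedness of $(S(t))_{t\in[0,T]}$, which follows from the bounded $H^{\infty}$-calculus of $A$ of angle less than $\pi/2$; (ii) the vector-valued Stein inequality (Lemma \ref{vectorstein}), used to replace the adaptedness-induced conditional expectations by unconditional ones at the cost of a UMD-dependent constant; (iii) property $(\alpha)$ combined with the $\gamma$-Fubini isomorphism \eqref{fubis}, permitting the interchange of the outer $\gamma$-in-time norm with the inner $\gamma$-norm delivered by Theorem \ref{main}; and (iv) hypothesis (A2). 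The hard step, and the one where property $(\alpha)$ is indispensable, is precisely this double-$\gamma$ interchange under convolution with the semigroup; everything else is an unfolding of the known $\gamma$-calculus.

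Finally, the continuous version and the bound \eqref{eq:stochcont} can be obtained by the factorization method of Da Prato--Kwapie\'n--Zabczyk: the identity $S(t-s)=c_\alpha\int_s^t (t-r)^{-\alpha}S(t-r)(r-s)^{\alpha-1}S(r-s)\,dr$ for a small $\alpha>0$ reduces \eqref{eq:stochcont} to an $L^p(\Omega;L^q(0,T;X))$-estimate on a ``raised'' stochastic integrand, itself handled by Theorem \ref{main} and by the preceding $V^p$-bounds, composed with a deterministic convolution whose kernel carries $L^q$-paths into continuous paths. The $R$-boundedness of $(t^{\alpha}S(t)(-A)^{\alpha})_{t\in[0,T]}$ required at this step again comes from the $H^{\infty}$-calculus of $A$.
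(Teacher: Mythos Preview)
Your outline assembles the right toolbox but misses the organizing device the paper actually relies on: the Fr\"ohlich--Weis dilation. The $H^\infty$-calculus of angle $<\pi/2$ produces an isomorphic embedding $J:X\to Y=\gamma(\mathbb R;X)$, a projection $P$, and a bounded $C_0$-group $(\tilde S(t))_{t\in\mathbb R}$ on $Y$ with $JS(t)=P\tilde S(t)J$. This factorizes $S(t-s)$ as $\tilde S(t)\tilde S(-s)$, and everything hinges on that separation. For the deterministic $\gamma$-bound, Lemma~\ref{gammameasure} treats $\int_0^\cdot\psi\,ds$, not a convolution; the dilation rewrites $J(S*\psi)(t)=P\tilde S(t)\int_0^t\tilde S(-s)J\psi(s)\,ds$, after which the $\gamma$-multiplier theorem removes $P\tilde S(\cdot)$ (this is where property~$(\alpha)$ enters, yielding $\gamma$-boundedness of $\tilde S$ on $Y$) and Lemma~\ref{gammameasure} applies. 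For the stochastic convolution the same rewriting gives $J\zeta_M(t)=P\tilde S(t)\int_0^t\tilde S(-s)J\Psi\,dM$, and the inner integral is now an honest $Y$-valued local martingale; \emph{this} is what Stein's inequality acts on. Your sketch invokes Stein for ``adaptedness-induced conditional expectations'', but $S\diamond G$ itself is not a martingale in $t$, so without the dilation there is nothing for Stein to apply to.

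There is a second, independent gap in the $\gamma$-component of the stochastic convolution: the $\gamma$-Fubini isomorphism~\eqref{fubis} needs a deterministic index set, whereas $\mu_{[[M]]}$ is random. The paper closes this with the time change of Theorem~\ref{rem:intfortimechange}, writing the inner martingale as $\tilde\zeta_{W_H}\circ[[M]]$ for a stochastic integral $\tilde\zeta_{W_H}$ against an $H$-cylindrical Brownian motion; Lemma~\ref{timechange} converts $\gamma(0,T,[[M]];Y)$ into $\gamma(0,[[M]]_T;Y)$, and only after enlarging to the deterministic interval $[0,T_{M,T}]$ can one swap $L^p(\overline\Omega)$ with $\gamma$ and apply Stein to $\tilde\zeta_{W_H}(t)=\mathbb E[\tilde\zeta_{W_H}(T_{M,T})\mid\mathcal G_t]$. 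Two smaller points: your ``fix $t$, apply Theorem~\ref{main}, integrate in $t$'' route controls $\|\zeta_M\|_{L^2(0,T;L^p(\Omega;X))}$, which by Minkowski only dominates $\|\zeta_M\|_{L^p(\Omega;L^2(0,T;X))}$ when $p\ge 2$ --- the paper instead bounds $\|\zeta_M\|_{L^p(\Omega;C([0,T];X))}$ via the dilation and then passes to $L^2$ in time; and continuity together with~\eqref{eq:stochcont} comes for free from the dilation representation, with no need for the factorization method and its attendant lower bound on $p$.
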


\begin{proof}
Actually the assumptions even yields that $\{S(t):t\geq 0\}$ is $R$-bounded and hence $\gamma$-bounded by some constant $N$ (see \cite[Theorem 2.20 and 12.8]{KunWeis04}). In particular $\|S(t)\|\leq N$ for all $t\geq 0$.

Let $Y = \gamma(\mathbb R;X)$.
For the proof we use the following dilation result for the semigroup $S$ from \cite{FW}.
By the boundedness of the $H^{\infty}$-calculus with angle $<\frac{\pi}{2}$ yields that there exist
$J\in \mathcal L(X,Y)$, $P \in \mathcal L(Y)$ and $(\tilde{S}(t))_{t \in \mathbb R}\subseteq \mathcal L(Y)$
such that
\begin{itemize}
 \item[(i)] There are $c_J, C_J >0$ such that for all $x \in X$, one has $c_J \|x\| \leq \|Jx\|\leq C_J\|x\|$.
 \item[(ii)] $P$ is a projection onto $\text{ran }J$.
 \item[(iii)] $(\tilde{S}(t))_{t \in \mathbb R}$ is a strongly continuous group on $Y$ with
 $\|\tilde S(t)y\| = \|y\|$ for all $y \in Y$.
\item[(iv)] For all $t \geq 0$ it holds true that $JS(t) = P\tilde S(t)J$.
\end{itemize}
This dilation will be used to derive continuity of the stochastic convolution in a similar way as in \cite{HausSei}. Moreover, we use it to obtain estimates in the $\gamma$-norm.

Notice that by \cite[Lemma~2.3]{VW1} $Y$ is a UMD space. Also notice that since $X$ has property $(\alpha)$
then according to \cite[Theorem~3.18]{HK} family $(\tilde S(t))_{t \in \mathbb R}$ is $\gamma$-bounded
by some constant $\alpha_X$. Now we will proceed prove in 4 steps. Fix $T\geq 0$. Let $C_P = \|P\|$.

\medskip

 \textit{Step 1: Estimating the initial value part.} By the strong continuity and uniform boundedness of $S$ we derive:
\begin{equation}\label{step1continuity}
\|s\mapsto S(s)u_0\|_{L^2(0,T;X)}\leq T^{\frac12}\|s\mapsto S(s)u_0\|_{C([0,T];X)} \leq N T^{\frac12} \|u_0\|
\end{equation}
By the $\gamma$-boundedness
of $(S(t))_{t \in \mathbb R}$ and \cite[Proposition~4.11]{KW}:
\begin{align*}
\|s\mapsto S(s)u_0\|_{\gamma(0,T,[[M]];X)}\leq N \|s\mapsto u_0\|_{\gamma(0,T,[[M]];X)} = [[M]]_{T}^{1/2} \|u_0\|.
\end{align*}

 \textit{Step 2. Estimating the deterministic part.} We proceed in two steps.

(a): For fixed $\omega \in \Omega$ and $\psi \in L^2(0,T;X)$
we estimate the $L^p(0,T;X)$- and $\gamma(0,T,[[M]];X)$-norms
of $S*\psi$. One has
\begin{align}\label{contusint}
  \|S*\psi\|_{L^2(0,T;X)}=T^{\frac12}\|S*\psi\|_{C([0,T];X)}  \leq T N \|\psi\|_{L^2(0, T;X)},
\end{align}
where the continuity of $S*\psi$ is simple to check (see \cite[Corollary 4.2.4]{Lun}).

By the representation of $S$ as a group, the ideal property \eqref{ideal} and \cite[Proposition~4.11]{KW} we find that
\begin{align*}
  \| S*\psi\|_{\gamma(0,T,[[M]];X)} &\leq  \frac{1}{c_J} \| (J S)*\psi\|_{\gamma(0,T,[[M]];X)}
\\ & =  \frac{1}{c_J}  \Bigl\|P\tilde S(\cdot)\int_0^{\cdot} \tilde S(-s) J\psi(s)\ud s\Bigr\|_{\gamma(0,T,[[M]];X)}
\\ & \leq \frac{C_P\alpha_X}{c_J}   \Bigl\|\int_0^{\cdot} \tilde S(-s) J\psi(s)\ud s\Bigr\|_{\gamma(0,T,[[M]];X)}
\\ & \leq \frac{C_JC_P\alpha_X N}{c_J}  T^{\frac12}[[M]]_T^{\frac12} \|\psi\|_{L^2(0,T;X)},
\end{align*}
where in the last step we used Lemma \ref{gammameasure} and $\int_0^T t \ud [[M]]_t\leq T [[M]]_{T}$.

Now let $\Psi \in V^p(0,T,M;X)$. Then by applying the inequalities above to the paths
$\Psi(\cdot, \omega)$ one easily obtains that $S* \Psi \in V^p(0,T,M;X)$ and
\begin{equation*}
 \|S* \Psi\|_{V^p(0,T,M;X)}\leq C_T^{1}
 \|\Psi\|_{V^p(0,T,M;X)},
\end{equation*}
where
\[C_T^{1} = T N  + \frac{C_JC_P\alpha_X N}{c_J}  T^{\frac12}T_{M,T}^{\frac12}\ \ \text{and}  \ \ T_{M,T}:=\|[[M]]_T\|_{L^{\infty}(\Omega)}.\]

\medskip

(b): Let $\phi_1, \phi_2 \in V^p(0,T,M;X)$. Since $F$ is of linear growth,
$F(\cdot,\phi_1)$ and $F(\cdot,\phi_2)$ have a continuous version and belong to $V^p(0,T,M;X)$. Since $F$ is Lipschitz
in its $X$-variable, we deduce that
$S*F(\cdot,\phi_1)$ and $ S* F(s,\phi_2)$ are
in $V^p(0,T,M;X)$ and
\begin{align*}
\|S*F(s,\phi_1) - S* F(s,\phi_2)\|_{V^p(0,T,M;X)}
&\leq C_T^{1} \|F(\cdot,\phi_1)-F(\cdot,\phi_1)\|_{V^p(0,T,M;X)}\\
&\leq  C_T^{1} L_F \|\phi_1-\phi_2\|_{V^p(0,T,M;X)}.
\end{align*}

  \textit{Step 3. Estimating the stochastic part.}

(a): Let $\Psi:[0,T]\times \Omega \to \mathcal L(H,X)$
be scalarly strongly progressively measurable and suppose that
$\Psi Q_M^{1/2}$ is in $L^p(\Omega;\gamma(L^2(0,T,[[M]];H),X))$.
Then by \cite[Proposition~4.11]{KW}  and Theorem \ref{main} for each $t\in [0,T]$,
\[\zeta_M(t):=\int_0^t S(t-s)\Psi(s) \ud M(s)\]
is well-defined.
Now we estimate $\zeta_M$ pathwise in the space of continuous functions. As before one sees that $\int_0^{\cdot}\tilde S^{-1} J \Psi \ud M$ is well-defined and is a.s.\ continuous (here we use the fact that $Y$ is a UMD space and $\tilde{S}$ is $\gamma$-bounded). Therefore, by the representation of $S$ as a group it follows that we can write
\begin{align}\label{eq:represtochconv}
J \zeta_M(t) = P \tilde S(t) \int_0^{t}\tilde S(-s) J \Psi(s) \ud M
\end{align}
Since $P \tilde S(t)$ is strongly continuous, the continuity follows since $J$ is an isomorphic embedding.
Moreover, by Theorem \ref{main} and \cite[Proposition~4.11]{KW},
\begin{align*}
\|\zeta_M\|_{L^p(\Omega;L^2(0,T;X))} & \leq  T^{\frac12} \|\zeta_M\|_{L^p(\Omega;C([0,T];X))}
\\ & \leq  \frac{T^{\frac12}}{c_J} \Big\|t\mapsto P \tilde S(t) \int_0^t \tilde S^{-1}(s) J \Psi(s) \ud M(s)\Big\|_{L^p(\Omega;C([0,T];X))}
\\ & \leq  \frac{T^{\frac12} C_P N}{c_J} \Big\|t\mapsto \int_0^t \tilde S^{-1}(s) J \Psi(s) \ud M(s)\Big\|_{L^p(\Omega;C([0,T];X))}
\\ & \leq  \frac{T^{\frac12} C_P N C_{p,X}}{c_J} \|\tilde S^{-1} J \Psi Q_M^{\frac12}\|_{L^p(\Omega;\gamma(0,T,[[M]];H, Y))}
\\ & \leq  \frac{T^{\frac12} C_P N^2 C_{p,X} C_J}{c_J} \|\Psi Q_M^{\frac12}\|_{L^p(\Omega;\gamma(0,T,[[M]];H, X))}.
\end{align*}

To estimate the $\gamma (L^2(0,T,[[M]]), X)$-norm of $\zeta_M$ we can again use the representation \eqref{eq:represtochconv} and use \cite[Proposition~4.11]{KW} and the ideal property to estimate
\begin{align*}
\|\zeta_M\|_{L^p(\Omega;\gamma(0,T,[[M]];X))}
 & \leq \frac{1}{c_J} \Big\|t\mapsto P \tilde S(t) \int_0^t \tilde S^{-1}(s) J \Psi(s) \ud M(s)\Big\|_{L^p(\Omega;\gamma(0,T,[[M]];Y))}
\\ & \leq \frac{C_P N}{c_J} \Big\|t\mapsto \int_0^t \tilde S^{-1}(s) J \Psi(s) \ud M(s)\Big\|_{L^p(\Omega;\gamma(0,T,[[M]];Y))}
\end{align*}
To estimate the last term recall from Theorem \ref{rem:intfortimechange}
\begin{align*}
\tilde{\zeta}_M(t):=  \int_0^{t} \tilde S^{-1} J \Psi \ud M = \int_0^{[[M]]_t} (\one_{(0,T)}\tilde S^{-1} J \Psi Q_M^{\frac12}) \circ \tau \ud W_H=:\tilde{\zeta}_{W_H}([[M]]_t).
\end{align*}
Using this representation, we find
\begin{align*}
\|\tilde{\zeta}_M\|_{L^p(\Omega;\gamma(0,T,[[M]];Y))} &= \|\tilde{\zeta}_{W_H}\circ[[M]]\|_{L^p(\overline{\Omega};\gamma(0,T,[[M]];Y))}
\\ &\stackrel{(*)}{=}\|\tilde{\zeta}_{W_H}\|_{L^p(\overline{\Omega};\gamma(0,[[M]]_T;Y))}
\\ & \leq \|\mathbb E\Big(\tilde{\zeta}_{W_H}(T_{M,T})|\mathcal G_{t}\Bigr)\|_{L^p(\overline{\Omega};\gamma(0,T_{M,T};Y))}
\\ & \stackrel{(i)}{\leq} C_p\|\mathbb E\Big(\tilde{\zeta}_{W_H}(T_{M,T})|\mathcal G_{t}\Bigr)\|_{\gamma(0,T_{M,T};L^p(\overline{\Omega};Y))}
\\ & \stackrel{(ii)}{\leq} C_p \gamma(\overline{\mathcal E}_{p,T}) \|\tilde{\zeta}_{W_H}(T_{M,T})\|_{\gamma(0,T_{M,T};L^p(\overline{\Omega};Y))}
\\ & = C_p \gamma(\overline{\mathcal E}_{p,T}) T_{M,T}^{\frac12} \|\tilde{\zeta}_{W_H}(T_{M,T})\|_{L^p(\overline{\Omega};Y)}
\\ & \stackrel{(iii)}{\leq} C_p \gamma(\overline{\mathcal E}_{p,T}) T_{M,T}^{\frac12} C_{p,X} \|(\one_{(0,T)}\tilde S^{-1} J \Psi Q_M^{\frac12}) \circ \tau\|_{L^p(\Omega;\gamma(\mathbb R_+;Y))}
\\ & \stackrel{(*)}{\leq} C_p \gamma(\overline{\mathcal E}_{p,T}) T_{M,T}^{\frac12} C_{p,X} \|\tilde S^{-1} J \Psi Q_M^{\frac12}\|_{L^p(\Omega;\gamma(0,T,[[M]];Y))}
\end{align*}
In $(*)$ we used Lemma \ref{timechange} and $[[M]]_{\tau_s} = s$.
In $(i)$ we used \eqref{fubis}. In $(ii)$ we used Lemma \ref{vectorstein} for conditional expectations on $L^p(\overline{\Omega};Y)$ and \cite[Proposition~4.11]{KW}. In $(iii)$ we used \eqref{eqwiener}.
Therefore, combining both estimates it follows that
\begin{align*}
\|\zeta_M\|_{L^p(\Omega;\gamma(0,T,[[M]];X))}& \leq \frac{C_P N}{c_J}  \|\tilde{\zeta}_M\|_{L^p(\Omega;\gamma(0,T,[[M]];Y))}
\\ & \leq \frac{C_P N^2 C_J}{c_J}  C_p \gamma(\overline{\mathcal E}_{p,T}) T_{M,T}^{\frac12} C_{p,X}  \|\Psi Q_M^{\frac12}\|_{L^p(\Omega;\gamma(0,T,[[M]];X))}
\end{align*}
where in the last step we argue as below \eqref{eq:represtochconv}.

\medskip
Combining these estimates we conclude that
\begin{equation}\label{combstep3a}
\|\zeta_M\|_{V^p(0,T,M;Y)} \leq  C_T^{2}
 \|\Psi Q_M^{1/2}\|_{L^p(\Omega;\gamma (L^2(0,T,[[M]];H), X))},
\end{equation}
where
\[  C_T^{2}:= \frac{T^{\frac12} C_P N^2 C_{p,X} C_J}{c_J}  + \frac{C_P N^2 C_J}{c_J}  C_p \gamma(\overline{\mathcal E}_{p,T}) T_{M,T}^{\frac12} C_{p,X}  \]

\medskip

 (b): Let $\phi_1,\phi_2 \in V^p(0,T,M;X)$. It follows from the assumption on $G$ that
$S\diamond G(\cdot,\phi_1), S\diamond G(\cdot,\phi_2)$ have a continuous version and are $V^p(0,T,M;X)$ and
 \begin{align*}
   \Bigl\|&S\diamond G(\cdot,\phi_1)-S\diamond G(\cdot,\phi_2)\Bigr\|_{V^p(0,T,M;X)}
 \\ &\leq C_T^2  \|(G(\cdot,\phi_1)- G(\cdot,\phi_2)) Q_M^{1/2}\|_{L^p(\Omega;\gamma (L^2(0,T,[[M]];H), X))}\\
 &\leq L_G^{\gamma}C_T^2 \|\phi_1-\phi_2\|_{V^p(0,T,M;X)}.
 \end{align*}

  \textit{Step 4: Collecting the estimates.} It follows from the previous steps that
 $L_T$ is well-defined on $V^p(0,T,M;X)$ and
 \begin{equation}\label{mainest}
\|L_t(\phi_1)-L_t(\phi_2)\|_{V^p(0,T,M;X)}\leq C_{T,M}\|\phi_1-\phi_2\|_{V^p(0,T,M;X)},
 \end{equation}
where $C_{T,M} = L_F C_T^{1} + L_G C_T^{2}$ and one can check that \eqref{eq:CTM} holds.

To prove \eqref{stocheq2eq} one has to apply \eqref{mainest} and the fact that
for some positive constant $C$ it holds true that
\[
\|L_T(0)\|_{V^p(0,T,M;X)}\leq C(1+(\mathbb E\|u_0\|_{X}^p)^{\frac 1p}).
\]
The final continuity statement and \eqref{eq:stochcont} follows from the previous steps.
\end{proof}

\subsection{Existence and uniqueness when the variation is small}

\begin{theorem}[Existence and uniqueness]\label{exandun1}
Suppose that \textup{(A0)--(A3)} are satisfied and $\|[[M]]_T\|_{L^{\infty}(\Omega)}\leq (2\tilde C)^{-2}$, where $\tilde{C}$ is as in \eqref{eq:CTM}.
If $u_0 \in L^p(\Omega,\mathcal F_0; X)$,
then there exists a unique solution $U$ in $V^{p}(0,T,M;X)$ of \eqref{stoceq}.
Moreover, there exists a nonnegative
constant~$C$, independent of $u_0$ but depending on $T\vee 1$ and $\tilde{C}$ such that
\begin{equation}\label{exandunest}
 \|U\|_{V^p(0,T,M;X)}\leq C(1+(\mathbb E\|u_0\|_{X}^p)^{\frac 1p}).
\end{equation}
Furthermore, $U$ has a continuous version and there exists a constant $D$ independent of $u_0$ but depending on $T\vee 1$ and $\tilde{C}$ such that
\begin{equation}\label{exandunestcont}
\|U\|_{L^p(\Omega;C([0,T];X))}\leq C(1+(\mathbb E\|u_0\|_{X}^p)^{\frac 1p})
\end{equation}
\end{theorem}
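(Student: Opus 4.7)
The plan is a Banach contraction argument for the mild-solution operator $L_T$ of Proposition \ref{stocheq}, combined with a finite time-stepping iteration to handle arbitrary $T$. The key observation is that the hypothesis $\|[[M]]_T\|_{L^\infty(\Omega)}\leq (2\tilde C)^{-2}$ gives $\tilde C T_{M,t}^{1/2} \leq 1/2$ for every $t \leq T$, so by \eqref{eq:CTM} the Lipschitz constant of $L_t$ on a subinterval of length $t \in (0,1]$ is at most $\max\{\tilde C\, t^{1/2}, 1/2\}$, which is $\leq 1/2$ as soon as $t \leq (2\tilde C)^{-2}$.

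Set $T_0 := \min\{1, (2\tilde C)^{-2}\}$ and let $N := \lceil T/T_0\rceil$. On $[0,T_0]$ the map $L_{T_0}$ is a $\tfrac12$-contraction on the complete metric space $V^p(0,T_0,M;X)$, so Banach's fixed-point theorem yields a unique mild solution $U^{(0)}$ there; the estimate \eqref{stocheq2eq} combined with $C_{T_0,M}\leq \tfrac12$ gives $\|U^{(0)}\|_{V^p(0,T_0,M;X)} \leq 2C(1+\|u_0\|_{L^p(\Omega;X)})$, and \eqref{eq:stochcont} provides a continuous modification with the analogous $L^p(\Omega;C([0,T_0];X))$ bound, so in particular $U^{(0)}(T_0) \in L^p(\Omega,\mathcal F_{T_0};X)$. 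I would then restart the equation on $[kT_0,(k+1)T_0\wedge T]$ with initial datum $U^{(k-1)}(kT_0)$ and shifted martingale $M^{(k)}_s := M_{s+kT_0}-M_{kT_0}$, which satisfies $\|[[M^{(k)}]]_{T_0}\|_{L^\infty} \leq \|[[M]]_T\|_{L^\infty}\leq (2\tilde C)^{-2}$, and iterate the contraction $N$ times. The pieces are glued via the semigroup identity $S(t-s)=S(t-kT_0)S(kT_0-s)$ and the additivity of Lebesgue and stochastic integrals across the deterministic time $kT_0$, the latter following from \eqref{eq:optstoppingcylcase}. Iterating the bounds \eqref{stocheq2eq} and \eqref{eq:stochcont} $N$ times yields \eqref{exandunest} and \eqref{exandunestcont} with constants depending on $N$, hence on $T\vee 1$ and $\tilde C$. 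Uniqueness on $[0,T]$ follows by induction from uniqueness on each subinterval.

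The main obstacle I anticipate is the bookkeeping required to show that the concatenation of the fixed-point solutions on the subintervals really satisfies the original mild equation \eqref{eq:mild} on $[0,T]$, rather than a piecewise analogue. Concretely, for $t \in [kT_0,(k+1)T_0]$ one must verify that
\[
(S\diamond G(\cdot,U))(t) = S(t-kT_0)\,(S\diamond G(\cdot,U))(kT_0) + \int_{kT_0}^t S(t-r) G(r,U(r))\,\ud M_r,
\]
which is the restriction-splitting of cylindrical stochastic integrals provided by \eqref{eq:optstoppingcylcase} applied to the deterministic stopping time $kT_0$, combined with $S(t-r) = S(t-kT_0)S(kT_0-r)$ for $r \leq kT_0$; the deterministic analogue for $S*F(\cdot,U)$ is immediate from additivity of the Bochner integral. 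Once this splitting is in hand, the inductive verification of the mild form and the growth estimates reduces to a routine computation.
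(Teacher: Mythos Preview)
Your proposal is correct and follows essentially the same approach as the paper: a Banach contraction on a short subinterval (using \eqref{eq:CTM} and the hypothesis on $\|[[M]]_T\|_{L^\infty}$ to get $C_{t,M}\leq\tfrac12$), followed by the estimate \eqref{stocheq2eq} to bound the fixed point, then a finite time-stepping induction to reach $T$. The paper is terser---it simply invokes ``a standard induction argument'' for the gluing across $[t,2t],\ldots,[nt,T]$---whereas you spell out the restart with shifted initial data and martingale and correctly flag the stochastic-convolution splitting as the only nontrivial bookkeeping; this is indeed routine once one pairs the semigroup identity with additivity of the stochastic integral over the deterministic split point.
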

\begin{proof} By Proposition \ref{stocheq} one can find $t \in [0,T\wedge 1]$, independent of $u_0$,
such that
$t \leq \|[[M]]_T\|_{L^{\infty}(\Omega)}$, so $C_{t,M}\leq \frac12$.
It follows from \eqref{stocheq1} and the Banach fixed point argument that $L_t$
has a unique fixed point $U \in V^p(0,t,M;X)$. This gives us a continuous progressively measurable process
$U:[0,t]\times \Omega \to X$ such that a.s.\ for all $s\in[0,t]$,
\begin{equation*}
 U(s) = S(s)u_0 + S* F(\cdot,U)(s) + S\diamond G(\cdot,U)(s).
\end{equation*}
Note that \eqref{stocheq2eq} implies that
\[
\|U\|_{V^p(0,t,M;X)} \leq C(1+(\mathbb E \|u_0\|_X^p)^{\frac 1p}) + C_{t,M}\|U\|_{V^p(0,t,M;X)},
\]
and since $C_{t,M} \leq 1/2$
\begin{equation}\label{exandunest2}
\|U\|_{V^p(0,t,M;X)} \leq 2C(1+(\mathbb E \|u_0\|_X^p)^{\frac 1p}).
\end{equation}
and by $U = L_t(U)$, \eqref{eq:stochcont} and \eqref{exandunest2} we find
\begin{equation}\label{exandunestconthelp}
\|U\|_{L^p(\Omega;C([0,t];X))} \leq C_2(1+\|u_0\|_{L^p(\Omega;X)}).
\end{equation}
Thanks to a standard induction argument one easily constructs a solution on each of intervals $[t,2t],\ldots,[nt,T]$, where $n = [\frac Tt]$. This solution $U$ on $[0,T]$
is the solution of \eqref{stoceq}. Moreover, according to \eqref{exandunest2}, \eqref{exandunestconthelp} and the induction one deduces~\eqref{exandunest} and \eqref{exandunestcont}.

For small $t\in [0,T]$ uniqueness on $[0,t]$ follows from the uniqueness of the fixed point of
$L_t$ in $V^p(0,t,M;X)$, and uniqueness on $[0,T]$ follows from the induction argument.
\end{proof}

\begin{lemma}\label{u1=u2}
Suppose that \textup{(A0)--(A3)} are satisfied both for $M$ and $N$ and
\[\|[[M]]_T\|_{L^{\infty}(\Omega)},\|[[N]]_T\|_{L^{\infty}(\Omega)}<\frac 1{4\tilde C^2}.\]
Let $U_1 \in V^p(0,T,M;X)$, $U_2 \in V^p(0,T,N;X)$
be the solutions of \eqref{stoceq} with initial values $u_1,u_2 \in L^p(\Omega, \mathcal F_0;X)$
and cylindrical martingales $M,N$ respectively. Finally suppose that $M\equiv N$ a.s.\ on the set $\{u_1=u_2\}$.
Then a.s.\ $U_1 \equiv U_2$ on $\{u_1=u_2\}$.
\end{lemma}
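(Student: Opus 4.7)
The strategy is to rerun the Picard iteration underlying Theorem \ref{exandun1} for both solutions in parallel, and to show by induction that the iterates coincide on $A := \{u_1 = u_2\} \in \mathcal F_0$. Let $L_T^M, L_T^N$ denote the fixed-point operators from Proposition \ref{stocheq} associated respectively to $M$ and $N$. Define
\[
U_i^0 := u_i, \quad U_1^{n+1} := L_T^M(U_1^n), \quad U_2^{n+1} := L_T^N(U_2^n), \quad i=1,2.
\]
Since $\|[[M]]_T\|_{L^\infty}, \|[[N]]_T\|_{L^\infty} < (2\tilde C)^{-2}$, both iterations are strict contractions on the respective $V^p$ spaces and converge to $U_1$ and $U_2$ in $V^p(0,T,M;X)$ and $V^p(0,T,N;X)$ respectively; in particular, after passing to a common subsequence, $U_i^n \to U_i$ a.s.\ in $C([0,T];X)$.

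I claim $\mathbf 1_A U_1^n = \mathbf 1_A U_2^n$ a.s.\ for every $n\ge0$, the case $n=0$ being immediate. For the induction step, expand
\[
U_i^{n+1}(t) = S(t)u_i + (S*F(\cdot,U_i^n))(t) + (S\diamond G(\cdot,U_i^n))(t),
\]
the last term being a stochastic convolution against $M$ or $N$. On $A$ we have $u_1=u_2$, hence $S(t)u_1=S(t)u_2$ there. The pointwise (Nemytskii) nature of $F$ and the induction hypothesis give $F(s,U_1^n)=F(s,U_2^n)$ on $A$ for every $s$, so the deterministic convolutions agree on $A$. For the stochastic parts, write
\[
S\diamond G(\cdot,U_1^n)\,\ud M - S\diamond G(\cdot,U_2^n)\,\ud N
= S\diamond\bigl(G(\cdot,U_1^n)-G(\cdot,U_2^n)\bigr)\,\ud N \; + \; S\diamond G(\cdot,U_1^n)\,\ud(M-N).
\]
By the induction hypothesis and the pointwise Lipschitz character of $G$, the integrand in the first term vanishes on $A$, so Corollary \ref{cor:whenintis0} forces that integral to be zero on $A$. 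For the second term, the hypothesis $M\equiv N$ on $A$ yields $[(M-N)x^*]_t=0$ on $A$ for every $x^*\in X^*$, and therefore $[[M-N]]_t=0$ on $A$ by Proposition \ref{prop:[[M]]andF}. Combined with the quadratic variation formula for stochastic integrals in Proposition \ref{prop:cylmartBM} and \cite[Exercise~17.3]{Kal}, this forces the stochastic integral against $M-N$ to vanish on $A$ as well. Closing the induction and taking $n\to\infty$ along the common subsequence gives $U_1=U_2$ a.s.\ on $A$.

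The main technical point is the splitting used above, in particular the well-posedness of the integral against $M-N\in\mathcal M_{\rm var}^{\rm loc}(H)$ (which lies in that class by Proposition \ref{prop:linearity}) and the integrability of $G(\cdot,U_1^n)$ with respect to $N$. Since stochastic integrability w.r.t.\ $M$ is built into the Picard scheme, the cleanest route is to approximate $G(\cdot,U_1^n)$ by elementary progressive processes $\Phi_k$ as in Theorem \ref{main}(1): for each $\Phi_k$, the identity $\int_0^t\Phi_k\,\ud M = \int_0^t\Phi_k\,\ud N$ on $A$ is immediate from the formula \eqref{intnorm} together with $Mh\equiv Nh$ on $A$ for every $h\in H$, and the splitting above is then recovered by taking a limit which, restricted to $A$, uses only the locality of $[[M]]=[[N]]$ and $Q_M=Q_N$ on $A$. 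This reduction is the real content of the proof, and all remaining steps are bookkeeping through the Picard iteration.
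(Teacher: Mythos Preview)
There is one genuine gap: your claim that $L_T^M$ and $L_T^N$ are strict contractions on the full interval $[0,T]$ does not follow from the hypothesis. Estimate \eqref{eq:CTM} gives $C_{T,M}\leq\tilde C\max\{T^{1/2},T_{M,T}^{1/2}\}$ (for $T\leq 1$), so smallness of $T_{M,T}=\|[[M]]_T\|_{L^\infty}$ alone is not enough---one also needs $T$ small. The paper handles this by first proving the result on a short subinterval $[0,t]$ with $C_{t,M}\leq\tfrac12$ and then inducting over subintervals as in Theorem~\ref{exandun1}. Your argument can be repaired the same way, at the cost of a second layer of induction (over subintervals, and over Picard steps within each).

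Apart from this, your route via Picard iterates is a valid but more laborious alternative. The paper never introduces iterates: since $\Gamma=\{u_1=u_2\}\in\mathcal F_0$ and $M\equiv N$ on $\Gamma$, one has $U_2\mathbf 1_\Gamma\in V^p(0,t,M;X)$ and $L_t(\phi)\mathbf 1_\Gamma=L_t(\phi\mathbf 1_\Gamma)\mathbf 1_\Gamma$, so the fixed-point identities $U_i=L_t(U_i)$ yield directly
\[
\|(U_1-U_2)\mathbf 1_\Gamma\|_{V^p(0,t,M;X)}\leq C_{t,M}\|(U_1-U_2)\mathbf 1_\Gamma\|_{V^p(0,t,M;X)},
\]
forcing equality on $\Gamma$ in one stroke. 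Both arguments rest on the same underlying fact---that on $\Gamma$ one has $[[M]]=[[N]]$, $Q_M=Q_N$, whence $\mathbf 1_\Gamma\int\Phi\,\ud M=\mathbf 1_\Gamma\int\Phi\,\ud N$ for integrable $\Phi$---which the paper uses tacitly and you make explicit via elementary approximation. Note that your splitting into an $N$-integral plus an $(M-N)$-integral is more than is needed and creates the extra integrability burdens you flag; the direct identification $\int\mathbf 1_\Gamma\Phi\,\ud M=\int\mathbf 1_\Gamma\Phi\,\ud N$ (pull $\mathbf 1_\Gamma$ inside using $\Gamma\in\mathcal F_0$, then use that both sides agree for elementary integrands and pass to the limit in the $\gamma$-norm, which transfers from $[[M]]$ to $[[N]]$ on $\Gamma$) already suffices.
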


\begin{proof}
 Let $\Gamma = \{u_1=u_2\}$. Since $U_2 \in V^p(0,T,N;X)$, then $U_2 \mathbf 1_{\Gamma}\in V^p(0,T,M;X)$,
 because $M$ and $N$ coincides on $\Gamma$.
 Consider small $t$ as in the beginning of the proof of Theorem~\ref{exandun1}.
 Since $\Gamma$ is $\mathcal F_0$-measurable
\begin{align*}
 \|U_1 \mathbf 1_{\Gamma} - U_2 \mathbf 1_{\Gamma}\|_{V^p(0,t,M;X)}
 &=\|L_t(U_1)\mathbf 1_{\Gamma}- L_t(U_2) \mathbf 1_{\Gamma}\|_{V^p(0,t,M;X)}\\
 &= \|L_t(U_1\mathbf 1_{\Gamma})\mathbf 1_{\Gamma}- L_t(U_2\mathbf 1_{\Gamma}) \mathbf 1_{\Gamma}\|_{V^p(0,t,M;X)}\\
 &\leq C_{t,M}\|U_1 \mathbf 1_{\Gamma} - U_2 \mathbf 1_{\Gamma}\|_{V^p(0,t,M;X)},
\end{align*}
therefore almost surely $U_1|_{[0,t]\times \Gamma}\equiv U_2|_{[0,t]\times \Gamma}$.

To extend this result to the whole interval $[0,T]$ one has to apply the same induction argument as
in the end of the proof of Theorem~\ref{exandun1}.
\end{proof}

Let $b\geq a\geq 0$. We say that $\phi$ is  \textit{locally in $V^p(a,b,M;X)$}
(or simply $\phi \in V^p_{\rm loc}(a,b,M;X)$) if there exists a sequence of increasing stopping times $(\tau_n)_{n\geq 1}$
such that $\tau_n\nearrow\infty$ a.s.\ and $\phi \in V^p(a,b,M^{\tau_n};X)$ for each $n>0$.
It is evident by~Remark~\ref{localvp} that $\phi \in V^p(a,b,M;X)$ implies $\phi \in V^p_{\rm loc}(a,b,M;X)$.
Obviously $V^{p}_{\rm loc}(a,b,M;X)\subseteq V(a,b,M;X)$.

 \begin{lemma}\label{forun2}
Suppose that \textup{(A0)--(A3)} are satisfied. Let $\tau$ be a stopping time such that
  $ \|[[M^{\tau}]]_T\|_{L^{\infty}(\Omega)}<\frac 1{4\tilde C^2}$,
  $u_0 \in L^p(\Omega,\mathcal F_0; X)$, $v_0\in L^0(\Omega,\mathcal F_0; X)$, $U_M\in V^p_{\rm loc}(0,T,M;X)$, $U_{M^{\tau}}\in V^p(0,T,M^{\tau};X)$
 be solutions of \eqref{stoceq} with cylindrical martingales $M$, $M^{\tau}$ and initial values $u_0$, $v_0$ respectively.
Then on the set $\{u_0 = v_0\}$ and on the interval $[0,\tau\wedge T]$, one has $U_M\equiv  U_{M^{\tau}}$  a.s.
 \end{lemma}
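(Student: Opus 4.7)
Let $\Gamma := \{u_0 = v_0\}$. The plan is to compare $U_M$ and $U_{M^\tau}$ by inserting three auxiliary $L^p$-initial-value solutions furnished by Theorem \ref{exandun1}, and bridging them by a local contraction argument together with Lemma \ref{u1=u2}. First, since $U_M \in V^p_{\rm loc}(0,T,M;X)$, fix a localizing sequence $(\sigma_n)$ and set $\rho_n := \sigma_n \wedge \tau$: then $\rho_n \uparrow \tau$ a.s., Remark \ref{localvp} gives $U_M \in V^p(0,T,M^{\rho_n};X)$, and $\|[[M^{\rho_n}]]_T\|_{L^\infty(\Omega)} \leq \|[[M^\tau]]_T\|_{L^\infty(\Omega)} < 1/(4\tilde C^2)$. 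Since $v_0 \in L^0$, truncate it to $v_0^k := v_0 \mathbf 1_{\{\|v_0\|\leq k\}} \in L^\infty(\Omega;X)$ and note $\{\|v_0\|\leq k\} \uparrow \Omega$ a.s. Theorem \ref{exandun1} then produces three unique solutions: $\bar U_n \in V^p(0,T,M^{\rho_n};X)$ for $(M^{\rho_n}, u_0)$, $\bar V_{n,k} \in V^p(0,T,M^{\rho_n};X)$ for $(M^{\rho_n}, v_0^k)$, and $\bar W_k \in V^p(0,T,M^\tau;X)$ for $(M^\tau, v_0^k)$.

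The key step is the following local agreement claim: whenever $U$ solves \eqref{stoceq} driven by $M$ or by $M^\tau$ with $U(0) \in L^p$ and satisfies $U \in V^p(0,T,M^{\rho_n};X)$, then $U|_{[0,\rho_n]} = \bar U|_{[0,\rho_n]}$ a.s., where $\bar U \in V^p(0,T,M^{\rho_n};X)$ denotes the Theorem \ref{exandun1} solution for $(M^{\rho_n}, U(0))$. Indeed, by the stopping property of the stochastic integral, $\int_0^t\Phi\,\ud M = \int_0^t\Phi\,\ud M^{\rho_n}$ for $t\leq\rho_n$ and similarly for $M^\tau$, so $U$ and $\bar U$ satisfy the same mild equation driven by $M^{\rho_n}$ on $[0,\rho_n]$. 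Set $\psi := (U-\bar U)\mathbf 1_{[0,\rho_n]}$ and pick $t_0 > 0$ small enough that $C_{t_0,M^{\rho_n}} \leq 1/2$, which is possible by \eqref{eq:CTM}. On $[0,t_0\wedge\rho_n]$ the process $\psi$ coincides with $L_{t_0}(\bar U+\psi) - L_{t_0}(\bar U)$, where $L_{t_0}$ is the fixed-point operator of Proposition \ref{stocheq} associated with $M^{\rho_n}$ and initial value $U(0)$. Because $\psi \equiv 0$ outside $[0,\rho_n]$, its $V^p(0,t_0,M^{\rho_n};X)$-norm equals that of $\psi\mathbf 1_{[0,\rho_n]}$; combined with the contractivity of $L_{t_0}$ this yields $\|\psi\|_{V^p(0,t_0,M^{\rho_n};X)} \leq C_{t_0,M^{\rho_n}}\|\psi\|_{V^p(0,t_0,M^{\rho_n};X)}$, whence $\psi = 0$ on $[0,t_0]$. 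The standard step-by-step induction on intervals of length $t_0$, exactly as at the end of the proof of Theorem \ref{exandun1}, then yields $\psi \equiv 0$ on $[0,T]$.

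Applying this claim to $U_M$ (driven by $M$) and to $\bar W_k$ (driven by $M^\tau$) gives $U_M = \bar U_n$ and $\bar W_k = \bar V_{n,k}$ on $[0,\rho_n]$ a.s. On the other hand, Lemma \ref{u1=u2} applied to the pair $(\bar U_n, \bar V_{n,k})$ driven by the common martingale $M^{\rho_n}$ produces $\bar U_n = \bar V_{n,k}$ on $\{u_0 = v_0^k\} \supseteq \Gamma\cap\{\|v_0\|\leq k\}$, and applied to $(U_{M^\tau}, \bar W_k)$ driven by $M^\tau$ it produces $U_{M^\tau} = \bar W_k$ on $\{v_0 = v_0^k\} \supseteq \{\|v_0\|\leq k\}$. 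Chaining these identities on $\Gamma\cap\{\|v_0\|\leq k\}\cap[0,\rho_n]$ gives $U_M = \bar U_n = \bar V_{n,k} = \bar W_k = U_{M^\tau}$; letting $k\to\infty$ and then $n\to\infty$ extends the equality to $\Gamma\cap[0,\tau\wedge T]$. The main obstacle is the local agreement claim of the second paragraph: a stopped mild solution is not itself a mild solution of the stopped equation, so the uniqueness provided by Theorem \ref{exandun1} or by Lemma \ref{u1=u2} cannot be invoked directly. The device $\psi = \psi\mathbf 1_{[0,\rho_n]}$ combined with the short-time contractivity of the fixed-point operator is precisely what lets the argument close without requiring $U$ and $\bar U$ to coincide globally.
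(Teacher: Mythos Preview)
Your proof is correct and rests on the same mechanism as the paper's---short-time contractivity of the fixed-point operator followed by induction---but it packages the argument more elaborately. You decompose the comparison into two separate pieces: agreement up to the stopping time via your ``local agreement claim,'' and agreement on the event $\Gamma$ via Lemma~\ref{u1=u2}. This forces the introduction of three auxiliary families $\bar U_n,\bar V_{n,k},\bar W_k$ and a chain through them. The paper instead handles both restrictions in a single stroke: taking the localizing sequence $(\tau_n)$ for $U_M$, it directly shows that $U_M\mathbf 1_{[0,\tau\wedge\tau_n]}\mathbf 1_\Gamma$ and $U_{M^\tau}\mathbf 1_{[0,\tau\wedge\tau_n]}\mathbf 1_\Gamma$ both lie in $V^p(0,T,M^{\tau\wedge\tau_n};X)$ (by Remark~\ref{localvp}), and then applies the contraction estimate~\eqref{stocheq1} for $L_t$ with respect to $M^{\tau\wedge\tau_n}$ to their difference, exactly as you do inside your local agreement claim. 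No auxiliary solutions or truncation of $v_0$ are needed, since $U_{M^\tau}\in V^p(0,T,M^\tau;X)$ is already assumed. Your route has the virtue of modularity (reusing Theorem~\ref{exandun1} and Lemma~\ref{u1=u2} as black boxes), while the paper's is shorter and avoids bookkeeping.

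One minor gap: when you invoke Lemma~\ref{u1=u2} for the pair $(U_{M^\tau},\bar W_k)$, the initial value $v_0$ is only in $L^0$, whereas the lemma as stated requires both initial values in $L^p$. The proof of Lemma~\ref{u1=u2} still goes through here because $U_{M^\tau}\in V^p(0,T,M^\tau;X)$ by hypothesis and the initial-value contributions cancel in the difference $L_t(U_{M^\tau})\mathbf 1_\Gamma - L_t(\bar W_k)\mathbf 1_\Gamma$, but you should say so explicitly.
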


 \begin{proof} Let $\Gamma = \{u_0=v_0\}$.
  Consider the localizing sequence $\{\tau_n\}_{n\geq 1}$ of stopping times for $U_M$, so
  $U_M \in V^p(0,T,M^{\tau_n};X)$ for each $n\geq 1$. Then by \eqref{ideal} both $U_M\mathbf 1_{[0,\tau\wedge\tau_n]}\mathbf 1_{\Gamma}$
  and $U_{M^{\tau}}\mathbf 1_{[0,\tau\wedge\tau_n]}\mathbf 1_{\Gamma}$ are in $V^p(0,T,M^{\tau\wedge\tau_n};X)$
  for each fixed $n\geq 1$. Let $t$ be such that
  $t^{\beta}< \|[[M^{\tau}]]_T\|_{L^{\infty}(\Omega)}^{\frac 12}$. Then for each fixed $n\geq 1$
 \begin{align*}
 \|U_M & \mathbf 1_{[0,\tau\wedge\tau_n]}\mathbf 1_{\Gamma}
 -  U_{M^{\tau}}\mathbf 1_{[0,\tau\wedge\tau_n]}\mathbf 1_{\Gamma} \|_{V^p(0,t,M^{\tau\wedge\tau_n};X)}
 \\ & = \|L_t(U_M)\mathbf 1_{[0,\tau\wedge\tau_n]}\mathbf 1_{\Gamma} -
 L_t(U_{M^{\tau}})\mathbf 1_{[0,\tau\wedge\tau_n]}\mathbf 1_{\Gamma}\|_{V^p(0,t,M^{\tau\wedge\tau_n};X)}\\
& = \|L_t(U_M\mathbf 1_{[0,\tau\wedge\tau_n]}\mathbf 1_{\Gamma})\mathbf 1_{[0,\tau\wedge\tau_n]}\mathbf 1_{\Gamma} -
 L_t(U_{M^{\tau}}\mathbf 1_{[0,\tau\wedge\tau_n]}\mathbf 1_{\Gamma})
 \mathbf 1_{[0,\tau\wedge\tau_n]}\mathbf 1_{\Gamma}\|_{V^p(0,t,M^{\tau\wedge\tau_n};X)}\\
& \leq C_{t,M^{\tau\wedge\tau_n}} \|U_M\mathbf 1_{[0,\tau\wedge\tau_n]}\mathbf 1_{\Gamma} -
 U_{M^{\tau}}\mathbf 1_{[0,\tau\wedge\tau_n]}\mathbf 1_{\Gamma}\|_{V^p(0,t,M^{\tau\wedge\tau_n};X)},
 \end{align*}
hence $U_M\mathbf 1_{[0,\tau\wedge\tau_n\wedge t]}\mathbf 1_{\Gamma} \equiv
 U_{M^{\tau}}\mathbf 1_{[0,\tau\wedge\tau_n\wedge t]}\mathbf 1_{\Gamma}$ a.s.\ Letting $n$ to infinity
 yields $U_M \equiv U_{M^{\tau}}$ on $[0,t\wedge \tau]$ for a.a.\ $\omega \in \Gamma$. Now by induction and the same technique
 as in Lemma \ref{u1=u2} one obtains the required result.
 \end{proof}

\subsection{Proof of the main existence and uniqueness result}

We first proof Theorem \ref{exandun3} under additional integrability assumptions on the initial value.

\begin{theorem}[Existence and uniqueness for integrable initial values]\label{exandun2}
 Suppose that \textup{(A0)--(A3)} are satisfied.
 If $u_0 \in L^p(\Omega,\mathcal F_0; X)$,
 then there exists a unique solution $U$ in $V^{p}_{\rm loc}(0,T,M;X)$ of \eqref{stoceq}.
\end{theorem}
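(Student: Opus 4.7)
The strategy is a standard localization in the quadratic variation. Fix $c := (2\tilde C)^{-2}$ and set $\tau_n := \inf\{t \geq 0 : [[M]]_t \geq nc\}$; since $[[M]]$ is a.s.\ continuous and finite on compacts, $\tau_n \nearrow \infty$ a.s. My plan is to construct inductively a compatible family $U_n \in V^p(0,T,M^{\tau_n};X)$ solving \eqref{stoceq} with driver $M^{\tau_n}$ and initial $u_0$, with $U_n \equiv U_{n-1}$ on $[0,\tau_{n-1}]$ a.s. The base case $n=1$ is immediate from Theorem \ref{exandun1}, since $\|[[M^{\tau_1}]]_T\|_{L^\infty(\Omega)} \leq c$.

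For the inductive step, decompose $M^{\tau_n} = M^{\tau_{n-1}} + N_n$ with $N_n := M^{\tau_n} - M^{\tau_{n-1}}$. For every $x^* \in X^*$,
\[
[N_n x^*]_t = [Mx^*]_{t\wedge \tau_n} - [Mx^*]_{t\wedge\tau_{n-1}} \leq ([[M]]_{\tau_n}-[[M]]_{\tau_{n-1}})\|x^*\|^2 = c\|x^*\|^2,
\]
so $N_n \in \mathcal M_{\rm var}^{\rm loc}(H)$ with $\|[[N_n]]_T\|_{L^\infty(\Omega)} \leq c$. Given $U_{n-1}$, I then solve the restarted equation
\[
V(t) = S(t-\tau_{n-1})U_{n-1}(\tau_{n-1}) + \int_{\tau_{n-1}}^t S(t-s)F(s,V(s))\,ds + \int_{\tau_{n-1}}^t S(t-s)G(s,V(s))\,dN_n(s)
\]
on $[\tau_{n-1},T]$ with $\mathcal F_{\tau_{n-1}}$-measurable initial value $U_{n-1}(\tau_{n-1}) \in L^p$ via a contraction argument strictly analogous to Proposition \ref{stocheq}; the smallness $\|[[N_n]]_T\|_{L^\infty(\Omega)} \leq c$ together with \eqref{eq:CTM} yields Lipschitz constant $\leq 1/2$. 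The random initial time $\tau_{n-1}$ is handled either by a Kazamaki-type time shift (Proposition \ref{Kazamaki}, Corollary \ref{cor:inversekaz}), or by running the fixed point directly in the affine subspace $\{\phi \in V^p(0,T,M^{\tau_n};X) : \phi \equiv U_{n-1}\ \text{on}\ [0,\tau_{n-1}]\}$, which is preserved by the operator $L_T$ built from $M^{\tau_n}$ because $N_n$ vanishes on $[0,\tau_{n-1}]$ and $U_{n-1}$ already solves the $M^{\tau_{n-1}}$-equation there. Splicing, $U_n := U_{n-1}\mathbf 1_{[0,\tau_{n-1}]} + V\mathbf 1_{(\tau_{n-1},T]}$ lies in $V^p(0,T,M^{\tau_n};X)$ and solves the $M^{\tau_n}$-equation.

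With the compatible family $(U_n)_{n\geq 1}$ at hand, I set $U(t,\omega) := U_n(t,\omega)$ whenever $t \leq \tau_n(\omega)$; this is well-defined, $U^{\tau_n} = U_n \in V^p(0,T,M^{\tau_n};X)$, so $U \in V^p_{\rm loc}(0,T,M;X)$. The mild formulation \eqref{eq:mild} for $U$ and $M$ follows by localizing on $\{t \leq \tau_n\}$, where $M = M^{\tau_n}$ and $U = U_n$, and letting $n \to \infty$. For uniqueness, given any other $V \in V^p_{\rm loc}(0,T,M;X)$ solving \eqref{stoceq}, I shrink $c$ slightly so $c < 1/(4\tilde C^2)$ and apply Lemma \ref{forun2} with $\tau = \tau_1$ to get $V \equiv U$ on $[0,\tau_1]$; the same comparison, iterated on each $[\tau_{k-1},\tau_k]$ via the restarted equation above, forces $V \equiv U$ on every $[0,\tau_n]$, hence on $[0,T]$. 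The main obstacle is the inductive extension step: setting up the contraction starting from the random stopping time $\tau_{n-1}$ with $\mathcal F_{\tau_{n-1}}$-measurable initial value, which requires either the Kazamaki time change or a careful adaptation of Proposition \ref{stocheq}; once this is done, the smallness $\|[[N_n]]_T\|_{L^\infty(\Omega)} \leq c$ delivers the contraction for free and all remaining steps (gluing, mild formulation, uniqueness) are routine.
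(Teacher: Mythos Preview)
Your localization $\tau_n := \inf\{t : [[M]]_t \geq nc\}$ controls only the quadratic variation, but the Lipschitz bound \eqref{eq:CTM} reads $C_{T,M} \leq \tilde C\max\{T^{1/2},T_{M,T}^{1/2}\}$: both the time horizon and the quadratic variation must be small to get a contraction. Your claim that ``the smallness $\|[[N_n]]_T\|_{L^\infty}\leq c$ together with \eqref{eq:CTM} yields Lipschitz constant $\leq 1/2$'' therefore fails --- the first argument of the maximum is $(T-\tau_{n-1})^{1/2}$ after restarting, which is random and in general close to $T^{1/2}$. Concretely, on the affine subspace you describe, the deterministic convolution still gives pathwise
\[
\|S*(F(\cdot,\phi_1)-F(\cdot,\phi_2))\|_{L^2(0,T;X)}\lesssim (T-\tau_{n-1})\,\|\phi_1-\phi_2\|_{L^2(0,T;X)},
\]
and analogous $T^{1/2}$-factors appear in the stochastic convolution estimates of Step~3 of Proposition~\ref{stocheq}. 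Neither of your two proposed fixes addresses this: Proposition~\ref{Kazamaki} and Corollary~\ref{cor:inversekaz} are time-\emph{change} results, not time-\emph{shift} results, and the affine-subspace argument does not shrink the deterministic time window.

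The paper avoids precisely this difficulty by localizing in time and in quadratic variation simultaneously: it fixes a deterministic grid $0,\tfrac{T}{2^n},\ldots,T$ with mesh $\leq c$, and on each subinterval stops $M$ at the first time the quadratic-variation increment exceeds $c$ (the stopping times $\rho_{nk}$). Both arguments of the maximum in \eqref{eq:CTM} are then $\leq c^{1/2}$, and every restart happens at a \emph{deterministic} time, so Theorem~\ref{exandun1} and Lemma~\ref{forun2} apply directly on each subinterval; finally $\rho_n=\rho_{n1}\wedge\cdots\wedge\rho_{n2^n}\nearrow\infty$ gives the localizing sequence. Your strategy can be salvaged, but only by adding a further time-subdivision of $[\tau_{n-1},T]$; since the base point is a stopping time this requires a genuine filtration-shift argument (verifying (A1)--(A2) for the shifted coefficients and re-proving Proposition~\ref{stocheq} with respect to $(\mathcal F_{\tau_{n-1}+t})_{t\geq 0}$), which is exactly the work you have deferred.
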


\begin{proof}
 By Proposition \ref{stocheq} one can find $n\in \mathbb N$ large enough so that
 $\frac{T}{2^n} \leq \frac 1{4\tilde C^2}$ and $T\leq 2^n$.

Let $\rho = \tau_{\frac{1}{4\tilde C^2}}$, where $\tau_s$ is a stopping time introduced
in \eqref{proofmain1}.
Consider equation \eqref{stoceq} with the cylindrical martingale $M^{\rho}$ instead of $M$.
It follows from \eqref{mainest} that $C_{\frac T{2^n},M^{\rho}}<\frac 12$.
Using the Banach fixed point argument one derives that $L_{\frac T{2^n}}$
 has a unique fixed point $U_n \in V^p(0,{\frac T{2^n}},M^{\rho};X)$. This gives us
 a continuous progressive measurable process
 $U_n:[0,{\frac T{2^n}}]\times \Omega \to X$ such that for almost all $\omega \in \Omega$
 for all $s\in[0,{\frac T{2^n}}]$,
 \begin{equation*}
  U_n(t) = S(s)u_0 + S* F(\cdot,U_n) + \int_0^t S(t-s)G(s,U_n)\ud M^{\rho}_s.
 \end{equation*}
 Note that \eqref{stocheq2eq} implies that
 \[
 \|U_n\|_{V^p(0,\frac T{2^n},M^{\rho};X)} \leq C(1+(\mathbb E \|u_0\|_X^p)^{\frac 1p}) +
 C_{\frac T{2^n},M^{\rho}}\|U_n\|_{V^p(0,\frac T{2^n},M^{\rho};X)},
 \]
 and since $C_{\frac T{2^n},M^{\rho}} < \frac 12$
 \begin{equation}\label{exandunest1}
 \|U_n\|_{V^p(0,\frac T{2^n},M^{\rho};X)} \leq 2C(1+(\mathbb E \|u_0\|_X^p)^{\frac 1p}).
  \end{equation}

To go on with a standard induction argument  on each of intervals
$[\frac {(k-1)T}{2^n},\frac {kT}{2^n}]$ for $k\in \{2, \ldots, 2^n\}$ we introduce the following stopping times
for $k\in \{1, \ldots, 2^n\}$

\begin{equation}\label{rho_nk}
\rho_{nk} = \begin{cases}
\frac {(k-1)T}{2^n}+\inf\{t\geq 0:[[M]]_{t+\frac {(k-1)T}{2^n}}-[[M]]_{\frac {(k-1)T}{2^n}}>\frac {T}{2^n}\},
&\text{on the set}\; A ;
\\
\infty,&\text{on the set}\; \Omega\setminus A.
\end{cases}
\end{equation}
Here $A = \{0\leq \frac {T}{2^n}<[[M]]_{\infty}-[[M]]_{\frac {(k-1)T}{2^n}}\}$.
As one can notice, $\rho_{n1} = \rho$. By \cite[Theorem~I.18]{Prot}
and since the minimum of stopping times is a stopping time,
$M^{\rho_{n1}\wedge \ldots\wedge \rho_{nk}} \in \mathcal M_{\rm var}(H)$. Fix $k>1$. Then one can construct
solution of equation \eqref{stoceq} on the interval $[\frac {(k-1)t}{2^n},\frac {kt}{2^n}]$
with the cylindrical martingale
$M^{\rho_{n1}\wedge \ldots\wedge \rho_{nk}}$ instead of $M$ and with the initial value,
obtained on the previous interval $[\frac {(k-2)T}{2^n},\frac {(k-1)T}{2^n}]$.

Thanks to \eqref{exandunest1}, \eqref{eq:stochcont} and a standard induction argument
one may construct a solution on each of intervals
$[\frac T{2^n},2\frac T{2^n}],\ldots,[(2^n-1)\frac T{2^n},T]$. This solution $U_n$ on $[0,T]$
is the solution of \eqref{stoceq} with $M$ replaced by $M^{\rho_n}$.

Define $\rho_n := \rho_{n1}\wedge\ldots\wedge\rho_{n2^n}$ for each $n\in \mathbb N$.
Then by the fixed point argument, the induction argument and Lemma \ref{forun2},
$U_n = U_m$ on $[0,\rho_n\wedge\rho_m\wedge T]$ for all $m,n\in \mathbb N$.
Consequently, since $\rho_n \nearrow\infty$ a.s.\ there exists $U:[0,T]\times \Omega\to X$ such that
$U=U_n$ on $[0,\rho_n\wedge T]$ for each $n \geq 1$.

Now one has to show that $U$ is a solution of \eqref{stoceq}. First of all notice that for each fixed $t\geq 0$ we know that $(U-U_n)\mathbf 1_{t\leq \rho_n}=0$. Consequently $(S(t-s)G(s,U_n) - S(t-s)G(s,U))\mathbf 1_{t\leq\rho_n} = 0$. Then for each fixed $t\geq 0$ according to Corollary \ref{cor:whenintis0} one has that a.s.\ on $\{t\leq \rho_n\}$
\begin{align*}
 U(t) =U_n(t)&= S(t)u_0 + \int_0^t S(t-s) F(s,U_n)\ud s + \int_0^t S(t-s) G(s,U_n) \ud M_s\\
 &= S(t)u_0 + \int_0^t S(t-s) F(s,U)\ud s + \int_0^t S(t-s) G(s,U) \ud M_s.
\end{align*}
So, letting $n$ to infinity one can show that for each fixed $t\geq 0$ a.s.\
\begin{align*}
 U(t) = S(t)u_0 + \int_0^t S(t-s) F(s,U)\ud s + \int_0^t S(t-s) G(s,U) \ud M_s.
\end{align*}

Now assume that $V \in V^{p}_{\rm loc}(0,T,M;X)$ is another solution of \eqref{stoceq}.
Then by Lemma~\ref{forun2}, $V = U_n$ on $[0, \rho_{n1}\wedge \frac{T}{2^n}]$ for all $n\geq 1$. According to \eqref{exandunestcont} $U_n(\frac{T}{2^n})\in L^p(\Omega,\mathcal F_{\frac T{2^n}};X)$, so again by Lemma~\ref{forun2} on the set $\{\rho_{n1}\geq \frac{T}{2^n}\}$ $V = U_n$ on $[\frac{T}{2^n},\rho_{n2}\wedge \frac{2T}{2^n}]$ for all $n\geq 1$ (here we start our solutions from the point $\frac T{2^n}$). Continuing this procedure for $k=3,\ldots,2^n$ we have that $V = U_n$ on $[0,\rho_n\wedge T]$ for all positive $n$. But since $U = U_n$ on $[0,\rho_n\wedge T]$ for all $n\geq 1$, $V=U$ on $[0,\rho_n\wedge T]$, therefore on whole $[0,T]$.
\end{proof}

Finally we can prove Theorem \ref{exandun3} for general initial values.

\begin{proof}[Proof of Theorem \ref{exandun3}]
The structure of the proof is the same as in \cite[Theorem~7.1]{NVW1}.
 To prove existence define a sequence $(u_n)_{n \geq 1}$ in $L^p(\Omega, \mathcal F_0;X)$
 in the following way:
 \[
 u_n = \mathbf 1_{\|u_0\|\leq n}u_0.
 \]
Then by Theorem \ref{exandun2} for each $n \geq 1$ there exists a unique solution
$U_n \in V^p_{\rm loc}(0,T,M;X)$ of \eqref{stoceq} with initial value $u_n$.
By Lemma \ref{forun2} one can define $U :[0,T]\times \Omega \to X$ as $U(t) = \lim_{n\to \infty} U_n(t)$ if this limit exists and $0$ otherwise. Then $U$ is strongly progressive measurable,
and almost surely on $\{\|u_0\|\leq n\}$
for all $t\in [0,T]$ we have that $U(t) = U_n(t)$. Consequently, $U \in V(0,T,M;X)$
and one can check it is a solution of \eqref{stoceq}.

For uniqueness of the solution we will need the stopping times constructed
in the proof of Theorem \ref{exandun2}. Let $U,V\in V(0,T,M;X)$ be two solutions of \eqref{stoceq}.
First of all fix $n \geq 1$ and prove that $U\mathbf 1_{\|u_0\|\leq n} = V\mathbf 1_{\|u_0\|\leq n}$.
Let $U_n = U\mathbf 1_{\|u_0\|\leq n}$, $V_n = V\mathbf 1_{\|u_0\|\leq n}$.
Obviously $U_{n}$ and $V_{ n}$ are solutions of
\eqref{stoceq} with initial value $u_0\mathbf 1_{\|u_0\|\leq n}$.

Let $k$ be large enough such that
$\frac T{2^k} <\frac 1{2\tilde C}$. For each $l\in \mathbb N$ define a stopping time
$\sigma_{nl}$ as follows:
\begin{align*}
  \sigma_{nl} = \inf\{s\in [0,T]:&\|U_{n}\|_{L^2((0,s);X)}
+\|U_{n}\|_{{\gamma}(L^2(0,s,[[M]]),X)}\\
+&\|V_{n}\|_{L^2((0,s);X)}
+\|V_{n}\|_{{\gamma}(L^2(0,s,[[M]]),X)}\geq l\}.
\end{align*}

Then $U_n\mathbf 1_{[0,\sigma_{nl}]},V_n\mathbf1_{[0,\sigma_{nl}]} \in V^p(0,\frac T{2^k},M;X)$. Define $(\rho_{km})_{1\leq m\leq 2^k}$ in the same way as in \eqref{rho_nk}.
For fixed $k$ one has the following
\begin{align*}
 \|&U_n\mathbf 1_{[0,\sigma_{nl}\wedge\rho_{k1}]} -
 V_n\mathbf1_{[0,\sigma_{nl}\wedge\rho_{k1}]}\|_{V^p(0,\frac T{2^k},M;X)}
\\ &=\|U_n\mathbf 1_{[0,\sigma_{nl}\wedge\rho_{k1}]} -
 V_n\mathbf1_{[0,\sigma_{nl}\wedge\rho_{k1}]}\|_{V^p(0,\frac T{2^k},M^{\rho_{k1}};X)}\\
 &=\|L_t(U_n)\mathbf 1_{[0,\sigma_{nl}\wedge\rho_{k1}]} -
 L_t(V_n)\mathbf1_{[0,\sigma_{nl}\wedge\rho_{k1}]}\|_{V^p(0,\frac T{2^k},M^{\rho_{k1}};X)}\\
  &=\|L_t(U_n\mathbf 1_{[0,\sigma_{nl}\wedge\rho_{k1}]})\mathbf 1_{[0,\sigma_{nl}\wedge\rho_{k1}]} -
 L_t(V_n\mathbf 1_{[0,\sigma_{nl}\wedge\rho_{k1}]})
 \mathbf1_{[0,\sigma_{nl}\wedge\rho_{k1}]}\|_{V^p(0,\frac T{2^k},M^{\rho_{k1}};X)}\\
 &\leq C_{\frac T{2^k}, M^{\rho_{k1}}}\|U_n\mathbf 1_{[0,\sigma_{nl}\wedge\rho_{k1}]} -
 V_n\mathbf 1_{[0,\sigma_{nl}\wedge\rho_{k1}]}\|_{V^p(0,\frac T{2^k},M^{\rho_{k1}};X)}\\
  &\leq \frac 12\|U_n\mathbf 1_{[0,\sigma_{nl}\wedge\rho_{k1}]} -
 V_n\mathbf 1_{[0,\sigma_{nl}\wedge\rho_{k1}]}\|_{V^p(0,\frac T{2^k},M;X)},
\end{align*}
so a.s.\ $U_n\mathbf 1_{[0,\sigma_{nl}\wedge\rho_{k1}]}(s) = V_n\mathbf 1_{[0,\sigma_{nl}\wedge\rho_{k1}]}(s)$
for all $s \in (0,\frac T{2^k})$. Define again
\[
\rho_k := \rho_{k1}\wedge\ldots\wedge\rho_{k2^k},\;\;\; k\in \mathbb N.
\]
By the standard induction argument one derives that
a.s.\ $U_n\mathbf 1_{[0,\sigma_{nl}\wedge\rho_{k}]}\equiv V_n\mathbf 1_{[0,\sigma_{nl}\wedge\rho_{k}]}$ on $[0,T]$. Now taking $k$ and $l$ to infinity gives us the desired.

Since $U = \lim_{n\to \infty} U_n$ and $V = \lim_{n\to \infty} V_n$, then $U=V$ a.s.\ and uniqueness
is proved.
\end{proof}

\begin{proof}[Proof of Proposition \ref{prop:SEEwithoutA}]
This result follows with the same method as for Theorem \ref{exandun3}. Note that property $(\alpha)$ can be avoided since $A=0$ and hence we can take $S(t) = \tilde{S}(t) = I$ and the $\gamma$-boundedness is clear in this case.
\end{proof}

\begin{remark}
Using the time change result of Theorem \ref{rem:intfortimechange} one can turn the noise part of the problem \eqref{stoceq} into a cylindrical Brownian motion. Unfortunately, by using this technique the term $Au(t)\ud t$ becomes more involved. In particular, one has to use evolution families instead of semigroups, which complicates matters.
\end{remark}

\appendix

\section{A technical lemma on measurable selections}

In the next lemma we show that a certain projection valued function can be chosen in a measurable way. Moreover, we give a representation formula for its inverse which is used in the proof of Theorem \ref{main}. In \cite[Lemma~8.9]{PZ} a similar measurability result was proved by applying a selector theorem by Kuratowski and Ryll-Nardzewski.

Recall from before that a function $F:S\to \calL(H)$ is called $H$-strongly measurable if for all
$h\in H$, $s\mapsto F(s) h$ is strongly measurable.
\begin{lemma}\label{technical}
Let $(S,\Sigma)$ be a measurable space and let $H$ be a separable Hilbert space. Let $H_0\subseteq H$ be a finite dimensional subspace. Let $F :S \to \mathcal L(H)$ be a function such that:
\begin{enumerate}
\item $F$ is $H$-strongly measurable;
\item for all $s\in S$ and $h\in H$, $F(s)^*  = F(s)$ and $\lb F(s)h, h \rb\geq 0$.
\end{enumerate}
For each $s\in S$, let $P(s)\in \calL(H)$ be the orthogonal projection onto $F(s) H_0$.
Then there exist $H$-strongly measurable functions $\tilde P, L :S \to \calL(H)$ such that
\begin{equation}\label{nice}
\tilde PF = FP  \ \ \text{and} \ \ L F = P,
\end{equation}
pointwise in $S$. Moreover, $\tilde P$ is a projection.
\end{lemma}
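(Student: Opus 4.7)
The strategy is to write everything in closed form using the Gram matrix of $\{F(s)h_1,\dots,F(s)h_d\}$, where $\{h_1,\dots,h_d\}$ is a fixed orthonormal basis of $H_0$. Put $g_i(s):=F(s)h_i$, which is strongly measurable in $s$ by hypothesis~(1), and form the measurable positive semidefinite Gram matrix $G(s)\in\mathbb R^{d\times d}$ with entries $G(s)_{ij}=\langle g_i(s),g_j(s)\rangle$. Since $F(s)H_0=\mathrm{span}\{g_1(s),\dots,g_d(s)\}$ (finite-dimensional, hence closed), the orthogonal projection onto this subspace admits the standard pseudoinverse representation
\[
P(s)h=\sum_{i,j=1}^d \bigl(G(s)^+\bigr)_{ji}\langle h,g_i(s)\rangle\,g_j(s),
\]
where $G(s)^+$ denotes the Moore--Penrose pseudoinverse; this already yields the measurability of $P$ itself once $s\mapsto G(s)^+$ is shown to be measurable.

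Next I would define
\[
L(s)\phi:=\sum_{i,j=1}^d \bigl(G(s)^+\bigr)_{ji}\langle \phi,h_i\rangle\,g_j(s),\qquad \phi\in H,
\]
which is bounded for each $s$ and is $H$-strongly measurable, since the coefficients are measurable scalars and the $g_j(s)$ are strongly measurable. Using self-adjointness of $F(s)$ one has $\langle F(s)h,h_i\rangle=\langle h,F(s)h_i\rangle=\langle h,g_i(s)\rangle$, so
\[
L(s)F(s)h=\sum_{i,j}(G(s)^+)_{ji}\langle h,g_i(s)\rangle g_j(s)=P(s)h,
\]
verifying $LF=P$. Then set $\tilde P(s):=F(s)L(s)$; this is $H$-strongly measurable because for each fixed $\phi\in H$ the vector $L(s)\phi$ depends measurably on $s$, and whenever $s\mapsto v(s)\in H$ is strongly measurable and $F$ is $H$-strongly measurable, $s\mapsto F(s)v(s)$ is strongly measurable by approximating $v$ by simple functions and using the boundedness of each $F(s)$. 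By construction $\tilde PF=FLF=FP$, and since $L$ takes values in $\mathrm{span}\{g_1,\dots,g_d\}=\mathrm{ran}\,P$ we have $PL=L$, whence
\[
\tilde P^2=FLFL=FPL=FL=\tilde P,
\]
so $\tilde P$ is idempotent, i.e.\ a projection.

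The main obstacle is the measurability of $s\mapsto G(s)^+$: the rank of $G(s)$ can jump with $s$, and the Moore--Penrose pseudoinverse is discontinuous precisely at rank changes. I would handle this via the identity $A^+=\lim_{\varepsilon\downarrow0}(A^2+\varepsilon I)^{-1}A$, valid for every symmetric matrix $A$ (as one checks by spectral diagonalisation, noting that $\lambda/(\lambda^2+\varepsilon)\to\lambda^+$ for every real $\lambda$). This exhibits $G(s)^+$ as a pointwise limit of continuous functions of $G(s)$, hence Borel measurable; alternatively one could invoke a measurable selection theorem in the spirit of \cite[Lemma~8.9]{PZ}, but the explicit pseudoinverse formula dovetails more cleanly with the closed-form construction of $L$ and~$\tilde P$ above.
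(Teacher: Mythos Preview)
Your argument is correct and takes a genuinely different, more streamlined route than the paper. The paper defines $\tilde P$ and $L$ by ad-hoc relations (e.g.\ $\tilde P(s)P_0F(s)^2P_0h=F(s)^2P_0h$ on the range and zero on the kernel, and $L(F^2h)=PFh$), then handles measurability by stratifying $S$ into the measurable pieces $S_\alpha$ on which a fixed subcollection $(F(s)h_i)_{i\in\alpha}$ is a basis of $F(s)H_0$, applying Gram--Schmidt on each stratum. Your Moore--Penrose formula $P(s)h=\sum_{i,j}(G(s)^+)_{ji}\langle h,g_i(s)\rangle g_j(s)$ with $L(s)\phi=\sum_{i,j}(G(s)^+)_{ji}\langle\phi,h_i\rangle g_j(s)$ and $\tilde P=FL$ bypasses the rank stratification entirely: a single closed-form expression covers all ranks at once, and the limit identity $A^+=\lim_{\varepsilon\downarrow 0}(A^2+\varepsilon I)^{-1}A$ for symmetric $A$ gives Borel measurability of $s\mapsto G(s)^+$ without any selection theorem. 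The verifications $LF=P$, $\tilde PF=FP$, and $\tilde P^2=\tilde P$ (via $PL=L$ since $\mathrm{ran}\,L\subseteq\mathrm{ran}\,P$) are all clean. What the paper's hands-on approach buys is that it never invokes the pseudoinverse machinery, which some readers may find more self-contained; what your approach buys is brevity and a formula that makes the measurability essentially automatic.
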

The operator $\tilde P$ will not be an orthogonal projection in general.
\begin{proof}
Let $P_0$ be the orthogonal projection onto $H_0$.
For each $s \in S$ define $\tilde{P}(s) \in \calL(H)$ as follows:
\[ \tilde{P}(s)P_0F(s)^2P_0 h = F(s)^2P_0 h, \ \ \text{for $h\in H$},\]
and set $\tilde{P}(s) = 0$ on
$\text{ker }P_0F(s)^2 P_0$. Notice, that there is no contradiction, since if $P_0F(s)^2P_0 h = 0$ for some $h\in H$ and $s \in S$, then
\[
0 =\langle P_0F(s)^2P_0 h,h \rangle = \|F(s) P_0h\|^2
\]
and hence $h\in \ker F(s)P_0 \subseteq \ker F(s)^2P_0$. Since $P_0F(s)^2P_0$ is a finite-rank self-adjoint operator for each $s\in S$, we have $H = \ker P_0F(s)^2P_0 \oplus \text{ran } P_0F(s)^2P_0$, and thus $P_0F(s)^2P_0$ is a bounded linear operator (see \cite[Theorem~6.2-G]{Tay}).

In the sequel we suppress the $s\in S$ from the formulas.
We claim that
\begin{enumerate}
\item[(i)] $\tilde{P}h = 0$ for each $h \in H_0^{\perp}$;
\item[(ii)] $\tilde{P}F^2h = F^2h$ for $h\in H_0$.
\item[(iii)] $\tilde{P} F = F P$
\end{enumerate}
Property (i) is clear from  $H_0^{\perp}\subseteq \ker  P_0F^2P_0$.
For (ii) note that for every $h\in H_0$, we can write $F^2 h = P_0 F^2 h + (1-P_0) F^2 h$. Since for all $g\in H_0$,
\[\lb (1-P_0)F^2 h, g\rb = \lb (1-P_0)F^2 h, P_0 g\rb = \lb P_0 (1-P_0)F^2 h, g\rb = 0,\]
we find that $(1-P_0)F^2 h\in H_0^{\perp}$. Thus by (i)  and the definition of $\tilde{P}$,
\[\tilde{P}F^2h = \tilde{P} P_0 F^2 h + \tilde{P} (1-P_0) F^2 h = F^2 h\]
and (ii) follows.
To prove (iii) let $g\in \text{ran } P$. Choosing $h \in H_0$ s.t.\ $g = F h$ we find
\[F P g = F g= F^2 h \stackrel{(ii)}{=} \tilde{P} F^2 h = \tilde{P} F g.\]
On the other hand, $F P$ vanishes on the space
\[\ker P_0F = (\text{ran }FP_0)^{\perp} = (\text{ran }P)^{\perp} = \ker P.\]
The same holds true for $\tilde P F$. Indeed, since $(1-P_0) F h \in \ker P_0F^2P_0$, it follows that for $h \in \ker P_0F$
\[
\tilde P F h = \tilde P (1-P_0) F h = 0
\]
and this gives (iii).

Next we claim that $\tilde{P}^2 = \tilde{P}$. Indeed, for each $h\in H$, $\tilde P h \in \text{ran } F^2P_0$ by the definition of $\tilde P$. Thus by (ii) $\tilde P F^2 P_0 = F^2 P_0$ and therefore $\tilde P^2 h= \tilde P h$.

To prove $H$-strong measurability fix
an orthonormal basis $(h_i)_{i=1}^k$  for $H_0$.  For each subset $\alpha \subseteq \{1,\ldots,k\}$ there exists a measurable $S_\alpha \subseteq S$
such that $(F h_i)_{i \in \alpha}$ is a basis of $\text{span}(F h_i)_{1\leq i\leq k}$
(because of the strong measurability of $F h$ for each $h \in H$ and using the Gramian matrix
technique). Notice that if $(F h_i)_{i \in \alpha}$ is a basis of $\text{span}(F h_i)_{1\leq i\leq k}$,
then $(F^2 h_i)_{i \in \alpha}$ is a basis of $\text{span}(F^2 h_i)_{1\leq i\leq k}$.
Indeed, let $g =\sum_{i\in \alpha} c_i F h_i$ be a combination
of $(F h_i)_{i \in \alpha}$ with some scalars $(c_i)_{i\in \alpha}$.
If $Fg = 0$, then $g \in\ker F = (\text{ran }F)^{\perp}$, so $g=0$.
Let $\alpha,\beta \subseteq \{1,\ldots,k\}$. We will say that $\alpha< \beta$ if
 $\sum_{i\in \alpha}2^i < \sum_{i\in \beta} 2^i$.
 If $\alpha < \beta$, one has to redefine
$S_\alpha:= S_\alpha \setminus S_{\beta}$. After the iterations of this procedure for all pairs
$\alpha,\beta\subseteq \{1,\ldots,k\}$ the sets $(S_\alpha)_{\alpha \subseteq \{1,\ldots,k\}}$ will be pairwise disjoint.

Now fix $\alpha \subseteq \{1,\ldots,k\}$. Let $(g_i)_{i\in \alpha}$ be obtained from $(P_0 F^2h_i)_{i\in \alpha}$
by the Gram--Schmidt process. These vectors are orthonormal and measurable
because $(\langle P_0F^2h_i, P_0F^2h_j\rangle)_{i,j \in \alpha}$ are measurable. Moreover,
the transformation matrix $C=(c_{ij})_{i,j \in \alpha}$ such that
\begin{align*}
 g_i = \sum_{j\in \alpha} c_{ij}P_0F^2h_j, \;\;\; i\in \alpha,
\end{align*}
has measurable elements.
So, $\tilde P g_i = \tilde P\sum_{j\in \alpha} c_{ij}P_0F^2h_j = \sum_{j\in \alpha} c_{ij}F^2h_j $.
This means that for each $h\in H$ the following hold true:
\[
\tilde P h = \sum_{i\in \alpha}\langle h,g_i\rangle \tilde P g_i =
\sum_{i\in \alpha}\langle h,g_i\rangle  \sum_{j\in \alpha} c_{ij}F^2h_j,
\]
which is obviously measurable.

Now define $L$ as an operator with values in $F(H_0) = P(H_0)$ as follows:
\begin{align*}
L (F^2 h) &= P F h,\;\;\; h \in H,\\
 L h &= 0,\;\;\; h \in \ker F.
\end{align*}
Then $L$ is well-defined since $\ker F = \ker F^2$. Also for each $1\leq i\leq k$ and $h \in H$
\begin{align*}
 |\langle L (F^2 h), Fh_i\rangle |
&= |\langle P F h, F h_i\rangle | =|\langle Fh, P Fh_i\rangle | =|\langle F^2h, h_i\rangle |\leq \|F^2h\|.
\end{align*}
Since the range of $L$ is finite dimensional and equal to $F H_0$, the operator $L$ is bounded.
Since $H = \overline{\text{ran }F}\oplus \ker F$ and $\ker F = \ker P$ we find $L F = P$.

As before one can show that $L$ is $H$-strongly measurable,
This time fixing $\alpha \subseteq \{1,\ldots,k\}$ one considering the orthogonal basis $(g_i)_{i\in \alpha}$ for
$\text{span}(Fh_i)_{i\in \alpha}$.
\end{proof}

\def\cprime{$'$}

\end{document}